\numberwithin{equation}{section}
\date{}
\newtheorem{theorem}{Theorem}[section]
\newtheorem{lemma}[theorem]{Lemma}
\newtheorem{corollary}[theorem]{Corollary}
\theoremstyle{definition}
\newtheorem{definition}[theorem]{Definition}
\newtheorem{remark}[theorem]{Remark}
\begin{document}
\baselineskip 15pt \setcounter{page}{1}
\title{  Prescribed mass standing waves for Schr\"{o}dinger-Maxwell equations with combined nonlinearities\footnote{Supported by
National Natural Science Foundation of China (No.~11971393).}}
\author{{Jin-Cai Kang$^a$,\ \  Yong-Yong Li$^b$,\ \  Chun-Lei Tang$^a$\footnote{Corresponding author. E-mail address: tangcl@swu.edu.cn (C.-L. Tang)}}\\
{\small\emph{$^a$School of Mathematics and Statistics, Southwest University, Chongqing, {\rm400715}, China,}}\\
{\small\emph{$^b$College of Mathematics and Statistics, Northwest Normal University, Lanzhou, {\rm730070}, China}}}
\date{}
\maketitle

\baselineskip 15pt
\begin{quote}
  {\bf Abstract:}~In the present paper, we study the following Schr\"{o}dinger-Maxwell equation with
  combined nonlinearities
  \begin{align*}
  \begin{cases}
  \displaystyle - \Delta u+\lambda u+  \left(|x|^{-1}\ast |u|^2\right)u
  =|u|^{p-2}u +\mu|u|^{q-2}u\quad \text{in} \
  \mathbb{ R}^3,\\
  \displaystyle \int_{\mathbb{R}^3}|u|^2dx=a^2,
  \end{cases}
 \end{align*}
  where   $a>0$, $\mu\in \mathbb{R}$, $2<q\leq \frac{10}{3}\leq p<6$ with $q\neq p$, $\ast$   denotes the convolution and $\lambda\in \mathbb{R}$ appears as a Lagrange multiplier.
  Under some mild assumptions on $a$ and $\mu$,  we prove some existence, nonexistence and  multiplicity  of normalized solution to the above equation.
  Moreover, the asymptotic behavior of normalized solutions is verified as $\mu\rightarrow 0$ and $q\rightarrow \frac{10}{3}$,
  and the stability/instability of the corresponding standing waves to the related time-dependent problem is also discussed.

  {\bf Keywords:}~Schr\"{o}dinger-Maxwell equation; Combined nonlinearities; Normalized solution;  Asymptotic behavior;  Stability
\end{quote}

\section{Introduction and main results}
In this paper, we investigate standing waves with prescribed
mass for a class of  Schr\"{o}dinger-Maxwell equations involving combined power nonlinearities
  \begin{align}\label{kk1}
   i \varphi_t+\Delta \varphi-\left(|x|^{-1}\ast| \varphi|^2\right)\varphi+|\varphi|^{p-2}\varphi+\mu|\varphi|^{q-2}\varphi=0,
  \end{align}
where    $2<q\leq\frac{10}{3}\leq p<6$ with $q\neq p$, $\mu \in \mathbb{R}$, $\varphi(x, t):\mathbb{R}^3\times [0, T]\rightarrow \mathbb{C} $ is the wave function   and  $(|x|^{-1}\ast  |\varphi| ^2)$ is a repulsive nonlocal Coulombic potential.   The case $\mu>0$ is focusing case, while the case $\mu < 0$ is referred to defocusing case. This class of Schr\"{o}dinger type equations with a repulsive nonlocal Coulomb potential is obtained by approximation of the Hartree-Fock equation describing a quantum mechanical system of many
particles, see \cite{2002RMP-BF, 1981CMP-BBL, 1981RMP-L, 1984CMP-L}. As a pioneering work, Tao et al.~in \cite{2007CPDETao} studied nonlinear Schr\"{o}dinger equations with combined nonlinearities, after which such problems have attracted widespread attention, see for e.g.~\cite{2007CPDETao, 2012DIE, 2016JDECheng, 2018JEE, 2017ARMA, 2016RMI,  2013CMP, 2017CV}. The standing wave solution $\varphi(t, x) =e^{i \lambda t}u(x)$ of Eq.~\eqref{kk1} corresponds to a solution of the equation
 \begin{align}\label{kkkk1}
 - \Delta u+\lambda u+ \left(|x|^{-1}\ast |u|^2\right)u
 =|u|^{p-2}u +\mu|u|^{q-2}u\quad  \text{in} \ \mathbb{ R}^3.
 \end{align}
As well known, fixed frequency problem (i.e.~$\lambda\in \mathbb{R}$ is fixed) and prescribed mass problem (i.e.~the L$^2$-mass of solution is prescribed) for Eq.~\eqref{kkkk1} are two focusing variational problems.
The former~was intensively studied in recent years, see for e.g. \cite{2008MJM-A, 2008CCM-AR, 2010AIHP-ADP, 2008JMAA-AP, 2016N-CM, 2006JFA, 2009-NA-CT, 2016-JDE-SM, 2018-NARWA-ZT, 2023-JGA-KLT, 2016-AMPA-LWZ}. However, the prescribed mass problems of Eq.~\eqref{kkkk1} are seldom  studied, they play a significant role in studying the Bose-Einstein condensation, which motivates us to search for solution with prescribed $L^2$-norm (called as normalized solution).
For this case, $\lambda$ is no longer a given constant but appears as an unknown Lagrange multiplier.

If $\mu=0$,  Eq.~\eqref{kkkk1} reduces to the following Schr\"{o}dinger-Maxwell equation with zero perturbation
   \begin{align}\label{k93}
- \Delta u+\lambda u+  \left(|x|^{-1}\ast |u|^2\right)u=|u|^{p-2}u \quad \text{in} \ \mathbb{ R}^3.
 \end{align}
To search for normalized solutions of Eq.~\eqref{k93}, we consider the critical points of the energy
functional
\begin{align*}
J (u)=\frac{1}{2}\int_{\mathbb{R}^3}|\nabla u|^2dx+\frac{1}{4}\int_{\mathbb{R}^3} \int_{\mathbb{R}^3} \frac{|u(x)|^{2}|u(y)|^2} {|x-y|}dxdy-\frac{1}{p}\int_{\mathbb{R}^3}|u|^{p}dx
\end{align*}
on the constraint
$$
S_a:=\left\{u\in H^1(\mathbb{R}^3, \mathbb{C}): \int_{\mathbb{R}^3}|u|^2dx=a^2 \right\}.
$$
It is easy to show that $J$ is a well defined $C^1$ functional on $S_a$ for any $p\in(2,6]$ (see \cite{2006JFA} for example).
When $p\in(2,\frac{10}{3})$, the functional $J$ is bounded from below and coercive on $S_a$, and the existence of minimizer for $J$ constrained on $S_a$ has been studied in \cite{2011ZAMP-BS, 1992CPDE-CL, 2011JFA-BS, 2004JSP}.  If $p\in (2, 3)$, the authors in  \cite{2011JFA-BS} proved that minimizers exist provided that $a > 0$ is small enough.  Specifically, by using the techniques introduced in \cite{1992CPDE-CL}, it has been proved in \cite{2004JSP} that minimizers exist for $p=\frac{8}{3}$ provided $a \in (0, a_0 )$ for some suitable $a_0 > 0$.
The case $p\in(3,\frac{10}{3})$ is considered in \cite{2011ZAMP-BS},  in which a minimizer is obtained for $a > 0$ large enough. Based on whether $J_{|_{S_a}}$ is bounded from below, the threshold $\frac{10}3$ is called as the L$^2$-critical exponent.
If $p\in[3, \frac{10}{3}]$,   Jeanjean and  Luo in \cite{2013ZAMP-JL} gave a threshold value of $a > 0$ separating existence and nonexistence of minimizers.
When $p\in( \frac{10}{3}, 6)$,  the functional $J$ is unbounded from below on $S_a$, and
Bellazzini, Jeanjean and Luo in  \cite{2013PLMS} found critical points of $J$ on $S_a$ for $a> 0$ sufficiently small.
After then, Chen et al.~\cite{2020JMAA-CT} considered normalized solutions for the Schr\"{o}dinger-Maxwell equations with general nonlinearity
\begin{align*}
- \Delta u+\lambda u+  \left(|x|^{-1}\ast |u|^2\right)u=f(u) \quad \text{in} \ \mathbb{ R}^3,
 \end{align*}
where $f\in C(\mathbb{R}, \mathbb{R})$ covers the case $f(u) = |u|^{s-2}u$ with $q\in (3, \frac{10}{3})\cup ( \frac{10}{3}, 6)$.
Afterwards, the most recent works on Eq.~\eqref{k93} was made in \cite{2023IJM},
where the authors not only demonstrate the old results in a unified way but also exhibit new ones.
For more investigations on normalized solutions of Schr\"{o}dinger-Maxwell equations, we refer readers to   \cite{2013ZAMP-JL, 2017JMAA-ZZ, 2020MMAS, 2023.3, 2014JMAA-L, 2018ZAMP-YL, 2017CMA-Y, 2023AMP-WQ} and the references therein.

Recently, a new phenomenon for the concave-convex variational problem reappears in the study of normalized solutions. Consider the following Schr\"{o}dinger equation with combining  nonlinearities
\begin{align}\label{k110}
  \begin{cases}
  \displaystyle - \Delta u+ \lambda u=|u|^{p-2}u+\mu |u|^{q-2}u \quad \text{in} \ \mathbb{ R}^N, \\
  \displaystyle \int_{\mathbb{R}^N}u^2dx=a^2,
  \end{cases}
  \end{align}
where $N\geq 1$, $a>0$, $\mu\in\mathbb{R}$, $\lambda\in\mathbb{R}$ is an unknown parameter and $2<q \leq p< 2^*$. If $q<2+\frac4N<p$,~the fiber mapping under the scaling of type $t^{\frac N2}u(t\cdot)$ admits
two critical points (a maximum point and a minimum point), which indicates that two normalized solutions can be catched by splitting the corresponding Nehari-Poho$\check{\mbox{z}}$aev manifold to Eq.~\eqref{k110} into two submanifolds. As mentioned above, Soave in \cite{2020Soave} firstly studied the existence and nonexistence of normalized solution for Eq.~\eqref{k110} by using Nehari-Poho\v{z}aev constraint.
After then, as an extension of works in \cite{2020Soave}, Soave in \cite{2021-JFA-Soave} further considered the existence and nonexistence of normalized solutions for Eq.~\eqref{k110} with $p=2^*:=\frac{2N}{N-2}$ and $ q\in(2,2^*)$.
For more results of  Eq.~\eqref{k110},  we refer the readers to  \cite{2022-MA-jean, 2022JFAWei,2022.9, 2021CVLixinfu, 2022CValves}.

When it comes to Schr\"{o}dinger-Maxwell equations with combining nonlinearities, there seem to be no results to our knowledge. Motivated by the above survey, a natural question is whether Eq.~\eqref{kkkk1} possesses normalized solution for $\mu\in \mathbb{R}$ and $2<q\leq\frac{10}{3}\leq p<6$ with $q\neq p$. Naturally, we study the following Schr\"{o}dinger-Maxwell equation in this work
  \begin{align}\label{k1}\tag{${ \mbox{SP}}$}
  \begin{cases}
  \displaystyle - \Delta u+\lambda u+  \left(|x|^{-1}\ast |u|^2\right)u
   =|u|^{p-2}u +\mu|u|^{q-2}u\quad \text{in} \ \mathbb{ R}^3,\\
  \displaystyle \int_{\mathbb{R}^3}|u|^2dx=a^2,
  \end{cases}
 \end{align}
where $a>0$, $\mu\in \mathbb{R}$,   $2<q\leq\frac{10}{3}\leq p<6$ with $q\neq p$,  $\ast$   denotes the convolution and $\lambda\in \mathbb{R}$ is an unknown Lagrange multiplier.~Normalized solutions of Eq.~\eqref{k1} correspond to critical points of the following functional on the constraint $S_a$,
  \begin{align*}
\mathcal{J} (u)=\frac{1}{2}\int_{\mathbb{R}^3}|\nabla u|^2dx+\frac{1}{4}\int_{\mathbb{R}^3}\int_{\mathbb{R}^3} \frac{|u(x)|^{2}|u(y)|^2} {|x-y|}dxdy
-\frac{1}{p}\int_{\mathbb{R}^3}|u|^{p}dx-\frac{\mu}{q}\int_{\mathbb{R}^3}|u|^{q}dx.
\end{align*}

For convenience, we let $\mathcal{H}:=H^1(\mathbb{R}^3, \mathbb{C})  $ denotes the usual Sobolev space with the norm
$$
\|u\|= \Big(\int_{\mathbb{R}^3}  |\nabla u|^2+  |u|^2 dx\Big)^{\frac{1}{2}}.
$$
$\mathcal{H }_r :=\{u\in \mathcal{H}: u(x)=u(|x|)\}$ has the same scalar product and norm as in $\mathcal{H}$.\\

Before stating our main results, we recall the following

\begin{definition}\label{de1.1}
We say that $\overline{u}$ is a ground state normalized solution of Eq.~\eqref{k1} on $S_a$ if $\overline{u}$ is a solution of Eq.~\eqref{k1} possessing minimal energy among all of the normalized solutions, namely,
\begin{align*}
  d\mathcal{J}_{|_{S_a }}(\overline{u})=0 \quad
  \text{and}
  \quad \mathcal{J}(\overline{u})=\inf\left\{\mathcal{J}(u):   d\mathcal{J}_{|_{S_a }}(u)=0 \ \text{and}\ u\in S_a\right \}.
\end{align*}
The set of all ground state normalized solutions for Eq.~\eqref{k1} will be denoted as $\mathcal{M}_a$.
\end{definition}

The case of $\mu>0$ and $\frac{10}{3}<q <p< 6$  has already been considered in \cite{2020JMAA-CT}, and the  existence result of Eq.~\eqref{k1} in this case is presented here for our convenience.

\begin{theorem}\label{K-T1}\cite[Theorem 1.1]{2020JMAA-CT}
Let $\mu>0$ and $\frac{10}{3}<q <p< 6$. Assume that  $a\in (0, \widetilde{\kappa})$ small enough, then there exists a  mountain pass type  solution   $(\lambda_{q}, u_{q})\in  \mathbb{R}^+\times \mathcal{H}_r$
 for Eq.~\eqref{k1} with positive energy  and $u_{q}>0$, where $J_q(u_q)= c_{q,r}>0$.
\end{theorem}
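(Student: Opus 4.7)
The plan is to mimic the mountain pass approach of Bellazzini--Jeanjean--Luo \cite{2013PLMS}, adapted to the present combined nonlinearity, and to work on the radial constraint $S_{a,r}:=S_{a}\cap \mathcal{H}_{r}$. The central tool is the $L^{2}$-preserving dilation $(t\star u)(x):=t^{3/2}u(tx)$, under which
\begin{align*}
\mathcal{J}(t\star u)=\frac{t^{2}}{2}\|\nabla u\|_{2}^{2}+\frac{t}{4}\!\int_{\mathbb{R}^{3}}\!\!\int_{\mathbb{R}^{3}}\!\frac{|u(x)|^{2}|u(y)|^{2}}{|x-y|}dxdy-\frac{t^{3(p-2)/2}}{p}\|u\|_{p}^{p}-\frac{\mu\,t^{3(q-2)/2}}{q}\|u\|_{q}^{q}.
\end{align*}
Because $\frac{10}{3}<q<p<6$, both nonlinear exponents in $t$ strictly exceed $2$, so for every fixed $u\in S_{a,r}$ the map $t\mapsto\mathcal{J}(t\star u)$ starts at $0$, rises to a strictly positive maximum, and then diverges to $-\infty$. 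Combined with the Gagliardo--Nirenberg inequality and smallness of $a<\widetilde{\kappa}$, this yields two radii $0<R_{0}<R_{1}$ such that $\mathcal{J}(u)>0$ on $\{u\in S_{a,r}:\|\nabla u\|_{2}=R_{0}\}$ while $\mathcal{J}$ takes strictly negative values outside $\{\|\nabla u\|_{2}\le R_{1}\}$, producing a genuine mountain pass geometry with level $c_{q,r}>0$.

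Next I would apply the Jeanjean min--max trick to the augmented functional $\widetilde{\mathcal{J}}(t,u):=\mathcal{J}(t\star u)$ on $\mathbb{R}\times S_{a,r}$, using mountain pass paths of the form $t\mapsto t\star u_{0}$. This delivers a Palais--Smale sequence $\{u_{n}\}\subset S_{a,r}$ for $\mathcal{J}|_{S_{a}}$ at level $c_{q,r}$ that additionally satisfies the asymptotic identity $P(u_{n})\to 0$, where
\begin{align*}
P(u)=\|\nabla u\|_{2}^{2}+\frac{1}{4}\!\int_{\mathbb{R}^{3}}\!\!\int_{\mathbb{R}^{3}}\!\frac{|u(x)|^{2}|u(y)|^{2}}{|x-y|}dxdy-\frac{3(p-2)}{2p}\|u\|_{p}^{p}-\frac{3\mu(q-2)}{2q}\|u\|_{q}^{q}
\end{align*}
is the Pohozaev-type quantity obtained by differentiating $\mathcal{J}(t\star u)$ at $t=1$. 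Suitably combining $\mathcal{J}(u_{n})\to c_{q,r}$ with $P(u_{n})\to 0$ and using $q,p>\frac{10}{3}$, I would extract uniform $H^{1}$-boundedness of $\{u_{n}\}$.

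The main obstacle is compactness. By the Strauss embedding $\mathcal{H}_{r}\hookrightarrow L^{s}(\mathbb{R}^{3})$ compact for $s\in(2,6)$, both the pure-power and the Coulomb terms pass to the weak limit $u_{q}$ along a subsequence. The delicate points are excluding $u_{q}\equiv 0$ and showing that the Lagrange multipliers $\lambda_{n}$ converge to some $\lambda_{q}>0$; this is where smallness of $a<\widetilde{\kappa}$ is essential, as it forces $c_{q,r}$ below the threshold above which mass can leak to infinity. Testing the equation against $u_{n}$, invoking $P(u_{n})\to 0$ to pin down $\lambda_{n}$, and then testing against $u_{n}-u_{q}$ upgrades weak to strong convergence in $\mathcal{H}_{r}$.

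Finally, positivity $u_{q}>0$ follows by replacing $u_{q}$ with $|u_{q}|$, which preserves $\mathcal{J}$ and the constraint since the Coulomb and power terms depend only on $|u|$, and then applying the strong maximum principle to
\begin{align*}
-\Delta u_{q}+\bigl(\lambda_{q}+|x|^{-1}\ast |u_{q}|^{2}\bigr)u_{q}=|u_{q}|^{p-2}u_{q}+\mu|u_{q}|^{q-2}u_{q},
\end{align*}
exploiting $\lambda_{q}+|x|^{-1}\ast |u_{q}|^{2}>0$. The identity $\mathcal{J}(u_{q})=c_{q,r}>0$ is immediate from the construction.
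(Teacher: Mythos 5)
A preliminary remark: the paper does not prove Theorem \ref{K-T1} at all; it is imported verbatim from \cite{2020JMAA-CT}, so there is no in-paper argument to compare with. Your outline is essentially the standard proof used in that reference and in \cite{2013PLMS,2020Soave}: fiber maps $s\star u$ on $S_{a,r}$, mountain-pass geometry coming from the fact that both $p\gamma_p>2$ and $q\gamma_q>2$, a Palais--Smale sequence at the level $c_{q,r}$ with $P(u_n)\to 0$ via the augmented functional, boundedness from $\mathcal{J}(u_n)-\frac{1}{p\gamma_p}P(u_n)$, compactness from the compact embedding $\mathcal{H}_r\hookrightarrow L^t(\mathbb{R}^3)$, $t\in(2,6)$, exclusion of $u\equiv 0$ by $c_{q,r}>0$, and positivity of the Lagrange multiplier for $a$ small, which upgrades weak to strong convergence. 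In approach this is the same route.

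Two steps are stated inaccurately. First, it is not true that $\mathcal{J}<0$ on $\{u\in S_{a,r}:\ |\nabla u|_2> R_1\}$: Gagliardo--Nirenberg only bounds $|u|_p,|u|_q$ from above, so there are functions of arbitrarily large gradient norm with positive energy. What you actually need, and what your fiber analysis already gives, is that along each dilation path $s\mapsto s\star u$ the energy is positive for $|\nabla(s\star u)|_2$ small and eventually negative; this suffices to define the admissible paths and the positive minimax level. Second, the positivity argument ``replace $u_q$ by $|u_q|$, which preserves $\mathcal{J}$'' is not justified: one only has $|\nabla|u||_2\le|\nabla u|_2$, hence $\mathcal{J}(|u_q|)\le\mathcal{J}(u_q)$, and in any case the modulus of a constrained critical point at a minimax level need not be a critical point, while Kato's inequality only makes $|u_q|$ a subsolution, which is the wrong direction for the strong maximum principle. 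The standard repair is to arrange nonnegativity at the level of the construction (for instance by observing that replacing a path $\zeta$ by $|\zeta|$ does not increase the energy along the path nor the minimax value, so one may work with nonnegative real-valued paths and limits), and only then apply elliptic regularity and the strong maximum principle to $-\Delta+\lambda_q+|x|^{-1}\ast|u_q|^2$ with $\lambda_q>0$. With these two corrections your sketch matches the cited proof.
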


Our main results in this paper are stated as follows:

\begin{theorem}\label{K-TH1}
Assume $\mu>0$, $q\in(2, \frac{8}{3} )$ and $p\in (\frac{10}{3}, 6)$. Let $a\in(0, \overline{a}_0)$ with
\begin{align*}
 \overline{a}_0:=\min\left\{a_0, \ \bigg[ \frac{p(2-q\gamma_q)}{2C_{p}^p (p\gamma_p-q\gamma_q)}\bigg]^\frac{2-q\gamma_q}{\mathcal{B} }
 \bigg[ \frac{q(p\gamma_p-2)}{2\mu C_{q}^q (p\gamma_p-q\gamma_q)}\bigg]^\frac{p\gamma_p-2}{ \mathcal{B}}\right\},
 \end{align*}
where $a_0$ will be defined in \eqref{k50} hereafter, $C_{p} $ is defined in \eqref{k102}, $\gamma_p=\frac{3(p-2)}{2p}$, $\gamma_q=\frac{3(q-2)}{2q}$ and
$$
\mathcal{B}=(q-q\gamma_q)(p\gamma_p-2)+(p-p\gamma_p)(2-q\gamma_q).
$$
Then Eq.~\eqref{k1}  possesses  a real-valued ground state normalized solution $(\lambda, u)\in  \mathbb{R}\times  \mathcal{H}$ with $u>0$.
Moreover, this ground state is a local minimizer of $\mathcal{J}$ in the set $D_{\rho_0}$ and any ground state normalized solution is a local minimizer of $\mathcal{J}$ on $D_{\rho_0}$, where $D_{\rho_0}$ is defined by \eqref{k91}.
\end{theorem}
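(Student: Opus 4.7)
The plan is to adapt the Soave-type local-minimizer strategy of \cite{2020Soave} to the Schrödinger--Maxwell setting, exploiting that the Coulomb term is nonnegative and hence compatible with minimization from below, while the mass-subcritical $q$-term with $\mu>0$ drives the energy below zero. First, I would analyze the fiber map $t\mapsto\mathcal{J}(t\star u)$ with $(t\star u)(x):=t^{3/2}u(tx)$, which preserves $\|u\|_2$. The Gagliardo--Nirenberg inequalities $\|u\|_s^s\le C_s^s\|u\|_2^{s(1-\gamma_s)}\|\nabla u\|_2^{s\gamma_s}$ for $s=p,q$ give
\begin{align*}
\mathcal{J}(u)\ge \frac{1}{2}\|\nabla u\|_2^2-\frac{C_p^p}{p}a^{p(1-\gamma_p)}\|\nabla u\|_2^{p\gamma_p}-\frac{\mu C_q^q}{q}a^{q(1-\gamma_q)}\|\nabla u\|_2^{q\gamma_q}=:h(\|\nabla u\|_2).
\end{align*}
Since $q\gamma_q<2<p\gamma_p$, for $a$ small enough $h$ admits a strict local maximum at some $\rho^*>0$ with $h(\rho^*)>0$ and a strict local minimum below it with negative value; the threshold $\overline{a}_0$ is obtained by imposing $h(\rho^*)\ge 0$ together with $h'(\rho^*)=0$ and eliminating $\rho^*$, which is what produces the exponent $\mathcal{B}$. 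Setting $\rho_0:=\rho^*$ and $D_{\rho_0}:=\{u\in S_a:\|\nabla u\|_2^2<\rho_0\}$ (the set in \eqref{k91}), the functional $\mathcal{J}|_{\overline{D}_{\rho_0}}$ is bounded below.

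Next, define $m(a):=\inf_{D_{\rho_0}}\mathcal{J}$. Testing with $t\star u_0$ for small $t>0$ and fixed $u_0\in S_a$, the $q$-term dominates at small scales and yields $m(a)<0$, while by construction $\mathcal{J}(u)\ge h(\rho_0)>0$ whenever $\|\nabla u\|_2^2=\rho_0$; hence any minimizing sequence stays strictly inside $D_{\rho_0}$. Given such a sequence $\{u_n\}$, Kato's inequality and a Schwarz symmetrization (with a comparison argument absorbing the slight Coulomb increase, in the spirit of \cite{2023IJM}) reduce to $u_n\in\mathcal{H}_r$ with $u_n\ge 0$. The sequence is bounded in $\mathcal{H}_r$, so up to a subsequence $u_n\rightharpoonup u$ and, by the radial compact embedding, $u_n\to u$ strongly in $L^s(\mathbb{R}^3)$ for $s\in(2,6)$. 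Then $m(a)<0$ rules out $u\equiv 0$, and a strict subadditivity inequality $m(a)<m(b)+m(\sqrt{a^2-b^2})$ for $0<b<a$---established via scaling arguments exploiting the superadditive, repulsive nature of the Coulomb term---excludes dichotomy; therefore $\|u_n\|_2\to a$. Weak lower semicontinuity of the kinetic and Coulomb parts, combined with $\mathcal{J}(u_n)\to m(a)=\mathcal{J}(u)$, upgrades to strong $H^1$-convergence.

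The Lagrange multiplier rule then provides $\lambda\in\mathbb{R}$ with $\mathcal{J}'(u)=\lambda u$, so $u$ solves \eqref{k1}, and the strong maximum principle applied to the nonnegative, nontrivial $u$ yields $u>0$. To identify $u$ as a ground state per Definition~\ref{de1.1} and to show $\mathcal{M}_a\subset D_{\rho_0}$, I would use the Pohozaev identity associated with $t\star u$: combined with the shape of $h$ it forces every normalized critical point with nonpositive energy into $\{\|\nabla v\|_2^2<\rho_0\}$, while any normalized critical point lying outside $D_{\rho_0}$ has energy at least $h(\rho_0)>0>m(a)$; hence $\mathcal{M}_a=\{v\in D_{\rho_0}:\mathcal{J}(v)=m(a)\}$, and every ground state is a local minimizer on $D_{\rho_0}$.

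The main obstacle I foresee is the strict subadditivity step that excludes dichotomy: the nonlocal Coulomb term lacks a clean Brezis--Lieb decomposition along translated or scaled sequences, so its superadditivity must be extracted from delicate scaling comparisons rather than from a direct splitting identity. A secondary technicality is the sharp algebraic balancing of the three competing monomials of $h$ at its local maximum, which produces the explicit closed-form threshold $\overline{a}_0$ with the combinatorial exponent $\mathcal{B}$.
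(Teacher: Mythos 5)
Your overall frame (local minimization over $D_{\rho_0}$, $m(a)<0<\inf_{\partial D_{\rho_0}}\mathcal{J}$, Lagrange multiplier, maximum principle) matches the paper, but the compactness step---the heart of the theorem---rests on a move the paper explicitly rules out. You propose to radialize the minimizing sequence by Schwarz symmetrization ``with a comparison argument absorbing the slight Coulomb increase''. The Hartree term enters $\mathcal{J}$ with a plus sign and \emph{increases} under rearrangement (Riesz), so $\mathcal{J}(u^*)\le\mathcal{J}(u)$ is simply unavailable and there is no mechanism to ``absorb'' the increase; this is precisely the reason the authors state that Soave's rearrangement-based scheme from \cite{2020Soave} cannot be transplanted here. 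Moreover, even if you work in $\mathcal{H}_r$ from the outset, the compact embedding does not settle the problem on the full range: the weak limit can lose mass, and the positive-multiplier argument that restores strong convergence (as in Lemma \ref{K-Lem2.6}) only works for $q\in(2,\tfrac{12}{5}]$. So your route covers at best $q\in(2,\tfrac{12}{5}]$, not the claimed range $q\in(2,\tfrac83)$.

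What the paper actually does for the full range is to stay in the nonradial space $\mathcal{H}$, run concentration--compactness with translations (vanishing excluded by $m(a)<0$), use the Brezis--Lieb-type splitting of the Hartree term from \cite{2008-JMAA-ZHAO}---so, contrary to your closing remark, a clean decomposition of the nonlocal term along weakly convergent sequences \emph{does} exist and is used---and exclude dichotomy via strict subadditivity. That subadditivity is not extracted from any ``superadditivity of the Coulomb term''; it comes from the scaling-path machinery of \cite{2011JFA-BS}: continuity of $a\mapsto m(a)$ (Lemma \ref{K-Lem2.9}), $m(a)/a^2\to0$ as $a\to0$ (Lemma \ref{K-Lem2.10}, itself a nontrivial Lagrange-multiplier limit for an auxiliary local functional), and the nondegeneracy condition for admissible scaling paths (Lemma \ref{K-Lem2.11}), whose verification is an interpolation argument treating $q\in(2,\tfrac{12}{5}]$ and $q\in(\tfrac{12}{5},\tfrac83)$ separately; these yield monotonicity of $s\mapsto m(s)/s^2$ (Lemma \ref{K-Lem2.8}) and hence $m(a)<m(b)+m(\sqrt{a^2-b^2})$. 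Your sketch names the right inequality but supplies no mechanism that would produce it for $q>\tfrac{12}{5}$. Finally, note that $\overline{a}_0$ is not obtained solely by balancing the three monomials of $h$: its second factor is exactly what forces $\mathcal{P}_0=\emptyset$ (Lemma \ref{K-Lem2.2}), which underlies the two-critical-point structure of the fiber map and is needed for the ``moreover'' part that every ground state lies in $D_{\rho_0}$.
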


\begin{theorem}\label{K-TH2}
Assume $\mu>0$, $q\in(2, \frac{12}{5} ]$ and $p\in (\frac{10}{3}, 6)$. Let $a\in(0, \overline{a}_0)$ and
 \begin{align}\label{k3}
 a< \left(\frac{3}{4C_p^p}\right)^{\frac{1}{2p\gamma_p+p-6}}
 \left(\frac{4}{4\gamma_p-1 }\right)^{\frac{p\gamma_p-1}{2p\gamma_p+p-6}}\Bigg(\frac{1- \gamma_p }{C_{\frac{12}{5}}^{\frac{12}{5}}}\Bigg)^{\frac{p\gamma_p-2 }{2p\gamma_p+p-6}}.
 \end{align}
  Then there exists a second solution  of mountain pass type $( \widehat{\lambda}, \widehat{u})\in \mathbb{R}^+ \times \mathcal{H}_r $   for Eq.~\eqref{k1} with $ \widehat{u}>0$.
\end{theorem}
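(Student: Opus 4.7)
The plan is to obtain the second solution as a mountain-pass critical point of $\mathcal{J}$ restricted to the radial sphere $S_{a,r}:=S_a\cap\mathcal{H}_r$, using the local minimizer neighborhood $D_{\rho_0}$ supplied by Theorem~\ref{K-TH1} as the interior of the mountain.

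First, I would set up the mountain-pass geometry via the $L^2$-preserving scaling $u_t(x):=t^{3/2}u(tx)$. A direct computation yields
\begin{align*}
\mathcal{J}(u_t)&=\tfrac{t^2}{2}\int_{\mathbb{R}^3}|\nabla u|^2dx+\tfrac{t}{4}\int_{\mathbb{R}^3}\int_{\mathbb{R}^3}\tfrac{|u(x)|^2|u(y)|^2}{|x-y|}dxdy\\
&\quad-\tfrac{t^{p\gamma_p}}{p}\int_{\mathbb{R}^3}|u|^pdx-\tfrac{\mu t^{q\gamma_q}}{q}\int_{\mathbb{R}^3}|u|^qdx.
\end{align*}
Since $p>\tfrac{10}{3}$ gives $p\gamma_p>2$, $\mathcal{J}(u_t)\to-\infty$ as $t\to+\infty$ for any fixed $u\in S_{a,r}$, while $u_{t_0}\in D_{\rho_0}$ for every sufficiently small $t_0>0$ (the gradient energy of $u_{t_0}$ scales as $t_0^2$). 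Picking such a $t_0$ and $t_1$ large enough that $\mathcal{J}(u_{t_1})<\inf_{D_{\rho_0}}\mathcal{J}$, the scaling path $s\mapsto u_{(1-s)t_0+s t_1}$ lies in $S_{a,r}$, links the local minimum basin to a strictly lower-energy endpoint, and must cross $\partial D_{\rho_0}$, on which $\mathcal{J}$ strictly exceeds $\inf_{D_{\rho_0}}\mathcal{J}$ by the very construction of $D_{\rho_0}$ in Theorem~\ref{K-TH1}. This produces a genuine minimax value
$$
c:=\inf_{\gamma\in\Gamma}\max_{s\in[0,1]}\mathcal{J}(\gamma(s))>\inf_{D_{\rho_0}}\mathcal{J},
$$
with $\Gamma$ the class of continuous paths in $S_{a,r}$ joining $D_{\rho_0}$ to the sub-level set $\{\mathcal{J}<\inf_{D_{\rho_0}}\mathcal{J}\}$.

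Next I would produce a Palais--Smale sequence at level $c$ which asymptotically verifies the Poho\v{z}aev identity via the standard Jeanjean device: work on the augmented manifold $\mathbb{R}\times S_{a,r}$ with $\widetilde{\mathcal{J}}(s,u):=\mathcal{J}(u_s)$, apply Ekeland's variational principle there, and deform back to get $\{u_n\}\subset S_{a,r}$ with $\mathcal{J}(u_n)\to c$, $d\mathcal{J}_{|_{S_{a,r}}}(u_n)\to 0$, and $\mathcal{P}(u_n)\to 0$, where
$$
\mathcal{P}(u):=\int_{\mathbb{R}^3}|\nabla u|^2dx+\tfrac14\int_{\mathbb{R}^3}\int_{\mathbb{R}^3}\tfrac{|u(x)|^2|u(y)|^2}{|x-y|}dxdy-\gamma_p\int_{\mathbb{R}^3}|u|^pdx-\mu\gamma_q\int_{\mathbb{R}^3}|u|^qdx.
$$
The linear combination $p\gamma_p\,\mathcal{J}(u_n)-\mathcal{P}(u_n)$ kills the $L^p$-term and leaves a positive quadratic form in the gradient and Coulomb integrals (since $p\gamma_p>2$), modulo a multiple of $\|u_n\|_q^q$; the Gagliardo--Nirenberg inequality combined with $q\gamma_q<2$ absorbs the residual $L^q$-term into the gradient, yielding a uniform $H^1$-bound on $\{u_n\}$.

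The final step is to pass to the limit. Since $\{u_n\}\subset\mathcal{H}_r$ and $\mathcal{H}_r\hookrightarrow L^s(\mathbb{R}^3)$ is compact for $s\in(2,6)$, one has $u_n\to\widehat{u}$ strongly in $L^p\cap L^q$; the sequence of Lagrange multipliers $\lambda_n$ is bounded and $\lambda_n\to\widehat{\lambda}$ along a subsequence. Combining the weak equation satisfied by $\widehat{u}$ with $\mathcal{P}(\widehat{u})=0$ forces $\widehat{\lambda}>0$, which upgrades weak to strong $\mathcal{H}$-convergence via the coercive combination of $\|\nabla u_n\|_2^2$ and $\widehat{\lambda}\|u_n\|_2^2$. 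The main obstacle, and precisely where the quantitative restrictions $q\in(2,\tfrac{12}{5}]$ and the smallness \eqref{k3} of $a$ enter, is ruling out the vanishing scenario $\widehat{u}\equiv 0$: if $\widehat{u}\equiv 0$ then $\|u_n\|_p+\|u_n\|_q\to 0$ by compactness and $\mathcal{P}(u_n)\to 0$ yields $\|\nabla u_n\|_2^2,\,D(u_n,u_n)\to 0$, whence $\mathcal{J}(u_n)\to 0$; \eqref{k3}, calibrated via the sharp Coulomb--$L^{12/5}$ Gagliardo--Nirenberg estimate, is exactly the threshold that forces $c>0$ and hence contradicts this alternative. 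Positivity of $\widehat{u}$ follows by replacing $u_n$ with $|u_n|$ from the outset and invoking the strong maximum principle, and $\mathcal{J}(\widehat{u})=c>\inf_{D_{\rho_0}}\mathcal{J}$ separates $\widehat{u}$ from the ground state of Theorem~\ref{K-TH1}.
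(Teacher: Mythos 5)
Your overall architecture (mountain-pass geometry on $S_{a,r}$ above the local well $D_{\rho_0}$, a Palais--Smale sequence with $P(u_n)\to 0$ obtained from the augmented functional, boundedness via the combination $\mathcal{J}-\frac{1}{p\gamma_p}P$, compact radial embeddings) is consistent with the paper's strategy, but there is a genuine gap at the decisive step: the sign of the Lagrange multiplier. You assert that combining the weak equation for $\widehat u$ with $\mathcal{P}(\widehat u)=0$ ``forces $\widehat\lambda>0$''. This is not true as stated: subtracting the Poho\v{z}aev-type identity \eqref{k18} from the Nehari identity \eqref{k17} gives
\begin{align*}
\widehat{\lambda}\,|\widehat u|_2^2=(1-\gamma_p)\int_{\mathbb{R}^3}|\widehat u|^{p}dx+\mu(1-\gamma_q)\int_{\mathbb{R}^3}|\widehat u|^{q}dx-\frac{3}{4}\int_{\mathbb{R}^3}\int_{\mathbb{R}^3}\frac{|\widehat u(x)|^{2}|\widehat u(y)|^{2}}{|x-y|}dxdy,
\end{align*}
and the nonlocal Coulomb term enters with the unfavorable sign, so positivity is not automatic (this is precisely where the problem differs from Soave's local setting, where the analogous identity has only positive terms). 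In the paper this is exactly the point where $q\le\frac{12}{5}$ (equivalently $\gamma_q\le\frac14$) and the smallness condition \eqref{k3} are consumed: assuming $\widehat\lambda\le 0$, eliminating the Coulomb term from \eqref{k17}--\eqref{k18} and using $\gamma_q\le\frac14$ yields the lower bound \eqref{k48} on $|\nabla\widehat u|_2$, while eliminating the $L^p$-term and estimating the Coulomb term through $C_{\frac{12}{5}}$ yields the upper bound \eqref{k49}; hypothesis \eqref{k3} makes the two bounds incompatible, whence $\widehat\lambda>0$. Without this argument you cannot upgrade weak convergence to strong $L^2$ (hence $\mathcal H$) convergence, and the limit need not satisfy the mass constraint.

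Relatedly, you misplace where the hypotheses act: you claim that \eqref{k3} is ``exactly the threshold that forces $c>0$'' and is what excludes vanishing. In fact $c>0$ needs neither \eqref{k3} nor $q\le\frac{12}{5}$: any admissible path must cross a region where $\mathcal J$ is uniformly positive (the paper identifies the level with $\inf_{\mathcal P_{-,r}}\mathcal J\ge h(t_{\max})>0$, which holds for all $a<\overline a_0$ and $q\in(2,\frac83)$), and once $c\neq 0$ the vanishing alternative is excluded by $\mathcal J(u_n)-\frac12 P(u_n)\to 0$ exactly as you wrote. So the restriction $q\in(2,\frac{12}{5}]$ and condition \eqref{k3} play no role in the positivity of the level; they are needed only for $\widehat\lambda>0$, and that step is missing from your proposal. (Your use of the Jeanjean augmented-functional device in place of the paper's deformation construction of a Palais--Smale sequence localized near $\mathcal P_{-,r}$ is a legitimate alternative and is not the issue.)
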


Here, we want to highlight the work \cite{2020Soave}, in which Soave made a pioneering work for Eq.~\eqref{k110} with $\mu\in \mathbb{R}$ and $2<q\leq 2+\frac{4}{N}\leq p<2^*$ by using Nehari-Poho\v{z}aev constraint.
In particular, when $2<q< 2+\frac{4}{N}< p<2^*$ and $\mu>0$, since the energy functional $\mathcal{E}$ for
Eq.~\eqref{k110} is no longer bounded from below on $S_a$, he considered the minimization of $\mathcal{E}$ on a subsets of $S_a$ and obtained two normalized solutions marked as $\overline{v}$ and $\overline{u}$, where $\overline{v}$ is an interior local minimizer of $\mathcal{E}$ on the set $\left\{u\in S_a:|\nabla u|_2<k \right\}$ for $k > 0$ small enough and $\overline{u}$ is of mountain pass type.
To search for a local minimizer, with the help of Nehari-Poho\v{z}aev manifold,
a bounded Palais-Smale sequence $\{u_n\}\subset S_a\cap H^1(\mathbb{R}^N)$ is received by using standard argument.
Using the Schwarz rearrangement, he obtained a bounded Palais-Smale sequence $\{u_n\}\subset S_a\cap H_r^1(\mathbb{R}^N)$.
For the Mountain-Pass type solution, he firstly established the mountain-pass geometry of $\mathcal{E}$ on $S_a\cap H_r^1(\mathbb{R}^N)$,
and then constructed a special bounded Palais-Smale sequence $\{u_n\}\subset S_a\cap H_r^1(\mathbb{R}^N)$ at the mountain pass level.
Since $H_r^1(\mathbb{R}^N)\hookrightarrow L^s(\mathbb{R}^N )$ with $s\in(2, 2^*)$ is compact,
Soave proved that the associated Lagrange multiplier $\lambda >0$.
In view of this fact, the compactness of bounded Palais-Smale sequence can be proved.
It is worth mentioning that the approaches in \cite{2020Soave} rely heavily on the compactness of $H_r^1(\mathbb{R}^N)\hookrightarrow L^s(\mathbb{R}^N )$ with $s\in(2, 2^*)$. We mention that Soave's methods in \cite{2020Soave} cannot be directly applied to prove our results due to two reasons.
Firstly, the Schwarz rearrangements are invalid in our present paper due to the presence of nonlocal term.
Secondly,  although our workspace is $\mathcal{H}_r$ in Theorem \ref{K-TH1}, we can only obtain a local minimizer to Eq.~\eqref{k1} for $q\in(2, \frac{12}{5} ]$ and $p\in (\frac{10}{3}, 6)$ by using Soave's methods in \cite{2020Soave}  directly.
Therefore, we will follow some ideas of \cite{2022-JMPA-jean, 2011JFA-BS} to fill the gap between $q\in(\frac{12}{5}, \frac{8}{3} )$ and $p\in (\frac{10}{3}, 6)$ to prove Theorem~\ref{K-TH1}.

\begin{theorem}\label{K-TH5}
Assume $\mu\leq0$, $q\in(2, \frac{8}{3} )$ and $p\in (\frac{10}{3}, 6)$. Then there exists some $\widetilde{a}_0>0$ such that Eq.~\eqref{k1} has a normalized solution $(\widetilde{\lambda}, \widetilde{u})\in\mathbb{R}^+ \times \mathcal{H}_r$ for any $a\in(0, \widetilde{a}_0)$.
\end{theorem}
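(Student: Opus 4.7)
Since $\mu\leq 0$, the defocusing $q$-term $-\frac{\mu}{q}\|u\|_q^q\geq 0$ is a helpful non-negative addition. Writing $D(u):=\iint|u(x)|^2|u(y)|^2/|x-y|\,dx\,dy$, the plan is to run a mountain-pass scheme directly on $S_a\cap \mathcal{H}_r$, using the standard augmented-functional trick of Jeanjean type to produce a Palais-Smale sequence that satisfies the Pohozaev identity asymptotically, and then to exploit the compact embedding $\mathcal{H}_r\hookrightarrow L^s(\mathbb{R}^3)$ for $s\in(2,6)$ to pass to the limit. For the mountain-pass geometry, the Gagliardo-Nirenberg inequality combined with $\mu\leq 0$ gives $\mathcal{J}(u)\geq \tfrac{1}{2}\|\nabla u\|_2^2-\tfrac{C_p^p}{p}a^{p(1-\gamma_p)}\|\nabla u\|_2^{p\gamma_p}$ on $S_a\cap\mathcal{H}_r$; since $p\gamma_p>2$, for $a\in(0,\widetilde{a}_0)$ small this lower bound has a strictly positive maximum over $\|\nabla u\|_2$, creating an energy barrier, while the $L^2$-preserving rescaling $u_t(x):=t^{3/2}u(tx)$ satisfies $\mathcal{J}(u_t)\to -\infty$ as $t\to +\infty$. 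Applying the min-max procedure to $\widetilde{\mathcal{J}}(s,u):=\mathcal{J}(e^{3s/2}u(e^s\cdot))$ on $\mathbb{R}\times(S_a\cap\mathcal{H}_r)$ produces a Palais-Smale sequence $\{u_n\}\subset S_a\cap\mathcal{H}_r$ at the MP level $c>0$ satisfying $P(u_n):=\|\nabla u_n\|_2^2+\tfrac{1}{4}D(u_n)-\gamma_p\|u_n\|_p^p-\mu\gamma_q\|u_n\|_q^q=o(1)$.

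Using $P(u_n)=o(1)$ to eliminate $\|u_n\|_p^p$ in $\mathcal{J}(u_n)$ together with $\mu\leq 0$, one rearranges to
\[
\mathcal{J}(u_n)=\tfrac{p\gamma_p-2}{2p\gamma_p}\|\nabla u_n\|_2^2+\tfrac{p\gamma_p-1}{4p\gamma_p}D(u_n)+\tfrac{|\mu|(p\gamma_p-q\gamma_q)}{qp\gamma_p}\|u_n\|_q^q+o(1),
\]
in which all coefficients are strictly positive because $p>10/3$ and $q<p$. Since $\mathcal{J}(u_n)\to c$, every summand is uniformly bounded, so $\{u_n\}$ is bounded in $\mathcal{H}$. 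Extracting a weakly convergent subsequence $u_n\rightharpoonup\widetilde{u}$ in $\mathcal{H}_r$, compactness gives $u_n\to\widetilde{u}$ in $L^s$ for every $s\in(2,6)$, and continuity of the Coulomb functional on bounded sets yields $D(u_n)\to D(\widetilde{u})$. If $\widetilde{u}\equiv 0$, the displayed identity forces $c=0$, contradicting the MP geometry; hence $\widetilde{u}\neq 0$.

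The last and most delicate step is the positivity of $\widetilde{\lambda}:=\lim\lambda_n$. Combining $\langle \mathcal{J}'(u_n),u_n\rangle=\lambda_n a^2+o(1)$ with $P(u_n)=o(1)$ to eliminate $\|u_n\|_p^p$ gives
\[
\lambda_n a^2=\tfrac{1-\gamma_p}{\gamma_p}\|\nabla u_n\|_2^2-\tfrac{4\gamma_p-1}{4\gamma_p}D(u_n)+\tfrac{\mu(\gamma_p-\gamma_q)}{\gamma_p}\|u_n\|_q^q+o(1),
\]
in which the Coulomb and $L^q$ contributions carry the \emph{wrong} sign for positivity. However, Pohozaev together with Gagliardo-Nirenberg (and $\mu\leq 0$) forces $\|\nabla u_n\|_2\geq (C a^{p(1-\gamma_p)})^{-1/(p\gamma_p-2)}$, which tends to $+\infty$ as $a\to 0$, while the Hardy-Littlewood-Sobolev-Gagliardo-Nirenberg bound $D(u_n)\leq Ca^3\|\nabla u_n\|_2$ and the Gagliardo-Nirenberg bound $\|u_n\|_q^q\leq Ca^{q(1-\gamma_q)}\|\nabla u_n\|_2^{q\gamma_q}$ with $q\gamma_q<1$ (since $q<8/3$) show that both negative contributions to $\lambda_n a^2$ are of strictly lower order than the positive $\|\nabla u_n\|_2^2$ term. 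Shrinking $\widetilde{a}_0$ as needed then forces $\widetilde{\lambda}>0$. Finally, testing $\mathcal{J}'(u_n)-\lambda_n u_n=o(1)$ against $u_n-\widetilde{u}$ — whereby the $L^p$, $L^q$, and Coulomb pieces vanish by compactness — yields $\|\nabla(u_n-\widetilde{u})\|_2^2+\lambda_n\|u_n-\widetilde{u}\|_2^2\to 0$, so $u_n\to\widetilde{u}$ strongly in $\mathcal{H}$, and $(\widetilde{\lambda},\widetilde{u})\in\mathbb{R}^+\times\mathcal{H}_r$ is the desired normalized solution. The main difficulty is thus calibrating the smallness of $a$ in $\widetilde{a}_0$ so that the lower-order bounds on $D(u_n)$ and $\|u_n\|_q^q$ genuinely dominate the wrong-sign terms and produce $\widetilde{\lambda}>0$.
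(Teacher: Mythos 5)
Your proposal is correct and follows essentially the same route as the paper: mountain-pass geometry from the Gagliardo--Nirenberg inequality with $p\gamma_p>2$, a Palais--Smale sequence with $P(u_n)\to 0$, boundedness via $\mathcal{J}-\tfrac{1}{p\gamma_p}P$, nonvanishing from the compact radial embedding and $c>0$, positivity of the multiplier by eliminating $\|u\|_p^p$ from the Nehari and Poho\v{z}aev relations and playing the lower bound $\|\nabla u\|_2\gtrsim a^{-p(1-\gamma_p)/(p\gamma_p-2)}$ against the bounds $D(u)\lesssim a^3\|\nabla u\|_2$ and $\|u\|_q^q\lesssim a^{q(1-\gamma_q)}\|\nabla u\|_2^{q\gamma_q}$ (exactly the estimates in the paper's Lemma on nonexistence for $\lambda\le 0$), and finally strong convergence since $\widetilde{\lambda}>0$. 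The only (cosmetic) differences are that you generate the Palais--Smale sequence via Jeanjean's auxiliary functional rather than the paper's deformation argument near $\mathcal{P}_{-,r}$, and you verify $\lambda_n>0$ along the sequence rather than at the limit equation; also note that the conclusion $c=0$ when $\widetilde{u}\equiv 0$ needs the extra one-line use of $P(u_n)=o(1)$ (together with $\mu\le 0$) to kill the gradient term, and that the mountain-pass geometry itself requires no smallness of $a$ when $\mu\le 0$.
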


\begin{theorem}\label{K-TH7}
Assume $q\in(2, \frac{8}{3} )$ and $p=\overline{p}=\frac{10}{3}$, then
\begin{enumerate}
  \setlength{\itemsep}{-0.50pt}
  \item [(1)] for any $\mu>0$, if $0<a\leq a^*:=\big(\frac{\overline{p}}{2 C_{\overline{p}}^{\overline{p}}}\big)^{\frac{3}{4}}$, it results that
       $e(a)=\inf_{S_a} \mathcal{J}<0$ and when $0<a< a^* $ the infimum is achieved by a real-valued  function $u\in S_a $;
       moreover, when $a=a^*$, $q\in(2, \frac{12}{5} ]$ and $p=\frac{10}{3}$, the infimum $e(a^*)$ is achieved by a real-valued  function $u\in S_a $;
  \item [(2)] for any $\mu>0$, if $a> a^*$, it holds that $\inf_{S_a} \mathcal{J}=-\infty$;
  \item [(3)] for any $\mu<0$, if $0<a\leq a^*:=\big(\frac{\overline{p}}{2 C_{\overline{p}}^{\overline{p}}}\big)^{\frac{3}{4}}$, it results that
      $\inf_{S_a} \mathcal{J}=0$ and Eq.~\eqref{k1} has no solution;
  \item [(4)] for any $\mu<0$, if $a> a^*$, it holds that $\inf_{S_a} \mathcal{J}=-\infty$.
\end{enumerate}
\end{theorem}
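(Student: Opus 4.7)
The proof splits into four cases according to the sign of $\mu$ and whether $a\le a^*$ or $a>a^*$. The key tools are the $L^2$-preserving dilation $u_t(x):=t^{3/2}u(tx)$, under which
\begin{equation*}
\mathcal{J}(u_t)=\tfrac{t^2}{2}\|\nabla u\|_2^2+\tfrac{t}{4}D(u)-\tfrac{t^2}{\overline{p}}\|u\|_{\overline{p}}^{\overline{p}}-\tfrac{\mu\,t^{3(q/2-1)}}{q}\|u\|_q^q,
\end{equation*}
where $D(u)$ denotes the Coulomb nonlocal term, together with the $L^2$-critical Gagliardo--Nirenberg inequality $\|u\|_{\overline p}^{\overline p}\le C_{\overline p}^{\overline p}\|\nabla u\|_2^2\|u\|_2^{4/3}$. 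By the definition of $a^*$, the coefficient $\tfrac12-\tfrac{C_{\overline p}^{\overline p}}{\overline p}a^{4/3}$ in front of $\|\nabla u\|_2^2$ in $\tfrac12\|\nabla u\|_2^2-\tfrac{1}{\overline p}\|u\|_{\overline p}^{\overline p}$ is non-negative iff $a\le a^*$, and vanishes at $a=a^*$ precisely when $u$ saturates Gagliardo--Nirenberg.

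For (2) and (4), when $a>a^*$, let $Q$ be the Gagliardo--Nirenberg optimizer and set $v:=aQ/\|Q\|_2\in S_a$; then $\tfrac12\|\nabla v\|_2^2-\tfrac{1}{\overline p}\|v\|_{\overline p}^{\overline p}<0$, and since $3(q/2-1)<2$, letting $t\to\infty$ in $\mathcal{J}(v_t)$ makes the negative $t^2$ terms dominate regardless of the sign of $\mu$, so $\mathcal{J}(v_t)\to-\infty$. For (3), $\mu<0$ and $a\le a^*$: Gagliardo--Nirenberg gives $\tfrac12\|\nabla u\|_2^2-\tfrac{1}{\overline p}\|u\|_{\overline p}^{\overline p}\ge0$, and since $D(u)\ge0$ and $-\tfrac{\mu}{q}\|u\|_q^q\ge0$, we obtain $\mathcal{J}\ge0$ on $S_a$; the $t\to 0^+$ regime of $\mathcal{J}(u_t)$ forces $\inf_{S_a}\mathcal{J}=0$. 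To exclude a normalized solution I multiply the equation by $u$ to get $\|\nabla u\|_2^2+\lambda\|u\|_2^2+D(u)=\|u\|_{\overline p}^{\overline p}+\mu\|u\|_q^q$ and combine it with the Poho\v{z}aev identity (where the Coulomb term contributes the factor $5/4$) to eliminate the multiplier and arrive at the Nehari--Poho\v{z}aev relation
\begin{equation*}
\|\nabla u\|_2^2+\tfrac14 D(u)=\gamma_{\overline p}\|u\|_{\overline p}^{\overline p}+\mu\gamma_q\|u\|_q^q.
\end{equation*}
With $\gamma_{\overline p}=3/5$, the Gagliardo--Nirenberg bound at $a\le a^*$ yields $\gamma_{\overline p}\|u\|_{\overline p}^{\overline p}\le\|\nabla u\|_2^2$, and therefore $\tfrac14 D(u)\le\mu\gamma_q\|u\|_q^q<0$, contradicting $D(u)>0$ for $u\not\equiv 0$.

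For (1), $\mu>0$ and $a\le a^*$: since $q<8/3$ gives $3(q/2-1)<\min\{1,2\}$, the $q$-term dominates as $t\to0^+$ in $\mathcal{J}(u_t)$ and produces $e(a)<0$. When $a<a^*$, the strict positivity of $\tfrac12-\tfrac{C_{\overline p}^{\overline p}}{\overline p}a^{4/3}$ together with $q\gamma_q<2$ makes $\mathcal{J}$ coercive on $S_a$; a minimizing sequence $\{u_n\}$ is then bounded in $H^1$, and after replacing by $|u_n|$ (which leaves $\|\nabla\cdot\|_2,\|\cdot\|_r,D(\cdot)$ unchanged or smaller) I apply Lions' concentration--compactness. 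Vanishing is ruled out by $e(a)<0$, and dichotomy by the strict sub-additivity $e(a)<e(b)+e(\sqrt{a^2-b^2})$ for $0<b<a$, which I would derive from a scaling of a near-minimizer of $e(b)$. Strong convergence yields a real-valued minimizer, positive by the strong maximum principle.

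The main obstacle is the borderline case $a=a^*$, where the coercivity coefficient collapses; this is exactly where the extra assumption $q\in(2,12/5]$ enters. Here I have to exploit only the remaining positive term $\tfrac14 D(u)$ to control $\|\nabla u_n\|_2$: interpolating $\|u\|_q$ between $\|u\|_2=a^*$ and $\|u\|_{12/5}$ has a non-negative exponent precisely when $q\le 12/5$, and bounding $\|u\|_{12/5}$ via Gagliardo--Nirenberg (together with the Hardy--Littlewood--Sobolev identification of $D(u)$ with $\|u\|_{12/5}^4$) allows one to absorb the $-\tfrac{\mu}{q}\|u\|_q^q$ term and recover boundedness along a minimizing sequence. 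With boundedness secured, the rest of the compactness argument runs as for $a<a^*$, and the strict inequality $e(a^*)<0$ furnished by the scaling test function forbids the would-be minimizer from degenerating into a Gagliardo--Nirenberg optimizer.
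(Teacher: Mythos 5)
Parts (2), (3), (4) and the easy claims of (1) (negativity of $e(a)$ via the $t\to0^+$ dilation, coercivity for $a<a^*$, the nonexistence argument combining the Nehari--Poho\v{z}aev identity with the $L^2$-critical Gagliardo--Nirenberg bound) are correct and essentially identical to the paper's proof, which phrases them through $E_0(u)=\frac12|\nabla u|_2^2-\frac1{\overline p}|u|_{\overline p}^{\overline p}\ge 0$ for $a\le a^*$. The genuine gap is in the attainment part of (1). For $0<a<a^*$ you exclude dichotomy by asserting the strict subadditivity $e(a)<e(b)+e(\sqrt{a^2-b^2})$, ``derived from a scaling of a near-minimizer of $e(b)$''. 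That scaling argument does not work here: under the mass scaling $u\mapsto\theta u$ (or any $L^2$-mass-increasing dilation $\theta^{1-\frac32\iota}u(x/\theta^{\iota})$) the repulsive Coulomb term $D(u):=\int_{\mathbb{R}^3}\int_{\mathbb{R}^3}\frac{|u(x)|^2|u(y)|^2}{|x-y|}\,dx\,dy$ scales like $\theta^4$ (resp.\ $\theta^{4-\iota}$), i.e.\ \emph{faster} than the $\theta^2$ normalization, so the inequality $e(\theta b)\le\theta^2 e(b)$, which is the engine of the classical strict-subadditivity proof, is simply unavailable. This is precisely why the paper does not argue that way: it transplants the Bellazzini--Siciliano scaling-path machinery of Section 3 (continuity of $a\mapsto e(a)$, $e(a)/a^2\to0$ as $a\to0$, the ``avoiding dichotomy'' lemma, and the admissible-path condition, the latter verified through the Nehari--Poho\v{z}aev identity with separate interpolation arguments for $q\in(2,\tfrac{12}{5}]$ and $q\in(\tfrac{12}{5},\tfrac83)$) to show that $s\mapsto e(s)/s^2$ is strictly decreasing, and only from this monotonicity does the needed strict subadditivity follow. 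Without an argument of this type your compactness step has a hole at its center.

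The endpoint $a=a^*$ is also not handled correctly. You propose to regain boundedness of a minimizing sequence by interpolating $|u|_q$ between $L^2$ and $L^{12/5}$ and then controlling $|u|_{12/5}$ through the ``identification'' of $D(u)$ with $|u|_{12/5}^4$; but Hardy--Littlewood--Sobolev gives only the \emph{upper} bound $D(u)\le C|u|_{12/5}^4$, so the positive term $\frac14D(u)$ yields no control of $|u|_{12/5}$ from below and cannot absorb $-\frac{\mu}{q}|u|_q^q$ in this way. In the paper, boundedness at $a=a^*$ is inherited from the coercivity statement established at the start of the proof, and the hypothesis $q\le\frac{12}{5}$ enters at a different point: after Ekeland's principle produces a minimizing sequence which is also a constrained Palais--Smale sequence, $q\le\frac{12}{5}$ gives $\gamma_q\le\frac14$, so in the identity obtained by eliminating the Coulomb term the contribution $\mu(\gamma_q-\frac14)\int|u|^q$ can be dropped, and the resulting inequality $\frac34|\nabla u|_2^2\le(\gamma_{\overline p}-\frac14)C_{\overline p}^{\overline p}(a^*)^{4/3}|\nabla u|_2^2$ forces $(a^*)^{4/3}\ge\frac{15}{7C_{\overline p}^{\overline p}}$, contradicting $(a^*)^{4/3}=\frac{5}{3C_{\overline p}^{\overline p}}$; this proves the Lagrange multiplier $\lambda>0$, and it is $\lambda>0$ (together with the compact embedding and the equation), not the strict negativity $e(a^*)<0$ alone, that upgrades weak to strong convergence. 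So at $a=a^*$ your plan both relies on an invalid inequality and misidentifies the mechanism through which the restrictions $q\le\frac{12}{5}$ and $a=a^*$ are actually used.
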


\begin{theorem}\label{K-TH15}
 Let $\mu>0$, $q=\overline{q}=\frac{10}{3}$ and   $p\in (\frac{10}{3}, 6)$. Assume that
\begin{align}\label{k201}
\bigg(\frac{1}{C_p^p \gamma_p a^{p(1-\gamma_p)}} \bigg)^{\frac{1}{p \gamma_p-2}} \Big(1-\frac{3}{5}  C_{\overline{q}}^{\overline{q}}\mu a^{\frac{4}{3}}\Big)^{\frac{1}{p \gamma_p-2}}\frac{1}{a^3}\geq \frac{4\gamma_p-1}{4(1-\gamma_p)}C_{\frac{12}{5}}^{\frac{12}{5}}.
\end{align}
Then there exists a  mountain pass type  solution   $(\lambda_{\overline{q}}, u_{\overline{q}})\in  \mathbb{R}^+\times \mathcal{H}_r$
 for Eq.~\eqref{k1}, where  $u_{\overline{q}} > 0$.
\end{theorem}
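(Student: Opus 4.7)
\textbf{Proof plan for Theorem~\ref{K-TH15}.} The plan is to produce a mountain-pass critical point of $\mathcal J$ on $S_a\cap\mathcal H_r$, working throughout in the radial subspace since Schwarz rearrangements are unavailable in the presence of the Coulomb term. The $L^2$-critical exponent $\overline q=10/3$ is what distinguishes this statement from Theorem~\ref{K-T1}: the term $\tfrac{\mu}{\overline q}\|u\|_{\overline q}^{\overline q}$ scales as $t^2$ under the mass-preserving dilation $u_t(x)=t^{3/2}u(tx)$, exactly like the kinetic energy, so every positivity estimate below has to absorb it via the smallness encoded in~\eqref{k201}.

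First I verify mountain-pass geometry on $S_a\cap\mathcal H_r$. Using Gagliardo-Nirenberg with $\overline q\gamma_{\overline q}=2$ and $p\gamma_p>2$, I get
\[
\mathcal J(u)\ge\tfrac12\Big(1-\tfrac{3\mu}{5}C_{\overline q}^{\overline q}a^{4/3}\Big)\|\nabla u\|_2^2-\tfrac{C_p^p}{p}a^{p(1-\gamma_p)}\|\nabla u\|_2^{p\gamma_p};
\]
the factor $1-\tfrac{3\mu}{5}C_{\overline q}^{\overline q}a^{4/3}$ is positive under~\eqref{k201}, so the right-hand side has a positive peak at some $r^*>0$. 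Along the fiber $u_t$ one has $\mathcal J(u_t)\to-\infty$ as $t\to+\infty$ because $p\gamma_p>2$, which yields the standard MP geometry and a level $c_{\overline q}>0$. Next I apply Jeanjean's monotonicity/augmentation trick to $\widetilde{\mathcal J}(s,u)=\mathcal J(u_{e^s})$ on $\mathbb R\times(S_a\cap\mathcal H_r)$ to produce a PS sequence $\{u_n\}\subset S_a\cap\mathcal H_r$ at level $c_{\overline q}$ that additionally satisfies $P(u_n)\to 0$, where $P(u)=\|\nabla u\|_2^2+\tfrac14\|u\|_D^2-\gamma_p\|u\|_p^p-\tfrac{3\mu}{5}\|u\|_{\overline q}^{\overline q}$ is the Nehari-Poho\v{z}aev functional and $\|u\|_D^2$ denotes the Coulomb energy.

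Boundedness of $\{u_n\}$ in $\mathcal H$ follows from the combination
\[
\mathcal J(u_n)-\tfrac{1}{p\gamma_p}P(u_n)=\tfrac{p\gamma_p-2}{2p\gamma_p}\|\nabla u_n\|_2^2+\tfrac{p\gamma_p-1}{4p\gamma_p}\|u_n\|_D^2-\tfrac{\mu(p\gamma_p-2)}{\overline q\,p\gamma_p}\|u_n\|_{\overline q}^{\overline q},
\]
where the last (negative) summand is controlled by $\|u_n\|_{\overline q}^{\overline q}\le C_{\overline q}^{\overline q}a^{4/3}\|\nabla u_n\|_2^2$ and then absorbed using the positivity of $1-\tfrac{3\mu}{5}C_{\overline q}^{\overline q}a^{4/3}$. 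The compact embedding $\mathcal H_r\hookrightarrow L^s(\mathbb R^3)$, $s\in(2,6)$, then yields $u_n\to u_{\overline q}$ strongly in $L^p\cap L^{\overline q}$, together with Lagrange multipliers $\lambda_n\to\lambda_{\overline q}$. Subtracting $P(u_n)=o(1)$ from $\langle\mathcal J'(u_n),u_n\rangle=-\lambda_n a^2+o(1)$ gives
\[
\lambda_n a^2=-\tfrac34\|u_n\|_D^2+\tfrac{6-p}{2p}\|u_n\|_p^p+\tfrac{2\mu}{5}\|u_n\|_{\overline q}^{\overline q}+o(1).
\]
Hardy-Littlewood-Sobolev and Gagliardo-Nirenberg yield $\|u_n\|_D^2\lesssim C_{12/5}^{12/5}\|\nabla u_n\|_2\,a^3$ up to the HLS constant, and the MP barrier pins $\|\nabla u_n\|_2$ from below near the scale $r^*$; condition~\eqref{k201} is calibrated so that the focusing $L^p$ contribution dominates the Coulomb piece, forcing $\lambda_{\overline q}>0$. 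With $\lambda_{\overline q}>0$ the quadratic form $u\mapsto\|\nabla u\|_2^2+\lambda_{\overline q}\|u\|_2^2$ is coercive, so the weak convergence upgrades to strong convergence in $\mathcal H_r$ and $u_{\overline q}\in S_a$. Replacing $u_{\overline q}$ by $|u_{\overline q}|$ (which does not increase $\mathcal J$) and applying the strong maximum principle after standard elliptic bootstrap then yields $u_{\overline q}>0$.

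The main obstacle is the $L^2$-critical character of the $\mu|u|^{\overline q-2}u$ term: in every a priori estimate it produces the same $\|\nabla u\|_2^2$ power as the Laplacian, so there is no ``soft'' room to absorb it as a lower-order perturbation. A single numerical condition, namely~\eqref{k201}, must simultaneously secure the MP barrier, the $H^1$-boundedness of the Palais-Smale sequence, and the positivity of $\lambda_{\overline q}$; balancing the $L^p$, $L^{\overline q}$ and $L^{12/5}$ (Coulomb) contributions at the gradient scale $r^*$ is exactly what~\eqref{k201} encodes.
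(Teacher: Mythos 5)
Your plan follows essentially the same route as the paper's Section~7: mountain-pass geometry on $S_a\cap\mathcal H_r$ from Gagliardo--Nirenberg with the coefficient $1-\tfrac{3\mu}{5}C_{\overline q}^{\overline q}a^{4/3}>0$ guaranteed by \eqref{k201}, a Palais--Smale sequence with $P(u_n)\to0$, boundedness via $\mathcal J-\tfrac{1}{p\gamma_p}P$, the compact radial embedding, positivity of the Lagrange multiplier by playing the Nehari and Poho\v{z}aev identities against the bound of the Coulomb term by $C_{\frac{12}{5}}^{\frac{12}{5}}a^3|\nabla u|_2$ together with the gradient lower bound coming from $P(u)=0$ (your ``calibration'' of \eqref{k201} is exactly the paper's \eqref{k202}--\eqref{kk149} contradiction), and finally strong convergence and the maximum principle. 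The only deviation is that you produce the constrained Palais--Smale sequence via Jeanjean's augmented functional, whereas the paper (Lemmas \ref{KJ-Lem3.1}--\ref{KJ-Lem3.2}) uses a pseudo-gradient deformation near $\mathcal P_{r}$; the two devices are interchangeable here, so the proposal is correct in substance.
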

\begin{remark}  The assumption
\begin{align*}
\bigg(\frac{1}{C_p^p \gamma_p a^{p(1-\gamma_p)}} \bigg)^{\frac{1}{p \gamma_p-2}} \Big(1-\frac{3}{5}  C_{\overline{q}}^{\overline{q}}\mu a^{\frac{4}{3}}\Big)^{\frac{1}{p \gamma_p-2}}\frac{1}{a^3}\geq \frac{4\gamma_p-1}{4(1-\gamma_p)}C_{\frac{12}{5}}^{\frac{12}{5}}
\end{align*}
implies the condition $\mu a^{\frac{4}{3}}< \frac{\overline{q}}{2C_{\overline{q}}^{\overline{q}}}$.
\end{remark}

\begin{remark}
To our knowledge, the latest progress in the study of normalized solution to Schr\"{o}dinger-Maxwell equations owes to \cite{2023IJM},
which presents the existence of normalized solution for Schr\"{o}dinger-Maxwell equation with single power nonlinearity.
There seem to be no existence results of normalized solutions for Schr\"{o}dinger-Maxwell equations with combined power nonlinearities.
Our results can be seen as  an extension and improvement   of existing results.
Moreover, compared with \cite{2020Soave}, our problems are more complex because of the presence of nonlocal term,
and the method of Schwarz rearrangement is invalid here.
If we verify Theorem \ref{K-TH1} in a radial space, we can only obtain normalized solution of Eq.~\eqref{k1} for $\mu>0$, $q\in(2, \frac{12}{5} ]$ and $p\in (\frac{10}{3}, 6)$ by using  the same   technique  as \cite{2020Soave}, but we cannot obtain normalized solution in the case of $q\in(  \frac{12}{5},  \frac{8}{3} )$ and $p\in (\frac{10}{3}, 6)$. The difficulty lies in that the compactness of Palais-Smale sequence cannot be easily verified when $q\in(  \frac{12}{5},  \frac{8}{3} )$ and $p\in (\frac{10}{3}, 6)$, even in radial subspaces.
To avoid dichotomy in Theorem \ref{K-TH1}, we follow the ideas of \cite{2011JFA-BS} to prove the strong subadditivity inequality. Most importantly, the authors in \cite{2011JFA-BS} studied Eq. \eqref{k93} with $ p\in(2, 3)$,
in which the corresponding energy functional is
bounded from below on the constraint $S_a$.
However,   the energy functional $\mathcal{J}$ under the assumptions of Theorem \ref{K-TH1}    is unbounded from below on $S_a$.
In order to overcome this obstacle, we consider the minimization of $\mathcal{J}$ on a subsets $S_a\cap D_{\rho_0}$, where $D_{\rho_0}$ is defined in  \eqref{k91}. Noting that $ \rho_0$   does not depend on $a$, which is the key to applying the   methods of   \cite{2011JFA-BS}.
Last but not least, for the case of $q\in(2, \frac{8}{3} )$, $p=\overline{p}=\frac{10}{3}$, $\mu>0$ and $a=a^*$,  we deduce $-\infty<e(a)=\inf_{S_a} \mathcal{J}<0$, and then  $e(a^*)$ is achieved when $\mu>0$, $q\in(2, \frac{12}{5} ]$ and $p=\frac{10}{3}$, which is different from \cite[Theorem 1.1]{2020Soave}.
\end{remark}

\begin{remark}
Since we cannot avoid dichotomy  for the range $a=a^*$, $\mu>0$, $q\in(\frac{12}{5}, \frac{8}{3} )$ and $p=\frac{10}{3}$, we say nothing about the existence of normalized solution to Eq.~\eqref{k1}, so does for the case of $\mu>0$, $q\in(  \frac{12}{5},   \frac{8}{3}) $ and $p\in (\frac{10}{3}, 6)$ in Theorem \ref{K-TH2}.
 Furthermore, the existence/nonexistence of  normalized solution to Eq.~\eqref{k1} with   $\mu\in \mathbb{R}\backslash\{0\}$, $q\in(  \frac{8}{3}, \frac{10}{3} )$ and $p\in[\frac{10}{3}, 6)$ remains an unsolved open problem.
\end{remark}

Next, we verify the asymptotic behavior of normalized solutions as $\mu\rightarrow 0$ and $q\rightarrow \frac{10}{3}$.

\begin{theorem}\label{K-TH6}
Assume that $\mu>0$, $q\in(2, \frac{12}{5} ]$ and $p\in (\frac{10}{3}, 6)$,
further $a\in(0, \min \{\overline{a}_0, \widetilde{a}_0\})$ and \eqref{k3} hold.
Let $ (\widehat{\lambda}_{\mu}, \widehat{u}_{\mu})\in \mathbb{R}^+ \times \mathcal{H}_r $ be the mountain pass type solution for Eq.~\eqref{k1} obtained in Theorem \ref{K-TH2},
then $\widehat{u}_{\mu}\rightarrow \widehat{u}_{0}$ in  $\mathcal{H}_{r}$ and $\widehat{\lambda}_{\mu}\rightarrow \widehat{\lambda}_{0}$ in $\mathbb{R} $  as $\mu\rightarrow0$ in the sense of subsequence, where $(\widehat{\lambda}_{0}, \widehat{u}_{0})\in \mathbb{R}^+ \times \mathcal{H}_r  $  is a solution of
\begin{align*}
\begin{cases}
\displaystyle - \Delta u+\lambda u+  \big(|x|^{-1}\ast |u|^2\big)u=|u|^{p-2}u  \ \  \mbox{in} \ \mathbb{ R}^3,\\
\displaystyle\int_{\mathbb{R}^3}|u|^2dx=a^2.\\
\end{cases}
 \end{align*}
\end{theorem}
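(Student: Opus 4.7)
The plan is to treat the family $\{(\widehat{\lambda}_{\mu},\widehat{u}_{\mu})\}_{\mu>0}$ produced by Theorem \ref{K-TH2} as an almost-critical family for the $\mu=0$ functional $J$ restricted to $S_a\cap \mathcal{H}_r$, and to extract a convergent subsequence by combining the radial compactness with a sign argument for the Lagrange multiplier. Throughout I denote by $c_{\mu}$ the mountain pass level of $\mathcal{J}|_{S_a\cap \mathcal{H}_r}$ used to produce $\widehat{u}_{\mu}$, and by $c_{0}$ the corresponding level for $J|_{S_a\cap \mathcal{H}_r}$ (which exists under the same hypotheses, by the same min--max construction with the $\mu$-term absent).

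The first step is uniform control. The term $\mu\int |u|^q/q$ enters the fiber map $t\mapsto \mathcal{J}(t^{3/2}u(t\cdot))$ only through an $O(\mu\, a^{q(1-\gamma_q)}t^{q\gamma_q})$ perturbation with $q\gamma_q<2$, so the mountain pass geometry constructed for $J$ is preserved for all small $\mu>0$; standard min--max comparison then gives $c_{\mu}\to c_{0}$ and in particular $\{c_{\mu}\}$ is bounded. Since each $\widehat{u}_{\mu}$ is a critical point of $\mathcal{J}|_{S_a}$, it satisfies the Poho\v{z}aev identity associated to the scaling $t^{3/2}u(t\cdot)$; combining this identity with $\mathcal{J}(\widehat{u}_{\mu})=c_{\mu}$ and $\|\widehat{u}_{\mu}\|_2=a$ yields a uniform bound on $\|\nabla \widehat{u}_{\mu}\|_2$, hence on $\|\widehat{u}_{\mu}\|_{\mathcal{H}}$. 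Testing the Euler--Lagrange equation against $\widehat{u}_{\mu}$ and using Gagliardo--Nirenberg together with these bounds then controls $\widehat{\lambda}_{\mu}$ uniformly.

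The second step is extraction of limits. Up to a subsequence, $\widehat{u}_{\mu}\rightharpoonup \widehat{u}_{0}$ in $\mathcal{H}_r$, $\widehat{u}_{\mu}\to \widehat{u}_{0}$ strongly in $L^{s}(\mathbb{R}^3)$ for every $s\in(2,6)$ by the compact radial embedding, $\widehat{u}_{\mu}\to \widehat{u}_{0}$ a.e., and $\widehat{\lambda}_{\mu}\to \widehat{\lambda}_{0}\in\mathbb{R}$. The nonlocal term is handled by the Hardy--Littlewood--Sobolev inequality together with strong convergence in $L^{12/5}$: one obtains $(|x|^{-1}\ast|\widehat{u}_{\mu}|^2)\widehat{u}_{\mu}\to (|x|^{-1}\ast|\widehat{u}_{0}|^2)\widehat{u}_{0}$ in the distributional sense, and the term $\mu|\widehat{u}_{\mu}|^{q-2}\widehat{u}_{\mu}\to 0$ since $\mu\to 0$ and $\widehat{u}_{\mu}$ is bounded in $L^q$. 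Passing to the limit in the equation shows that $\widehat{u}_{0}$ solves the $\mu=0$ problem weakly with multiplier $\widehat{\lambda}_{0}$.

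The main obstacle is to prove nontriviality, strict positivity of $\widehat{\lambda}_{0}$, and strong convergence (which together force $\widehat{u}_{0}\in S_a$). Nontriviality follows because $c_{\mu}\to c_{0}>0$ bounds $\int|\widehat{u}_{\mu}|^p$ from below (via the Poho\v{z}aev identity and the energy formula), and this lower bound survives the strong $L^p$ convergence. Positivity of $\widehat{\lambda}_{0}$ is then obtained by combining the limiting equation tested against $\widehat{u}_{0}$ with the Poho\v{z}aev identity for $\widehat{u}_{0}$: eliminating the Coulomb term from these two identities and using $p\in(10/3,6)$ so that $p\gamma_p>2$ yields $\widehat{\lambda}_{0}\,\|\widehat{u}_{0}\|_2^2>0$. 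Once $\widehat{\lambda}_{0}>0$, the quadratic form $u\mapsto \|\nabla u\|_2^2+\widehat{\lambda}_{0}\|u\|_2^2$ is an equivalent norm on $\mathcal{H}$; subtracting the equations for $\widehat{u}_{\mu}$ and $\widehat{u}_{0}$, testing against $\widehat{u}_{\mu}-\widehat{u}_{0}$, and using the strong convergence of every nonlinear and nonlocal term upgrades the weak convergence to strong convergence in $\mathcal{H}_r$. In particular $\|\widehat{u}_{0}\|_2=a$, so $(\widehat{\lambda}_{0},\widehat{u}_{0})\in \mathbb{R}^+\times \mathcal{H}_r$ is a solution of the limit problem, completing the proof.
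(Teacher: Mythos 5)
Your overall scheme (uniform bound on the levels by comparison with the $\mu=0$ level, boundedness of $\{\widehat u_\mu\}$ via the Poho\v{z}aev identity, compact radial embedding, boundedness of $\{\widehat\lambda_\mu\}$, passage to the limit, nontriviality from the level being bounded away from zero, and strong convergence via testing the difference of the equations) is exactly the route the paper takes; the only caveat there is that you assert $c_\mu\to c_0$, which is more than is proved or needed -- the paper only establishes the monotone comparison $c_{a,\mu}\le c_{a,0}$ (and, for the nonvanishing step, the lower bound $c_{a,\mu}\ge c_{a,\mu_1}>0$ for $\mu\le\mu_1$), and that is all your argument actually uses.

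The genuine gap is the positivity of $\widehat\lambda_0$. You claim it follows by "combining the limiting equation tested against $\widehat u_0$ with the Poho\v{z}aev identity, eliminating the Coulomb term and using $p\gamma_p>2$". That is the standard argument for the local NLS, but it fails here: with the repulsive Hartree term, eliminating the Coulomb energy $D(u)=\int\!\!\int\frac{|u(x)|^2|u(y)|^2}{|x-y|}\,dxdy$ from the Nehari identity and from $P(u)=0$ gives
\begin{align*}
\widehat\lambda_0\,|\widehat u_0|_2^2 \;=\; 3\,|\nabla \widehat u_0|_2^2-\bigl(4\gamma_p-1\bigr)\,|\widehat u_0|_p^p ,
\end{align*}
and since $\gamma_p>\tfrac14$ for $p>\tfrac{10}{3}$ the right-hand side has no definite sign (equivalently, eliminating $|\nabla u|_2^2$ gives $\widehat\lambda_0|\widehat u_0|_2^2=(1-\gamma_p)|\widehat u_0|_p^p-\tfrac34 D(\widehat u_0)$, where the Coulomb term competes against the positive term). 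This is precisely why the theorem carries the smallness hypothesis \eqref{k3}, which your proposal never invokes. The paper's argument (the proof of \eqref{k40} in Lemma \ref{K-Lem2.6}, to which the proof of Theorem \ref{K-TH6} refers) is by contradiction: assuming $\widehat\lambda_0\le 0$, eliminating the Coulomb term plus Gagliardo--Nirenberg yields the lower bound \eqref{k48} on $|\nabla\widehat u_0|_2$, while eliminating the $|u|^p$-term and estimating $D(\widehat u_0)\le C_{\frac{12}{5}}^{\frac{12}{5}}a^3|\nabla\widehat u_0|_2$ yields the upper bound \eqref{k49}; these are incompatible under \eqref{k3}. Without this step your proof of $\widehat\lambda_0>0$ is unsupported, and the final upgrade to strong $\mathcal H_r$-convergence (which needs $|\nabla(\widehat u_\mu-\widehat u_0)|_2^2+\widehat\lambda_0|\widehat u_\mu-\widehat u_0|_2^2=o(1)$ with $\widehat\lambda_0>0$ to recover the $L^2$-norm, hence $\widehat u_0\in S_a$) collapses as well.
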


\begin{theorem}\label{K-TH14}
 Assume that $\mu>0$,  $\frac{10}{3}=\overline{q}<q<p<6$ and $a\in (0, \widetilde{\kappa})$ satisfying \eqref{k201}.   Let $(\lambda_q, u_q) \in \mathbb{R}^+ \times \mathcal{H}_r $  be the radial mountain pass type normalized solution of    Eq.~\eqref{k1} with $q>\overline{q}$ tending to $\overline{q}$
at the level  $c_{q,r}$ obtained in Theorem \ref{K-T1}.
Then, there is $(\lambda, u)\in \mathbb{R}^+ \times \mathcal{H}_r $ such that up to a subsequence,  $u_q\rightarrow u$ in $\mathcal{H}$ and $ \lambda_q\rightarrow \lambda$ in $\mathbb{R} $ as $q\rightarrow \overline{q}$,
where   $\mathcal{J}(u)= c_{\overline{q}, r}$ and  $(\lambda, u)\in \mathbb{R}^+ \times \mathcal{H}_r$ solves Eq.~\eqref{k1} with $q=\overline{q}$.
\end{theorem}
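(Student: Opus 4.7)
The plan is to view $\{(\lambda_q,u_q)\}_{q\downarrow\overline q}$ as an approximating family for a critical point of the $L^2$-critical limit problem at $q=\overline q$, and to run the standard four-step programme: uniform bound in $\mathcal{H}_r$, weak-limit extraction, passage to the limit in the Euler--Lagrange equation, and upgrade to strong convergence via positivity of the limiting multiplier. Hypothesis \eqref{k201} plays a dual role: by Theorem~\ref{K-TH15} it ensures that $c_{\overline q,r}$ is well-defined and strictly positive, and by continuity in $q$ it keeps the mass-subcritical term $\mu\|u\|_q^q$ coercively absorbable via Gagliardo--Nirenberg uniformly for $q$ close to $\overline q$.

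First I would show $\{c_{q,r}\}$ is uniformly bounded by pulling back an admissible mountain-pass path $\gamma\in\Gamma_{\overline q}$ into $J_q$ and letting $q\to\overline q^+$, obtaining $\limsup_{q\to\overline q}c_{q,r}\le c_{\overline q,r}$. Combining the energy identity $J_q(u_q)=c_{q,r}$ with the Pohozaev identity
\begin{align*}
\|\nabla u_q\|_2^2 + \frac{1}{4}\int_{\mathbb{R}^3}\!\int_{\mathbb{R}^3}\frac{|u_q(x)|^2|u_q(y)|^2}{|x-y|}\,dx\,dy = \gamma_p\|u_q\|_p^p + \mu\gamma_q\|u_q\|_q^q,
\end{align*}
and absorbing the $L^q$-term via \eqref{k201}, gives a uniform bound on $\|u_q\|_{\mathcal H}$. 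Testing the equation against $u_q$ then bounds $\{\lambda_q\}$; after extraction $\lambda_q\to\lambda\in\mathbb{R}$ and $u_q\rightharpoonup u$ in $\mathcal{H}_r$. The compact embedding $\mathcal{H}_r\hookrightarrow L^s(\mathbb{R}^3)$ for $s\in(2,6)$ yields $u_q\to u$ strongly in every such $L^s$; an interpolation argument combined with $q\to\overline q$ gives $\|u_q\|_q^q\to\|u\|_{\overline q}^{\overline q}$, and the nonlocal term passes via strong $L^{12/5}$-convergence. Hence $(\lambda,u)$ solves Eq.~\eqref{k1} with $q=\overline q$ distributionally.

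The principal obstacle is preventing mass loss, i.e.\ proving $\|u\|_2=a$. First, $u\not\equiv 0$: were $u\equiv 0$, strong $L^s$-convergence of $u_q$ to zero would make the right-hand side of the Pohozaev identity vanish in the limit, forcing $\|\nabla u_q\|_2\to 0$ and hence $J_q(u_q)\to 0$, contradicting $c_{q,r}\to c_{\overline q,r}>0$. Second, $\lambda>0$ should follow by combining the limit Pohozaev identity with the $u$-tested multiplier formula and invoking \eqref{k201} (the continuous analogue of the argument underpinning Theorem~\ref{K-TH15}). Once $\lambda>0$, the identity $(-\Delta+\lambda_q)(u_q-u)\to 0$ in $\mathcal{H}^{-1}$, combined with uniform coercivity of $-\Delta+\lambda_q$ for $q$ near $\overline q$, forces $u_q\to u$ strongly in $\mathcal{H}$; then $\|u\|_2=a$ and $\mathcal{J}(u)=\lim J_q(u_q)=c_{\overline q,r}$ close the argument.
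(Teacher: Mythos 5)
Your compactness scheme is genuinely different from the paper's. The paper does not rely on the positive-multiplier/subtraction argument you propose; instead it derives uniform-in-$q$ elliptic information: two-sided bounds $0<\lambda_q<\Lambda$ (Lemma~\ref{KJ-Lem3}, where the smallness $a\in(0,\widetilde{\kappa})$ is used in a Young-inequality contradiction), uniform $L^\infty$ bounds via a blow-up rescaling and the Liouville theorem (Lemma~\ref{KJ-Lem4}), and uniform exponential decay (Lemma~\ref{KJ-Lem5}), which give $u_q\to u$ strongly in $L^2(\mathbb{R}^3)$ directly and hence mass conservation. Your shorter route --- pass to the weak limit, prove $\lambda>0$ for the limit equation by the Nehari/Poho\v{z}aev comparison under \eqref{k201} (exactly as in Lemma~\ref{KJ-Lem2.6}, which only needs $0<|u|_2\le a$), then test the two equations against $u_q-u$ and use $\lambda_q\to\lambda>0$ to force $\|u_q-u\|\to0$ --- is viable in $\mathcal{H}_r$: the compact embedding handles the local terms, Lemma~\ref{gle4} handles the nonlocal one, and the only extra care is interpolating the $q$-dependent power uniformly as $q\to\overline{q}$, which you note. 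What your route buys is the avoidance of the uniform regularity/decay machinery; what the paper's route buys is quantitative control ($L^\infty$ bounds, exponential decay, uniform $\lambda_q$-bounds) that is of independent interest.

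There is, however, one concrete gap. To exclude $u\equiv0$ you invoke ``contradicting $c_{q,r}\to c_{\overline{q},r}>0$'', but at that stage you only possess $\limsup_{q\to\overline{q}}c_{q,r}\le c_{\overline{q},r}$ from your path comparison; full convergence of the levels is precisely what becomes available only after compactness, so the argument as written is circular. What is actually needed is the uniform lower bound $\liminf_{q\to\overline{q}}c_{q,r}>0$, i.e.\ \eqref{k223}, which the paper establishes separately (citing an external lemma); it can be proved by a Gagliardo--Nirenberg estimate on $\mathcal{P}_{q,r}$, uniform for $q$ near $\overline{q}$, using that \eqref{k201} forces $\frac{3}{5}\mu C_{\overline{q}}^{\overline{q}}a^{4/3}<1$, but it must be proved, not assumed. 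Relatedly, your closing claim $\mathcal{J}(u)=\lim\mathcal{J}_q(u_q)=c_{\overline{q},r}$ again presupposes level convergence; the correct finish is the sandwich $c_{\overline{q},r}\ge\limsup_{q\to\overline{q}}c_{q,r}\ge\mathcal{J}_{\overline{q}}(u)\ge c_{\overline{q},r}$, where the last inequality uses that the limit $u$ is a normalized solution of the limiting problem, hence lies on the Poho\v{z}aev manifold and $c_{\overline{q},r}=\inf_{\mathcal{P}_r}\mathcal{J}$ by the identification in Section~\ref{sec7}. These are repairable omissions, but they are exactly the points where your proposal currently asserts rather than proves.
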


In what follows, we recall the notions of orbital stability and instability.

\begin{definition}\label{de1.3}
$Z\subset \mathcal{H} $ is stable if $Z\neq \emptyset$ and, for any $v \in Z$ and $\varepsilon>0$, there exists a $ \delta > 0$ such that
if $\varphi\in \mathcal{H}$  satisfies $\|\varphi - v\|<\delta$,  then $u_{\varphi}(t)$ is globally defined and $\inf_{z\in Z}\|u_{\varphi}(t)-z\| <\varepsilon $ for all $t \in\mathbb{ R}$, where $u_{\varphi}(t)$  is the solution to Eq~\eqref{kk1} with the initial condition $\varphi$.
\end{definition}

\begin{definition}\label{de1.2}
A standing wave $e^{i\lambda t}u$ is strongly unstable if, for every $\varepsilon> 0$, there exists $\varphi_0 \in \mathcal{H}$ such
that $\|u- \varphi_0\|<\varepsilon$ and $\varphi(t,\cdot)$ blows-up in finite time, where $\varphi(t,\cdot)$ denotes the solution to Eq~\eqref{kk1} with initial datum  $\varphi_0$.
\end{definition}

From Theorem \ref{K-TH1},   the set of all ground state normalized solutions for Eq.~\eqref{k1} is given by
\begin{align*}
\mathcal{M}_a=\Big\{u\in  \mathcal{H}: u\in D_{\rho_0}\  \mbox{and}\ \mathcal{J}(u)=m_a:= \inf_{D_{\rho_0} }\mathcal{J}(u)\Big\},
\end{align*}
where $D_{\rho_0} $ is defined in \eqref{k91}.
Next, we shall focus on the   stability of the ground state set $\mathcal{M}_a$.

\begin{theorem}\label{K-TH3}
Under the assumptions of Theorem \ref{K-TH1}, $\mathcal{M}_a$ has the following characterization
\begin{align*}
\mathcal{M}_a=\big\{ e^{i \theta}|u|:  \ \theta\in \mathbb{R}, \ |u|\in D_{\rho_0},\ \mathcal{J}(|u|)=m_a\ \text{and}\   |u| >0 \ \text{in}\ \mathbb{R}^3\big\}.
\end{align*}
Moreover, the set  $\mathcal{M}_a$ is orbitally stable.
\end{theorem}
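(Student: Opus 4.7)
The plan is twofold: first identify the set $\mathcal{M}_a$ up to a phase, and then deduce stability from the compactness of minimizing sequences combined with an energy-trapping argument.

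For the characterization, I would start with an arbitrary (complex-valued) $u\in\mathcal{M}_a$ and apply the diamagnetic inequality $|\nabla|u|(x)|\leq|\nabla u(x)|$ a.e. Since the Coulomb term, the $L^p$-norm and the $L^q$-norm depend only on $|u|$, this gives both $|\nabla|u||_2\leq|\nabla u|_2$ and $\mathcal{J}(|u|)\leq\mathcal{J}(u)=m_a$. The first inequality places $|u|$ in $D_{\rho_0}$ (which is defined as a sublevel set of the gradient norm), while $|u|\in S_a$ is immediate; by the definition of $m_a$ the reverse inequality $\mathcal{J}(|u|)\geq m_a$ holds, so $|u|\in\mathcal{M}_a$. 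Equality in the diamagnetic inequality then forces $u=e^{i\theta}|u|$ for a constant $\theta\in\mathbb{R}$, and the strong maximum principle applied to the Euler--Lagrange equation for $|u|$ yields $|u|>0$ in $\mathbb{R}^3$. The reverse inclusion is immediate since $\mathcal{J}$ and the constraints are phase-invariant.

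For orbital stability, I argue by contradiction: suppose there exist $\varepsilon_0>0$, $\{\varphi_n\}\subset\mathcal{H}$ with $\varphi_n\to v\in\mathcal{M}_a$ in $\mathcal{H}$, and times $t_n$ with $\inf_{z\in\mathcal{M}_a}\|u_{\varphi_n}(t_n)-z\|\geq\varepsilon_0$. By continuity of $\mathcal{J}$ and the $L^2$-norm we have $|\varphi_n|_2\to a$ and $\mathcal{J}(\varphi_n)\to m_a$; after the harmless renormalization $\psi_n:=a\varphi_n/|\varphi_n|_2$ I may assume $\psi_n\in S_a$ with $\|\psi_n-\varphi_n\|\to 0$ and $\mathcal{J}(\psi_n)\to m_a$. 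The key step is a trapping argument: because $\mathcal{M}_a$ lies strictly inside $D_{\rho_0}$ and $\inf_{u\in\partial D_{\rho_0}\cap S_a}\mathcal{J}(u)>m_a$ (this follows from the very definition of $D_{\rho_0}$ used in Theorem~\ref{K-TH1}), the conservation of mass and energy along the Schr\"odinger flow prevents $u_{\psi_n}(t)$ from crossing $\partial D_{\rho_0}$ for $n$ large. In particular, the solution is globally defined, remains on $S_a\cap D_{\rho_0}$, and satisfies $\mathcal{J}(u_{\psi_n}(t_n))=\mathcal{J}(\psi_n)\to m_a$, so $\{u_{\psi_n}(t_n)\}$ is a minimizing sequence for $\mathcal{J}$ on $S_a\cap D_{\rho_0}$.

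The compactness statement established in the course of Theorem~\ref{K-TH1} then gives a subsequence converging strongly in $\mathcal{H}$ to some $u^*\in\mathcal{M}_a$, contradicting $\inf_{z\in\mathcal{M}_a}\|u_{\psi_n}(t_n)-z\|\geq\varepsilon_0/2$. The hardest ingredient is this compactness in the full (non-radial) space $\mathcal{H}$: as noted in the Remark preceding the statement, Schwarz rearrangement is ruled out by the nonlocal term, so one must prove strong subadditivity $m_{a_1+a_2}<m_{a_1}+m_{a_2}$ to rule out dichotomy in a concentration--compactness analysis à la \cite{2011JFA-BS}. Verifying the trapping condition on $\partial D_{\rho_0}$ uniformly as $\psi_n$ varies near $\mathcal{M}_a$ is the second delicate point, but it reduces to the fact, already exploited in the proof of Theorem~\ref{K-TH1}, that $\rho_0$ does not depend on $a$, so small perturbations of $S_a$ do not move the inner barrier of the potential well.
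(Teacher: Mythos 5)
Your proposal follows essentially the same route as the paper: the characterization comes from the equality case $|\nabla|u||_2=|\nabla u|_2$ for minimizers (the paper extracts the same information from \cite{2004-ANS-Ha} rather than from the maximum principle, but the content is the same), and stability is the Cazenave--Lions scheme: conservation laws, trapping inside $D_{\rho_0}$ through the barrier $m_a<0<\inf_{\partial D_{\rho_0}}\mathcal{J}$ of Lemma \ref{K-Lem2.5}, and the precompactness (up to translation) of minimizing sequences from Lemma \ref{K-Lem2.12}, whose proof indeed rests on the strong subadditivity argument you mention. Two steps, however, are stated in a way that does not quite work as written.

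First, equality in the diamagnetic inequality does \emph{not} by itself force $u=e^{i\theta}|u|$ with a constant $\theta$: any sign-changing real-valued $u$ satisfies $|\nabla|u||_2=|\nabla u|_2$ but is not a constant phase times $|u|$. You must first prove $|u|>0$ (as you do later: $|u|\in\mathcal{M}_a$ is an interior minimizer on $D_{\rho_0}$ by Lemma \ref{K-Lem2.5}, hence solves the Euler--Lagrange equation, and regularity plus the strong maximum principle give positivity), and only then conclude the constant phase from the equality case; the paper reaches the same dichotomy (each of $\Re u$, $\Im u$ either vanishes identically or nowhere) via \cite[Theorem 4.1]{2004-ANS-Ha}. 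This is a matter of reordering, not of substance.

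Second, the renormalization of the \emph{initial data}, $\psi_n:=a\varphi_n/|\varphi_n|_2$, followed by the claim $\inf_{z\in\mathcal{M}_a}\|u_{\psi_n}(t_n)-z\|\geq\varepsilon_0/2$, is a genuine gap: the equation is nonlinear, so $u_{\psi_n}$ is not a multiple of $u_{\varphi_n}$, and there is no uniform-in-time continuous dependence allowing you to transfer the $\varepsilon_0$-separation from the flow of $\varphi_n$ to that of $\psi_n$ at the possibly large times $t_n$. The standard repair (and the way the paper's slightly loose step should be read) is to work with the flow of $\varphi_n$ itself: choose $t_n$ as the first time the distance to $\mathcal{M}_a$ equals $\varepsilon_0$, set $u_n:=u_{\varphi_n}(t_n)$, and renormalize \emph{at time $t_n$} by the factor $a/|\varphi_n|_2\to1$, which perturbs neither the minimizing property, nor membership in $A_{\rho_0}$, nor the distance to $\mathcal{M}_a$ by more than $o(1)$; the trapping argument then has to be run for masses $|\varphi_n|_2\to a$, which is harmless because the barrier of Lemma \ref{K-Lem2.5} is stable under such perturbations. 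With these two adjustments your argument coincides with the paper's proof.
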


We are also interested in the instability of standing waves obtained in Theorems \ref{K-TH2},  \ref{K-TH5} and  \ref{K-TH15}.

\begin{theorem}\label{K-TH4}
Under the assumptions of Theorem \ref{K-TH2} (or Theorem \ref{K-TH5} or  Theorem \ref{K-TH15}),  the standing
wave $\psi(t, x) =e^{i  \widehat{\lambda}t} \widehat{u}(x)$ (or $\psi(t, x) =e^{i   \widetilde{\lambda} t} \widetilde{u}(x)$ or  $\psi(t, x) =e^{i  \lambda_{\overline{q}} t} u_{\overline{q}}(x)$) is strongly unstable.
\end{theorem}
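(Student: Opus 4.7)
The plan is to use the classical virial/invariant-set method of Berestycki--Cazenave combined with the mountain pass variational characterization; the three cases are treated in a unified way, since in each of them the profile $u^{*}\in\{\widehat{u},\widetilde{u},u_{\overline{q}}\}$ is a mountain pass critical point of $\mathcal{J}|_{S_a}$ lying on the Pohozaev manifold.

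First, I would record the facts I need from the Cauchy theory of \eqref{kk1}. Standard arguments give local well-posedness in $\mathcal{H}$, with conservation of mass $\|\varphi(t)\|_2=\|\varphi_0\|_2$ and energy $\mathcal{J}(\varphi(t))=\mathcal{J}(\varphi_0)$ on the maximal interval $[0,T^{*})$; if in addition $|x|\varphi_0\in L^2$, then $V(t):=\int_{\mathbb{R}^3}|x|^2|\varphi(t,x)|^2\,dx$ is $C^2$ and satisfies the virial identity $V''(t)=8\,\mathcal{P}(\varphi(t,\cdot))$, where
\[
\mathcal{P}(u):=\|\nabla u\|_2^2+\tfrac{1}{4}\iint\tfrac{|u(x)|^2|u(y)|^2}{|x-y|}\,dx\,dy-\gamma_p\|u\|_p^p-\mu\gamma_q\|u\|_q^q
\]
is the Pohozaev functional associated with the $L^2$-preserving dilation $s\cdot u:=s^{3/2}u(sx)$. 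Next, I would recall that $u^{*}$ belongs to the Pohozaev manifold $\mathcal{P}_a=\{v\in S_a:\mathcal{P}(v)=0\}$ and realizes the mountain pass level $\sigma_a:=\inf_{v\in\mathcal{P}_a^{+}}\mathcal{J}(v)$, where $\mathcal{P}_a^{+}\subset\mathcal{P}_a$ is the component on which the fibre map $h_v(s):=\mathcal{J}(s\cdot v)$ has a strict local maximum at $s=1$. For $\tau>1$ close to $1$, setting $\varphi_0^{\tau}:=\tau\cdot u^{*}$ yields $\|\varphi_0^{\tau}\|_2=a$, $\mathcal{J}(\varphi_0^{\tau})<\sigma_a$, $\mathcal{P}(\varphi_0^{\tau})<0$, and $\varphi_0^{\tau}\to u^{*}$ in $\mathcal{H}$ as $\tau\to 1^{+}$. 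Exponential decay of $u^{*}$ (from the ellipticity of \eqref{k1} with associated Lagrange multiplier $\lambda^{*}>0$) gives $|x|\varphi_0^{\tau}\in L^2$, so the virial identity applies to the corresponding solution $\varphi$.

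I would then define the invariant set
\[
\mathcal{A}:=\bigl\{u\in S_a:\ \mathcal{J}(u)<\sigma_a,\ \mathcal{P}(u)<0,\ \|\nabla u\|_2>\rho^{*}\bigr\},
\]
with $\rho^{*}>0$ chosen so that $\|\nabla u^{*}\|_2>\rho^{*}$ while any $v\in\mathcal{P}_a$ with $\|\nabla v\|_2>\rho^{*}$ and $\mathcal{J}(v)<\sigma_a$ necessarily lies in $\mathcal{P}_a^{+}$ (possible because in the mixed-nonlinearity regime the local-minimum component of $\mathcal{P}_a$ sits at a smaller gradient scale, while in the setting of Theorem~\ref{K-TH5} this component is empty). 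Conservation of mass and energy together with continuity of $\mathcal{P}$ and $\|\nabla\cdot\|_2$ along the $\mathcal{H}$-valued flow yield invariance: if $\mathcal{P}(\varphi(t_0))=0$ for some $t_0$, then $\varphi(t_0)\in\mathcal{P}_a^{+}$ with $\mathcal{J}(\varphi(t_0))<\sigma_a$, contradicting the definition of $\sigma_a$; the gradient bound $\|\nabla\varphi(t)\|_2>\rho^{*}$ is preserved by the same continuation argument.

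Finally, I would establish a uniform gap estimate $\mathcal{P}(u)\leq\beta(\mathcal{J}(u)-\sigma_a)$ on $\mathcal{A}$ for some $\beta>0$, combined with the virial identity to get $V''(t)\leq 8\beta(\mathcal{J}(\varphi_0^{\tau})-\sigma_a)<0$ uniformly in $t$; then $V(t)$ becomes negative in finite time, contradicting $V\geq 0$, hence $T^{*}<\infty$ and $\varphi$ blows up. The gap estimate is the main technical obstacle: writing $\tau^{*}(u)\in(0,1)$ for the unique value with $\tau^{*}(u)\cdot u\in\mathcal{P}_a^{+}$, one has $\mathcal{J}(\tau^{*}(u)\cdot u)\geq\sigma_a$, so $\sigma_a-\mathcal{J}(u)\leq-\int_{\tau^{*}(u)}^{1}h_u'(s)\,ds=\int_{\tau^{*}(u)}^{1}|\mathcal{P}(s\cdot u)|/s\,ds$, and the task reduces to controlling the integrand by $|\mathcal{P}(u)|$; this is done by exploiting the concavity of $h_u$ on $[\tau^{*}(u),1]$, which follows from the structural relation $p\gamma_p>2$, together with a uniform lower bound $\tau^{*}(u)\geq c>0$ on $\mathcal{A}$ guaranteed by the gradient threshold $\rho^{*}$. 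A secondary difficulty, specific to the mixed-nonlinearity regimes of Theorems~\ref{K-TH2} and \ref{K-TH15}, is the separation of the mountain pass and local-minimum components of $\mathcal{P}_a$, which is exactly the role of $\rho^{*}$ in the definition of $\mathcal{A}$.
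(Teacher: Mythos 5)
Your strategy is essentially the paper's: dilate the mountain-pass profile to get data $\tau\cdot u^{*}$, $\tau>1$, with conserved energy below the mountain-pass level and negative Pohozaev functional; keep $P<0$ along the flow; prove a gap estimate of the form $P(u)\leq \mathcal{J}(u)-\sigma_a$ via the fibre map; and conclude by the virial identity. Two of your refinements are superfluous but harmless: once $h_u$ is concave on $[\tau^{*}(u),1]$ you have $-h_u'(s)\leq -h_u'(1)=|P(u)|$ there, hence $\sigma_a-\mathcal{J}(u)\leq |P(u)|\,(1-\tau^{*}(u))\leq |P(u)|$, so the estimate holds with $\beta=1$ and no uniform lower bound on $\tau^{*}(u)$ is needed; this is exactly the paper's inequality \eqref{k85} in Lemma \ref{K-Lem5.1}. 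Also note that the level identity $\mathcal{J}(u^{*})=\sigma_a$ is only known with the infimum taken over the radial manifold, so you must work in $\mathcal{H}_r$ and invoke that the flow of \eqref{kk1} preserves radial symmetry (the paper does this implicitly), and for $\mu\leq 0$ the concavity of $h_u$ past its maximum does not follow from $p\gamma_p>2$ alone (the $\mu$-term in $h_u''$ is then increasing in $s$); it requires the separate fibre analysis of Lemma \ref{K-Lem4.2}.

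The genuine gap is the invariance of the gradient constraint in your set $\mathcal{A}$. At the first exit time the orbit may touch the sphere $\{\|\nabla u\|_2=\rho^{*}\}$ while $P(\varphi(t))<0$ strictly, and nothing you write excludes this: conservation of mass and energy gives no contradiction there, since functions with $\|\nabla u\|_2=\rho^{*}$, $P(u)<0$ and $\mathcal{J}(u)<\sigma_a$ do exist (suitable dilations of elements of the mountain-pass component already provide them), and the lower envelope $h$ satisfies $h(\rho^{*})\leq\sigma_a$, so an energy barrier cannot be erected on that sphere; moreover the separation property you require of $\rho^{*}$ (every $v\in\mathcal{P}_a$ with $\|\nabla v\|_2>\rho^{*}$ and $\mathcal{J}(v)<\sigma_a$ lies in the maximum component, while $\|\nabla u^{*}\|_2>\rho^{*}$) is asserted rather than proved. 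The repair is to discard the threshold altogether: by Corollary \ref{K-Lem2.4} one has $\sup_{\mathcal{P}_+}\mathcal{J}\leq 0$, while for $\tau$ close to $1$ the conserved energy $\mathcal{J}(\tau\cdot u^{*})$ is close to $\sigma_a>0$; hence if $P(\varphi(t_0))=0$ for some $t_0$, the point $\varphi(t_0)$ must lie on the mountain-pass component (in the settings of Theorems \ref{K-TH5} and \ref{K-TH15} it is the only component), and $\mathcal{J}(\varphi(t_0))\geq\sigma_a$ contradicts energy conservation. With this replacement your argument coincides with the paper's proof of Lemma \ref{K-Lem5.1} (there phrased through the sign of $t_{\varphi(t)}$ and the hypothesis $\mathcal{J}(u_0)<\inf_{\mathcal{P}_{-,r}}\mathcal{J}$) and goes through.
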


\begin{remark}
The proof of Theorem \ref{K-TH3} relies  on the classical Cazenave-Lions' stability argument introduced in \cite{1982-CMP-CL} and further developed in \cite{2004-ANS-Ha}. To prove Theorem \ref{K-TH4}, we shall follow the original approach of Berestycki and Cazenave \cite{1981CRASSM-BC}.
\end{remark}

The organization of this paper is as follows.
In Sec.~\ref{sec2} and Sec.~\ref{sec1}, we give some preliminaries and prove Theorem \ref{K-TH1}, respectively.
Then we prove Theorem \ref{K-TH2} in Sec.~\ref{sec3}.
The aim of Sec.~\ref{sec6} is to show~Theorem \ref{K-TH5}, and Sec.~\ref{sec5} is devoted to show Theorem \ref{K-TH7}. In Sec. \ref{sec7}, we give the proof of Theorem \ref{K-TH15}.
Finally, we show Theorems \ref{K-TH6},  \ref{K-TH14}, \ref{K-TH3} and \ref{K-TH4} in Sec.~\ref{sec4}.

Now, we conclude this section with the following notations applied throughout this paper:
\begin{itemize}
  \setlength{\itemsep}{-1.0pt}
  \item $\mathcal{D}^{1,2}(\mathbb{R}^3):= \mathcal{D}^{1,2}(\mathbb{R}^3,\mathbb{C})$ is the usual Sobolev space with the norm
  $\|u\|_{\mathcal{D}^{1,2}  }=\left(\int_{\mathbb{R}^3} |\nabla u|^2 dx\right)^{\frac12}$.
  \item $L^q(\mathbb{R}^3)= L^q(\mathbb{R}^3, \mathbb{C})$ is the Lebesgue space with the norm $|u|_q=\left(\int_{\mathbb{R}^3}|u|^qdx\right)^{\frac{1}{q}}$ for $q\in[1,+\infty)$.
  \item $\mathcal{H}^{-1}$ (or $\mathcal{H}_r^{-1}$)  denotes the dual space of $\mathcal{H} $ (or $\mathcal{H}_r$) and $\mathbb{R}^+=(0, +\infty)$.
  \item $C$, $C_i,  i=1,2, \ldots $,   denote positive constant which may depend on  $p$ and $q$ (but never on $a$ or $\mu$), and may possibly various in different places.
  \item $B_y(r):=\left\{x\in \mathbb{ R}^3:|x-y|<r\right\}$ for $y\in\mathbb{R}^3$ and $r>0$.
\end{itemize}

We also mention that, within a section, after having fixed the
parameters $a$ and $\mu$ we may choose to omit the dependence of $\mathcal{J}_{a,\mu}$,  $P_{a,\mu}$, $\mathcal{P}_{a,\mu}$,  $\ldots$  on these
quantities, writing simply $\mathcal{J}$,   $P$, $\mathcal{P}$, $\ldots$.

\section{Preliminaries}\label{sec2}
Since $ \mathcal{J}$ is unbounded from below on $S_a$, thus one cannot apply the minimizing argument on $S_a$ any more.
In this paper, we consider the minimization of $\mathcal{J}$ on a subset of $S_a$. For any $u\in S_a$, let
$$
(s\star u)(x):=e^{\frac{3s}{2}}u(e^sx).
$$
By direct computation, one has $(s\star u)\in S_a$ and
 \begin{align}\label{k8}
 \psi_u(s):=\mathcal{J}(s\star u)
  &=\frac{e^{2s}}{2}\int_{\mathbb{R}^3}|\nabla u|^2dx+\frac{e^{s}}{4} \int_{\mathbb{R}^3} \int_{\mathbb{R}^3} \frac{|u(x)|^{2}|u(y)|^2} {|x-y|}dxdy \nonumber\\
  &\quad-\frac{e^{p\gamma_ps}}{p}\int_{\mathbb{R}^3}|u|^{p}dx
  -\frac{\mu e^{q\gamma_qs}}{q}
  \int_{\mathbb{R}^3}|u|^{q}dx,
\end{align}
where $\gamma_p=\frac{3(p-2)}{2p}$. Clearly,
 \begin{align*}
 p \gamma_p
 \begin{cases}
 >2  \ &\text{if}\ p>\frac{10}{3},\\
 =2  \ &\text{if}\ p=\frac{10}{3},\\
 <2  \ &\text{if}\ p<\frac{10}{3}.
 \end{cases}
\end{align*}
It is easy to see that, if $u\in \mathcal{H}$ is a solution of Eq.~\eqref{k1}, then  the following Nehari  identity holds
 \begin{align}\label{kk4}
 \int_{\mathbb{R}^3}|\nabla u|^2+\lambda\int_{\mathbb{R}^3}  |u|^2dx+ \int_{\mathbb{R}^3} \int_{\mathbb{R}^3} \frac{|u(x)|^{2}|u(y)|^2} {|x-y|}dxdy =\int_{\mathbb{R}^3}|u|^{p}dx+\mu  \int_{\mathbb{R}^3}|u|^{q}dx.
\end{align}
Moreover, by  \cite[Proposition 2.1]{2014-JFA-le},  any solution $u$ of Eq.~\eqref{k1}  satisfies the following Poho\v{z}aev identity
 \begin{align}\label{kkk4}
\frac{1}{2}\int_{\mathbb{R}^3}|\nabla u|^2dx +\frac{3\lambda }{2} \int_{\mathbb{R}^3}  |u|^2dx
+\frac{5}{4}\int_{\mathbb{R}^3} \int_{\mathbb{R}^3} \frac{|u(x)|^{2}|u(y)|^2} {|x-y|}dxdy=
\frac{3}{p}\int_{\mathbb{R}^3}|u|^{p}dx+\frac{ 3\mu}{q} \int_{\mathbb{R}^3}|u|^{q}dx.
\end{align}
Hence, by \eqref{kk4} and \eqref{kkk4},  $u$ satisfies
\begin{align}\label{k4}
P(u):=\int_{\mathbb{R}^3}|\nabla u|^2dx+\frac{1}{4}\int_{\mathbb{R}^3} \int_{\mathbb{R}^3} \frac{|u(x)|^{2}|u(y)|^2} {|x-y|}dxdy
-\gamma_p\int_{\mathbb{R}^3}|u|^{p}dx
-\mu\gamma_q\int_{\mathbb{R}^3}|u|^{q}dx=0.
 \end{align}
Usually, \eqref{k4} is called as Nehari-Poho\v{z}aev identity. In particular, \eqref{k4} is widely used in the literature to study the prescribed mass problem.
In order to obtain the  normalized    solutions for Eq.~\eqref{k1}, we introduce the following Nehari-Poho\v{z}aev constrained set:
\begin{align*}
 \mathcal{P}=\left\{u\in S_a: P(u)=0\right\}.
 \end{align*}
 By simple calculation, we have
$$
P(s\star u)=e^{2s}\int_{\mathbb{R}^3}|\nabla u|^2dx+\frac{e^{s}}{4} \int_{\mathbb{R}^3} \int_{\mathbb{R}^3} \frac{|u(x)|^{2}|u(y)|^2} {|x-y|}dxdy
-\gamma_p e^{p\gamma_ps} \int_{\mathbb{R}^3}|u|^{p}dx
-\mu\gamma_q e^{q\gamma_qs}
\int_{\mathbb{R}^3}|u|^{q}dx.
$$
Obviously, $\psi_u'(s)=P(s\star u)$. Hence, for any $u\in S_a$, $s\in \mathbb{R}$ is a critical point of $\psi_u(s)$ iff $s\star u\in \mathcal{P}$.

The following classical Gagliardo-Nirenberg inequality is very important in this paper, which can be found in  \cite{1983W}.
Concretely, let  $2< t <6$, then
\begin{align}\label{k10}
|u|_t\leq C_{t}|u|_2^{1-\gamma_t}|\nabla u|_2^{\gamma_t},\ \ \ \ \forall \ u\in \mathcal{H},
\end{align}
where the sharp constant $C_{t}$ is defined by
\begin{align}\label{k102}
C_{t}^t=\frac{2t}{6-t}\left(\frac{6-t}{3(t-2)}  \right)^{\frac{3(t-2)}{4}}\frac{1}{|Q_t|_2^{t-2}},
 \end{align}
and $Q_t$ is the unique positive radial solution of equation
$$
-\Delta Q+Q=|Q|^{t-2}Q.
$$

Now, we recall some properties of the operator $\int_{\mathbb{R}^3} \frac{|u(y)|^2} {|x-y|}dy$ in the following lemma.

\begin{lemma}\label{gle4}(see \cite{2006JFA})
 The following results hold
\begin{itemize}
  \item [$(1)$]   $\int_{\mathbb{R}^3} \frac{|u(y)|^2} {|x-y|}dy\ge0$ for any $x\in{\mathbb{R}^{3}}$;
  \item [$(2)$] there exist some constants $C_1,C_2>0$ such that $\int_{\mathbb{R}^3} \int_{\mathbb{R}^3} \frac{|u(x)|^{2}|u(y)|^2} {|x-y|}dxdy
      \le{C_1|u|^4_\frac{12}{5}}\le{C_2\|u\|^4};$

  \item [$(3)$] if $u_n\rightarrow u$ in $L^{\frac{12}{5}}(\mathbb{R}^3)$, then $\int_{\mathbb{R}^3} \frac{|u_n(y)|^2} {|x-y|}dy\rightarrow \int_{\mathbb{R}^3} \frac{|u(y)|^2} {|x-y|}dy$ in $\mathcal{D}^{1,2}(\mathbb{R}^{3});$
  \item [$(4)$]    if $u_n\rightharpoonup u$ in $\mathcal{H}$, then $\int_{\mathbb{R}^3} \frac{|u_n(y)|^2} {|x-y|}dy\rightharpoonup \int_{\mathbb{R}^3} \frac{|u(y)|^2} {|x-y|}dy$ in $\mathcal{D}^{1,2}(\mathbb{R}^{3})$.
\end{itemize}
\end{lemma}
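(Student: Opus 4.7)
The plan is to interpret, up to a fixed multiplicative constant, the quantity $\phi_u(x):=\int_{\mathbb{R}^3}\frac{|u(y)|^2}{|x-y|}\,dy$ as the Newtonian potential of the density $|u|^2$, i.e.\ as the unique $\mathcal{D}^{1,2}(\mathbb{R}^3)$ solution of $-\Delta\phi_u=4\pi|u|^2$ whenever $|u|^2\in L^{6/5}(\mathbb{R}^3)$. Adopting this viewpoint, all four items follow from the Hardy--Littlewood--Sobolev (HLS) inequality, the Sobolev embedding, and standard linear elliptic theory applied to the operator $\rho\mapsto(-\Delta)^{-1}\rho$.

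Item (1) is immediate from the positivity of the integrand. For item (2), I apply HLS with exponents $p=q=\tfrac{6}{5}$ and kernel exponent $\lambda=1$ to the pair $f=g=|u|^2$, giving
\[
\int_{\mathbb{R}^3}\int_{\mathbb{R}^3}\frac{|u(x)|^2|u(y)|^2}{|x-y|}\,dx\,dy\leq C_1\bigl(|u|_{12/5}^{2}\bigr)^{2}=C_1|u|_{12/5}^{4},
\]
since $\bigl\||u|^2\bigr\|_{L^{6/5}}=|u|_{12/5}^{2}$; the Sobolev embedding $\mathcal{H}\hookrightarrow L^{12/5}(\mathbb{R}^3)$, valid because $\tfrac{12}{5}\in[2,6]$, then produces the second inequality with a new constant $C_2$.

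For item (3), I first record the a priori bound $\|\phi_u\|_{\mathcal{D}^{1,2}}\le C|u|_{12/5}^{2}$. Testing the equation $-\Delta\phi_u=4\pi|u|^2$ against $\phi_u$ itself and using H\"older together with the Sobolev inequality $|\phi_u|_6\le C\|\phi_u\|_{\mathcal{D}^{1,2}}$ yields
\[
\|\phi_u\|_{\mathcal{D}^{1,2}}^{2}=4\pi\int_{\mathbb{R}^3}|u|^{2}\phi_u\,dx\leq 4\pi\,|u|_{12/5}^{2}\,|\phi_u|_6 \leq C|u|_{12/5}^{2}\,\|\phi_u\|_{\mathcal{D}^{1,2}}.
\]
By linearity of $\rho\mapsto(-\Delta)^{-1}\rho$, the same estimate applied to $\phi_{u_n}-\phi_u$ (whose source is $|u_n|^2-|u|^2$), together with the pointwise inequality $\bigl||a|^2-|b|^2\bigr|\le(|a|+|b|)|a-b|$ and H\"older, gives
\[
\|\phi_{u_n}-\phi_u\|_{\mathcal{D}^{1,2}}\le C\bigl(|u_n|_{12/5}+|u|_{12/5}\bigr)|u_n-u|_{12/5},
\]
and strong convergence $u_n\to u$ in $L^{12/5}(\mathbb{R}^3)$ forces $\phi_{u_n}\to\phi_u$ in $\mathcal{D}^{1,2}(\mathbb{R}^3)$.

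Item (4) is the subtlest and is where I expect the main work. If $u_n\rightharpoonup u$ in $\mathcal{H}$, then $\{u_n\}$ is bounded in $L^{12/5}(\mathbb{R}^3)$, so the a priori bound of item (3) shows $\{\phi_{u_n}\}$ is bounded in $\mathcal{D}^{1,2}$; up to a subsequence, $\phi_{u_n}\rightharpoonup\psi$ in $\mathcal{D}^{1,2}$. To identify $\psi=\phi_u$, I test the Poisson equation against an arbitrary $\varphi\in C_c^{\infty}(\mathbb{R}^3)$:
\[
\int_{\mathbb{R}^3}\nabla\phi_{u_n}\cdot\nabla\varphi\,dx=4\pi\int_{\mathbb{R}^3}|u_n|^{2}\varphi\,dx.
\]
The left-hand side converges to $\int\nabla\psi\cdot\nabla\varphi\,dx$ by weak convergence in $\mathcal{D}^{1,2}$, while the right-hand side converges to $4\pi\int|u|^{2}\varphi\,dx$ because Rellich's compactness theorem gives $u_n\to u$ strongly in $L^{2}(\mathrm{supp}\,\varphi)$, hence $|u_n|^{2}\to|u|^{2}$ in $L^{1}(\mathrm{supp}\,\varphi)$. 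Therefore $\psi$ and $\phi_u$ are distributional solutions of the same Poisson problem with the same $L^{6/5}$ source, so they coincide by uniqueness in $\mathcal{D}^{1,2}$. Since the limit is independent of the extracted subsequence, a standard argument promotes convergence to the full sequence, finishing the proof.
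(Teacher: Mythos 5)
The paper itself gives no proof of this lemma---it is quoted directly from Ruiz \cite{2006JFA}---and your argument is exactly the standard one used there: positivity is trivial, (2) is Hardy--Littlewood--Sobolev with exponents $6/5$ plus the embedding $\mathcal{H}\hookrightarrow L^{12/5}$, and (3)--(4) follow from identifying $\int_{\mathbb{R}^3}|u(y)|^2|x-y|^{-1}dy$ with the $\mathcal{D}^{1,2}$ solution of $-\Delta\phi_u=4\pi|u|^2$ and exploiting linearity, Rellich compactness and uniqueness of weak limits; the proof is correct. The only point worth tightening is that before testing $-\Delta\phi_u=4\pi|u|^2$ against $\phi_u$ you should first justify $\phi_u\in\mathcal{D}^{1,2}$ (e.g.\ by defining $\phi_u$ via Lax--Milgram from the source $|u|^2\in L^{6/5}\subset\mathcal{D}^{-1,2}$ and then matching it with the integral representation, or by estimating the Riesz potential directly through HLS), after which your quantitative bound and the convergence statements follow as written.
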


Define for $(a, t)\in \mathbb{R}^+\times \mathbb{R}^+$ the function
\begin{align*}
f(a,t)=\frac{1}{2}-\mu \frac{C_q^q}{q}a^{(1-\gamma_q)q}t^{\gamma_qq-2}
 -\frac{C_p^p}{p}a^{(1-\gamma_p)p}t^{p\gamma_p-2}.
\end{align*}

\begin{lemma}\label{KK-Lem2.1}
Assume that $\mu>0$, $q\in(2, \frac{8}{3})$ and $p\in (\frac{10}{3}, 6)$.
 Then for any fixed $a>0$,  the function $f_a(t):=f(a, t)$ has a unique global maximum, and the maximum value satisfies
 \begin{align}\label{0k50}
 \max\limits_{t>0}f_a(t)
 \begin{cases}
 >0 &\text{if}\ a<a_0,\\
 =0 &\text{if}\ a=a_0,\\
 <0 &\text{if}\ a>a_0,
 \end{cases}
 \end{align}
where
\begin{align}\label{k50}
a_0=\left(\frac{1}{2K}\right)^{\frac{p\gamma_p-q\gamma_q}{(p-p\gamma_p)(2-q\gamma_q)+(q-q\gamma_q)(p\gamma_p-2)}}
\end{align}
with
\begin{align*}
K=\left(\frac{\mu C_q^q}{q}\right)^{\frac{p\gamma_p-2 }{p\gamma_p-q\gamma_q }}\left[\frac{p(2-q\gamma_q)}{(p\gamma_p-2)C_p^p} \right]^{\frac{q\gamma_q-2 }{p\gamma_p-q\gamma_q}}+ \left(\frac{p}{C_p^p} \right)^{\frac{q\gamma_q-2 }{p\gamma_p-q\gamma_q}} \left[\frac{\mu(2-q\gamma_q)C_q^q}{q(p\gamma_p-2) } \right]^{\frac{p\gamma_p-2 }{p\gamma_p-q\gamma_q}}.
\end{align*}
\end{lemma}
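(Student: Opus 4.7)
Under the stated ranges $q \in (2, 8/3)$ and $p \in (10/3, 6)$, a direct computation gives $q\gamma_q = 3(q-2)/2 < 1 < 2 < p\gamma_p$, so the exponents $q\gamma_q - 2$ and $p\gamma_p - 2$ have opposite sign. My plan is to rewrite $f_a(t) = \tfrac{1}{2} - g(t)$ where
\[
g(t) := \mu \frac{C_q^q}{q}\, a^{(1-\gamma_q)q}\, t^{q\gamma_q - 2} + \frac{C_p^p}{p}\, a^{(1-\gamma_p)p}\, t^{p\gamma_p - 2},
\]
so that the global maximum of $f_a$ corresponds to the global minimum of $g$. Because one power of $t$ is negative and the other positive, $g(t)\to +\infty$ as $t\to 0^+$ and as $t\to +\infty$, hence the minimum is attained on $(0,+\infty)$.

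For uniqueness of the critical point I would factor
\[
f_a'(t) = t^{q\gamma_q - 3}\left[\mu \frac{C_q^q}{q}\, a^{(1-\gamma_q)q}(2 - q\gamma_q) - \frac{C_p^p}{p}\, a^{(1-\gamma_p)p}(p\gamma_p - 2)\, t^{p\gamma_p - q\gamma_q}\right],
\]
and observe that the bracketed factor is strictly decreasing in $t$, hence changes sign exactly once. This yields a unique critical point $t^*(a)\in (0,+\infty)$, which is the global maximum of $f_a$, determined by
\[
t^*(a)^{\,p\gamma_p - q\gamma_q} = \frac{\mu\, p\, C_q^q (2 - q\gamma_q)}{q\, C_p^p (p\gamma_p - 2)}\, a^{(1-\gamma_q)q - (1-\gamma_p)p}.
\]

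Next I would substitute $t^*(a)$ back into $g$. Both summands of $g(t^*(a))$ individually collapse to positive constants times $a^{\mathcal{B}/(p\gamma_p - q\gamma_q)}$, where $\mathcal{B} = (q-q\gamma_q)(p\gamma_p - 2) + (p - p\gamma_p)(2 - q\gamma_q)$ emerges from combining the exponent $(1-\gamma_q)q$ (resp.\ $(1-\gamma_p)p$) on $a$ with the factor $a^{(1-\gamma_q)q - (1-\gamma_p)p}$ raised to $m/(n-m)$ (resp.\ $n/(n-m)$), with $m = q\gamma_q - 2$ and $n = p\gamma_p - 2$. Summing the two contributions gives $g(t^*(a)) = K\, a^{\mathcal{B}/(p\gamma_p - q\gamma_q)}$ with $K$ exactly as in the lemma. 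Therefore
\[
\max_{t>0} f_a(t) = \frac{1}{2} - K\, a^{\mathcal{B}/(p\gamma_p - q\gamma_q)},
\]
which is strictly decreasing in $a$ since $K>0$ and $\mathcal{B}>0$, vanishes precisely at $a = a_0 = (1/(2K))^{(p\gamma_p - q\gamma_q)/\mathcal{B}}$, and the trichotomy \eqref{0k50} follows at once.

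The only real obstacle is the algebraic bookkeeping in the last step: one must verify that the two substituted summands, each a power of $\alpha := \mu C_q^q/q$ and $\beta := C_p^p/p$, reassemble into precisely the two-term expression for $K$ displayed in the statement. I would organize this by isolating the common factor $a^{\mathcal{B}/(p\gamma_p - q\gamma_q)}$ and expanding each summand separately, tracking the exponents $m/(n-m)$ and $n/(n-m)$ in $\alpha$ and $\beta$; once these are tidied up, the sum is identically the $K$ of the lemma.
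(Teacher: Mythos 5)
Your proposal is correct and follows essentially the same route as the paper: compute $f_a'$, identify the unique critical point $t^*(a)$ (the paper's $\rho_a$), verify it is the global maximum using the behavior of $f_a$ at $0$ and $+\infty$, and substitute back to obtain $\max_{t>0} f_a(t)=\tfrac12-K\,a^{\mathcal{B}/(p\gamma_p-q\gamma_q)}$, from which the trichotomy follows by the definition of $a_0$. The only differences (recasting the maximum of $f_a$ as the minimum of $g$, and factoring $t^{q\gamma_q-3}$ to see the sign change of $f_a'$) are cosmetic.
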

\begin{proof}
 By definition of $f_a(t)$, we have
  $$
f_a'(t)= \mu (2-q\gamma_q)\frac{C_q^q}{q}a^{(1-\gamma_q)q}t^{\gamma_qq-3}
 -(p\gamma_p-2)\frac{C_p^p}{p}a^{(1-\gamma_p)p}t^{p\gamma_p-3}.
 $$
 We know that  the equation $f_a'(t)=0$  has a unique solution
   \begin{align}\label{k90}
 \rho_a=\left[\frac{\mu p(2-q\gamma_q)C_q^q}{q(p\gamma_p-2)C_p^p}a^{q(1-\gamma_q)-p(1-\gamma_p)} \right]^{\frac{1}{p\gamma_p-q\gamma_q}}.
   \end{align}
By simple analysis, we obtain that $f_a(t)$ is increasing on $(0, \rho_a )$ and decreasing on $( \rho_a, +\infty )$,
and $f_a(t)\rightarrow-\infty $ as  $t\rightarrow 0$ and $f_a(t)\rightarrow-\infty $ as  $t\rightarrow +\infty$, which implies that $\rho_a$ is the
unique global maximum point of $f_a(t)$. Hence, the maximum value of $f_a(t)$ is
\begin{align*}
\max_{t>0}f_a(t)=  f_a(\rho_a) =&\frac{1}{2}-\mu \frac{C_q^q}{q}a^{(1-\gamma_q)q}\rho_a^{\gamma_qq-2}
-\frac{C_p^p}{p}a^{(1-\gamma_p)p}\rho_a^{p\gamma_p-2}\\
=&\frac{1}{2}-\mu \frac{C_q^q}{q}a^{(1-\gamma_q)q}\left[\frac{\mu p(2-q\gamma_q)C_q^q}{q(p\gamma_p-2)C_p^p}a^{q(1-\gamma_q)-p(1-\gamma_p)} \right]^{\frac{\gamma_qq-2}{p\gamma_p-q\gamma_q}}\\
&-\frac{C_p^p}{p}a^{(1-\gamma_p)p}\left[\frac{\mu p(2-q\gamma_q)C_q^q}{q(p\gamma_p-2)C_p^p}a^{q(1-\gamma_q)-p(1-\gamma_p)} \right]^{\frac{p\gamma_p-2}{p\gamma_p-q\gamma_q}}\\
=&\frac{1}{2}-K a^{ \frac{(p-p\gamma_p)(2-q\gamma_q)+(q-q\gamma_q)(p\gamma_p-2)}{p\gamma_p-q\gamma_q}}.
\end{align*}
By the definition of $a_0$, we conclude \eqref{0k50}. Thus we complete the proof.
\end{proof}

\begin{lemma}\label{KKK-Lem2.1}
Assume that $\mu>0$, $q\in(2, \frac{8}{3})$ and $p\in (\frac{10}{3}, 6)$.   Let $(\widehat{a}_1, \widehat{t}_{1})\in \mathbb{R}^+\times \mathbb{R}^+$ be such that $f(\widehat{a}_1, \widehat{t}_{1}) \geq0$. Then for any $\widehat{a}_2\in(0, \widehat{a}_1]$,  it holds that
$f(\widehat{a}_2,  \widehat{t}_{2})\geq 0$ if $\widehat{t}_{2}\in\big[\frac{\widehat{a}_2}{\widehat{a}_1}  \widehat{t}_{1},   \widehat{t}_{1}\big]$.
\end{lemma}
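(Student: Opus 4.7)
The strategy is to show that $f(\widehat{a}_2,\cdot)$ is nonnegative at both endpoints of the interval $\bigl[\tfrac{\widehat{a}_2}{\widehat{a}_1}\widehat{t}_1,\widehat{t}_1\bigr]$, and then invoke the unimodal shape of $f_{\widehat{a}_2}$ from Lemma \ref{KK-Lem2.1} to conclude that $f(\widehat{a}_2,\cdot)\geq 0$ on the entire interval. The computation at the left endpoint will use a joint scaling $(a,t)\mapsto (\lambda a,\lambda t)$, and the right endpoint will follow from monotonicity in the $a$-variable with $t$ fixed.

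\textbf{Left endpoint via diagonal scaling.} For $\lambda>0$, substituting into the definition of $f$ gives
\begin{align*}
f(\lambda a,\lambda t)=\frac{1}{2}-\mu\frac{C_q^q}{q}a^{(1-\gamma_q)q}t^{\gamma_q q-2}\lambda^{q-2}-\frac{C_p^p}{p}a^{(1-\gamma_p)p}t^{p\gamma_p-2}\lambda^{p-2},
\end{align*}
because the exponents of $\lambda$ collapse to $(1-\gamma_q)q+(\gamma_q q-2)=q-2$ and $(1-\gamma_p)p+(p\gamma_p-2)=p-2$. Since $p,q>2$, for any $\lambda\in(0,1]$ we have $\lambda^{q-2}\leq 1$ and $\lambda^{p-2}\leq 1$, hence $f(\lambda a,\lambda t)\geq f(a,t)$. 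Taking $\lambda=\widehat{a}_2/\widehat{a}_1\in(0,1]$ and $(a,t)=(\widehat{a}_1,\widehat{t}_1)$ yields
\begin{align*}
f\!\left(\widehat{a}_2,\tfrac{\widehat{a}_2}{\widehat{a}_1}\widehat{t}_1\right)\geq f(\widehat{a}_1,\widehat{t}_1)\geq 0.
\end{align*}

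\textbf{Right endpoint and conclusion.} Fixing $t=\widehat{t}_1$, the map $a\mapsto f(a,\widehat{t}_1)$ is strictly decreasing on $(0,\infty)$ since $\mu>0$ and the exponents $(1-\gamma_q)q,(1-\gamma_p)p>0$ (using $\gamma_q,\gamma_p\in(0,1)$ for $2<q,p<6$). Therefore $f(\widehat{a}_2,\widehat{t}_1)\geq f(\widehat{a}_1,\widehat{t}_1)\geq 0$. By Lemma \ref{KK-Lem2.1}, the function $f_{\widehat{a}_2}$ is strictly increasing on $(0,\rho_{\widehat{a}_2})$, strictly decreasing on $(\rho_{\widehat{a}_2},\infty)$, and tends to $-\infty$ at both ends, so the superlevel set $\{t>0:f_{\widehat{a}_2}(t)\geq 0\}$ is a (closed) interval. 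This interval contains the two endpoints $\tfrac{\widehat{a}_2}{\widehat{a}_1}\widehat{t}_1$ and $\widehat{t}_1$, and hence contains every $\widehat{t}_2\in\bigl[\tfrac{\widehat{a}_2}{\widehat{a}_1}\widehat{t}_1,\widehat{t}_1\bigr]$. The main (and really the only) obstacle is spotting the correct diagonal scaling that aligns the $\lambda$-exponents, after which the rest of the argument is essentially bookkeeping.
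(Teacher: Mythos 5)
Your proof is correct and takes essentially the same route as the paper: nonnegativity at the right endpoint follows from monotonicity of $a\mapsto f(a,t)$, nonnegativity at the left endpoint is the paper's ``direct calculation'' $f\bigl(\widehat{a}_2,\tfrac{\widehat{a}_2}{\widehat{a}_1}\widehat{t}_1\bigr)\geq f(\widehat{a}_1,\widehat{t}_1)$, which your diagonal scaling makes explicit via the exponents $q-2,p-2>0$, and the interpolation between the endpoints uses the unimodal shape of $f_{\widehat{a}_2}$ from Lemma \ref{KK-Lem2.1}, just as the paper's contradiction argument does.
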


\begin{proof}
Since the function $a \rightarrow f(\cdot, t)$ is non-increasing, we obtain that, for any $\widehat{a}_2\in(0, \widehat{a}_1]$,
\begin{align}\label{k76}
f(\widehat{a}_2,  \widehat{t}_{1})\geq f(\widehat{a}_1,  \widehat{t}_{1})\geq0.
\end{align}
By direct calculation, we get
\begin{align}\label{k77}
f\big(\widehat{a}_2,  \frac{\widehat{a}_2}{\widehat{a}_1}   \widehat{t}_{1}\big)- f(\widehat{a}_1,  \widehat{t}_{1})\geq0.
\end{align}
Note that if $f(\widehat{a}_2, t')\geq 0$ and $ f(\widehat{a}_2, t'' )\geq 0$,  then $f(\widehat{a}_2, k)\geq 0$ for any $k\in [t',  t'']$.
Indeed, if   $f(\widehat{a}_2, \widehat{t})<0$ for some $\widehat{t} \in [t',  t'']$, then there exists a local minimum point on  $(t',  t'')$,
which contradicts the fact that the function $f(\widehat{a}_2, t)$ has a unique   global maximum by Lemma \ref{KK-Lem2.1}. Hence, by \eqref{k76} and \eqref{k77}, taking $t'= \frac{\widehat{a}_2}{\widehat{a}_1}  \widehat{t}_{1}$ and $t''=  \widehat{t}_{1}$, we get the conclusion. This lemma is verified.
\end{proof}

\begin{lemma}\label{K-Lem2.1}
Assume $\mu>0$, $q\in(2, \frac{8}{3})$ and $p\in (\frac{10}{3}, 6)$.
Let $a\in(0, \overline{a}_0)$, $\rho_0:=\rho_{a_0}$ with $\rho_{a_0} $ defined by \eqref{k90} and
\begin{align*}
h(t):=\frac{1}{2}t^2-\mu \frac{C_q^q}{q}a^{(1-\gamma_q)q}t^{\gamma_qq}
 -\frac{C_p^p}{p}a^{(1-\gamma_p)p}t^{p\gamma_p},~~~~  \forall\ t\in \mathbb{R}^+.
\end{align*}
Then there exist $0 < R_0<\rho_0< R_1$, both $R_0$ and $R_1$ depend on $a$ and $\mu$,
such that $h(R_0) = 0 = h(R_1)$ and $h(t) > 0$ iff $t \in(R_0, R_1)$. Moreover, the function $h$ has a local strict minimum at negative level
and a global strict maximum at positive level.
  \end{lemma}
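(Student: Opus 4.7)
The plan is to factor $h$ through $f_a$ and inherit its sign structure, then read off the monotonicity of $h$ from its derivative.

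First I would observe the algebraic identity $h(t) = t^2 f_a(t)$, so that for $t > 0$ the sign of $h$ coincides with the sign of $f_a$. Since $a < \overline{a}_0 \le a_0$, Lemma \ref{KK-Lem2.1} gives $\max_{t>0} f_a(t) > 0$, and the computation in its proof already shows $f_a(t) \to -\infty$ as $t \to 0^+$ and as $t \to +\infty$ together with the fact that $f_a$ has a unique critical point. Hence $f_a$ has exactly two zeros $0 < R_0 < R_1$, both depending on $(a,\mu)$, with $f_a > 0$ precisely on $(R_0, R_1)$. This yields $h(R_0) = 0 = h(R_1)$ and $h(t) > 0$ iff $t \in (R_0, R_1)$.

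Next I would show $R_0 < \rho_0 < R_1$, which amounts to $f_a(\rho_0) > 0$. Here I use that the map $a \mapsto f(a,\rho_0)$ is strictly decreasing (immediate from the definition of $f$, since the exponents $(1-\gamma_q)q$ and $(1-\gamma_p)p$ are positive), combined with the equality $f(a_0,\rho_0) = f_{a_0}(\rho_{a_0}) = \max_t f_{a_0}(t) = 0$ from Lemma \ref{KK-Lem2.1}. Since $a < a_0$, strict monotonicity gives $f(a,\rho_0) > f(a_0,\rho_0) = 0$, as required.

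For the extremal structure of $h$, I would compute
\begin{align*}
h'(t) = t\Bigl(1 - \mu C_q^q \gamma_q a^{(1-\gamma_q)q} t^{q\gamma_q-2} - C_p^p \gamma_p a^{(1-\gamma_p)p} t^{p\gamma_p-2}\Bigr) =: t\, g_a(t),
\end{align*}
and notice that $g_a$ has exactly the same qualitative form as $f_a$: a single interior critical point, and limits $-\infty$ at $0^+$ and at $+\infty$ (using $q\gamma_q - 2 < 0 < p\gamma_p - 2$). So $g_a$ has either no zeros or exactly two zeros. Since $h > 0$ on $(R_0,R_1)$ while $h(0^+) \le 0$ and $h(+\infty) = -\infty$, the function $h$ cannot be monotone, so $g_a$ must have two zeros $t_1 < t_2$ with $g_a > 0$ on $(t_1,t_2)$ and $g_a < 0$ outside. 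Consequently $h$ is strictly decreasing on $(0,t_1)$, strictly increasing on $(t_1,t_2)$, and strictly decreasing on $(t_2,+\infty)$, giving a strict local minimum at $t_1$ and a strict local maximum at $t_2$.

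To place the levels: since $h$ is strictly decreasing on $(0,t_1)$ with $h(0^+) = 0$, we get $h(t_1) < 0$, so the local minimum is at a negative level. Since $h$ is strictly increasing on $(t_1,t_2) \supset (t_1, R_0)$ past the zero $R_0$ and since $h > 0$ on $(R_0,R_1)$, necessarily $t_2 \in (R_0,R_1)$ and $h(t_2) > 0$; moreover $h \le 0$ on $(0,R_0] \cup [R_1,+\infty)$, so $h(t_2)$ is in fact the global maximum. The only mildly delicate step is the strict inequality $f_a(\rho_0) > 0$ for $a < a_0$, which the monotonicity argument above handles cleanly; the rest is routine once the factorization $h = t^2 f_a$ and the common structure of $f_a$ and $g_a$ are in hand.
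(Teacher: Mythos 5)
Your proof is correct and follows essentially the same route as the paper: the factorization $h(t)=t^2 f_a(t)$ together with Lemma \ref{KK-Lem2.1} yields the zeros $R_0<R_1$ and the sign of $h$, and the strict monotonicity of $a\mapsto f(a,\rho_0)$ combined with $f(a_0,\rho_0)=0$ places $\rho_0$ in $(R_0,R_1)$, exactly as in the paper. The only difference is that where the paper delegates the local-minimum/global-maximum structure to \cite[Lemma 5.1]{2020Soave}, you carry it out explicitly via $h'(t)=t\,g_a(t)$ and the two-zero structure of $g_a$; that argument is sound and simply fills in the omitted details.
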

 \begin{proof}
 From the analysis of Lemma \ref{KK-Lem2.1}, we know that    there exist $0 < R_0< R_1$, both $R_0$ and $R_1$ depending
on $a$ and $\mu$, such that $f_a(R_0) = 0 = f_a(R_1)$ and $f_a(t) > 0$ iff $t \in(R_0, R_1)$.
Due to $h(t)=t^2 f_a(t)$, then the function $h$  satisfies $h(R_0) = 0 = h(R_1)$ and $h(t) > 0$ iff $t \in(R_0, R_1)$.
Moreover,  noting  that   $a \rightarrow f(\cdot, t)$ is decreasing and $f(a_0, \rho_0)=0$,
then $f(a, \rho_0)> f(\overline{a}_0, \rho_0)\geq f(a_0, \rho_0)=0 $ when $a\in(0, \overline{a}_0)$.
Hence, $\rho_0\in (R_0, R_1)$. The remains are similar to \cite[Lemma 5.1]{2020Soave}, so we omit it here.
 \end{proof}

In the following, we try to split $\mathcal{P}$ into three disjoint unions $  \mathcal{P}_{+}\cup  \mathcal{P}_{-}\cup  \mathcal{P}_{0}$, where
\begin{align*}
&\mathcal{P}_+:=\left\{u\in \mathcal{P}: \psi_u''(0)>0\right \}, \\
&\mathcal{P}_-:=\left\{u\in \mathcal{P}: \psi_u''(0)<0 \right\},\\
&\mathcal{P}_0:=\left\{u\in \mathcal{P}: \psi_u''(0)=0 \right\}.
 \end{align*}

\begin{lemma}\label{K-Lem2.2}
Let $\mu>0$, $q\in(2, \frac{8}{3})$, $p\in (\frac{10}{3}, 6)$ and  $a\in(0,\overline{a}_0)$.
Then $ \mathcal{P}_0=\emptyset$ and $\mathcal{P}$ is a smooth manifold of codimension 2 in $\mathcal{H}$.
\end{lemma}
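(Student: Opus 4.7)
The plan is to split the lemma into two claims: (a) $\mathcal{P}_0=\emptyset$, and (b) the codimension-two manifold structure of $\mathcal{P}$; the latter is a standard consequence of the former, so the bulk of the work lies in (a). For (a) I would argue by contradiction: suppose $u\in\mathcal{P}_0$, so that both $P(u)=0$ and $\psi_u''(0)=0$. Writing $A:=|\nabla u|_2^2$, $D:=\iint|u(x)|^2|u(y)|^2/|x-y|\,dx\,dy$, $B:=|u|_p^p$, $C:=|u|_q^q$, direct differentiation of \eqref{k8} gives $P(u)=A+D/4-\gamma_p B-\mu\gamma_q C$ and $\psi_u''(0)=2A+D/4-p\gamma_p^2 B-\mu q\gamma_q^2 C$.

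Forming the linear combinations $\psi_u''(0)-q\gamma_q P(u)=0$ and $\psi_u''(0)-p\gamma_p P(u)=0$ then yields
\[
\gamma_p(p\gamma_p-q\gamma_q)B=(2-q\gamma_q)A+(1-q\gamma_q)\tfrac{D}{4},\qquad \mu\gamma_q(p\gamma_p-q\gamma_q)C=(p\gamma_p-2)A+(p\gamma_p-1)\tfrac{D}{4}.
\]
Under our hypotheses one has $p\gamma_p>2$ and $q\gamma_q<1$, and $D\ge 0$ by the non-negativity of the Coulomb form. Dropping the non-negative $D$-contribution and inserting the sharp Gagliardo--Nirenberg inequality \eqref{k10} for $B$ and $C$ (with $t:=|\nabla u|_2$) produces
\[
t^{p\gamma_p-2}\ge \frac{2-q\gamma_q}{\gamma_p(p\gamma_p-q\gamma_q)C_p^p\,a^{p(1-\gamma_p)}},\qquad t^{2-q\gamma_q}\le \frac{\mu\gamma_q(p\gamma_p-q\gamma_q)C_q^q\,a^{q(1-\gamma_q)}}{p\gamma_p-2}.
\]
Raising the first to the power $2-q\gamma_q$ and the second to $p\gamma_p-2$ eliminates $t$ and produces an inequality of the shape $a^{\mathcal{B}}\ge K(p,q,\mu)$, with $\mathcal{B}$ matching the exponent appearing in the statement. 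The second factor in the definition of $\overline{a}_0$ is calibrated (up to careful tracking of constants) exactly so that this forces $a\ge\overline{a}_0$, contradicting $a<\overline{a}_0$. Hence $\mathcal{P}_0=\emptyset$.

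For (b), let $F:\mathcal{H}\to\mathbb{R}^2$ be given by $F(u):=(|u|_2^2-a^2,\,P(u))$, which is a $C^1$ map since the Coulomb form is continuous on $\mathcal{H}$. It suffices to check that $dF(u)$ is surjective at every $u\in\mathcal{P}$, after which $\mathcal{P}=F^{-1}(0)$ is a smooth codimension-two submanifold by the regular-level-set theorem. I would test $dF(u)$ against the scaling direction $v:=\tfrac{d}{ds}(s\star u)\big|_{s=0}$. Since $|s\star u|_2=|u|_2$ for every $s\in\mathbb{R}$, the first component of $dF(u)v$ vanishes; since $P(s\star u)=\psi_u'(s)$, the second component equals $\psi_u''(0)$, which is non-zero by (a). This provides the required rank-two condition.

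The main obstacle is step (a): the two identities are highly coupled, and one has to be careful both with the sign bookkeeping (using $p\gamma_p>2$ and $q\gamma_q<1$ to drop the Coulomb contribution in the right direction) and with the precise tracking of Gagliardo--Nirenberg constants, so that the threshold extracted on $a$ lands exactly at the second factor of $\overline{a}_0$. Once $\psi_u''(0)\ne 0$ on $\mathcal{P}$ is established, the manifold statement in (b) is essentially automatic.
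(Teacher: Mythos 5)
Your treatment of $\mathcal{P}_0=\emptyset$ is essentially the paper's own argument: your two linear combinations are, up to a positive factor, the identities \eqref{k9} and \eqref{k12}, and after dropping the nonnegative Coulomb contribution and inserting Gagliardo--Nirenberg you land exactly on \eqref{k11} and \eqref{k13}. One caveat on the final step: the threshold you obtain by eliminating $t=|\nabla u|_2$ is \emph{not} exactly the second factor of $\overline{a}_0$; it equals that factor multiplied by $\bigl[(2/(p\gamma_p))^{2-q\gamma_q}(2/(q\gamma_q))^{p\gamma_p-2}\bigr]^{1/\mathcal{B}}$, so you still must check that this correction is $\geq 1$. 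It is, because $(2-q\gamma_q)\ln\frac{2}{p\gamma_p}+(p\gamma_p-2)\ln\frac{2}{q\gamma_q}>0$ whenever $p\gamma_p>2$ and $0<q\gamma_q<1$ (use $\ln x\leq x-1$), and with that observation the contradiction with $a<\overline{a}_0$ goes through; so part (a) is sound, but the phrase ``lands exactly at the second factor'' should be replaced by this comparison.

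The manifold statement is where your argument has a genuine gap. You test $dF(u)$, with $F(u)=(|u|_2^2-a^2,\,P(u))$, against the scaling direction $v=\frac{d}{ds}(s\star u)\big|_{s=0}=\tfrac{3}{2}u+x\cdot\nabla u$. For a general $u\in\mathcal{P}\subset H^1(\mathbb{R}^3)$ nothing guarantees $x\cdot\nabla u\in L^2(\mathbb{R}^3)$, so $v$ need not belong to $\mathcal{H}$ and the curve $s\mapsto s\star u$ is not $C^1$ into $\mathcal{H}$; hence the chain-rule identity ``second component of $dF(u)v$ equals $\psi_u''(0)$'' is not justified. Moreover, even granting it, a single admissible direction only shows the image of $dF(u)$ contains $\{0\}\times\mathbb{R}$; to get rank two you also need a direction hitting the mass component, e.g. $w=u$, for which $d(|u|_2^2)(u)[u]=2a^2\neq 0$ (this part is a trivial fix). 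The robust route, which is what the paper invokes via Soave's Lemma 5.2, is by contradiction: if $(d(|u|_2^2)(u),dP(u))$ were not surjective, then $dP(u)=\nu\,d(|u|_2^2)(u)$ for some $\nu\in\mathbb{R}$, so $u$ weakly solves $-2\Delta u+(|x|^{-1}\ast|u|^2)u-p\gamma_p|u|^{p-2}u-\mu q\gamma_q|u|^{q-2}u=2\nu u$; the Nehari--Poho\v{z}aev identity for this equation (valid for $H^1$ weak solutions, as in \cite[Proposition 2.1]{2014-JFA-le} and the derivation of \eqref{k4}) reads precisely $\psi_u''(0)=0$, i.e. $u\in\mathcal{P}_0$, contradicting part (a). Replacing your scaling-direction computation by this Lagrange-multiplier/Poho\v{z}aev step closes the gap.
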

\begin{proof}
Suppose on the contrary that there exists $u\in  \mathcal{P}_0$. By \eqref{k8} and \eqref{k4} one has
\begin{align}
&P(u)=\int_{\mathbb{R}^3}|\nabla u|^2dx+\frac{1}{4}\int_{\mathbb{R}^3} \int_{\mathbb{R}^3} \frac{|u(x)|^{2}|u(y)|^2} {|x-y|}dxdy
-\gamma_p\int_{\mathbb{R}^3}|u|^{p}dx
-\mu\gamma_q\int_{\mathbb{R}^3}|u|^{q}dx=0,\label{k5}\\
&\psi_u''(0)=2\int_{\mathbb{R}^3}|\nabla u|^2dx+\frac{1}{4}\int_{\mathbb{R}^3} \int_{\mathbb{R}^3} \frac{|u(x)|^{2}|u(y)|^2} {|x-y|}dxdy
-p\gamma_p^2\int_{\mathbb{R}^3}|u|^{p}dx
-\mu q \gamma_q^2\int_{\mathbb{R}^3}|u|^{q}dx=0.\label{k6}
 \end{align}
  By eliminating  $|\nabla u|_2^2$ from \eqref{k5}  and   \eqref{k6}, we get
  \begin{align}\label{k7}
\mu  (2-q\gamma_q)\gamma_q\int_{\mathbb{R}^3}|u|^{q}dx
= (p \gamma_p-2)\gamma_p\int_{\mathbb{R}^3}|u|^{p}dx
+\frac{1}{4}
\int_{\mathbb{R}^3}\int_{\mathbb{R}^3}\frac{|u(x)|^{2}|u(y)|^2}{|x-y|}dxdy.
 \end{align}
On the one hand, it follows from   \eqref{k5} and   \eqref{k7} that
\begin{align}\label{k9}
\int_{\mathbb{R}^3}|\nabla u|^2dx+\frac{1}{4}\left(1-\frac{1}{2-q\gamma_q}\right)
\int_{\mathbb{R}^3}\int_{\mathbb{R}^3}\frac{|u(x)|^{2}|u(y)|^2} {|x-y|}dxdy
=\gamma_p\left(\frac{p\gamma_p-q\gamma_q}{2-q\gamma_q}\right)\int_{\mathbb{R}^3}|u|^{p}dx.
\end{align}
Since $q\gamma_q<1$ and $p\gamma_p>q\gamma_q$, then by \eqref{k9},
\begin{align}\label{k11}
 |\nabla u|_2^2
 \leq\gamma_p\left(\frac{p\gamma_p-q\gamma_q}{2-q\gamma_q}\right)\int_{\mathbb{R}^3}|u|^{p}dx
 \leq C_p^p\gamma_p\left(\frac{p\gamma_p-q\gamma_q}{2-q\gamma_q}\right)a^{p(1-\gamma_p)}|\nabla u|_2^{p\gamma_p}.
 \end{align}
 On the other  hand, by eliminating  $\int_{\mathbb{R}^3}|u|^{p}dx$ from \eqref{k5}  and   \eqref{k7}, one obtains
  \begin{align}\label{k12}
 \int_{\mathbb{R}^3}|\nabla u|^2dx+\frac{1}{4}\left(1+\frac{1}{p\gamma_p-2}\right)\int_{\mathbb{R}^3}\int_{\mathbb{R}^3} \frac{|u(x)|^{2}|u(y)|^2} {|x-y|}dxdy
 =\mu\gamma_q\left(\frac{p\gamma_p-q\gamma_q}{p\gamma_p-2}\right)\int_{\mathbb{R}^3}|u|^{q}dx.
 \end{align}
 Since $p\gamma_p>2$ and $p\gamma_p> q\gamma_q$, we deduce from \eqref{k12} that
  \begin{align}\label{k13}
 |\nabla u|_2^2
\leq\mu\gamma_q\left(\frac{p\gamma_p-q\gamma_q}{p\gamma_p-2}\right)\int_{\mathbb{R}^3}|u|^{q}dx
 \leq\mu\gamma_q C_q^q\left(\frac{p\gamma_p-q\gamma_q}{p\gamma_p-2}\right) a^{q(1-\gamma_q)}|\nabla u|_2^{q\gamma_q}.
 \end{align}
Hence, from \eqref{k11}, \eqref{k13} and the definition of $\overline{a}_0$, we can get a contradiction based on \cite[Lemma 5.2]{2020Soave}. Thus, $  \mathcal{P}_0=\emptyset$. Moreover,    similar to \cite[Lemma 5.2]{2020Soave}, we obtain that   $\mathcal{P}$ is a smooth manifold of codimension 2 in $\mathcal{H}$.
Thus we complete the proof.
\end{proof}

\begin{lemma}\label{K-Lem2.3}
Assume $\mu>0$ and $a\in(0, \overline{a}_0)$.
Let $q\in(2, \frac{8}{3})$, $p\in (\frac{10}{3}, 6)$ and $\rho_0:=\rho_{a_0}$, for any $u\in S_a$,
the function $\psi_u$  has exactly two critical points $s_u < t_u $ and
two zeros $c_u < d_u$, with $s_u < c_u < t_u < d_u$. Moreover,
\begin{itemize}
  \item [(i)] $s_u\star u\in \mathcal{P}_+$ and $t_u \star u\in \mathcal{P}_-$. If $s\star   u\in \mathcal{P}$, then $s=s_u$ or $s=t_u$.
  \item [(ii)] $|\nabla (s \star u)|_2\leq R_0\leq \rho_0$ for every $s\leq c_u$, $\ln\big(\frac{\rho_0}{|\nabla u|_2}\big)< \ln\big(\frac{R_1}{|\nabla u|_2}\big)\leq d_u$ and
  \begin{align*}
  \mathcal{J}(s_u \star u)&=\min\left\{  \mathcal{J}(s\star u):s\in \mathbb{R} \ \text{and}\ |\nabla (s\star u)|_2\leq R_0 \right\}\\
  &=\min\left\{  \mathcal{J}(s\star u):s\in \mathbb{R} \ \text{and}\ |\nabla (s\star u)|_2\leq \rho_0 \right\}\\
  &<0.
  \end{align*}
  \item [(iii)] $\mathcal{J}(t_u\star u)=\max\left\{  \mathcal{J}(s\star u):s\in \mathbb{R} \right\}>0$,
  and $\psi_u$ is strictly decreasing and concave on $(t_u,+\infty)$. In particular, if $t_u < 0$, then $P(u) <0$.
    \item [(iv)] $u\in S_a\mapsto s_u\in \mathbb{R}$ and $u\in S_a\mapsto t_u\in \mathbb{R}$ are of class $C^1$.
\end{itemize}
  \end{lemma}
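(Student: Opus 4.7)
The strategy is to adapt the Soave-type fiber-map analysis of \cite{2020Soave} to accommodate the extra nonlocal term. Writing $\psi_u(s)$ from \eqref{k8} as a sum of four exponentials with exponents $q\gamma_q<1<2<p\gamma_p$ (the first two carrying positive coefficients and the last two negative), the Gagliardo-Nirenberg inequality \eqref{k10} together with the positivity of the Coulomb integral $D(u):=\int_{\mathbb{R}^3}\int_{\mathbb{R}^3}|u(x)|^2|u(y)|^2/|x-y|\,dxdy$ yields the strict pointwise bound $\psi_u(s)>h(e^s|\nabla u|_2)$ for every $s\in\mathbb{R}$. Since $a<\overline{a}_0\leq a_0$, Lemma~\ref{K-Lem2.1} gives $h>0$ on an interval $(R_0,R_1)$ with $R_0<\rho_0<R_1$, so $\psi_u$ is strictly positive whenever $e^s|\nabla u|_2\in(R_0,R_1)$. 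The boundary behavior is read off from the dominant term: as $s\to-\infty$ the $q\gamma_q$-term dominates, so $\psi_u(s),\psi'_u(s)\to 0^-$; as $s\to+\infty$ the $p\gamma_p$-term dominates, so $\psi_u(s),\psi'_u(s)\to-\infty$. Together with the positivity of $\psi_u$ somewhere, this already forces $\psi'_u$ to vanish at least twice, with a strict local minimum preceding a strict local maximum.

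For the uniqueness part of (i), I would set $x=e^s$ and recast $\psi'_u(s)=0$ as $\Phi(x)=\mu\gamma_q|u|_q^q$, where
\[
\Phi(x):=|\nabla u|_2^2\,x^{2-q\gamma_q}+\tfrac{1}{4}D(u)\,x^{1-q\gamma_q}-\gamma_p|u|_p^p\,x^{p\gamma_p-q\gamma_q}.
\]
The key algebraic observation is that $x^{q\gamma_q}\Phi'(x)$ equals
\[
(2-q\gamma_q)|\nabla u|_2^2\,x+\tfrac{1-q\gamma_q}{4}D(u)-\gamma_p(p\gamma_p-q\gamma_q)|u|_p^p\,x^{p\gamma_p-1},
\]
whose own derivative is strictly decreasing on $(0,\infty)$ because $p\gamma_p-2>0$. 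Hence $\Phi'$ admits a unique zero on $(0,\infty)$, so $\Phi$ is strictly unimodal with $\Phi(0^+)=0^+$ and $\Phi(\infty)=-\infty$; consequently $\Phi(x)=\mu\gamma_q|u|_q^q$ has at most two positive solutions, yielding exactly two critical points $s_u<t_u$ of $\psi_u$. Since $\mathcal{P}_0=\emptyset$ by Lemma~\ref{K-Lem2.2}, these are a strict local minimum and a strict local maximum, so $s_u\star u\in\mathcal{P}_+$, $t_u\star u\in\mathcal{P}_-$, and any other $s\star u\in\mathcal{P}$ satisfies $s\in\{s_u,t_u\}$, giving (i).

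The zeros $c_u<d_u$ with $s_u<c_u<t_u<d_u$ follow from the intermediate value theorem applied to the sign pattern $\psi_u(s_u)<0<\psi_u(t_u)$ together with the limits at $\pm\infty$. For (ii), any $s\leq c_u$ satisfies $\psi_u(s)\leq 0$; the lower bound $\psi_u\geq h$ then forces $e^s|\nabla u|_2\notin(R_0,R_1)$, and continuity combined with $e^s|\nabla u|_2\to 0$ as $s\to-\infty$ traps it in $[0,R_0]$. The bound $\psi_u(\ln(R_1/|\nabla u|_2))\geq h(R_1)=0$ places $\ln(R_1/|\nabla u|_2)$ in $[c_u,d_u]$, and since $\rho_0<R_1$ one has $\ln(\rho_0/|\nabla u|_2)<\ln(R_1/|\nabla u|_2)\leq d_u$; the minimization identities on $D_{\rho_0}$ then follow because $\{|\nabla(s\star u)|_2\leq\rho_0\}=(-\infty,s_*]$ with $c_u<s_*<d_u$, on which $\psi_u$ attains its negative minimum at $s_u$. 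Claim (iii) is immediate from the alternating critical-point structure and the eventual strict negativity of $\psi''_u$ on $(t_u,\infty)$ (dominated by $-p\gamma_p^2|u|_p^p e^{p\gamma_p s}$); if $t_u<0$, then $P(u)=\psi'_u(0)<0$ because $\psi'_u<0$ beyond $t_u$. Finally, (iv) follows from the implicit function theorem applied to $(u,s)\mapsto\psi'_u(s)$, since $\psi''_u(s_u)>0$ and $\psi''_u(t_u)<0$. The chief obstacle is the uniqueness in Step~2: ruling out further oscillations is delicate because the Coulomb term contributes a fourth exponential to $\psi'_u$, and the multiplication by $x^{q\gamma_q}$ is the crucial simplification unavailable in the purely local framework of \cite{2020Soave}.
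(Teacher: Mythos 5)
Your argument is correct and follows essentially the same route as the paper: the lower bound $\psi_u(s)\ge h(e^s|\nabla u|_2)$ combined with Lemma \ref{K-Lem2.1} and the limits $\psi_u(-\infty)=0^-$, $\psi_u(+\infty)=-\infty$ gives at least two critical points, while dividing $\psi_u'$ by an exponential reduces $\psi_u'(s)=0$ to a unimodal-function-equals-constant equation (the paper divides by $e^{s}$ and shows its auxiliary function is decreasing-then-increasing; you divide by $e^{q\gamma_q s}$ and show $\Phi$ is increasing-then-decreasing), after which $\mathcal{P}_0=\emptyset$ and the Soave-type arguments give (i)--(iv). The only quick spot is your justification of concavity on $(t_u,+\infty)$ — dominance of the $p$-term gives $\psi_u''<0$ only for large $s$; one can instead write $\psi_u''(s)=q\gamma_q e^{q\gamma_q s}\bigl(\Phi(e^s)-\mu\gamma_q|u|_q^q\bigr)+e^{(q\gamma_q+1)s}\Phi'(e^s)<0$ for $s>t_u$ using your unimodality of $\Phi$ — which is precisely the portion the paper also delegates to \cite[Lemma 5.3]{2020Soave}.
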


 \begin{proof}
Since $s\star u\in \mathcal{P}\Leftrightarrow \psi_u'(s)=0$, then we first show that $\psi_u$ has at least two critical points.
Note
$$
J(u)\geq \frac{1}{2}|\nabla u|_2^2-\mu \frac{C_q^q}{q} a^{q(1-\gamma_q)}|\nabla u|_2^{q\gamma_q}-\frac{C_p^p}{p} a^{p(1-\gamma_p)}|\nabla u|_2^{p\gamma_p}:=h(|\nabla u|_2),
$$
then
$$
\psi_u(s)=J(s\star u)\geq h(|\nabla (s\star u)|_2)=h(e^s|\nabla u|_2).
$$
By Lemma \ref{K-Lem2.1}, we have $h(t)>0\Leftrightarrow t\in(R_0, R_1)$, which implies $\psi_u>0 $ on $(\log(\frac{R_0}{|\nabla u|_2}), \log(\frac{R_1}{|\nabla u|_2}))$. Furthermore, $\psi_u(-\infty)=0^-$ and $\psi_u(+\infty)=-\infty$. Hence, $\psi_u$ has at least two critical points $s_u<t_u$, where $s_u$ is a local minimum point on $(-\infty,\log(\frac{R_0}{|\nabla u|_2}))$ at negative level, and $t_u > s_u$ is a global maximum point at positive level.
Next, we prove that $\psi_u$ has at most two critical points. In other words, we show that $\psi_u'(s)=0$  has at most two  solutions.
Let $\psi_u'(s)=0$, namely,
$$
e^{s}\int_{\mathbb{R}^3}|\nabla u|^2dx+\frac{1}{4} \int_{\mathbb{R}^3} \int_{\mathbb{R}^3} \frac{|u(x)|^{2}|u(y)|^2} {|x-y|}dxdy
-\gamma_p e^{(p\gamma_p-1)s} \int_{\mathbb{R}^3}|u|^{p}dx
-\mu\gamma_q e^{(q\gamma_q-1)s}
\int_{\mathbb{R}^3}|u|^{q}dx=0.
$$
Define
$$
f_1(s)=\gamma_p e^{(p\gamma_p-1)s} \int_{\mathbb{R}^3}|u|^{p}dx
+\mu\gamma_q e^{(q\gamma_q-1)s}
\int_{\mathbb{R}^3}|u|^{q}dx-e^{s}\int_{\mathbb{R}^3}|\nabla u|^2dx,
$$
then
$$
f_1'(s)=e^sf_2(s),
$$
where
$$
f_2(s)=\gamma_p(p\gamma_p-1)e^{(p\gamma_p-2)s} \int_{\mathbb{R}^3}|u|^{p}dx+\mu\gamma_q(q\gamma_q-1)e^{(q\gamma_q-2)s}
\int_{\mathbb{R}^3}|u|^{q}dx-\int_{\mathbb{R}^3}|\nabla u|^{2}dx.
$$
Clearly, $f_2'(s)>0$ for all $s\in \mathbb{R}$, then $f_2$ is strictly increasing on $\mathbb{R}$.
Moreover, since $f_2(-\infty)=-\infty$ and $f_2(+\infty)=+\infty$, there exists a unique $\widetilde{s}\in \mathbb{R}$ such that $f_2(\widetilde{s})=0$, and then $f_1'(\widetilde{s})=0$. Furthermore, $f_1$ is strictly decreasing on $(-\infty, \widetilde{s})$ and is  strictly increasing on $( \widetilde{s}, +\infty)$. Since $f_1(-\infty)=+\infty$    and $f_1(+\infty)=+\infty$,  then the equation
$$
f_1(s)= \frac{1}{4} \int_{\mathbb{R}^3} \int_{\mathbb{R}^3} \frac{|u(x)|^{2}|u(y)|^2} {|x-y|}dxdy
$$
has at most two  solutions. That is,
$\psi_u'(s)=0$  has at most two  solutions. Therefore,  $\psi_u$ has exactly two critical points $s_u< t_u\in \mathbb{R}$. Moreover, $\psi_{s_u\star u}''(0)=\psi_{ u}''(s_u)\geq 0$ and $\mathcal{P}_0=\emptyset$ imply $s_u\star u\in \mathcal{P}_+$. Similarly, $t_u\star u\in \mathcal{P}_-$. The remains are similar to the proof of \cite[Lemma 5.3]{2020Soave}. We finish the proof.
\end{proof}

Let $A_k:=\left\{u\in \mathcal{H}: |\nabla u|_2\leq k\right\}$  and
\begin{align}\label{k91}
D_{\rho_0}=A_{\rho_0}\cap S_a.
\end{align}
We shall consider the local minimization problem
$$
 m(a):=\inf_{D_{\rho_0}}\mathcal{J}(u).
 $$
From Lemma \ref{K-Lem2.3}, we have

\begin{corollary}\label{K-Lem2.4}
Assume that $\mu>0$,  $a\in(0, \overline{a}_0)$, $q\in(2, \frac{8}{3})$ and $p\in (\frac{10}{3}, 6)$.
Then $\mathcal{P}_+\subset D_{\rho_0}$ and $\sup\limits_{\mathcal{P}_+} \mathcal{J}(u)\leq 0\leq \inf\limits_{\mathcal{P}_-} \mathcal{J}(u)$.
\end{corollary}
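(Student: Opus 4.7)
The plan is to leverage Lemma \ref{K-Lem2.3} directly, exploiting the identification of elements in $\mathcal{P}_\pm$ with the two critical points $s_u < t_u$ of the fiber map $\psi_u$.

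First, I would observe that for any $u \in \mathcal{P}_+$, since $u \in \mathcal{P}$ corresponds to $\psi_u'(0) = 0$, the real number $s = 0$ must be a critical point of $\psi_u$. By Lemma \ref{K-Lem2.3}, the only two critical points are $s_u$ (a local minimum with $\psi_u''(s_u) \geq 0$, actually $>0$ since $\mathcal{P}_0 = \emptyset$ by Lemma \ref{K-Lem2.2}) and $t_u$ (a strict maximum with $\psi_u''(t_u) < 0$). Since $u \in \mathcal{P}_+$ means $\psi_u''(0) > 0$, we must have $0 = s_u$, i.e.\ $u = s_u \star u$. Then Lemma \ref{K-Lem2.3}(ii) yields $|\nabla u|_2 \leq R_0 \leq \rho_0$ (use the dichotomy $s_u < c_u$ and $|\nabla(s\star u)|_2 \leq R_0$ for $s \leq c_u$), so $u \in A_{\rho_0} \cap S_a = D_{\rho_0}$. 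This establishes $\mathcal{P}_+ \subset D_{\rho_0}$.

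Next, still using $u = s_u \star u$ for $u \in \mathcal{P}_+$, Lemma \ref{K-Lem2.3}(ii) gives $\mathcal{J}(u) = \mathcal{J}(s_u \star u) < 0$, so $\sup_{\mathcal{P}_+} \mathcal{J} \leq 0$. By a symmetric argument, for any $u \in \mathcal{P}_-$ one obtains $0 = t_u$, i.e.\ $u = t_u \star u$, and Lemma \ref{K-Lem2.3}(iii) gives $\mathcal{J}(u) = \mathcal{J}(t_u \star u) > 0$, so $\inf_{\mathcal{P}_-} \mathcal{J} \geq 0$.

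There is really no substantive obstacle here; the corollary is essentially a bookkeeping consequence of Lemma \ref{K-Lem2.3}. The only point requiring care is verifying that the natural identification $u \in \mathcal{P}_\pm \iff 0 \in \{s_u, t_u\}$ is forced by $\mathcal{P}_0 = \emptyset$ (Lemma \ref{K-Lem2.2}), which rules out the degenerate case $\psi_u''(0) = 0$ and guarantees the sign of $\psi_u''$ distinguishes $s_u$ from $t_u$ unambiguously.
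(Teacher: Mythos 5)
Your argument is correct and is exactly the reasoning the paper intends: the corollary is stated in the paper as an immediate consequence of Lemma \ref{K-Lem2.3}, and your identification of $u\in\mathcal{P}_{\pm}$ with $0=s_u$ or $0=t_u$ (using $\mathcal{P}_0=\emptyset$ from Lemma \ref{K-Lem2.2} to rule out ambiguity), followed by items (ii) and (iii) of that lemma, is precisely that bookkeeping. No gap.
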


\begin{lemma}\label{K-Lem2.5}
Let $\mu>0$,  $a\in(0, \overline{a}_0)$, $q\in(2, \frac{8}{3})$ and $p\in (\frac{10}{3}, 6)$.
Then $-\infty<m(a)<0<\inf\limits_{\partial D_{\rho_0}}\mathcal{J}(u)$ and $m(a)=\inf\limits_{\mathcal{P}}\mathcal{J}(u)=\inf\limits_{\mathcal{P}_+}\mathcal{J}(u)$.
Moreover, $m(a)<\inf\limits_{\overline{D_{\rho_0}}\backslash D_{\rho_0-\epsilon}} \mathcal{J}(u)$ for $\epsilon>0$ small enough.
\end{lemma}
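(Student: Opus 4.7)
The plan is to leverage the auxiliary one-variable function $h$ from Lemma \ref{K-Lem2.1} together with the fibering analysis of Lemma \ref{K-Lem2.3} to sandwich $m(a)$ between the interior minima of $\mathcal{J}$ and its strictly positive boundary values on $\partial D_{\rho_0}$. By the positivity of the Coulomb term (Lemma \ref{gle4}(1)) combined with the Gagliardo--Nirenberg inequality \eqref{k10}, I get the pointwise control $\mathcal{J}(u)\geq h(|\nabla u|_2)$ for every $u\in S_a$. Restricting to $u\in D_{\rho_0}$, continuity of $h$ on the compact interval $[0,\rho_0]$ immediately gives $m(a)>-\infty$. Restricting instead to $\partial D_{\rho_0}$, the monotonicity of $a\mapsto f(a,\cdot)$ used in the proof of Lemma \ref{K-Lem2.1} yields $f_a(\rho_0)>f_{a_0}(\rho_0)=0$ for $a\in(0,\overline{a}_0)$, so $h(\rho_0)=\rho_0^2 f_a(\rho_0)>0$, and therefore $\inf_{\partial D_{\rho_0}}\mathcal{J}\geq h(\rho_0)>0$. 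The strict inequality $m(a)<0$ follows at once: fixing any $u\in S_a$, Lemma \ref{K-Lem2.3}(ii) together with Corollary \ref{K-Lem2.4} guarantees that $s_u\star u\in\mathcal{P}_+\subset D_{\rho_0}$ and $\mathcal{J}(s_u\star u)<0$.

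For the identity $m(a)=\inf_{\mathcal{P}_+}\mathcal{J}$ I would set up a two-sided comparison. The inclusion $\mathcal{P}_+\subset D_{\rho_0}$ gives $m(a)\leq\inf_{\mathcal{P}_+}\mathcal{J}$ at once. For the reverse direction, given any $u\in D_{\rho_0}$, the condition $|\nabla u|_2\leq\rho_0$ places $s=0$ in the admissible range of the minimization described at the bottom of Lemma \ref{K-Lem2.3}(ii), hence
\begin{align*}
\mathcal{J}(u)=\psi_u(0)\geq \min\bigl\{\mathcal{J}(s\star u):s\in\mathbb{R},\ |\nabla(s\star u)|_2\leq \rho_0\bigr\}=\mathcal{J}(s_u\star u)\geq \inf_{\mathcal{P}_+}\mathcal{J},
\end{align*}
and taking the infimum over $u\in D_{\rho_0}$ produces $m(a)\geq\inf_{\mathcal{P}_+}\mathcal{J}$. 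To promote the equality to all of $\mathcal{P}$, I invoke Lemma \ref{K-Lem2.2} ($\mathcal{P}_0=\emptyset$, so $\mathcal{P}=\mathcal{P}_+\sqcup\mathcal{P}_-$) together with the sign separation $\sup_{\mathcal{P}_+}\mathcal{J}\leq 0\leq \inf_{\mathcal{P}_-}\mathcal{J}$ from Corollary \ref{K-Lem2.4}; combined with $m(a)<0$ this forces $\inf_{\mathcal{P}}\mathcal{J}=\inf_{\mathcal{P}_+}\mathcal{J}=m(a)$.

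Finally, for the strict separation $m(a)<\inf_{\overline{D_{\rho_0}}\setminus D_{\rho_0-\epsilon}}\mathcal{J}$, I again rely on $\mathcal{J}(u)\geq h(|\nabla u|_2)$. Since $h$ is continuous and $h(\rho_0)>0$, there exists $\epsilon_0>0$ with $h(t)\geq \tfrac{1}{2}h(\rho_0)>0$ for every $t\in[\rho_0-\epsilon_0,\rho_0]$. Then for any $\epsilon\in(0,\epsilon_0)$ and any $u\in \overline{D_{\rho_0}}\setminus D_{\rho_0-\epsilon}$ one has $\mathcal{J}(u)\geq \tfrac{1}{2}h(\rho_0)>0>m(a)$, which is the desired bound. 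I do not foresee a serious obstacle; the only subtle point is making sure that the minimization characterization in Lemma \ref{K-Lem2.3}(ii) is applied with the admissible set $\{|\nabla(s\star u)|_2\leq\rho_0\}$ (rather than the tighter $\leq R_0$), which is exactly how it is stated and which is precisely what makes the second paragraph's sandwiching clean.
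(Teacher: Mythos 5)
Your proof is correct and is essentially the argument the paper intends: the paper omits the details, deferring to \cite[Lemma 2.4]{2022-JMPA-jean} and \cite[Lemma 5.3]{2020Soave}, and those proofs rest on exactly your ingredients, namely the bound $\mathcal{J}(u)\geq h(|\nabla u|_2)$ on $S_a$, the characterization of $\mathcal{J}(s_u\star u)$ in Lemma \ref{K-Lem2.3}(ii) with the admissible set $\{|\nabla(s\star u)|_2\leq\rho_0\}$, and the decomposition $\mathcal{P}=\mathcal{P}_+\cup\mathcal{P}_-$ with the sign separation of Corollary \ref{K-Lem2.4}. No gaps to report.
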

\begin{proof}
The proof is similar to \cite[Lemma 2.4]{2022-JMPA-jean} and \cite[Lemma 5.3]{2020Soave}, we omit the details.
\end{proof}

\section{The local minimizer of the case $\mu>0$, $q\in(2,\frac{8}{3})$ and $p\in(\frac{10}{3},6)$}\label{sec1}
To solve the case of $\mu>0$, $q\in(2, \frac{8}{3} )$ and $p\in (\frac{10}{3}, 6)$,  we use the ideas of \cite{2011JFA-BS}.
Firstly, we present two definitions, which are different from \cite{2011JFA-BS}
since  we do not directly consider the
minimization problem on $S_a$ as in \cite{2011JFA-BS}.

\begin{definition}\label{K-def1}
  Let $u\in \mathcal{H} $ and $u\neq0$. A continuous path $g_u: \theta\in \mathbb{R}^+\mapsto g_u(\theta)\in \mathcal{H}$ such that $g_u(1)=u$  is said to be a scaling path of $u$ if $|\nabla g_u(\theta)|_2^2\rightarrow |\nabla u|_2^{2}$ as $ \theta\rightarrow 1$,
$\Theta_{g_u}(\theta):=|g_u(\theta)|_2^2|u|_2^{-2}$ is differentiable and $\Theta_{g_u}'(1)\neq 0$.
 We denote with $\mathcal{G}_u$ the set of the scaling paths of $u$.
\end{definition}
\newpage

The set $\mathcal{G}_u$ is nonempty. For example, $g_u( \theta) = \theta u\in \mathcal{G}_u$, since $ \Theta_{g_u}(\theta)= \theta^ 2$.
Also $g_u( \theta)= u( \frac{x}{\theta })$ is an element of $\mathcal{G}_u$ since  $ \Theta_{g_u}(\theta)= \theta^3$.
As we will see, it is relevant to consider the family of scaling paths of $u$ parametrized with  $\iota\in \mathbb{R}$ given by
\begin{align*}
 \mathcal{G}_u^{\iota}=\left\{g_u(\theta): g_u(\theta)=\theta^{1-\frac{3}{2}\iota} u(\frac{x}{\theta^{\iota}}) \right\}\subset \mathcal{G}_u.
\end{align*}

\begin{definition}\label{K-def2}
 Let  $u\neq0$ be fixed and $g_u\in \mathcal{G}_u$. We say that   the scaling path $g_u$ is admissible for
the functional $\mathcal{J}$ if $h_{g_u}$ is a differentiable function, where
$h_{g_u}(\theta)= \mathcal{J}(g_u(\theta) )- \Theta_{g_u}(\theta)\mathcal{J}(u)$ for all $\theta\in \mathbb{R}^+$.
\end{definition}

The following result  is crucial for handling the dichotomy case.

\begin{lemma}\label{K-Lem2.7}
Assume that
 \begin{align}
 &-\infty<m(s)<0\ \text{for all} \ s\in(0,\overline{a}_0),\label{k51}\\
 &s\in(0,\overline{a}_0)\mapsto m(s)\ \text{is continuous},\label{k52}\\
 &\lim_{s\rightarrow 0}\frac{m(s)}{s^2}=0.\label{k53}
 \end{align}
Then for any $a\in (0, \overline{a}_0)$, there exists $a_1\in(0, a]$ such that $ m(a_1)<m(\beta)+m(\sqrt{a_1^2-\beta^2})$ for any $\beta\in (0, a_1)$.
\end{lemma}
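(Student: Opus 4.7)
The natural quantity to study is $y(s) := m(s)/s^{2}$ on $(0,\overline{a}_0)$: the identity $\beta^{2} + (\sqrt{a_1^{2} - \beta^{2}})^{2} = a_1^{2}$ will convert a pointwise bound on $y$ into a subadditivity statement for $m$. By \eqref{k51}--\eqref{k53}, $y$ is continuous on $(0,\overline{a}_0)$, strictly negative there, and satisfies $y(s)\to 0$ as $s\to 0^+$. The plan is to choose $a_1\in(0,a]$ to be the \emph{smallest} point at which $y$ attains its infimum on $(0,a]$, and to exploit the strict inequality $y(s)>y(a_1)$ for $s<a_1$.

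\textbf{Step 1 (attainment and smallest minimizer).} Fix $a\in(0,\overline{a}_0)$. Because $y(a)<0$ and $y(s)\to 0$ as $s\to 0^+$, one can choose $\delta\in(0,a)$ such that $y(s)>\tfrac{1}{2}y(a)$ for all $s\in(0,\delta)$; in particular $\inf_{(0,a]}y=\inf_{[\delta,a]}y$, and the latter infimum is attained by continuity of $y$ on the compact interval $[\delta,a]$. The set
\[
S:=\bigl\{s\in[\delta,a]:y(s)=\inf\nolimits_{(0,a]}y\bigr\}
\]
is nonempty, closed (by continuity of $y$), and contained in $[\delta,a]$, so $a_1:=\min S\in(0,a]$ is well defined. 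By the minimality of $a_1$ in $S$, one has the strict inequality $y(s)>y(a_1)$ for every $s\in(0,a_1)$.

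\textbf{Step 2 (strict subadditivity).} Let $\beta\in(0,a_1)$ be arbitrary and set $\gamma:=\sqrt{a_1^{2}-\beta^{2}}$, so that $\gamma\in(0,a_1)$ and $\beta^{2}+\gamma^{2}=a_1^{2}$. By Step 1, $y(\beta)>y(a_1)$ and $y(\gamma)>y(a_1)$; multiplying by $\beta^{2}>0$ and $\gamma^{2}>0$ respectively and summing yields
\[
m(\beta)+m\bigl(\sqrt{a_1^{2}-\beta^{2}}\bigr)=\beta^{2}y(\beta)+\gamma^{2}y(\gamma)>(\beta^{2}+\gamma^{2})\,y(a_1)=a_1^{2}y(a_1)=m(a_1),
\]
which is exactly the claimed inequality. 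The only delicate point in the plan is the passage to the \emph{smallest} minimizer of $y$: an arbitrary minimizer would only give the non-strict inequality, since if $y$ happened to be constant on some subinterval one could have $y(\beta)=y(a_1)$. Selecting $a_1=\min S$ forces $\beta,\gamma\in(0,a_1)$ to lie in the region where $y>y(a_1)$ strictly, and that is precisely what produces the strict splitting bound.
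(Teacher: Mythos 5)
Your proof is correct and follows essentially the same strategy as the paper: study $y(s)=m(s)/s^{2}$, pick a point $a_1$ to the left of which $y$ is strictly larger than $y(a_1)$, and conclude by the splitting $m(a_1)=\beta^{2}y(a_1)+(a_1^{2}-\beta^{2})y(a_1)$. The only (harmless) difference is the choice of $a_1$: you take the smallest minimizer of $y$ on $(0,a]$, while the paper takes the smallest point where $y(s)=y(a)$ and then rules out a smaller minimizer via continuity and $y(s)\to 0$ as $s\to 0$; both yield the same key strict inequality.
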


\begin{proof}
 Let  $a\in (0, \overline{a}_0)$ fixed and define
 $$
 a_1=\min\left\{s\in[0,a]: \frac{m(s)}{s^2}=\frac{m(a)}{a^2}\right\}.
 $$
From \eqref{k52} and \eqref{k53}, $a_1>0$ follows. We claim the function $s\mapsto \frac{m(s)}{s^2}$ in the interval $[0, a_1]$ achieves minimum only at $s=a_1$.
If the claim holds, then for any $\beta\in (0, a_1)$, we get $\frac{\beta^2}{a_1^2}m(a_1)<m(\beta)$ and $\frac{a_1^2-\beta^2}{a_1^2}m(a_1)< m(\sqrt{a_1^2-\beta^2})$. Hence,
 $$
 m(a_1)=\frac{\beta^2}{a_1^2}m(a_1)+\frac{a_1^2-\beta^2}{a_1^2}m(a_1)<m(\beta)
 +m\Big(\sqrt{a_1^2-\beta^2}\Big).
 $$
 So, we next verify the claim. In fact, suppose that there exists $a_{\ast}<a_1$ such that $\frac{m(s)}{s^2}$ achieves minimum at $s=a_{\ast}$.
 Then we have $\frac{m(a_{\ast})}{a_{\ast}^2}< \frac{m(a_{1})}{a_{1}^2}= \frac{m(a)}{a^2}<0$. Thus, from \eqref{k51} and  \eqref{k53}, there exists $\overline{a}<a_1$ such that $ \frac{m(\overline{a})}{{\overline{a}}^2}=\frac{m(a_{1})}{a_{1}^2}= \frac{m(a)}{a^2} $, which contradicts the definition of $a_1$. Therefore, we complete the proof.
\end{proof}

To obtain the strong subadditivity inequality $m(a)<m(a_1)+m\big(\sqrt{a^2-a_1^2}\big)$ for any $a_1\in (0, a)$,
in the following we will show that the function $s\mapsto \frac{m(s)}{s^2}$  is monotone decreasing for $s\in(0, a]$.

\begin{lemma}\label{K-Lem2.8}(Avoiding dichotomy)
Let \eqref{k51}, \eqref{k52} and \eqref{k53}   hold.  Then  for any $a\in(0, \overline{a}_0)$,   the set
 \begin{align*}
M(a)=\cup_{\rho\in (0, a]}\left\{ u\in S_{\rho}\cap A_{\rho_0}: \mathcal{J}(u)=m(\rho)\right\}\neq\emptyset.
 \end{align*}
If in addition,
\begin{align}\label{k54}
\forall \ u\in M(a),\ \exists\ g_u\in \mathcal{G}_u\ \text{admissible, such that}\ \frac{d}{d \theta}h_{g_u}(\theta)|_{\theta=1}\neq 0,
\end{align}
then  the function $s\mapsto \frac{m(s)}{s^2}$  is monotone decreasing for $s\in(0, a]$.
\end{lemma}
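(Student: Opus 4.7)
The plan for the first assertion is to use Lemma~\ref{K-Lem2.7} to pick $a_1\in(0,a]$ realizing the strong subadditivity $m(a_1)<m(\beta)+m(\sqrt{a_1^2-\beta^2})$ for every $\beta\in(0,a_1)$, and then to produce a minimizer of $m(a_1)$ via Lions' concentration-compactness applied to $\{|u_n|^2\}$ for a minimizing sequence $\{u_n\}\subset D_{\rho_0}\cap S_{a_1}$. Thanks to Lemma~\ref{K-Lem2.5} the sequence eventually lies in the strict interior of $D_{\rho_0}$ and is therefore bounded in $\mathcal{H}$. Vanishing is ruled out because the positivity of the Coulomb term from Lemma~\ref{gle4}(1) together with $L^p$- and $L^q$-vanishing would force $\liminf_n\mathcal{J}(u_n)\geq 0>m(a_1)$; dichotomy is ruled out because the long-range Coulomb cross-interaction decays under translation of the two pieces and yields $m(a_1)\geq m(\beta)+m(\sqrt{a_1^2-\beta^2})$ for some $\beta\in(0,a_1)$, contradicting the choice of $a_1$. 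In the remaining compactness case, a translated subsequence converges strongly in $L^r(\mathbb{R}^3)$ for every $r\in[2,6)$ and, using weak lower semicontinuity of the gradient norm together with Lemma~\ref{gle4}(3)-(4), produces a limit $u\in A_{\rho_0}\cap S_{a_1}$ with $\mathcal{J}(u)=m(a_1)$; since $a_1\in(0,a]$, this $u$ belongs to $M(a)$.

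Turning to the second assertion, set $\phi(s):=m(s)/s^2$; by \eqref{k51}-\eqref{k53} this function extends continuously to $[0,a]$ with $\phi(0)=0$ and $\phi(s)<0$ for $s\in(0,a]$. The key input is the following scaling estimate: whenever $\rho\in(0,a]$ and $u\in A_{\rho_0}\cap S_\rho$ attains $m(\rho)$, and $g_u$ is the admissible path supplied by \eqref{k54} with $h_{g_u}'(1)\neq 0$, there is $\sigma$ arbitrarily close to $\rho$ with $\phi(\sigma)<\phi(\rho)$. Indeed, $h_{g_u}(1)=0$ and $h_{g_u}'(1)\neq 0$ force $h_{g_u}(\theta)<0$ on one side of $1$; for such $\theta$ close to $1$, the strict interior bound $|\nabla u|_2<\rho_0$ coming from Lemma~\ref{K-Lem2.5} gives $g_u(\theta)\in A_{\rho_0}\cap S_\sigma$ with $\sigma^2:=\Theta_{g_u}(\theta)\rho^2$, so
\begin{align*}
m(\sigma)\leq \mathcal{J}(g_u(\theta))<\Theta_{g_u}(\theta)\mathcal{J}(u)=\Theta_{g_u}(\theta)m(\rho),
\end{align*}
and dividing by $\sigma^2$ yields $\phi(\sigma)<\phi(\rho)$.

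To conclude strict monotonicity, fix $s\in(0,a]$ and set $\bar{s}:=\inf\{t\in(0,s]:\phi(t)=\min_{[0,s]}\phi\}$; continuity of $\phi$ on the compact interval $[0,s]$ together with $\phi(0)=0>\phi$ on $(0,s]$ shows $\bar{s}\in(0,s]$ and $\phi(\bar{s})=\min_{[0,s]}\phi$. I plan to prove $\bar{s}=s$: the infimum condition then forces $\phi(s)<\phi(t)$ for every $t\in(0,s)$, and strict monotonicity follows by arbitrariness of $s$. Suppose instead $\bar{s}<s$. A repeat of the concentration-compactness argument at mass $\bar{s}$ shows that $m(\bar{s})$ is attained: vanishing is excluded as above, and if dichotomy occurred it would give $m(\bar{s})\geq m(\alpha)+m(\beta)$ with $\alpha^2+\beta^2=\bar{s}^2$ and $\alpha,\beta\in(0,\bar{s})\subset(0,s]$, while
\begin{align*}
m(\alpha)+m(\beta)=\alpha^2\phi(\alpha)+\beta^2\phi(\beta)\geq(\alpha^2+\beta^2)\phi(\bar{s})=m(\bar{s})
\end{align*}
by the minimality of $\bar{s}$ on $(0,s]$; the resulting equalities would then force $\phi(\alpha)=\phi(\bar{s})$ with $\alpha<\bar{s}$, contradicting the definition of $\bar{s}$. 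The minimizer so obtained lies in $M(a)$, so the scaling estimate supplies $\sigma$ close to $\bar{s}$ with $\phi(\sigma)<\phi(\bar{s})$; picking $\theta$ sufficiently close to $1$ keeps $\sigma\in(0,s)$ thanks to $\bar{s}<s$, contradicting $\phi(\bar{s})=\min_{[0,s]}\phi$. The main obstacle I anticipate is the rigorous execution of concentration-compactness for the Coulomb-type energy: one must verify that translating the two pieces to infinite separation sends the nonlocal cross term $\iint|v_n(x)|^2|w_n(y)|^2|x-y|^{-1}dxdy$ to zero, a point where the argument depends most delicately on the decay of $|x|^{-1}$ and on uniform $L^{12/5}$ estimates.
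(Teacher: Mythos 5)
Your proposal is correct and follows the same overall strategy as the paper: attainment of the constrained infimum at a suitable mass via a compactness argument powered by the (strict) subadditivity encoded in the minimality structure of $s\mapsto m(s)/s^2$, followed by the scaling-path condition \eqref{k54} applied at a minimizer to rule out an interior smallest minimum point, exactly as in the paper's treatment of $b_0<b$. Where you deviate is in the implementation of compactness: the paper never runs the full Lions trichotomy with spatial cutoffs, but instead translates, extracts a weak limit $v\neq0$, and uses the Br\'ezis--Lieb splitting (including \cite[Lemma 2.2]{2008-JMAA-ZHAO} for the Coulomb term) to get $\mathcal{J}(v_n)=\mathcal{J}(v_n-v)+\mathcal{J}(v)+o_n(1)$, from which $|v|_2<a_1$ would contradict the strict subadditivity of Lemma \ref{K-Lem2.7}; this buys a cleaner argument, since the remainder $v_n-v$ automatically satisfies the gradient constraint for large $n$, whereas in your explicit dichotomy the truncated pieces may exceed $\rho_0$ by a vanishing amount, so the inequalities $\mathcal{J}(v_n)\geq m(|v_n|_2)$ need a small repair (e.g.\ a dilation $s\star v_n$ with $s\to0$, or passing to weak limits as the paper does). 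Also, the point you flag as most delicate -- decay of the Coulomb cross term under separation -- is actually harmless here: that term is nonnegative and enters $\mathcal{J}$ with a plus sign, so it can simply be discarded for the lower bound. For the second assertion, your argument is the contrapositive of the paper's: the paper shows $h_{g_w}\geq0$ near $\theta=1$, hence $h_{g_w}'(1)=0$ for every admissible path, contradicting \eqref{k54}, while you use the path from \eqref{k54} to force $h_{g_u}<0$ on one side of $1$ and hence $\phi(\sigma)<\phi(\bar s)$ nearby; the inequality chain is identical, and your equality-forcing exclusion of dichotomy at $\bar s$ is a mild repackaging of the paper's strict subadditivity at $b_0$. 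One small point to make explicit in both halves: the strict interior bound $|\nabla w|_2\leq\rho_0-\epsilon$ for the minimizer at mass $\bar s$ comes from applying Lemma \ref{K-Lem2.5} at that mass (legitimate because $\rho_0$ is independent of the mass and $\bar s<\overline a_0$), which is also how the paper keeps $g_w(\theta)$ inside $A_{\rho_0}$ for $\theta$ near $1$.
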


\begin{proof}
By Lemma \ref{K-Lem2.7},   for any $a\in (0, \overline{a}_0)$, there exists $a_1\in(0, a]$ such that for any $\beta\in (0, a_1)$,
\begin{align}\label{k59}
m(a_1)<m(\beta)+m\Big(\sqrt{a_1^2-\beta^2}\Big).
\end{align}
We claim that
\begin{align}\label{k55}
\left\{u\in S_{a_1}\cap A_{\rho_0}: \mathcal{J}(u)=m(a_1)\right\}\neq\emptyset.
\end{align}
Indeed, taking a minimizing sequence $\{u_n\}\subset S_{a_1}\cap A_{\rho_0} $ of $m(a_1)$,
clearly, $\{u_n\}$ is bounded in $ \mathcal{H}$. Hence,  there exists some $  u\in \mathcal{H}$ satisfying
 \begin{align*}
 &u_n\rightharpoonup u \quad\quad\quad\quad \text{in}\   \mathcal{H};\\
 & u_n\rightarrow u \quad\quad \quad\quad\text{in}\  L_{loc}^t(\mathbb{R}^3), \ \forall\ t\in(2,6);\\
 &u_n(x)\rightarrow u(x) \quad \ \mbox{a.e.} \ \text{in}\ \mathbb{R}^3.
 \end{align*}
If $u\neq0$, the minimizing sequence $\{u_n\}$ is the desired.
If $u=0$, then the following two cases occur:
\begin{align*}
&(\mbox{i})\ \lim_{n\rightarrow\infty}\sup_{y\in \mathbb{ R}^3 }\int_{B_y{(1)}}|u_n|^2dx=0,\\
&(\mbox{ii}) \ \lim_{n\rightarrow\infty}\sup_{y\in \mathbb{ R}^3 }\int_{B_y{(1)}}|u_n|^2dx\geq\delta>0,
\end{align*}
If $(\mbox{i})$ holds,  by the well-known Lions' lemma \cite{1983Wi}, $u_n \rightarrow 0 $ in $L^t (\mathbb{R}^3)$ for any $t\in(2, 6)$ and so
$$
m(a_1)=\lim_{n\rightarrow\infty}\mathcal{ J}(u_n)=\frac{1}{2}\int_{\mathbb{R}^3}|\nabla u_n|^2dx+\frac{1}{4}\int_{\mathbb{R}^3} \int_{\mathbb{R}^3} \frac{|u_n(x)|^{2}|u_n(y)|^2} {|x-y|}dxdy\geq 0,
$$
which is contradict with $ m(a_1)<0 $ by \eqref{k51}.
Thus, $(\mbox{ii})$ holds.  In this case we choose $\{y_n\}  \subset\mathbb{ R}^3$ such that
$$
\int_{B_0(1)}|u_n(\cdot+y_n)|^2dx\geq \delta>0.
$$
Moreover, we deduce that
 the sequence $\{u_n(\cdot+y_n)\}\subset S_{a_1}\cap A_{\rho_0} $ is still a minimizing sequence for $m(a_1)$.  Hence, there exists $ \widehat{u}\in \mathcal{H}$  such that $ \widehat{u}\neq 0$ and
 \begin{align*}
 &u_n(\cdot+y_n)\rightharpoonup \widehat{u} \quad\quad\quad\quad \text{in}\   \mathcal{H};\\
 & u_n(\cdot+y_n)\rightarrow \widehat{u} \quad\quad \quad\quad\text{in}\  L_{loc}^t(\mathbb{R}^3),\ \forall\ t\in(2,6);\\
 &u_n(x+y_n)\rightarrow \widehat{u}(x) \quad  \quad \ \mbox{a.e.} \ \text{in}\ \mathbb{R}^3.
 \end{align*}
Thus we find a minimizing sequence $\{v_n\}\subset S_{a_1}\cap A_{\rho_0} $ of $m(a_1)$ such that  there is $v\in \mathcal{H}\backslash\{0\}$~satisfying
 \begin{align*}
v_n\rightharpoonup v \quad\quad\quad\quad  &\text{in}\   \mathcal{H};\\
  v_n\rightarrow v \quad\quad \quad\quad &\text{in}\  L_{loc}^t(\mathbb{R}^3)\ \text{with}\ t\in(2,6);\\
   v_n(x)\rightarrow v(x) \quad \  &\mbox{a.e.} \ \text{in}\ \mathbb{R}^3.
 \end{align*}
Let $w_n=v_n-v$, then $w_n\rightharpoonup 0$ in $   \mathcal{H}$. It follows from Br\'{e}zis-Lieb lemma \cite{1983Wi} that, as $n\rightarrow\infty$,
 \begin{align*}
&\|w_n\|^2=\|v_n\|^2-\|v\|^2+o_n(1);\\
 &|w_n|_s^s=|v_n|_s^s-|v|_s^s+o_n(1),
\end{align*}
where $s\in [2,6]$.
Then, by using \cite[Lemma 2.2]{2008-JMAA-ZHAO} we obtain, as $n\rightarrow\infty$,
 \begin{align*}
 \int_{\mathbb{R}^3}  \int_{\mathbb{R}^3} \frac{|w_n(x)|^2|w_n(y)|^2} {|x-y|}dxdy = \int_{\mathbb{R}^3}  \int_{\mathbb{R}^3} \frac{|v_n(x)|^2|v_n(y)|^2} {|x-y|}dx dy -\int_{\mathbb{R}^3}  \int_{\mathbb{R}^3} \frac{|v(x)|^2|v(y)|^2} {|x-y|}dxdy+o_n(1).
\end{align*}
Hence, we have
 \begin{align}\label{k56}
\mathcal{J}(v_n)=\mathcal{J}(w_n)+\mathcal{J}(v)+o_n(1).
 \end{align}
Let $|v|_2=a_2\in(0, a_1]$.  If $a_2\in(0, a_1)$, then $v\in S_{a_2}\cap A_{\rho_0}$. Hence,
 \begin{align}\label{k57}
\mathcal{J}(v)\geq m(a_2).
 \end{align}
Besides, since $|w_n|_2=|v_n-v|_2\rightarrow \sqrt{a_1^2-a_2^2}>0$ as $n\rightarrow\infty$ and $|\nabla w_n|_2\leq  \rho_0$ for $n$ large enough,  then $w_n\in S_{|w_n|_2}\cap A_{\rho_0}$, and   we deduce from \eqref{k52} that
\begin{align}\label{k58}
 \lim_{n\rightarrow\infty}\mathcal{J}(w_n)\geq \lim_{n\rightarrow\infty} m(|w_n|_2)=   m\Big(\sqrt{a_1^2-a_2^2}\Big).
 \end{align}
From \eqref{k56}-\eqref{k58}, we deduce that
 \begin{align*}
m(a_1)=\lim_{n\rightarrow\infty}\mathcal{J}(v_n)
=\lim_{n\rightarrow\infty}\mathcal{J}(w_n)+\mathcal{J}(v)\geq m\Big(\sqrt{a_1^2-a_2^2}\Big)+m(a_2),
 \end{align*}
which is contradict with \eqref{k59}. Consequently,  $a_2=a_1$.  Then we have $v\in S_{a_1}\cap A_{\rho_0}$ and so
$$
m(a_1)\leq \mathcal{J}(v)\leq \lim_{n\rightarrow\infty}\mathcal{J}(v_n)=m(a_1),
$$
which implies that $ \mathcal{J}(v)= m(a_1)$ and $\|v_n-v\|\rightarrow 0 $ as $n\rightarrow\infty$. Hence, \eqref{k55} holds. Then, $M(a)\neq \emptyset$.

To prove that the function $s\mapsto \frac{m(s)}{s^2}$  is monotone decreasing for $s\in(0, a]$, we only need to show that  the function $s\mapsto \frac{m(s)}{s^2}$ in every interval $[0, b]$ achieves its unique minimum in $s=b$, where $b\in(0, a]$.  Let $a\in(0, a_0)$ be fixed and $c:=\min_{[0, b]}\frac{m(s)}{s^2}<0$ by \eqref{k51}. Let
$$
b_0:=\min\Big\{ s\in[0,b]: \frac{m(s)}{s^2}=c\Big\}.
$$
We have to prove that $b_0=b$. It follows from \eqref{k51} and \eqref{k52} that $b_0>0$ and
\begin{align*}
\frac{m(b_0)}{b_0^2}< \frac{m(s)}{s^2} \quad \text{for all}\ s\in[0, b_0).
\end{align*}
That is, the function $s\mapsto \frac{m(s)}{s^2}$ in the interval $[0, b_0]$ achieves its unique minimum in $s=b_0$. Thus, for any $b_1\in (0, b_0)$, we get $\frac{b_1^2}{b_0^2}m(b_0)<m(b_1)$ and $\frac{b_0^2-b_1^2}{b_0^2}m(b_0)< m\big(\sqrt{b_0^2-b_1^2}\big)$. Hence, for any $b_1\in (0, b_0)$,
 $$
 m(b_0)=\frac{b_1^2}{b_0^2}m(b_0)+\frac{b_0^2-b_1^2}{b_0^2}m(b_0)<m(b_1)+m\Big(\sqrt{b_0^2-b_1^2}\Big).
 $$
 Similar to the proof of \eqref{k55}, we get
 \begin{align*}
 \left\{u\in S_{b_0}\cap A_{\rho_0}: \mathcal{J}(u)=m(b_0)\right\}\neq\emptyset.
 \end{align*}
 Hence, there exists $w\in S_{b_0}\cap A_{\rho_0}$ such that $\mathcal{J}(w)=m(b_0)$. Especially, $w\in M(a)$. Now assume that $b_0 <b$. Then fixed $ g_w\in \mathcal{G}_w$ with its associated $\Theta(\theta)$, by the Definition  \ref{K-def1}, for any $\varepsilon\in(0, \epsilon)$, where $\epsilon$ is given in Lemma \ref{K-Lem2.5}, there exists $\delta_{\varepsilon}'>0$ such that when $|\theta-1|< \delta_{\varepsilon}'$,
  \begin{align*}
 \left|\Theta(\theta) b_0^2-b_0^2 \right|=\left |\Theta(\theta) |w|_2^2-|w|_2^2 \right|=\left| |g_w(\theta)|_2^2- |w|_2^2\right|< \varepsilon.
 \end{align*}
 Due to the fact that  the function $s\mapsto \frac{m(s)}{s^2}$ in the interval $[0, b_0]$ achieves its unique minimum in $s=b_0$ and $b_0<b$,
  we have
   \begin{align*}
 \frac{m(b_0)}{b_0^2}\leq  \frac{m\big(\Theta(\theta) b_0^2\big)}{\Theta(\theta) b_0^2}\quad\quad \text{for all}\ \theta\in(1-\delta_{\varepsilon}', 1+\delta_{\varepsilon}')
 \end{align*}
 Moreover, for any $\varepsilon\in(0, \epsilon)$,   there is $\delta_{\varepsilon}''>0$ such that when $|\theta-1|< \delta_{\varepsilon}''$,
    \begin{align*}
 \left||\nabla g_w(\theta)|_2^2-|\nabla w |_2^2\right|<\varepsilon,
 \end{align*}
 which  and Lemma \ref{K-Lem2.5} show that
 $$
 |\nabla g_w(\theta)|_2^2\leq |\nabla w |_2^2+\varepsilon\leq \rho_0-\epsilon+\varepsilon\leq \rho_0 \quad\quad\text{for all}\ \theta\in(1-\delta_{\varepsilon}'', 1+\delta_{\varepsilon}'').
 $$
   Therefore, choosing $\delta_{\varepsilon}=\min\{\delta_{\varepsilon}', \delta_{\varepsilon}'' \}$, one has
 \begin{align*}
 \frac{\mathcal{J}(g_w(\theta,x))}{\Theta(\theta,x) b_0^2 }\geq\frac{m\big(\Theta(\theta) b_0^2\big)}{\Theta(\theta) b_0^2}\geq \frac{m(b_0)}{b_0^2}=\frac{\mathcal{J}(w)}{b_0^2}
 \quad\quad \text{for all}\ \theta\in(1-\delta_{\varepsilon}, 1+\delta_{\varepsilon}),
 \end{align*}
which gives that
$h_{g_w}(\theta)=\mathcal{J}(g_w(\theta))- \Theta(\theta) \mathcal{J}(w)$ is nonegative in  $(1-\delta_{\varepsilon}, 1+\delta_{\varepsilon})$ and has a global minimum in $\theta=1$ with $h_{g_w}(1)=0$. Then we get $\frac{dh_{g_w}(\theta)}{d \theta}|_{\theta=1}=0$. Since $g_w$ is arbitrary, this relation has to be true for every map  $g_w$. Thus,
we get a contradiction   with \eqref{k54}.  Hence, we deduce  $b_0=b$. That is,  the function $s\mapsto \frac{m(s)}{s^2}$ in every interval $[0, b]$ achieves its unique minimum in $s=b$, where $b\in(0, a]$.
Hence,  $s\mapsto \frac{m(s)}{s^2}$  is monotone decreasing for $s\in(0, a]$.   We complete the proof.
\end{proof}

According to Lemma \ref{K-Lem2.5}, it is easy to see that $ \eqref{k51}$ holds. In what follows, we check   that  \eqref{k52}, \eqref{k53} and \eqref{k54} are valid.

\begin{lemma}\label{K-Lem2.9}
 Let $\mu>0$, $q\in(2, \frac{8}{3})$ and $p\in (\frac{10}{3}, 6)$.
Then $a\in(0,\overline{a}_0)\mapsto m(a) $ is a continuous mapping.
\end{lemma}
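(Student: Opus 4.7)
Fix $a\in(0,\overline{a}_0)$ and let $\{a_n\}\subset(0,\overline{a}_0)$ with $a_n\to a$. The plan is to establish both one-sided estimates
\begin{align*}
\limsup_{n\to\infty} m(a_n)\le m(a)
\qquad\text{and}\qquad
\liminf_{n\to\infty} m(a_n)\ge m(a),
\end{align*}
by transporting near-minimizers between the mass levels $a$ and $a_n$ via the scalar rescaling $u\mapsto tu$. Under this rescaling one has $|tu|_2=t|u|_2$, $|\nabla(tu)|_2=t|\nabla u|_2$, $|tu|_r^r=t^r|u|_r^r$, and the nonlocal term scales as $t^4$, so the four contributions to $\mathcal{J}(tu)$ are continuous powers of $t$ multiplying uniformly bounded quantities whenever $u$ varies in an $\mathcal{H}$-bounded set. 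The crucial point allowing the rescaled functions to stay inside $D_{\rho_0}$ is the strict separation from $\partial D_{\rho_0}$ encoded in Lemma \ref{K-Lem2.5}.

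For the upper estimate, given $\eta>0$ small I would choose $u\in S_a\cap D_{\rho_0}$ with $\mathcal{J}(u)<m(a)+\eta$. By Lemma \ref{K-Lem2.5}, $m(a)<\inf_{\overline{D_{\rho_0}}\setminus D_{\rho_0-\epsilon}}\mathcal{J}$ for some $\epsilon>0$, so, taking $\eta$ smaller than that gap, the near-minimizer $u$ automatically satisfies $|\nabla u|_2\le\rho_0-\epsilon$. Setting $v_n:=(a_n/a)u\in S_{a_n}$, the bound $|\nabla v_n|_2=(a_n/a)|\nabla u|_2\le\rho_0$ is valid for all large $n$, so $v_n\in S_{a_n}\cap D_{\rho_0}$. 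Since $\mathcal{J}(v_n)\to\mathcal{J}(u)$ as $a_n/a\to 1$, one concludes $m(a_n)\le\mathcal{J}(v_n)\to\mathcal{J}(u)<m(a)+\eta$, and letting $\eta\to 0$ yields the upper semicontinuity.

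For the lower estimate, I would select $u_n\in S_{a_n}\cap D_{\rho_0}$ with $\mathcal{J}(u_n)\le m(a_n)+1/n$. The upper semicontinuity just obtained, combined with $m(a)<0$, forces $\mathcal{J}(u_n)\le m(a)/2<0$ for $n$ large. Now the pointwise minorant $\mathcal{J}(u)\ge h_{a}(|\nabla u|_2)$ used in Lemma \ref{K-Lem2.1} depends continuously on the mass through the coefficients $a^{(1-\gamma_q)q}$ and $a^{(1-\gamma_p)p}$, so one has $h_{a_n}(\rho_0)\to h_{a}(\rho_0)>0$, and by continuity of $h_{a_n}$ in $t$ near $\rho_0$ there exists $\delta>0$ (uniform in $n$ large) such that $\mathcal{J}(u)\ge h_{a_n}(\rho_0)/2>0$ whenever $u\in S_{a_n}$ and $|\nabla u|_2\in[\rho_0-\delta,\rho_0]$. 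Consequently $|\nabla u_n|_2\le\rho_0-\delta$ for $n$ large. Setting $w_n:=(a/a_n)u_n\in S_a$, we get $|\nabla w_n|_2\le\rho_0$ for large $n$; moreover, since $\{u_n\}$ is $H^1$-bounded (all norms controlled by $a_n$ and $\rho_0$, hence by $a$ and $\rho_0$), a direct expansion gives
\begin{align*}
\mathcal{J}(w_n)-\mathcal{J}(u_n)=\tfrac{t_n^2-1}{2}|\nabla u_n|_2^2 + \tfrac{t_n^4-1}{4}\!\iint\!\tfrac{|u_n(x)|^2|u_n(y)|^2}{|x-y|}\,dxdy + \tfrac{1-t_n^p}{p}|u_n|_p^p + \tfrac{(1-t_n^q)\mu}{q}|u_n|_q^q\to 0
\end{align*}
with $t_n:=a/a_n\to 1$. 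Therefore $m(a)\le\mathcal{J}(w_n)=\mathcal{J}(u_n)+o(1)\le m(a_n)+1/n+o(1)$, which gives the lower semicontinuity.

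The main obstacle is producing the boundary-separation estimate $|\nabla u_n|_2\le\rho_0-\delta$ that is \emph{uniform} in $n$, rather than for a single value of the mass as in Lemma \ref{K-Lem2.5}: this uniformity simultaneously guarantees that the rescaled trial functions stay in $D_{\rho_0}$ and that the energies $\mathcal{J}(w_n)-\mathcal{J}(u_n)$ converge to zero on bounded pieces. It is resolved by exploiting the continuous dependence on $a$ of the Gagliardo--Nirenberg-type minorant $h_a(t)$ built in Lemmas \ref{KK-Lem2.1}--\ref{K-Lem2.1}, together with the strict inequality $a<\overline{a}_0$ which keeps $h_a(\rho_0)$ strictly positive on a neighbourhood of $a$.
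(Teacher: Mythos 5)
Your proof is correct and follows essentially the same route as the paper: both directions transport near-minimizers between the mass levels by the scalar rescaling $u\mapsto \frac{a'}{a}u$ and use the uniform bounds on $D_{\rho_0}$ to show that the resulting energy change vanishes. The only deviation is in how the rescaled functions are kept inside $D_{\rho_0}$: the paper invokes Lemma \ref{KKK-Lem2.1} (monotonicity of $f(\cdot,t)$ in the mass) to deduce $|\nabla u_n|_2<\frac{a_n}{a}\rho_0$, whereas you derive a uniform separation $|\nabla u_n|_2\le \rho_0-\delta$ from the joint continuity of $h_a$ and the gap in Lemma \ref{K-Lem2.5}; both devices rest on $f(a,\rho_0)>0$ and are equally valid.
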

\begin{proof}
Similarly as in \cite[Lemma 2.6]{2022-JMPA-jean}, let $a \in  (0, \overline{a}_0)$ be arbitrary and $\{a_n\} \subset(0, \overline{a}_0)$ be such that $a_n \rightarrow a$. From the definition of $m(a_n)$
and since $m(a_n) < 0$, for any  $\varepsilon> 0$ sufficiently small, there exists $u_n \in S_{a_n}\cap A_{\rho_0}$ such that
\begin{align}\label{k61}
\mathcal{J}(u_n)\leq m(a_n)+\varepsilon \quad\quad \text{and} \quad\quad\mathcal{J}(u_n)<0.
\end{align}
We set $v_n := \frac{a}{a_n} u_n$,
  and hence $v_n \in S_a$. We have that $v_n \in S_a\cap A_{\rho_0}$. Indeed, if $a_n \geq a$, then
  $$
  |\nabla v_n|_2=\frac{a}{a_n}|\nabla u_n|_2\leq |\nabla u_n|_2\leq \rho_0.
  $$
If $a_n \leq a$,   in view of Lemma \ref{KKK-Lem2.1} and $f(a, \rho_0)>f(a_0, \rho_0)=0$ we have
\begin{align}\label{k62}
f(a_n, \rho) \geq 0 \quad\quad \text{for any}\ \rho \in \big[\frac{a_n}{a}\rho_0,  \rho_0\big].
\end{align}
 Moreover, it follows from \eqref{k61} that
\begin{align*}
0>J(u_n)\geq& \frac{1}{2}|\nabla u_n|_2^2-\mu \frac{C_q^q}{q} a_n^{q(1-\gamma_q)}|\nabla u_n|_2^{q\gamma_q}-\frac{C_p^p}{p} a_n^{p(1-\gamma_p)}|\nabla u_n|_2^{p\gamma_p}\\
\geq& |\nabla u_n|_2^2f(a_n, |\nabla u_n|_2),
\end{align*}
which shows that $ f(a_n, |\nabla u_n|_2)<0$. Hence, by \eqref{k62}, we infer that $|\nabla u_n|_2< \frac{a_n}{a}\rho_0$. Then
$$
  |\nabla v_n|_2=\frac{a}{a_n}|\nabla u_n|_2\leq \frac{a}{a_n}\frac{a_n}{a}\rho_0=\rho_0.
$$
Due to $v_n\in S_a\cap A_{\rho_0}$, we infers that
$$
m(a)\leq \mathcal{J}(v_n)=\mathcal{J}(u_n)+[\mathcal{J}(v_n)- \mathcal{J}(u_n)],
$$
 where
\begin{align*}
 \mathcal{J}(v_n)- \mathcal{J}(u_n)=&\frac{1}{2}\Big[\big(\frac{a}{a_n}\big)^2-1\Big]|\nabla u_n|_2^2+\frac{1}{4}\Big[\big(\frac{a}{a_n}\big)^4-1\Big]\int_{\mathbb{R}^3} \int_{\mathbb{R}^3} \frac{|u_n(x)|^2|u_n(y)|^2} {|x-y|}dxdy\\
 &-\frac{1}{p}\Big[\big(\frac{a}{a_n}\big)^p-1\Big]|  u_n|_p^p
 -\mu\frac{1}{q}\Big[\big(\frac{a}{a_n}\big)^q-1\Big]|  u_n|_q^q.
\end{align*}
Since $|\nabla u_n|_2\leq \rho_0$, also $| u_n|_p$ and $|  u_n|_q$ are uniformly bounded,    we obtain, as $n\rightarrow\infty$,
 \begin{align}\label{k64}
m(a)\leq \mathcal{J}(v_n)=\mathcal{J}(u_n)+o(1)\leq m(a_n)+\varepsilon+o(1).
\end{align}
Besides,  let $u \in S_a\cap A_{\rho_0}$ be such that
 \begin{align}\label{k63}
\mathcal{J}(u)\leq m(a)+\varepsilon \quad\quad \text{and} \quad\quad\mathcal{J}(u)<0.
\end{align}
Let $u_n=\frac{a_n}{a}u$ and hence $u_n\in S_{a_n}$. Clearly, $|\nabla u|_2\leq \rho_0$ and $a_n\rightarrow a$ as $n\rightarrow\infty$ imply that $|\nabla u_n|_2\leq \rho_0$ for $n$ large enough. Thus $u_n\in S_{a_n}\cap A_{\rho_0}$. Moreover, $\mathcal{J}(u_n)\rightarrow \mathcal{J}(u)$ as $n\rightarrow\infty$, thus we deduce from \eqref{k63} that
 \begin{align}\label{k65}
m(a_n)\leq \mathcal{J}(u_n)=\mathcal{J}(u)+[\mathcal{J}(u_n)-\mathcal{J}(u)]\leq m(a)+\varepsilon+o(1).
\end{align}
Combining \eqref{k64} with \eqref{k65}, we get $m(a_n)\rightarrow m(a)$ as $n\rightarrow\infty$ for any $a\in(0, a_0)$.
\end{proof}

\begin{lemma}\label{K-Lem2.10}
 Let $\mu>0$, $a\in(0, \overline{a}_0)$, $q\in(2, \frac{8}{3})$ and $p\in (\frac{10}{3}, 6)$. Then
$\lim_{a\rightarrow 0}\frac{m(a)}{a^2}=0$.
\end{lemma}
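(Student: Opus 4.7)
The plan is to sandwich $m(a)/a^2$ between two bounds that both vanish as $a\to 0^+$.

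\textbf{Upper bound.} From Lemma \ref{K-Lem2.5} we have $m(a)<0$ for every $a\in(0,\overline{a}_0)$, so $m(a)/a^2<0$ and consequently $\limsup_{a\to0} m(a)/a^2\le 0$ is immediate. The substantive part of the lemma is therefore the matching lower bound $\liminf_{a\to 0} m(a)/a^2\ge 0$.

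\textbf{Lower bound via Gagliardo--Nirenberg.} For any $u\in S_a\cap A_{\rho_0}$, set $t:=|\nabla u|_2\in[0,\rho_0]$. Dropping the nonnegative Coulomb term (Lemma \ref{gle4}(1)) and applying \eqref{k10} with the elementary identities $q(1-\gamma_q)=(6-q)/2$ and $p(1-\gamma_p)=(6-p)/2$, I would get
\begin{align*}
\mathcal{J}(u)\;\ge\;\tfrac12 t^2\,-\,\tfrac{\mu C_q^q}{q}\,a^{(6-q)/2}\,t^{q\gamma_q}\,-\,\tfrac{C_p^p}{p}\,a^{(6-p)/2}\,t^{p\gamma_p}.
\end{align*}
Since $p\gamma_p>2$ and $t\le\rho_0$, the estimate $t^{p\gamma_p}\le\rho_0^{p\gamma_p-2}\,t^2$ together with $a^{(6-p)/2}\to 0$ shows that the $L^p$-contribution is dominated by, say, $\tfrac14 t^2$ for all sufficiently small $a$. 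Hence
\begin{align*}
\mathcal{J}(u)\;\ge\;\tfrac14 t^2\,-\,\tfrac{\mu C_q^q}{q}\,a^{(6-q)/2}\,t^{q\gamma_q}.
\end{align*}

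\textbf{Minimization and the decisive exponent.} Since $q\gamma_q=3(q-2)/2\in(0,1)\subset(0,2)$, the function $t\mapsto \tfrac14 t^2-c\,t^{q\gamma_q}$ admits a unique minimum on $t\ge 0$; a direct computation yields
\begin{align*}
\min_{t\ge 0}\Bigl[\tfrac14 t^2-\tfrac{\mu C_q^q}{q}a^{(6-q)/2} t^{q\gamma_q}\Bigr]\;=\;-K_0\,a^{\frac{2(6-q)}{10-3q}},
\end{align*}
for an explicit $K_0=K_0(q,\mu)>0$. The crucial identity
\begin{align*}
\frac{2(6-q)}{10-3q}\,-\,2\;=\;\frac{4(q-2)}{10-3q}\;>\;0\qquad \text{for } q\in(2,10/3),
\end{align*}
valid in particular on $(2,8/3)$, lets me set $\delta:=4(q-2)/(10-3q)>0$ and conclude $m(a)\ge -K_0\,a^{2+\delta}$, whence $m(a)/a^2\ge -K_0\,a^\delta\to 0$. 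Combined with the upper bound, this gives $\lim_{a\to 0} m(a)/a^2=0$.

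\textbf{Main obstacle.} The delicate point is the $L^p$-term: it is $L^2$-supercritical ($p\gamma_p>2$) and, if left alone, would spoil the argument because $a^{(6-p)/2}/a^2\to+\infty$. What rescues the estimate is precisely the constraint $|\nabla u|_2\le\rho_0$ with $\rho_0$ fixed and independent of $a$ (guaranteed by Lemma \ref{K-Lem2.1}), which converts $t^{p\gamma_p}$ into a small multiple of $t^2$ and reduces the analysis to the benign, $L^2$-subcritical $L^q$-term, where the gain $q>2$ produces the strict supra-quadratic decay needed.
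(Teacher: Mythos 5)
Your proof is correct, and it takes a genuinely different route from the paper's. The paper compares $\mathcal{J}$ with the auxiliary functional $I(u)=\frac12|\nabla u|_2^2-\frac1p|u|_p^p-\frac{\mu}{q}|u|_q^q$, so that $0>m(a)\geq \widetilde m(a):=\inf_{S_a\cap A_{\rho_0}}I$, then invokes (via \cite{2020Soave}) the existence of a minimizer $\widetilde u_a$ for $\widetilde m(a)$, and concludes through the associated Lagrange multipliers: $\frac{\omega_a}{2}\leq \frac{\widetilde m(a)}{a^2}<0$ together with a contradiction argument (based on the Nehari and Poho\v{z}aev identities of the limit equation) showing $\omega_a\to0$. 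You instead work directly with the Gagliardo--Nirenberg lower bound $\mathcal{J}(u)\geq \frac12 t^2-\frac{\mu C_q^q}{q}a^{q(1-\gamma_q)}t^{q\gamma_q}-\frac{C_p^p}{p}a^{p(1-\gamma_p)}t^{p\gamma_p}$ with $t=|\nabla u|_2\leq\rho_0$, absorb the mass-supercritical $p$-term into $\frac14 t^2$ using that $\rho_0$ is independent of $a$ and $p<6$, and then minimize the remaining one-variable subcritical expression explicitly, arriving at $m(a)\geq -K_0\,a^{2(6-q)/(10-3q)}$ with exponent strictly larger than $2$ precisely because $q>2$; your exponent bookkeeping ($q(1-\gamma_q)=\frac{6-q}{2}$, $2-q\gamma_q=\frac{10-3q}{2}$) is correct. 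Your argument is more elementary and quantitative: it bypasses the existence of constrained minimizers for the auxiliary problem, the Euler--Lagrange equation, and the multiplier analysis, and it produces an explicit decay rate $m(a)/a^2\geq -K_0 a^{\delta}$ with $\delta=\frac{4(q-2)}{10-3q}$, whereas the paper's scheme (modeled on \cite{2011JFA-BS}) yields only the limit. The ingredient both proofs share, and which your writeup correctly identifies as decisive, is that the radius $\rho_0$ of the constraint set $D_{\rho_0}$ does not depend on $a$, so the supercritical term cannot spoil the quadratic scaling as $a\to0$.
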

\begin{proof}
Define
 \begin{align*}
I(u)=\frac{1}{2}\int_{\mathbb{R}^3}|\nabla u|^2dx
-\frac{1}{p}\int_{\mathbb{R}^3}|u|^{p}dx-\frac{\mu}{q}\int_{\mathbb{R}^3}|u|^{q}dx,
\end{align*}
and $\widetilde{m}(a):=\inf_{S_a\cap A_{\rho_0}} I$, where $a\in(0, \overline{a}_0)$.
Then $\mathcal{J} (u)\geq I(u) $ for any $u\in \mathcal{H}$. Hence $ 0> m(a)\geq \widetilde{m}(a)$,  which  implies that $ \frac{\widetilde{m}(a)}{a^2}\leq\frac{m(a)}{a^2}<0$. Hence, we only need to show that  $\lim_{a\rightarrow0}\frac{\widetilde{m}(a)}{a^2}=0$.   According to \cite{2020Soave}, when $q\in(2, \frac{8}{3})$ and $p\in (\frac{10}{3}, 6)$, we obtain that there exists $\widetilde{u}_{a}\in S_a\cap A_{\rho_0}$ such that $I(\widetilde{u}_{a})=\widetilde{m}(a)<0 $ for any $a\in(0, \overline{a}_0)$.  Then the sequence $\{\widetilde{u}_{a}\}_{a>0}$ is bounded in $D^{1,2}(\mathbb{R}^3)$. Since the minimizer $\widetilde{u}_{a}$ satisfies
\begin{align}\label{k79}
-\Delta \widetilde{u}_{a}-|\widetilde{u}_{a}|^{p-2}\widetilde{u}_{a}
-\mu|\widetilde{u}_{a}|^{q-2}\widetilde{u}_{a}=\omega_a  \widetilde{u}_{a},
\end{align}
 where $\omega_a$ is the Lagrange multiplier associated to the minimizer.
 We get
\begin{align}\label{k78}
 \frac{\omega_a}{2}=\frac{|\nabla  \widetilde{u}_{a}|_2^2-| \widetilde{u}_{a}|_p^p-\mu | \widetilde{u}_{a}|_q^q}{2| \widetilde{u}_{a}|_2^2}\leq\frac{\frac{1}{2}|\nabla  \widetilde{u}_{a}|_2^2-\frac{1}{p}| \widetilde{u}_{a}|_p^p-\mu\frac{1}{q} | \widetilde{u}_{a}|_q^q}{| \widetilde{u}_{a}|_2^2}=\frac{I(\widetilde{u}_{a} )}{a^2}< 0.
\end{align}
 Actually we prove that $\lim_{a\rightarrow0} \omega_a=0$, so by comparison in \eqref{k78} we get the lemma.
 To show that $\lim_{a\rightarrow0} \omega_a=0$,  suppose on the contrary that there exists a sequence $a_n \rightarrow 0$ such that $\omega_{a_n} <-\alpha$ for some $\alpha\in(0,1)$. Since the minimizers $\widetilde{u}_{a_n}\in S_{a_n}\cap A_{\rho_0}$ satisfy Eq.~\eqref{k79}, we get
\begin{align}
 |\nabla\widetilde{u}_{a_n} |_2^2-\omega_{a_n} |\widetilde{u}_{a_n} |_2^2-|\widetilde{u}_{a_n}|_p^p-\mu |\widetilde{u}_{a_n}|_q^q&=0,\label{k80}\\
 |\nabla\widetilde{u}_{a_n} |_2^2-\gamma_p|\widetilde{u}_{a_n}|_p^p-\mu\gamma_q|\widetilde{u}_{a_n}|_q^q&=0.\label{k81}
\end{align}
 By   $\eqref{k80}-\frac{1}{2}\eqref{k81}$, one infers that
 \begin{align*}
C\|\widetilde{u}_{a_n}\|^2
&\leq \frac{1}{2}|\nabla\widetilde{u}_{a_n} |_2^2+\alpha  |\widetilde{u}_{a_n} |_2^2\\
&\leq(1-\frac{1}{2}\gamma_p)| \widetilde{u}_{a_n} |_p^p+\mu(1-\frac{1}{2}\gamma_q)| \widetilde{u}_{a_n} |_q^q \\
  & \leq C(1-\frac{1}{2}\gamma_p)\| \widetilde{u}_{a_n} \|^p+\mu C(1-\frac{1}{2}\gamma_q)\| \widetilde{u}_{a_n} \|^q,
\end{align*}
 which implies that $\|\widetilde{u}_{a_n}\|\geq C $ due to $p, q>2$ and $\gamma_p, \gamma_q<1$. This shows that $ |\nabla\widetilde{u}_{a_n} |_2>C$ for $n$ large enough since $ |\widetilde{u}_{a_n} |_2=a_n\rightarrow 0$ as $n\rightarrow\infty$.
 Moreover, for any $u\in S_a$,
 \begin{align*}
I(u)=&\frac{1}{2}\int_{\mathbb{R}^3}|\nabla u|^2dx
-\frac{\mu}{q}\int_{\mathbb{R}^3}|u|^{q}dx-\frac{1}{p}\int_{\mathbb{R}^3}|u|^{p}dx\\
\geq& \frac{1}{2}|\nabla u|_2^2-\mu \frac{C_q^q}{q} a^{q(1-\gamma_q)}|\nabla u|_2^{q\gamma_q}-\frac{C_p^p}{p} a^{p(1-\gamma_p)}|\nabla u|_2^{p\gamma_p}.
\end{align*}
Hence, for $n$ large enough,
 \begin{align*}
0\geq I(\widetilde{u}_{a_n})
\geq\left(\frac{1}{2}-\mu \frac{C_q^q}{q} a_n^{q(1-\gamma_q)}|\nabla\widetilde{u}_{a_n}|_2^{q\gamma_q-2}-\frac{C_p^p}{p} a_n^{p(1-\gamma_p)}|\nabla\widetilde{u}_{a_n}|_2^{p\gamma_p-2}\right)|\nabla\widetilde{u}_{a_n}|_2^2
\geq \frac{1}{4}C>0,
\end{align*}
which is a contradiction. Therefore, $\lim_{a\rightarrow0} \omega_a=0$, which shows that $\lim_{a\rightarrow0}\frac{\widetilde{m}(a)}{a^2}=0$. Hence, we complete the proof.
\end{proof}

\begin{lemma}\label{K-Lem2.11}
 Let $\mu>0$, $q\in(2, \frac{8}{3})$, $p\in (\frac{10}{3}, 6)$ and $a\in(0, \overline{a}_0)$. Then,
 \begin{align*}
\forall \ u\in M(a),\ \exists\ g_u\in \mathcal{G}_u\ \text{admissible, such that}\ \frac{d}{d \theta}h_{g_u}(\theta)|_{\theta=1}\neq 0.
 \end{align*}
\end{lemma}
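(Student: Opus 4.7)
The plan is to exhibit, for each $u \in M(a)$, one concrete admissible scaling path whose derivative at $\theta = 1$ is nonzero. Throughout, set $D(u,u) := \int_{\mathbb{R}^3}\int_{\mathbb{R}^3} |u(x)|^2|u(y)|^2/|x-y|\,dxdy$ for brevity. Since $u$ minimizes $\mathcal{J}$ on $S_\rho \cap A_{\rho_0}$ for some $\rho \in (0, a]$ with $\mathcal{J}(u) = m(\rho) < 0$ and, by Lemma~\ref{K-Lem2.5}, satisfies $|\nabla u|_2 < \rho_0$, i.e.\ $u$ is an interior critical point, $u$ solves the Euler--Lagrange equation
\[
-\Delta u + \omega u + \bigl(|x|^{-1}*|u|^2\bigr)u = |u|^{p-2}u + \mu|u|^{q-2}u
\]
for some Lagrange multiplier $\omega \in \mathbb{R}$. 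Hence the Nehari identity \eqref{kk4} and, via the Poho\v{z}aev identity \eqref{kkk4}, also $P(u)=0$ hold for $u$.

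I would take the simple pointwise dilation $g_u(\theta)(x) := \theta u(x)$, an element of the family $\mathcal{G}_u^0$ (i.e.\ $\iota=0$). Admissibility is straightforward: $g_u(1) = u$, $|\nabla g_u(\theta)|_2^2 = \theta^2 |\nabla u|_2^2 \to |\nabla u|_2^2$ as $\theta \to 1$, $\Theta_{g_u}(\theta) = \theta^2$ is smooth with $\Theta_{g_u}'(1) = 2 \neq 0$, and $\theta \mapsto \mathcal{J}(\theta u)$ is a polynomial in $\theta$, so $h_{g_u}$ is smooth. A direct computation yields
\[
h_{g_u}'(1) = \tfrac{1}{2} D(u,u) - \tfrac{p-2}{p}|u|_p^p - \tfrac{\mu(q-2)}{q}|u|_q^q.
\]
Replacing $|u|_p^p + \mu|u|_q^q$ via the Nehari identity gives the equivalent form $h_{g_u}'(1) = -\omega|u|_2^2 - 2\mathcal{J}(u)$. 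Combining in addition with $P(u)=0$ in the form $\gamma_p|u|_p^p + \mu\gamma_q|u|_q^q = |\nabla u|_2^2 + D(u,u)/4$, one obtains
\[
h_{g_u}'(1) = \tfrac{1}{3}\bigl(D(u,u) - 2|\nabla u|_2^2\bigr).
\]
Thus the lemma is reduced to verifying the nondegeneracy $D(u,u) \neq 2|\nabla u|_2^2$ for every $u \in M(a)$.

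The main obstacle is precisely this last nondegeneracy. I would argue by contradiction: assume $D(u,u) = 2|\nabla u|_2^2$; combined with $P(u)=0$ this forces
\[
\tfrac{3}{2}|\nabla u|_2^2 = \gamma_p|u|_p^p + \mu\gamma_q|u|_q^q.
\]
Since $u$ is a minimizer at negative energy, Lemma~\ref{K-Lem2.3} places $u$ in $\mathcal{P}_+$, so $\psi_u''(0)>0$; after using $P(u)=0$ to eliminate the $D(u,u)/4$ contribution this reads
\[
|\nabla u|_2^2 + \mu\gamma_q(1-q\gamma_q)|u|_q^q > \gamma_p(p\gamma_p-1)|u|_p^p.
\]
Eliminating $\gamma_p|u|_p^p$ between these two relations and using $p\gamma_p > 2$ together with $q\gamma_q < 1$ produces a lower bound of the form $\mu\gamma_q(p\gamma_p-q\gamma_q)|u|_q^q > \tfrac{3p\gamma_p-5}{2}|\nabla u|_2^2$. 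Inserting the Gagliardo--Nirenberg inequality~\eqref{k10} on the left (with $|u|_2 = \rho \leq a$) yields a forced upper bound on $|\nabla u|_2$, while feeding \eqref{k10} into the identity $\tfrac{3}{2}|\nabla u|_2^2 = \gamma_p|u|_p^p + \mu\gamma_q|u|_q^q$ produces an incompatible competing bound. The smallness condition $a < \overline{a}_0$ from Lemmas~\ref{KK-Lem2.1}--\ref{K-Lem2.1}, which controls the function $f(a,\cdot)$ on the admissible range of $|\nabla u|_2$, forces these two constraints to be inconsistent; this yields the desired contradiction and proves $h_{g_u}'(1)\neq 0$.
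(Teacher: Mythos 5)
Your choice of the path $g_u(\theta)=\theta u$ and the computation are correct: writing $D(u,u)$ for the double integral as you do, one indeed has $h_{g_u}'(1)=\frac12 D(u,u)-\frac{p-2}{p}|u|_p^p-\frac{\mu(q-2)}{q}|u|_q^q$, and since $\frac{p-2}{p}=\frac{2\gamma_p}{3}$, $\frac{q-2}{q}=\frac{2\gamma_q}{3}$, the identity $P(u)=0$ turns this into $\frac13\bigl(D(u,u)-2|\nabla u|_2^2\bigr)$. This is exactly the heart of the paper's argument: the paper works with the whole family $\mathcal{G}_u^{\iota}$, but since its relation is affine in $\iota$, the only information beyond $P(u)=0$ it produces is the $\iota=0$ identity \eqref{k70}, which is equivalent to your degenerate case $D(u,u)=2|\nabla u|_2^2$. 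So the reduction is sound; the genuine gap is in the nondegeneracy step that follows.

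The two bounds you propose to play off against each other are not incompatible. From the degeneracy and $\psi_u''(0)>0$ you correctly obtain $\mu\gamma_q(p\gamma_p-q\gamma_q)|u|_q^q>\frac{3p\gamma_p-5}{2}|\nabla u|_2^2$, hence via \eqref{k10} an upper bound of the form $|\nabla u|_2^{2-q\gamma_q}\lesssim \mu\,\rho^{q(1-\gamma_q)}$ with $\rho=|u|_2\le a$. Feeding \eqref{k10} into $\frac32|\nabla u|_2^2=\gamma_p|u|_p^p+\mu\gamma_q|u|_q^q$ gives $\frac32\le \gamma_pC_p^p\rho^{p(1-\gamma_p)}|\nabla u|_2^{p\gamma_p-2}+\mu\gamma_qC_q^q\rho^{q(1-\gamma_q)}|\nabla u|_2^{q\gamma_q-2}$; but since $q\gamma_q<2$, this inequality becomes easier, not harder, to satisfy as $|\nabla u|_2$ decreases, so both constraints hold simultaneously whenever $|\nabla u|_2$ is small compared with the relevant powers of $\rho$. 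That is precisely the regime where the local minimizers live: $\mathcal{J}(u)=m(\rho)<0$ forces $f(\rho,|\nabla u|_2)<0$, i.e.\ $|\nabla u|_2<R_0$, and on that range the condition $a<\overline{a}_0$ (which only guarantees $f>0$ on $(R_0,R_1)$) gives no leverage at all. Hence no contradiction follows from the ingredients you list. What the paper actually exploits is the nonlocal term: the degeneracy forces $\frac12 D(u,u)=\frac{p-2}{p}|u|_p^p+\frac{\mu(q-2)}{q}|u|_q^q$, so $D(u,u)\ge\frac{2\mu(q-2)}{q}|u|_q^q$, while $D(u,u)\le C|u|_{12/5}^4$ by Lemma \ref{gle4}; interpolating $|u|_{12/5}$ between $L^q$ and $L^6$ when $q\in(2,\frac{12}{5}]$, respectively between $L^q$ and $L^2$ when $q\in(\frac{12}{5},\frac83)$ (this is the reason for the paper's case split, which your sketch never sees), converts the degeneracy into a positive lower bound on $|u|_q$, and the paper reaches its contradiction along a sequence of degenerate minimizers whose masses tend to $0$, so that $|u_n|_q\to0$. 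Your proposal contains no ingredient playing this role, and the $\psi_u''(0)>0$ route cannot substitute for it; as written, the claimed inconsistency ``forced by $a<\overline{a}_0$'' is not established.
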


\begin{proof}
To prove this lemma,  we argue by contradiction   assuming that there exists a sequence $\{u_n\}\subset M(a) $
with $|\nabla u_n|_2\leq \rho_0$ and $a\geq |u_n|_2=a_n\rightarrow 0$ as $n\rightarrow\infty$ such that for all $\iota\in\mathbb{R}$  and    $g_{u_n}\in \mathcal{G}_{u_n}^\iota$,  $h_{g_{u_n}}'(1)=0$,  namely,
 \begin{align}\label{k66}
-\iota|\nabla u_n|_2^2+\frac{2-\iota}{4}\int_{\mathbb{R}^3} \int_{\mathbb{R}^3} \frac{|u_n(x)|^2|u_n(y)|^2} {|x-y|}dxdy
=\frac{p-2+\frac{6-3p}{2}\iota}{p}|u_n|_p^p
+\frac{q-2+\frac{6-3q}{2}\iota}{q}|u_n|_q^q.
 \end{align}
Moreover, since $\{u_n\}\subset M(a) $, then it holds that
 \begin{align}\label{k67}
|\nabla u_n|_2^2+\frac{1}{4}\int_{\mathbb{R}^3} \int_{\mathbb{R}^3} \frac{|u_n(x)|^2|u_n(y)|^2} {|x-y|}dxdy
-\gamma_p|u_n|_p^p
-\gamma_q|u_n|_q^q=0.
 \end{align}
Using  \eqref{k66} and \eqref{k67},   we have
\begin{align}\label{k70}
\frac{1}{2}\int_{\mathbb{R}^3} \int_{\mathbb{R}^3} \frac{|u_n(x)|^2|u_n(y)|^2} {|x-y|}dxdy-\frac{p-2}{p}|u_n|_p^p-\frac{q-2}{q}|u_n|_q^q=0.
 \end{align}
 Since $|\nabla u_n|_2\leq \rho_0$ and $ |u_n|_2\rightarrow 0$ as $n\rightarrow\infty$,  one has,  $|u_n|_p, |u_n|_q\rightarrow 0 $  as $n\rightarrow\infty$.
When $q\in(2, \frac{12}{5}]$, by \eqref{k70}, Lemma \ref{gle4} and  the interpolation inequality, we have
$$
 \frac{2(q-2)}{q}|u_n|_q^q\leq\int_{\mathbb{R}^3} \int_{\mathbb{R}^3} \frac{|u_n(x)|^2|u_n(y)|^2} {|x-y|}dxdy\leq C|u_n|_{\frac{12}{5}}^4\leq C|u_n|_{q}^{ \frac{6q}{6-q}}|u_n|_6^{4-\frac{6q}{6-q}},
$$
which is a contradiction since $\frac{6q}{6-q}>q$, $4-\frac{6q}{6-q}\geq0$, $|u_n|_{q}\rightarrow 0$ as $n\rightarrow\infty$ and $|u_n|_6\leq C$.
If $q\in(\frac{12}{5}, \frac{8}{3})$, it follows from \eqref{k70} and  the interpolation inequality that
\begin{align*}
 \frac{2(q-2)}{q}|u_n|_q^q&\leq\int_{\mathbb{R}^3} \int_{\mathbb{R}^3} \frac{|u_n(x)|^2|u_n(y)|^2} {|x-y|}dxdy\\
 &\leq C|u_n|_{\frac{12}{5}}^4\leq C|u_n|_{q}^{4(1-t)}|u_n|_2^{4t},
  \end{align*}
   where $ t=\frac{5q-12}{6q-2}$,
  a contradiction  since $q<4(1-t)$ and $|u_n|_{q}\xrightarrow{n}0$. Thus, we complete the proof.
\end{proof}

\begin{lemma}\label{K-Lem2.12}
Under the assumptions of Theorem \ref{K-TH1},    let $\{u_n\}\subset D_{\rho_0}$ be  a minimizing sequence of $m(a)$,  then there exists $u\in D_{\rho_0}$ such that $u_n\rightarrow u $  in $\mathcal{H}$ as $n\rightarrow\infty$ up to  translation and $\mathcal{J}(u)=m(a)$.
\end{lemma}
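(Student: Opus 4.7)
The plan is a classical concentration-compactness argument whose decisive ingredient, the strict subadditivity of $a \mapsto m(a)$, is supplied by the monotonicity of $s \mapsto m(s)/s^2$ established in Lemma \ref{K-Lem2.8}. First, $\{u_n\}$ is bounded in $\mathcal{H}$ because $|\nabla u_n|_2 \leq \rho_0$ and $|u_n|_2 = a$. By the last statement of Lemma \ref{K-Lem2.5}, $m(a) < \inf_{\overline{D_{\rho_0}} \setminus D_{\rho_0-\epsilon}} \mathcal{J}$ for some $\epsilon > 0$, so, discarding finitely many terms, we may assume $|\nabla u_n|_2 \leq \rho_0 - \epsilon$. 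This small slack is essential later for keeping the pieces produced by Brezis--Lieb inside $A_{\rho_0}$.

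Next I apply Lions' concentration-compactness lemma to $\{u_n\}$. Vanishing is excluded: if $\sup_{y \in \mathbb{R}^3} \int_{B_y(1)} |u_n|^2 dx \to 0$, then $u_n \to 0$ in $L^t(\mathbb{R}^3)$ for every $t \in (2,6)$, in particular in $L^{12/5}$, so via Lemma \ref{gle4}(2) the Coulomb term vanishes in the limit, yielding $\mathcal{J}(u_n) \geq o(1)$, which contradicts $m(a) < 0$. Hence there exist $\delta > 0$ and $\{y_n\} \subset \mathbb{R}^3$ with $\int_{B_{y_n}(1)} |u_n|^2 dx \geq \delta$. Setting $v_n(x) := u_n(x + y_n)$, we have $v_n \in D_{\rho_0}$, $\mathcal{J}(v_n) \to m(a)$, and, up to a subsequence, $v_n \rightharpoonup v$ in $\mathcal{H}$ with $v \neq 0$.

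Writing $w_n := v_n - v$, the standard Brezis--Lieb lemma together with the nonlocal splitting already invoked in the proof of Lemma \ref{K-Lem2.8} yields $|v_n|_2^2 = |w_n|_2^2 + |v|_2^2 + o(1)$, $|\nabla v_n|_2^2 = |\nabla w_n|_2^2 + |\nabla v|_2^2 + o(1)$, and $\mathcal{J}(v_n) = \mathcal{J}(w_n) + \mathcal{J}(v) + o(1)$. Let $b := |v|_2 \in (0, a]$. The heart of the argument is to rule out $b < a$: in that case $|w_n|_2 \to \sqrt{a^2-b^2} > 0$, and the slack $|\nabla v_n|_2 \leq \rho_0 - \epsilon$ combined with weak lower semicontinuity of the $H^1$-seminorm guarantees $v \in S_b \cap A_{\rho_0}$ and $w_n \in S_{|w_n|_2} \cap A_{\rho_0}$ for $n$ large. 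Hence $\mathcal{J}(v) \geq m(b)$ and $\liminf_n \mathcal{J}(w_n) \geq \lim_n m(|w_n|_2) = m(\sqrt{a^2-b^2})$ by the continuity of $m$ (Lemma \ref{K-Lem2.9}). Passing to the limit gives $m(a) \geq m(b) + m(\sqrt{a^2-b^2})$, which contradicts the strict subadditivity $m(a) < m(b) + m(\sqrt{a^2-b^2})$: indeed, for $b,\sqrt{a^2-b^2} \in (0,a)$ and $m(a) < 0$, Lemma \ref{K-Lem2.8} gives $m(b) > (b^2/a^2)m(a)$ and $m(\sqrt{a^2-b^2}) > ((a^2-b^2)/a^2)m(a)$, whose sum exceeds $m(a)$.

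Therefore $b = a$, so $|v_n|_2 \to |v|_2$ and $v_n \to v$ strongly in $L^2$. Since $|w_n|_2 \to 0$ while $|\nabla w_n|_2$ remains bounded, Gagliardo--Nirenberg yields $w_n \to 0$ in every $L^t$ with $t \in [2,6)$, and hence the $L^p$, $L^q$ and (via Lemma \ref{gle4}(2)) Coulomb contributions to $\mathcal{J}(w_n)$ vanish. Combined with weak lower semicontinuity of $u \mapsto |\nabla u|_2^2$, the decomposition $\mathcal{J}(v_n) = \mathcal{J}(w_n) + \mathcal{J}(v) + o(1) \to m(a)$ forces $|\nabla w_n|_2 \to 0$ and $\mathcal{J}(v) = m(a)$. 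Hence $v_n \to v$ strongly in $\mathcal{H}$ with $v \in D_{\rho_0}$, which is the claim (with the translation $u := v$, $u_n(\cdot+y_n) \to u$ in $\mathcal{H}$). The main obstacle is the exclusion of dichotomy, which rests entirely on the monotonicity/subadditivity machinery of Lemmas \ref{K-Lem2.7}--\ref{K-Lem2.11}; once it is in place, the remainder is a routine concentration-compactness analysis.
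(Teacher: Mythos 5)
Your proposal is correct and follows essentially the same route as the paper: strict subadditivity $m(a)<m(a_1)+m\bigl(\sqrt{a^2-a_1^2}\bigr)$ deduced from the strict monotonicity of $s\mapsto m(s)/s^2$ (Lemma \ref{K-Lem2.8}), followed by the same Lions/Br\'ezis--Lieb concentration-compactness scheme used for \eqref{k55} (vanishing excluded by $m(a)<0$, dichotomy excluded by subadditivity, then strong convergence). Your use of the last assertion of Lemma \ref{K-Lem2.5} to keep $|\nabla u_n|_2\le \rho_0-\epsilon$ is only a minor bookkeeping variant of the paper's argument for keeping the Br\'ezis--Lieb pieces in $A_{\rho_0}$.
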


\begin{proof}
From Lemma \ref{K-Lem2.8},  the function $s\mapsto \frac{m(s)}{s^2}$  is monotone decreasing for $s\in(0, a]$.
Then for any $a_1\in (0, a)$, we get $\frac{a_1^2}{a^2}m(a)<m(a_1)$ and $\frac{a^2-a_1^2}{a^2}m(a)< m(\sqrt{a^2-a_1^2})$. Hence, for any $a_1\in (0, a)$,
 $$
 m(a)=\frac{a_1^2}{a^2}m(a)+\frac{a^2-a_1^2}{a^2}m(a)<m(a_1)+m\Big(\sqrt{a^2-a_1^2}\Big).
 $$
  The remain is similar to the proof of \eqref{k55}. Hence, we complete the proof.
\end{proof}

 \noindent{\bf Proof of Theorem \ref{K-TH1}.}
Under the assumption of Theorem \ref{K-TH1},  from Lemma \ref{K-Lem2.12}, there exists $u\in D_{\rho_0}$ such that $ \mathcal{J}(u)=m(a)$.  Since if $\{u_n\}\subset D_{\rho_0}$  is a minimizing sequence of $m(a)$, then $\{|u_n|\}\subset D_{\rho_0}$  is also a minimizing sequence of $m(a)$. Hence,
 in view of Lemma \ref{K-Lem2.5}, we obtain that
 there is $\lambda\in \mathbb{R}$ such that $(\lambda, u)\in \mathbb{R}\times \mathcal{H}$  is a ground state normalized solutions for
 Eq.~\eqref{k1}, where $u$ is  a real-valued non-negative function. Moreover,    the elliptic $L^s$ estimate implies that $u \in W^{2,s}_{loc} (\mathbb{R}^3) $ with $s\geq 2$.  Then, by Sobolev embedding theorem, we have $u\in C^{1,\alpha}_{loc}(\mathbb{R}^3) $  for $0 <  \alpha < 1$. Hence, it follows from the strong maximum principle   that $u > 0$.
  Besides,   Corollary \ref{K-Lem2.4}  and  Lemma \ref{K-Lem2.5} imply that any ground state for   Eq.~\eqref{k1} is a local minimizer of $\mathcal{J}$ on $D_{\rho_0}$.  $\hfill\Box$

\section{The second solution of the case $\mu>0$, $q\in(2, \frac{12}{5} ]$ and $p\in(\frac{10}{3},6)$}\label{sec3}
  In this part,  we address the case $\mu>0$, $q\in(2, \frac{12}{5} ]$ and $p\in (\frac{10}{3}, 6)$
  and consider the problem in $\mathcal{H}_r$. Let $S_{a,r}= \mathcal{H}_r\cap S_a  $, $\mathcal{P}_{\pm,r}= \mathcal{H}_r\cap\mathcal{P}_{\pm} $ and $m(a,r)=\inf_{\mathcal{P}_{+,r} } \mathcal{J}(u)$. All conclusions in Sec. \ref{sec2} are still valid.
  Using a similar method in the proof of Theorem \ref{K-TH1}, we can obtain that there exists $(\lambda, u)\in \mathbb{R}\times S_{a,r}$  is a   solution for Eq.~\eqref{k1},  where  $\mathcal{J}(u)= m(a,r)<0$. Now, we show that Eq.~\eqref{k1} has a second  solution (Mountain Pass type) in this case.
 Firstly, we verify the Mountain Pass geometry of $\mathcal{J} $ on manifold $S_{a,r}$. Let
\begin{align*}
\Gamma:=\left\{\zeta\in C([0,1], S_{a,r}): \zeta(0)\in \mathcal{P}_{+,r}\ \ \text{and}\ \ \mathcal{J}(\zeta(1))<2m(a,r)\right\},
\end{align*}
and
$$
c_a:=\inf_{\zeta\in \Gamma}\max_{u\in \zeta([0,1])}\mathcal{J}(u).
$$
Taking $v\in S_{a,r}$, there is $s_v\in \mathbb{R}$ such that $s_v\star v\in \mathcal{P}_{+,r}$  by Lemma \ref{K-Lem2.3}. When $k>1$ large enough,
$$
\zeta_v(\tau)=[(1-\tau)s_v+\tau k] \star v\in \Gamma,
$$
which implies that $\Gamma\neq \emptyset $. Next, we claim that
$$
c_a=\sigma:=\inf_{u\in\mathcal{P}_{-,r}}\mathcal{J}(u)>0.
$$
On the one hand, assuming that there exists $\widetilde{v}\in \mathcal{P}_{-,r}$ such that $\mathcal{J}(\widetilde{v})<c_a$. Setting $s\star \widetilde{v}= e^{\frac{3}{2}s} \widetilde{v}(e^s x) $. Let $s_1>1$ sufficient large be such that $\mathcal{J}(s_1\star \widetilde{v} )<2m(a,r) $. Hence,  for any $\tau\in [0,1]$,
$$
 \zeta_{\widetilde{v}}(\tau)=[(1-\tau)s_{\widetilde{v}}+\tau s_1]\star \widetilde{v}\in \Gamma,
$$
which and Lemma \ref{K-Lem2.3}-(iii) show that
$$
c_a\leq \max_{\tau\in [0,1]}\mathcal{J}(\zeta_{\widetilde{v}}(\tau))=\max_{\tau\in [0,1]}\mathcal{J}([(1-\tau)s_{\widetilde{v}}+\tau s_1]\star \widetilde{v})=\mathcal{J}(\widetilde{v} )< c_a,
$$
a contradiction. Thus, for any $u\in  \mathcal{P}_{-,r}$, one infers  $\mathcal{J}(u)\geq c_a $, which implies that $\sigma\geq c_a$. On the other hand, for any $\zeta\in \Gamma$, since $\zeta(0)\in \mathcal{P}_{+,r}$, then there exists $t_{\zeta(0)}> s_{\zeta(0)}=0$ satisfying $t_{\zeta(0)}\star \zeta(0)\in   \mathcal{P}_{-,r} $. Moreover, since $\mathcal{J}(\zeta(1))\leq 2m(a,r) $, then similar to \cite[Lemma 5.6]{2020Soave}, we know that $t_{\zeta(1)}<0$. Therefore, there exists $\overline{\tau}\in(0,1)$ such that $t_{\zeta(\overline{\tau})}=0$ by the continuity. Hence, $\zeta(\overline{\tau})\in  \mathcal{P}_{-,r}$. So,
$$
\max_{u\in \zeta([0,1])}\mathcal{J}(u)\geq \mathcal{J}(\zeta(\overline{\tau}))\geq \inf_{\mathcal{P}_{-,r} }\mathcal{J}(u)=\sigma,
 $$
which implies that $c_a\geq \sigma$. Hence, we deduce that $c_a= \sigma$. Furthermore, let $t_{\max}$  is the maximum point of function $h$. For any $u\in  \mathcal{P}_{-,r}$, there is $\tau_u\in \mathbb{R}$ such that $|\nabla (\tau_u \star u)|_2=t_{\max}$. For every $u\in \mathcal{P}_{-,r}$, one has
$$
\mathcal{J}(u)\geq \mathcal{J}(\tau_u\star u)\geq h(|\nabla (\tau_u \star u) |_2)=h(t_{\max} )>0,
$$
which gives that $\sigma>0$. That is, $c_a= \sigma>0$.

For any $a\in(0, \overline{a}_0)$ fixed, it is easy to verify that the set
$$
L:=\{u\in \mathcal{P}_{-,r}:\mathcal{J}(u)\leq c_a+1\}
$$
is bounded. Then let $M_0>0$ be such that $L\subset B(0, M_0)$, where
$$
B(0, M_0):=\{u\in \mathcal{H}_r: \|u\|\leq M_0\}.
$$

In order to prove the following lemma, we need to develop a deformation argument on $S_a$. Following \cite{1983-BL-II},  we recall that, for any $a > 0$, $S_a$ is a submanifold of $\mathcal{H}$ with codimension $1$ and the
tangent space at a point $\overline{u}\in S_a$ is defined as
$$
T_{\overline{u}}=\{v\in \mathcal{H}: (\overline{u}, v)_{L^2}=0 \}.
$$
The restriction $\mathcal{J}_{|_{S_a}}: S_a\rightarrow \mathbb{R}$ is a $C^1$ functional on $S_a$ and for any $\overline{u}\in S_a$ and any $v\in T_{\overline{u} }$,
 $$
 \langle\mathcal{J}_{|_{S_a}}'(\overline{u} ), v\rangle= \langle\mathcal{J}'(\overline{u}), v\rangle.
 $$
We use the notation   $\|d\mathcal{J}_{|_{S_a}}(\overline{u} ) \|$  to indicate the norm in the cotangent space $ T_{\overline{u}}'$, that is,
the dual norm induced by the norm of $T_{\overline{u}}$, that is,
$$
\|d\mathcal{J}_{|_{S_a}}(\overline{u} ) \|:= \sup_{\|v\|\leq 1, v\in T_{\overline{u}}}| \langle d\mathcal{J}(\overline{u} ), v \rangle|.
$$
Let $\widetilde{S}_a:=\{u\in S_a: d\mathcal{J}_{|_{S_a}}(u )\neq 0\}$. We know from \cite{1983-BL-II} that there exists a locally
Lipschitz pseudo gradient vector field $Y \in C^1(\widetilde{S}_a, T(S_a))$ (here $T(S_a)$ is the tangent bundle) such that
\begin{align}\label{k41}
\|Y(u)\|\leq 2 \|d\mathcal{J}_{|_{S_a}}(u)\|
\end{align}
and
\begin{align}\label{k42}
\langle\mathcal{J}_{|_{S_a}}'(u), Y(u)\rangle\geq \|d\mathcal{J}_{|_{S_a}}(\overline{u} ) \|^2,
\end{align}
for any $u\in \widetilde{S}_a$. Note that $\|Y(u)\|\neq 0$ for $u\in \widetilde{S}_a$ thanks to \eqref{k42}. Now an arbitrary but fixed $\delta > 0$, we consider the sets
\begin{align*}
&\widetilde{N}_{\delta}:=\{u\in S_a: |\mathcal{J}(u)-c_a |\leq \delta,\ dist(u, \mathcal{P}_{-,r})\leq 2 \delta, \ \|Y(u)\|\geq 2\delta\},\\
& \widetilde{N}_{\delta}:=\{u\in S_a: |\mathcal{J}(u)-c_a|< 2\delta\},
\end{align*}
where $dist(x, \mathcal{A}):=\inf\{\|x-y\|: y\in \mathcal{A}\}$. Assuming that  $ \widetilde{N}_{\delta}$ is
nonempty there exists a locally Lipschitz function $g : S_a \rightarrow [0, 1]$ such that
\begin{align}\label{k104} g(u)=
\begin{cases}
1,\quad\quad &\text{on}\ \widetilde{N}_{\delta},\\
0,\quad\quad &\text{on}\ N_{\delta}^c.
\end{cases}
\end{align}
We also define on $S_a$ the vector field $W$ by
\begin{align}\label{k44}W(u)=
\begin{cases}
-g(u)\frac{Y(u)}{\|Y(u)\|},\quad\quad &\text{if}\ u\in \widetilde{S}_a,\\
0,\quad\quad &\text{if}\ u\in S_a\backslash \widetilde{S}_a,
\end{cases}
\end{align}
and  the pseudo gradient flow
\begin{align}\label{k45}
\begin{cases}
 \frac{d}{dt}\eta(t,u)=W(\eta(t,u)),\\
\eta(0,u)=u.
\end{cases}
\end{align}
The existence of a unique solution $\eta(t, \cdot)$ of \eqref{k44} defined for all $t\in \mathbb{R}$ follows from standard
arguments and we refer to \cite[Lemma 5]{1983-BL-II} for this. Let us recall some of its basic properties:
\begin{itemize}
  \item [(i)] $\eta(t, \cdot)$ is a homeomorphism of $S_a$;
  \item [(ii)]$\eta(t, u)=u$ for all $t\in \mathbb{R}$ if $|\mathcal{J}(u)-c_a |\geq 2\delta$;
  \item [(iii)] $ \frac{d}{dt} \mathcal{J}(\eta(t, u))=\langle \mathcal{J}'(\eta(t, u)), W(\eta(t, u) ) \rangle\leq 0$ for all $t\in \mathbb{R}$ and $u\in S_a$.
\end{itemize}

The following results help to obtain a special Palais-Smail sequence, inspired by \cite{2011JFA-BS}.

\begin{lemma}\label{K-Lem3.1}
Let $\mu>0$, $q\in (2, \frac{12}{5}]$, $p\in (\frac{10}{3}, 6)$, $a\in(0, \overline{a}_0)$ and
\begin{align*}
\Omega_{\delta}:=\left\{u\in S_{a,r}:\ |\mathcal{J}(u)-c_a  |\leq\delta, \ dist(u, \mathcal{P}_{-,r})\leq 2\delta, \ \|\mathcal{J}'_{|_{S_{a,r}}}(u)\|_{\mathcal{H}_r^{-1}}\leq 2 \delta\right\},
\end{align*}
then for any $\delta>0$, $\Omega_{\delta}\cap B(0, 3M_0)$ is nonempty.
\end{lemma}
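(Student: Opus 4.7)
The plan is to argue by contradiction using a pseudo-gradient deformation on $S_{a,r}$ modeled on the flow \eqref{k45}, localized to a neighborhood of $\mathcal{P}_{-,r}$ inside $B(0,3M_0)$. Suppose $\Omega_\delta \cap B(0,3M_0) = \emptyset$ for some $\delta > 0$ (which we may take small, since $\Omega_{\delta'} \subset \Omega_\delta$ for $\delta' \leq \delta$); the goal is to produce a path $\widehat\zeta \in \Gamma$ with $\max_\tau \mathcal{J}(\widehat\zeta(\tau)) < c_a$, contradicting the definition of $c_a$.

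I begin by selecting a near-optimal path $\zeta \in \Gamma$ with $\max_\tau \mathcal{J}(\zeta(\tau)) \leq c_a + \delta^2/4$. Every such $\zeta$ must cross $\mathcal{P}_{-,r}$ at some parameter $\overline\tau$ with $\mathcal{J}(\zeta(\overline\tau)) \in [c_a, c_a + \delta^2/4]$, so $\zeta(\overline\tau)$ lies in $L \subset B(0,M_0)$. More generally, whenever $\zeta(\tau)$ satisfies the first two defining conditions of $\Omega_\delta$, any witness $v \in \mathcal{P}_{-,r}$ with $\|\zeta(\tau) - v\| \leq 2\delta$ satisfies $\mathcal{J}(v) \leq \mathcal{J}(\zeta(\tau)) + C_1 \delta \leq c_a + 1$ by local Lipschitz continuity of $\mathcal{J}$ on bounded subsets of $\mathcal{H}$ (for $\delta$ sufficiently small), so $v \in L \subset B(0,M_0)$ and $\|\zeta(\tau)\| \leq M_0 + 2\delta < 3M_0$. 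Under the contradiction hypothesis, every such $\zeta(\tau)$ then satisfies $\|d\mathcal{J}_{|S_{a,r}}(\zeta(\tau))\|_{\mathcal{H}_r^{-1}} > 2\delta$. I next construct a locally Lipschitz pseudo-gradient $Y$ on $\widetilde{S}_{a,r}$ obeying the radial analogues of \eqref{k41}--\eqref{k42}, preserving the radial structure via the principle of symmetric criticality, and a cutoff $g \in C(S_{a,r},[0,1])$ that equals $1$ on the critical strip
\[
\{u \in S_{a,r} \cap B(0,3M_0) : |\mathcal{J}(u)-c_a|\leq \delta,\ dist(u,\mathcal{P}_{-,r}) \leq 2\delta\}
\]
and vanishes outside a slightly enlarged neighborhood. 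The associated flow $\eta(t,\cdot)$ satisfies $\frac{d}{dt}\mathcal{J}(\eta(t,u)) \leq -\tfrac12\, g(\eta(t,u))\,\|d\mathcal{J}_{|S_{a,r}}(\eta(t,u))\|$, producing decrease at rate at least $\delta$ on the critical strip. Since $\mathcal{J}(\zeta(0)) \leq 0$ by Corollary \ref{K-Lem2.4} and $\mathcal{J}(\zeta(1)) < 2m(a,r) < 0$ both lie below $c_a - \delta$ for small $\delta$, the endpoints of $\zeta$ are fixed by the flow and $\widehat\zeta(\tau) := \eta(T,\zeta(\tau)) \in \Gamma$ for every $T \geq 0$.

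The main obstacle is to force $\max_\tau \mathcal{J}(\widehat\zeta(\tau)) < c_a$ for a suitable $T$. Parameters $\tau$ with $\mathcal{J}(\zeta(\tau)) \leq c_a - \delta$ cause no trouble, by monotonicity of $\mathcal{J}$ along the flow. The delicate case is $\tau$ with $\mathcal{J}(\zeta(\tau)) \in (c_a - \delta, c_a + \delta^2/4]$ that are \emph{not} a priori close to $\mathcal{P}_{-,r}$, hence not in the critical strip where the flow acts. To handle this I would either (a) apply Ekeland's variational principle on $\Gamma$ endowed with the uniform metric to upgrade $\zeta$ to a path whose near-maximum portion concentrates near $\mathcal{P}_{-,r}$ (exploiting $c_a = \sigma = \inf_{\mathcal{P}_{-,r}} \mathcal{J}$), or (b) reparametrize $\zeta$ near its crossing point via $\tau \mapsto t_{\zeta(\tau)} \star \zeta(\tau)$ to land on $\mathcal{P}_{-,r}$ by Lemma \ref{K-Lem2.3}, using the $C^1$ dependence of $u \mapsto t_u$ granted by part (iv) there. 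Either route then yields $\max_\tau \mathcal{J}(\widehat\zeta(\tau)) \leq c_a + \delta^2/4 - c_2\, \delta\, T < c_a$ for $T$ of order $1/\delta$, producing the contradiction and completing the proof.
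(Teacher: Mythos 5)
Your overall skeleton matches the paper's: argue by contradiction, take a near-optimal object at level $c_a$, run the pseudo-gradient flow \eqref{k45} with a cutoff supported on the strip $\{|\mathcal{J}-c_a|\le\delta,\ dist(\cdot,\mathcal{P}_{-,r})\le 2\delta\}\cap B(0,3M_0)$, and push the maximum of an admissible path strictly below $c_a$. But the heart of the lemma is exactly the step you flag as "the main obstacle" and then leave unresolved: one must produce a path $\zeta_\varepsilon\in\Gamma$ with $\max_t\mathcal{J}(\zeta_\varepsilon(t))\le c_a+\varepsilon$ such that \emph{every} point of the path with $\mathcal{J}\ge c_a$ lies in $\Lambda_{\overline\delta/2}\cap B(0,2M_0)$, i.e.\ is close \emph{in norm} to $\mathcal{P}_{-,r}$ (this is \eqref{k47}, which the deformation step genuinely needs, since the flow only decreases energy on that strip). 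The paper achieves this by choosing the path to be the dilation fiber through a near-minimizer $u_\varepsilon\in\mathcal{P}_{-,r}$, $\zeta_\varepsilon(t)=[(1-t)s_{u_\varepsilon}+tk]\star u_\varepsilon$, so that $\max_t\mathcal{J}(\zeta_\varepsilon(t))=\mathcal{J}(u_\varepsilon)$, and then by a careful limiting analysis of the fiber function $L(t)$ built from \eqref{k25}--\eqref{k27} (using $P(u_\varepsilon)=0$, the second-order inequality $\psi_{u_\varepsilon}''(0)\le 0$, and $L'''<0$) to show $L$ has a \emph{unique} global maximum at dilation parameter $1$; this forces any point of the path at level $\ge c_a$ to have dilation parameter tending to $0$, hence to be norm-close to $u_\varepsilon\in\mathcal{P}_{-,r}$. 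None of this second-order/fiber analysis appears in your proposal.

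Your two suggested fixes do not close this gap. Option (a): Ekeland's principle on the path space (or Jeanjean's augmented functional) yields a Palais--Smale sequence near the level and, at best, $P(u_n)\to 0$; it does not give $dist(u_n,\mathcal{P}_{-,r})\to 0$ in the $\mathcal{H}$-norm, which is what the set $\Omega_\delta$ demands and what Lemma \ref{K-Lem3.2} later exploits via the mean value inequality. Option (b): reparametrizing or projecting the path near its crossing point via $\tau\mapsto t_{\zeta(\tau)}\star\zeta(\tau)$ does not address the actual difficulty, namely the \emph{other} portions of the path with energy in $(c_a-\delta,\,c_a+\delta^2/4]$ that may be far from $\mathcal{P}_{-,r}$ and hence untouched by the flow; moreover, for a general point $u$ on the path one only knows $\mathcal{J}(t_u\star u)=\max_s\mathcal{J}(s\star u)\ge c_a$, with no upper bound by $c_a+\delta^2/4$, so the projected piece can exceed the allowed level and destroy the estimate on the maximum. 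So as written the proposal identifies the correct strategy but omits the decisive construction and estimate that make it work.
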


\begin{proof}
Define
$$
\Lambda_{\delta}:=\{u\in S_{a,r}: \ |\mathcal{J}(u)-c_a  |\leq\delta, \ dist(u, \mathcal{P}_{-,r})\leq 2\delta\}.
$$
Suppose on the contrary   that there is $\overline{\delta}\in (0, \frac{c_a}{2})$ such that
\begin{align}\label{k105}
u\in \Lambda_{\overline{\delta}}\cap B(0, 3M_0)\Rightarrow \|\mathcal{J}'_{|_{S_{a,r}}}(u)\|_{\mathcal{H}_r^{-1}}> 2 \overline{\delta}.
\end{align}
From \eqref{k42},
\begin{align}\label{k106}
u\in \Lambda_{\overline{\delta}}\cap B(0, 3M_0)\Rightarrow u\in \widetilde{N}_{\overline{\delta}}.
\end{align}
Note that, by  \eqref{k45}, for any $u\in S_{a,r}$, it holds  that $\big\|\frac{d}{dt}\eta(t,u)\big\|\leq 1$  for all $t\geq 0$,
then there exists $s'>0$, depending on $\overline{\delta}>0$, such that, for all $s\in(0, s')$,
\begin{align}\label{k46}
u\in \Lambda_{\frac{\overline{\delta}}{2}}\cap B(0, 2M_0)\Rightarrow  \eta(s,u)\in B(0, 3M_0)\quad \quad \text{and}\quad\quad dist(\eta(s,u), \mathcal{P}_{-,r} )\leq 2 \overline{\delta}.
\end{align}
We claim that, taking $\varepsilon>0$ small enough, we can construct a path $\zeta_\varepsilon(t)\in \Gamma$ satisfying
$$
\max_{t\in[0,1]}\mathcal{J}(\zeta_\varepsilon(t))\leq c_a+\varepsilon,
$$
and
\begin{align}\label{k47}
\mathcal{J}(\zeta_\varepsilon(t))\geq c_a\Rightarrow\zeta_\varepsilon(t)\in \Lambda_{\frac{ \overline{\delta}}{2}}\cap B(0, 2M_0).
\end{align}
In fact, for $\varepsilon>0$ small, let $u_\varepsilon \in \mathcal{P}_{-,r}$ be such that $\mathcal{J}(u_\varepsilon)\leq c_a+\varepsilon $, and considering the path
\begin{align*}
\zeta_{\varepsilon}(t)=[(1-t)s_{u_{\varepsilon}}+t k]\star u_{\varepsilon},
\end{align*}
where $ t\in [0,1] $, $ k>0 $ large enough and   $s_{u_{\varepsilon}}<0$.
Clearly,
$$
\max_{t\in[0,1]}\mathcal{J}(\zeta_\varepsilon(t))=\mathcal{J}(u_\varepsilon) \leq c_a+\varepsilon.
$$
Since  $u_\varepsilon \in \mathcal{P}_{-,r}$,
similar to \eqref{k11},  one has
 \begin{align*}
 |\nabla u_\varepsilon|_2^2
 \leq\gamma_p\left(\frac{p\gamma_p-q\gamma_q}{2-q\gamma_q}\right)
 \int_{\mathbb{R}^3}|u_\varepsilon|^{p}dx
 \leq C_p^p\gamma_p\left(\frac{p\gamma_p-q\gamma_q}{2-q\gamma_q}\right)a^{p(1-\gamma_p)}|\nabla u_\varepsilon|_2^{p\gamma_p},
 \end{align*}
which shows that
\begin{align*}
|\nabla u_\varepsilon|_2\geq \Big(\frac{1}{C_p^p \gamma_p a^{p(1-\gamma_p)}}\Big)^{\frac{1}{p \gamma_p-2}} \Big( \frac{2-q\gamma_q}{p\gamma_p-q\gamma_q}  \Big)^{\frac{1}{p \gamma_p-2}}.
\end{align*}
Moreover, it is easy to see that $\{u_{\varepsilon}\}_{\varepsilon}$ is bounded in $\mathcal{H}_r$. Then, let $\varepsilon\rightarrow 0$,
\begin{align*}
&A(u_{\varepsilon}):=\int_{\mathbb{R}^3}|\nabla u_\varepsilon|^2dx\rightarrow A>0, \quad \quad B(u_{\varepsilon}):=\int_{\mathbb{R}^3} \int_{\mathbb{R}^3} \frac{|u_\varepsilon(x)|^{2}|u_\varepsilon(y)|^2} {|x-y|}dxdy\rightarrow B\geq0,\\
&D(u_{\varepsilon}):=\int_{\mathbb{R}^3}|u_\varepsilon|^{p}dx\rightarrow D>0, \quad\quad  \ E(u_{\varepsilon}):=\int_{\mathbb{R}^3}|u_\varepsilon|^{q}dx\rightarrow E\geq0.
\end{align*}
Since  $u_\varepsilon \in \mathcal{P}_{-,r}$ and $\lim_{\varepsilon\rightarrow 0}\mathcal{J}(u_\varepsilon)=c_a$, then we have
\begin{align}
\lim_{\varepsilon\rightarrow 0}P(u_\varepsilon)&=A+\frac{1}{4}B
-\gamma_pD-\mu\gamma_qE=0, \label{k25}\\
\lim_{\varepsilon\rightarrow 0}\psi_{u_{\varepsilon}}''(0)&=2A+\frac{1}{4}B
-p\gamma_p^2D
-\mu q \gamma_q^2E\leq0,\label{k26}\\
\lim_{\varepsilon\rightarrow 0}\mathcal{J} (u_{\varepsilon})&=\frac{1}{2}A+\frac{1}{4}B
-\frac{1}{p}D-\frac{\mu}{q}E=c_a.\label{k27}
 \end{align}
 Considering the function
\begin{align*}
 L(t)=\frac{1}{2}A t^2+\frac{1}{4}B t-\frac{1}{p}Dt^{p \gamma_p} -\frac{\mu}{q}Et^{q\gamma_q},\quad\quad \text{for all}\ t>0.
 \end{align*}
 We claim that the function  $ L(t)$ has   a unique global maximum point at $t=1$. Indeed, by simple calculate and from \eqref{k25},  we have
 \begin{align*}
 L'(t)&= A t+\frac{1}{4}B -\gamma_pDt^{p \gamma_p-1} - \mu \gamma_qEt^{q\gamma_q-1} \\
  &=\frac{1}{4}(1-t)B
+\gamma_pD(t-t^{p \gamma_p-1})+\mu\gamma_qE(t-t^{q\gamma_q-1}).
 \end{align*}
 Then
  \begin{align*}
 L''(t)&=-B
+\gamma_pD(1-(p \gamma_p-1)t^{p \gamma_p-2})+\mu\gamma_qE(1-(q\gamma_q-1)t^{q\gamma_q-2}),\\
 L'''(t)&=
 -\gamma_pD(p \gamma_p-1)(p \gamma_p-2)t^{p \gamma_p-3}-\mu\gamma_qE(q\gamma_q-1)(q\gamma_q-2)t^{q\gamma_q-3}.
 \end{align*}
Notice that  $L'''(t)<0$ for all $t>0$. Thus, the function $ L''(t)$ is strictly decrease on  $t>0$. Since $\lim_{t\rightarrow 0}L''(t)=+\infty$, $\lim_{t\rightarrow \infty}L''(t)=-\infty$ and
  \begin{align*}
  L''(1)&=-B
+\gamma_pD(2-p \gamma_p )+\mu\gamma_qE(2-q\gamma_q)\leq 0,
  \end{align*}
 where \eqref{k25} and \eqref{k26} are applied,
 then there exits $0<\widetilde{t}\leq 1$ such that $ L''(\widetilde{t})=0$. Therefore, the function $L'(t)$ is increase on $(0, \widetilde{t})$ and decrease on $(\widetilde{t}, +\infty)$. It is easy to see from $\lim_{t\rightarrow0} L(t)=0^-$ and $L(1)=c_a>0$ that
$L'(\widetilde{t})>0$. Furthermore,   $L'(1)=0$ implies that $\widetilde{t}< 1$. Since $\lim_{t\rightarrow0}L'(\widetilde{t})=-\infty $, $L'(\widetilde{t})$ has only two zero points, denoted by $t^*$ and 1. Hence, the function $L(t)$ has a local minimize   $t^*$ and global maximum $1$. By \eqref{k27}, we infer that
$$
\max_{t>0}L(t)=L(1)=c_a.
$$
Setting $y_{\varepsilon}=  (1-t)s_{u_{\varepsilon}}+t k \in (s_{u_{\varepsilon}}, k )$, assume that $e^{y_{\varepsilon}}\rightarrow l\geq 0$ as $\varepsilon\rightarrow 0$.
Since
 \begin{align*}
c_a \leq \lim_{\varepsilon\rightarrow 0} \mathcal{J}(\zeta_\varepsilon(t))
 =\frac{1}{2}l^2A+\frac{1}{4} l B - \frac{1}{p}l^{p\gamma_p}D-
 -\frac{\mu}{q}l^{q\gamma_q}E\leq L(1)=c_a,
\end{align*}
which implies that $l=1$ by the uniqueness of global maximum for the function $L(t)$. That is, $y_{\varepsilon}\rightarrow 0$ as $\varepsilon\rightarrow 0$.
 Hence, let $\varepsilon\in (0, \frac{1}{4} \overline{\delta}s')$ sufficiently small,   we get $\zeta_{\varepsilon}(t)\in \Lambda_{ \frac{\overline{\delta}}{2}} \cap B(0, 2M_0) $. Besides, applying the pseudo-gradient flow on $ \zeta_{\varepsilon}(t)$,   we see that
\begin{align}\label{k22}
\eta(s, \zeta_{\varepsilon}(\cdot))\in \Gamma\quad\quad \text{for all} \ s>0,
\end{align}
because $ \eta(s, u)=u$ for all $s>0$ if $| \mathcal{J}(u)-c_a |\geq 2 \overline{\delta}$. Next, we claim that, taking $s^*:=\frac{4 \varepsilon}{\overline{\delta}}<s'$,
\begin{align}\label{k21}
\max_{t\in [0,1]} \mathcal{J}( \eta( s^*,\zeta_{\varepsilon}(t)))<c_a .
\end{align}
Indeed, for simplicity, setting $w=\zeta_{\varepsilon}(t)$ for $t\in [0,1]$,

(1) If $ \mathcal{J}(w)<c_a $, then $\mathcal{J}( \eta( s^*, w))\leq  \mathcal{J}(w)<c_a$.

(2)  If $ \mathcal{J}(w)\geq c_a  $,   assume by contradiction that
\begin{align*}
\mathcal{J}( \eta( s, w))\geq c_a  \quad\quad \text{for all}\ s\in [0, s^*].
\end{align*}
 Since  $ \mathcal{J}( \eta( s, w))\leq  \mathcal{J}(w)\leq c_a+\varepsilon$,
 it follows from \eqref{k46} and  \eqref{k47} that $\eta( s, w)\in \Lambda_{\overline{\delta}}\cap B(0, 3M_0) $ for all $s\in [0, s^*]$. Moreover, we can see from \eqref{k42}, \eqref{k104}, \eqref{k105} and \eqref{k106} that   $\|Y(\eta( s, w) )\|\geq 2 \overline{\delta}$ and $ g(\eta( s, w) )=1$ for all $s\in [0, s^*]$. Thus, by \eqref{k44}, \eqref{k45} and \eqref{k105}, one has
\begin{align*}
\frac{d}{ds} \mathcal{J}( \eta( s, w))= \Big\langle d\mathcal{J}( \eta( s, w)), -\frac{Y(\eta(t, w)  }{\|Y(\eta(t, w)\|}    \Big\rangle.
\end{align*}
By integration, using $s^*=\frac{4\varepsilon}{\overline{\delta}}$, \eqref{k41}, \eqref{k42} and the fact that $\|Y(\eta( s, w) )\|\geq 2 \overline{\delta}$, we deduce that
$$
\mathcal{J}( \eta( s^*, w))\leq\mathcal{J}(w)-s^* \frac{\overline{\delta}}{2}\leq c_a +\varepsilon-2\varepsilon<c_a -\varepsilon,
$$
which is contradict with $\mathcal{J}( \eta( s^*, w))\geq c_a $. Thus, \eqref{k21} holds. It follows from  \eqref{k22} and  \eqref{k21} that
$$
c_a \leq \max_{t\in[0,1]}\mathcal{J}( \eta( s^*,  \gamma_{\varepsilon}(t)))<c_a ,
$$
a contradiction. So we complete the proof.
\end{proof}

\begin{lemma}\label{K-Lem3.2}
Let $\mu>0$, $q\in (2, \frac{12}{5}]$ and $p\in (\frac{10}{3}, 6)$. Then there exists a sequence $\{u_n\}\subset S_{a,r} $ and a constant $\alpha>0$ satisfying
$$
P(u_n)=o(1), \quad  \mathcal{J}(u_n)=c_a +o(1), \quad \|\mathcal{J}'_{|_{S_{a,r}}}(u_n)\|_{\mathcal{H}_{r}^{-1}}= o(1), \quad \|u_n\|\leq \alpha.
$$
\end{lemma}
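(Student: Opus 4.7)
The plan is to extract the desired sequence directly from Lemma \ref{K-Lem3.1} via a diagonal argument. For each positive integer $n$, apply Lemma \ref{K-Lem3.1} with $\delta = \tfrac1n$ to produce a point $u_n \in \Omega_{1/n} \cap B(0, 3M_0)$. By the very definition of $\Omega_{1/n}$ this immediately yields
\begin{align*}
|\mathcal{J}(u_n) - c_a| \leq \tfrac{1}{n}, \qquad \|\mathcal{J}'_{|_{S_{a,r}}}(u_n)\|_{\mathcal{H}_r^{-1}} \leq \tfrac{2}{n},
\end{align*}
together with the uniform bound $\|u_n\| \leq 3M_0$. Setting $\alpha := 3M_0$ takes care of three of the four claims.

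The remaining claim $P(u_n) = o(1)$ is then handled by exploiting the proximity to $\mathcal{P}_{-,r}$. The inclusion $u_n \in \Omega_{1/n}$ gives $\mathrm{dist}(u_n, \mathcal{P}_{-,r}) \leq \tfrac{2}{n}$, so I can pick $v_n \in \mathcal{P}_{-,r}$ with $\|u_n - v_n\| \to 0$. Since $\{u_n\}$ is bounded in $\mathcal{H}_r$, so is $\{v_n\}$. Now $P(v_n) = 0$ for every $n$, so it suffices to prove $P(u_n) - P(v_n) \to 0$, i.e.\ that $P$ is continuous on bounded subsets of $\mathcal{H}_r$. The gradient term is controlled by $\|u_n - v_n\|$ directly. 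For the nonlocal Coulomb term, Lemma \ref{gle4}(2) bounds it by $C|\cdot|_{12/5}^4$, and the Sobolev embedding $\mathcal{H} \hookrightarrow L^{12/5}$ ensures continuity on bounded sets. For the $L^p$ and $L^q$ terms, the continuity of the Nemytskii maps $u \mapsto |u|^p$, $u \mapsto |u|^q$ on bounded subsets (since $p,q \in (2,6) \subset (2,2^*)$) finishes the job. Assembling these gives $P(u_n) \to 0$.

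The whole argument is really just post-processing: the substantive work — the deformation argument, construction of the path $\zeta_\varepsilon$ via the scaling $s \star u$, the uniqueness of the global maximum of the auxiliary function $L(t)$, and the proof that the deformation cannot drop the max-energy below $c_a$ — was already completed inside Lemma \ref{K-Lem3.1}. The only potential subtlety I anticipate here is making sure the continuity of $P$ is justified with the correct quantitative control in $\mathcal{H}$, but since the exponents $p, q \in (2,6)$ and Lemma \ref{gle4} together provide uniform continuity of each term on bounded sets, no new estimate is required.
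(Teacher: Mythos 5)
Your proposal is correct and follows essentially the same route as the paper: extract the sequence from Lemma \ref{K-Lem3.1} with $\delta=\frac1n$ (so the energy, gradient and boundedness claims are immediate with $\alpha=3M_0$), and then convert $\mathrm{dist}(u_n,\mathcal{P}_{-,r})\to 0$ into $P(u_n)\to 0$ using that $P$ vanishes on $\mathcal{P}_{-,r}$ and is well-behaved on bounded sets. The paper phrases this last step as a mean value estimate, $|P(u_n)|\leq \max_{B(0,3M_0)}\|dP\|_{\mathcal{H}^{-1}}\,\|u_n-w\|$ with $w\in\mathcal{P}_{-,r}$, which is the same local-Lipschitz-on-bounded-sets fact you invoke termwise, so there is no substantive difference.
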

\begin{proof}
We know from Lemma \ref{K-Lem3.1} that there exists $\{u_n\}\subset S_{a,r}$ satisfying $\{u_n\}\subset B(0, 3M_0)$ and
$$
dist(u_n, \mathcal{P}_{-,r})=o(1),\quad \quad |\mathcal{J}(u_n)-c_a |=o(1),\quad\quad \|\mathcal{J}_{|_{S_{a,r}}}'(u_n) \|_{H^{-1}}=o(1).
$$
In what follows, we show that $P(u_n)=o(1)$. Since $\|dP\|_{H^{-1}}$ is bounded on any bounded set of $\mathcal{H}_r$. Now, for any $n\in \mathbb{N}$ and any $w\in \mathcal{P}_{-,r}$, we have
$$
P(u_n)=P(w)+ dP(\beta u_n+(1-\beta)w)(u_n-w),
$$
where $\beta\in [0,1]$. Since $P(w)=0$,  then
\begin{align}\label{k23}
|P(u_n)|\leq \max_{u\in B(0, 3M_0)} \|dP\|_{H^{-1}} \|u_n-w\|.
\end{align}
Choosing $\{w_m\}\subset \mathcal{P}_{-,r}$ such that
\begin{align}\label{k24}
\|u_n-w_m\|\rightarrow dist(u_n, \mathcal{P}_{-,r} )
\end{align}
as $m\rightarrow\infty$. Since  $dist(u_n, \mathcal{P}_{-,r})\rightarrow 0 $ as $n\rightarrow\infty$,  \eqref{k23} and \eqref{k24} give  that $P(u_n)\rightarrow 0$ as $n\rightarrow\infty$.
\end{proof}

\begin{lemma}\label{K-Lem2.6}
Let $\mu>0$, $q\in (2, \frac{12}{5}]$, $p\in (\frac{10}{3}, 6)$ and \eqref{k3} be hold. Assume that $\{u_n\}\subset S_{a,r}$ is a Palais-Smail sequence at level $c\neq 0$ and $P(u_n)\rightarrow 0$ as $n\rightarrow\infty$. Then there exists $u\in S_{a,r}$ such that $u_n\rightarrow u$ in $\mathcal{H}_r$ and $(\lambda, u )\in \mathbb{R}^+ \times  \mathcal{H}_{r}$ solves Eq.~\eqref{k1} with $\mathcal{J} (u)=c $.
\end{lemma}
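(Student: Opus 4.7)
The plan is to prove strong convergence along four standard steps: uniform boundedness, extraction of a weak limit enjoying radial compactness, identification of a positive Lagrange multiplier, and a Br\'ezis--Lieb-type splitting; for brevity write $D(w):=\int_{\mathbb{R}^3}\!\int_{\mathbb{R}^3}\frac{|w(x)|^2|w(y)|^2}{|x-y|}\,dx\,dy$. First, I would combine $\mathcal{J}(u_n)=c+o(1)$ with $P(u_n)=o(1)$, forming $\mathcal{J}(u_n)-\tfrac{1}{p\gamma_p}P(u_n)$ to eliminate the $L^p$-contribution, which gives
\begin{equation*}
c+o(1)=\frac{p\gamma_p-2}{2p\gamma_p}|\nabla u_n|_2^2+\frac{p\gamma_p-1}{4p\gamma_p}D(u_n)-\mu\,\frac{p\gamma_p-q\gamma_q}{pq\gamma_p}|u_n|_q^q.
\end{equation*}
Since $p\gamma_p>2$ and $q\gamma_q<2$, Gagliardo--Nirenberg \eqref{k10} plus Young's inequality absorbs the $L^q$-term into the gradient term, so $\{u_n\}$ is bounded in $\mathcal{H}_r$. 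Along a subsequence $u_n\rightharpoonup u$ in $\mathcal{H}_r$; by the compact radial embedding $\mathcal{H}_r \hookrightarrow L^s(\mathbb{R}^3)$ for $s\in(2,6)$, $u_n\to u$ strongly in $L^p\cap L^q\cap L^{12/5}$, and Lemma \ref{gle4}-(3) gives $D(u_n)\to D(u)$. Setting $\lambda_n:=\tfrac{1}{a^2}\langle \mathcal{J}'(u_n),u_n\rangle$, boundedness forces $\lambda_n\to\lambda$ up to a subsequence, and passing to the limit in the constrained PS relation $\mathcal{J}'(u_n)-\lambda_n u_n\to 0$ in $\mathcal{H}_r^{-1}$ shows that $u$ weakly solves \eqref{k1} with multiplier $\lambda$.

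The main obstacle is ruling out $u\equiv 0$ and proving $\lambda>0$. If $u\equiv 0$, then the $L^s$-convergence gives $|u_n|_p^p,|u_n|_q^q,D(u_n)\to 0$, and $P(u_n)\to 0$ then forces $|\nabla u_n|_2\to 0$; hence $\mathcal{J}(u_n)\to 0$, contradicting $c\neq 0$. For the sign of $\lambda$, I combine the Nehari identity \eqref{kk4} with the Poho\v{z}aev identity $P(u)=0$ (valid by \cite{2014-JFA-le}) and eliminate $|u|_p^p$, arriving at
\begin{equation*}
\lambda|u|_2^2=\frac{1-\gamma_p}{\gamma_p}|\nabla u|_2^2+\mu\,\frac{\gamma_p-\gamma_q}{\gamma_p}|u|_q^q-\frac{4\gamma_p-1}{4\gamma_p}D(u).
\end{equation*}
Since $q\leq \tfrac{12}{5}$ yields $\gamma_q\leq \tfrac14<\gamma_p$, the $L^q$-contribution is nonnegative for $\mu>0$. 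To dominate the Coulombic term I combine Lemma \ref{gle4}-(2) with Gagliardo--Nirenberg at exponent $\tfrac{12}{5}$ (where $\gamma_{12/5}=\tfrac14$) to obtain $D(u)\leq C\,a^{3}|\nabla u|_2$; on the other hand, $P(u)=0$ together with GN at exponent $p$ produce an explicit upper bound for $|\nabla u|_2$ in terms of $a$. Substituting these bounds reduces $\lambda>0$ to an inequality in $a$ which is exactly the content of hypothesis \eqref{k3}.

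Finally, set $v_n:=u_n-u$. Br\'ezis--Lieb gives $|\nabla u_n|_2^2=|\nabla u|_2^2+|\nabla v_n|_2^2+o(1)$ and $|u_n|_2^2=|u|_2^2+|v_n|_2^2+o(1)$, while the $L^s$-convergence and Lemma \ref{gle4}-(3) imply $|u_n|_p^p\to|u|_p^p$, $|u_n|_q^q\to|u|_q^q$ and $D(u_n)\to D(u)$. Testing $\mathcal{J}'(u_n)-\lambda_n u_n\to 0$ against $u_n$ and subtracting the Nehari identity for $u$ yields $|\nabla v_n|_2^2+\lambda|v_n|_2^2=o(1)$; since $\lambda>0$, $v_n\to 0$ in $\mathcal{H}_r$. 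Therefore $u_n\to u$ strongly, $|u|_2=a$, $\mathcal{J}(u)=c$ and $(\lambda,u)\in\mathbb{R}^+\times\mathcal{H}_r$ solves \eqref{k1}.
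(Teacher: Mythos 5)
Your overall architecture (boundedness via $\mathcal{J}(u_n)-\tfrac{1}{p\gamma_p}P(u_n)$, the compact embedding $\mathcal{H}_r\hookrightarrow L^s(\mathbb{R}^3)$ for $s\in(2,6)$, boundedness of the multipliers, exclusion of $u\equiv 0$ from $c\neq 0$, and strong convergence once $\lambda>0$) is exactly the paper's, and those parts of your sketch are fine. The gap is in the step $\lambda>0$. From your identity
\begin{equation*}
\lambda|u|_2^2=\frac{1-\gamma_p}{\gamma_p}|\nabla u|_2^2+\mu\,\frac{\gamma_p-\gamma_q}{\gamma_p}|u|_q^q-\frac{4\gamma_p-1}{4\gamma_p}D(u),
\end{equation*}
after the estimate $D(u)\leq C_{\frac{12}{5}}^{\frac{12}{5}}a^{3}|\nabla u|_2$ you need a \emph{lower} bound on $|\nabla u|_2$ so that the quadratic term dominates the linear one; the ``explicit upper bound for $|\nabla u|_2$'' you propose to extract is the wrong direction, and in any case neither bound follows from $P(u)=0$ together with Gagliardo--Nirenberg at exponent $p$ alone. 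Indeed $P(u)=0$ only gives $|\nabla u|_2^2\leq \gamma_p C_p^p a^{p(1-\gamma_p)}|\nabla u|_2^{p\gamma_p}+\mu\gamma_q C_q^q a^{q(1-\gamma_q)}|\nabla u|_2^{q\gamma_q}$, and since the $q$-term enters with a positive coefficient and the subcritical exponent $q\gamma_q<2$, this inequality is satisfied for all small $|\nabla u|_2$ and excludes nothing; you are not allowed simply to drop that term.

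The paper closes this point by contradiction: assuming $\lambda\leq 0$, it forms the \emph{other} combination of the Nehari and Poho\v{z}aev identities, eliminating the Coulomb term instead of $|u|_p^p$, namely $\tfrac34|\nabla u|_2^2-\tfrac{\lambda}{4}|u|_2^2+(\tfrac14-\gamma_p)|u|_p^p+\mu(\tfrac14-\gamma_q)|u|_q^q=0$. Here the hypothesis $q\leq\tfrac{12}{5}$ (i.e. $\gamma_q\leq\tfrac14$) is what allows the $q$-term to be discarded, and Gagliardo--Nirenberg at $p$ with $p\gamma_p>2$ then yields the lower bound $|\nabla u|_2^{p\gamma_p-2}\geq \tfrac{3}{(4\gamma_p-1)C_p^p a^{p(1-\gamma_p)}}$ (this is \eqref{k48} in the paper); combined with the upper bound $(1-\gamma_p)|\nabla u|_2\leq(\gamma_p-\tfrac14)C_{\frac{12}{5}}^{\frac{12}{5}}a^{3}$, which is what your identity gives under $\lambda\leq 0$, one contradicts precisely \eqref{k3}. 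Note also that you locate the role of $q\leq\tfrac{12}{5}$ incorrectly: positivity of $\tfrac{\gamma_p-\gamma_q}{\gamma_p}$ only needs $q<p$; the restriction $q\leq\tfrac{12}{5}$ is needed for the lower bound just described. With the $\lambda>0$ step repaired along these lines, the rest of your argument (the $u\not\equiv 0$ step and the Br\'ezis--Lieb/strong convergence step) goes through as in the paper.
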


\begin{proof} Let us prove this result in three steps.

Step 1: $\{u_n\}$ is bounded in $\mathcal{H}_r$.
 Since $P(u_n)\rightarrow 0$ as $n\rightarrow\infty$,  then
   \begin{align}\label{k96}
     c+o(1)
 =&\mathcal{J} (u_n)-\frac{1}{p\gamma_p} P(u_n)\nonumber\\
 =&\left(\frac{1}{2}-\frac{1}{p\gamma_p}\right)\int_{\mathbb{R}^3}|\nabla u_n|^2dx+\frac{1}{4}\left(1-\frac{1}{p\gamma_p}\right)\int_{\mathbb{R}^3} \int_{\mathbb{R}^3} \frac{|u_n(x)|^{2}|u_n(y)|^2} {|x-y|}dxdy\nonumber\\
&+\mu\left(\frac{\gamma_q}{p\gamma_p}-\frac{1}{q}\right)\int_{\mathbb{R}^3}|u_n|^{q}dx\nonumber\\
 \geq& \left(\frac{1}{2}-\frac{1}{p\gamma_p}\right)\int_{\mathbb{R}^3}|\nabla u_n|^2dx-\mu  \frac{p\gamma_p-q\gamma_q}{pq\gamma_p} C_q^qa^{(1-\gamma_q)q}|\nabla u_n|_2^{q \gamma_q},
\end{align}
which shows that $\{u_n\}$ is bounded in $\mathcal{H}_r$ because of $q\gamma_q<2$.

Step 2: Since $\{u_n\}$ is bounded in $\mathcal{H}_r$, then there exists $u\in \mathcal{H}_r$ such that, up to a subsequence, as $n\rightarrow\infty$,
 \begin{align*}
   & u_n\rightharpoonup u \quad\quad \text{in}\ \mathcal{H}_r;\\
   & u_n\rightarrow u\quad\quad \text{in}\ L^t(\mathbb{R}^3)\ \text{with}\ t\in(2, 6);\\
   & u_n\rightarrow u \quad\quad  a.e \ \text{in}\ \mathbb{R}^3.
\end{align*}
Since $\mathcal{J}'_{|_{S_a}}(u_n)\rightarrow 0$,  then there exists $\lambda_n\in \mathbb{R} $ satisfying
\begin{align} \label{k19}
-\Delta u_n+\lambda_n u_n+ (|x|^{-1}\ast |u_n|^2)u_n=\mu |u_n|^{q-2}u_n+|u_n|^{p-2}u_n+o(1).
\end{align}
 Multiplying the above equation by $u_n$ and integrating in $\mathbb{R}^3$,
\begin{align} \label{k16}
 \lambda_n a^2= -|\nabla u_n|_2^2-\int_{\mathbb{R}^3} \int_{\mathbb{R}^3} \frac{|u_n(x)|^{2}|u_n(y)|^2} {|x-y|}dxdy
+\int_{\mathbb{R}^3}|u_n|^{p}dx+\mu \int_{\mathbb{R}^3}|u_n|^{q}dx+o( \|u_n\|).
\end{align}
Since $\{u_n\}\subset \mathcal{H}_r$ is bounded, then it follows from \eqref{k16} that  $\{\lambda_n\}$  is bounded in $\mathbb{R}$. Hence, there is $\lambda\in \mathbb{R}$ satisfying $\lambda_n\rightarrow \lambda$ as $n\rightarrow\infty$. Then we have
\begin{align} \label{k20}
-\Delta u+\lambda u+(|x|^{-1}\ast |u|^2)u=\mu |u|^{q-2}u+|u|^{p-2}u.
\end{align}
Clearly, $u\neq0$. If not, assume that $u=0$, then by Lemma \ref{gle4} and the fact that $\mathcal{H}_r\hookrightarrow L^t(\mathbb{R}^3 )$ with $t\in (2,6)$ is compact, we deduce
\begin{align*}
c=\lim_{n\rightarrow\infty}\mathcal{J}(u_n)
&=\lim_{n\rightarrow\infty}\Big(\mathcal{J}(u_n)- \frac{1}{2} P(u_n)\Big)\\
&=\lim_{n\rightarrow\infty}\bigg[\frac{1}{8} \int_{\mathbb{R}^3} \int_{\mathbb{R}^3} \frac{|u_n(x)|^2|u_n(y)|^2} {|x-y|}dxdy+(\gamma_p-\frac{1}{p})|u_n|_p^p+\mu(\gamma_q-\frac{1}{q})|u_n|_q^q\bigg]\\
&=0,
\end{align*}
which is a contradiction. Besides, we show that
\begin{align}\label{k40}
\lambda>0.
\end{align}
 Since $(\lambda, u)\in \mathbb{R}\times \mathcal{H}_r$ satisfies \eqref{k20},
then  the following equalities hold:
\begin{align}
|\nabla u|_2^2+ \lambda \int_{\mathbb{R}^3} |u|^{2}dx  +\int_{\mathbb{R}^3} \int_{\mathbb{R}^3} \frac{|u(x)|^{2}|u(y)|^2} {|x-y|}dxdy
-\int_{\mathbb{R}^3}|u|^{p}dx-\mu \int_{\mathbb{R}^3}|u|^{q}dx&=0,\label{k17}\\
 |\nabla u|_2^2+ \frac{1}{4}\int_{\mathbb{R}^3} \int_{\mathbb{R}^3} \frac{|u(x)|^{2}|u(y)|^2} {|x-y|}dxdy-\gamma_p \int_{\mathbb{R}^3}|u|^{p}dx-\mu \gamma_q \int_{\mathbb{R}^3}|u|^{q}dx&=0.\label{k18}
\end{align}
  We argue by contradiction   assuming that $\lambda\leq 0$. Then it follows from  \eqref{k17} and \eqref{k18} that, by eliminating $\int_{\mathbb{R}^3} \int_{\mathbb{R}^3} \frac{|u(y)|^2|u(x)|^{2}} {|x-y|}dydx$,
\begin{align*}
\frac{3}{4}|\nabla u|_2^2-\frac{\lambda}{4}  \int_{\mathbb{R}^3} |u|^{2}dx
+(\frac{1}{4}-\gamma_p)\int_{\mathbb{R}^3}|u|^{p}dx+\mu (\frac{1}{4}-\gamma_q) \int_{\mathbb{R}^3}|u|^{q}dx=0,
\end{align*}
which implies that
\begin{align*}
\frac{3}{4}|\nabla u|_2^2 \leq&
 (\gamma_p-\frac{1}{4})\int_{\mathbb{R}^3}|u|^{p}dx+\mu (\gamma_q-\frac{1}{4}) \int_{\mathbb{R}^3}|u|^{q}dx\\
 \leq& (\gamma_p-\frac{1}{4})\int_{\mathbb{R}^3}|u|^{p}dx\\
 \leq & (\gamma_p-\frac{1}{4}) C_p^p|\nabla u|_2^{p\gamma_p} a^{p(1-\gamma_p)}
\end{align*}
by using the fact that $q\in(2, \frac{12}{5}]$, $p\in(\frac{10}{3}, 6)$, $ |u|_2\leq a$ and the Gagliardo-Nirenberg inequality \eqref{k10}.
Then, since $p\gamma_p>2$ and $|\nabla u|_2\neq 0$,  we get
\begin{align}\label{k48}
 |\nabla u|_2^{p\gamma_p-2}\geq \frac{3}{4 (\gamma_p-\frac{1}{4})C_p^p a^{p(1-\gamma_p)} }.
\end{align}
Moreover, by eliminating $\int_{\mathbb{R}^3}|u|^{p}dx$
  from  \eqref{k17} and \eqref{k18},
  \begin{align*}
(\gamma_p-1)|\nabla u|_2^2+ \lambda \gamma_p \int_{\mathbb{R}^3} |u|^{2}dx
+(\gamma_p-\frac{1}{4}) \int_{\mathbb{R}^3} \int_{\mathbb{R}^3} \frac{|u(x)|^{2}|u(y)|^2} {|x-y|}dxdy +\mu ( \gamma_q-\gamma_p) \int_{\mathbb{R}^3}|u|^{q}dx=0.
\end{align*}
  Then,
    \begin{align*}
  0\leq-\lambda \gamma_p \int_{\mathbb{R}^3} |u|^{2}dx =&(\gamma_p-1)|\nabla u|_2^2+\big(\gamma_p-\frac{1}{4}\big) \int_{\mathbb{R}^3} \int_{\mathbb{R}^3} \frac{|u(x)|^{2}|u(y)|^2} {|x-y|}dxdy +\mu ( \gamma_q-\gamma_p) \int_{\mathbb{R}^3}|u|^{q}dx\\
  \leq & (\gamma_p-1)|\nabla u|_2^2+\big(\gamma_p-\frac{1}{4}\big) \int_{\mathbb{R}^3} \int_{\mathbb{R}^3} \frac{|u(x)|^{2}|u(y)|^2} {|x-y|}dxdy\\
  \leq &  (\gamma_p-1)|\nabla u|_2^2+ \big(\gamma_p-\frac{1}{4}\big)  C_{\frac{12}{5}}^{\frac{12}{5}}a^3|\nabla u|_2,
\end{align*}
which gives that
  \begin{align}\label{k49}
(1-\gamma_p) |\nabla u|_2\leq \big(\gamma_p-\frac{1}{4}\big)  C_{\frac{12}{5}}^{\frac{12}{5}}a^3.
\end{align}
Combining \eqref{k48} and \eqref{k49}, using the assumption \eqref{k3},
   we get a contradiction. Hence,   $\lambda>0$.

Step 3:  Multiplying \eqref{k19} and \eqref{k20} by $u_n-u$ and integrating in $\mathbb{R}^3$,
  $$
 \left\langle \mathcal{J}'(u_n)-\mathcal{J}'(u), u_n-u\right\rangle +\lambda_n \int_{\mathbb{R}^3} u_n(u_n-u)dx-\lambda \int_{\mathbb{R}^3} u(u_n-u)dx=o(1),
  $$
 by the fact that   $\lambda_n\rightarrow \lambda$ as $n\rightarrow\infty$ and $\mathcal{H}_r\hookrightarrow L^t( \mathbb{R}^3)$ with $t\in (2,6)$ is compact, we deduce
 $$
 |\nabla (u_n-u)|_2^2+ \lambda |u_n-u|_2^2=o(1),
 $$
 which shows that $\|u_n-u\|\rightarrow 0$ as $n\rightarrow\infty$ because  $\lambda>0$. Thus, we infer that $( \lambda, u)\in \mathbb{R}^+\times \mathcal{H}_r$ is a solution for Eq.~\eqref{k1}, which satisfies that $u_n\rightarrow u$ in $\mathcal{H}_r$ and $\mathcal{J}(u)=c$. So, we complete the proof.
\end{proof}

 \noindent {\bf{Proof of Theorem \ref{K-TH2}}.}
In view of Lemmas \ref{K-Lem3.2} and \ref{K-Lem2.6}, we obtain that there is $( \widehat{\lambda}, \widehat{u})\in \mathbb{R}^+ \times \mathcal{H}_r  $ such that $(\widehat{\lambda}, \widehat{u})$ solves Eq.~\eqref{k1} and $\mathcal{J}(\widehat{u})=c_a>m(a,r)\geq m(a)$. Similarly as the proof of Theorem \ref{K-TH1}, we can find that $\widehat{u}>0$.

\section{The exisence of the case $\mu\leq0$, $  2<q<\frac{8}{3}$ and $ \frac{10}{3}<p<6$}\label{sec6}
In this section, we consider the case $\mu\leq0$, $  2<q<\frac{8}{3}$ and $ \frac{10}{3}<p<6$ in $\mathcal{H}_{r}$. Let $S_{a,r}=\mathcal{H}_{r} \cap S_a$, $ \mathcal{P}_{0,r}=\mathcal{H}_{r}\cap \mathcal{P}_{0}$ and $\mathcal{P}_r=\mathcal{H}_{r}\cap \mathcal{P} $.

\begin{lemma}\label{K-Lem4.1}
  Let  $\mu\leq0$, $  2<q<\frac{8}{3}$ and $ \frac{10}{3}<p<6$. Then $ \mathcal{P}_{0,r}=\emptyset$, and $\mathcal{P}_r$ is a smooth manifold of codimension 2 in $\mathcal{H}_r$.
  \end{lemma}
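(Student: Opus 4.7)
\medskip

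\noindent\textbf{Proof proposal.}
The plan is to argue $\mathcal{P}_{0,r}=\emptyset$ by a direct sign analysis, and then to obtain the manifold structure by applying the implicit function theorem to the constraint map
\[
\Phi:\mathcal{H}_r\to\mathbb{R}^2,\qquad \Phi(u)=\bigl(|u|_2^2-a^2,\ P(u)\bigr),
\]
so that $\mathcal{P}_r=\Phi^{-1}(0,0)$.

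First I would assume $u\in\mathcal{P}_{0,r}$, so that both $P(u)=0$ and $\psi_u''(0)=0$. Writing $B(u):=\int_{\mathbb{R}^3}\int_{\mathbb{R}^3}\frac{|u(x)|^2|u(y)|^2}{|x-y|}\,dx\,dy$ and eliminating $|\nabla u|_2^2$ from the system (by computing $\psi_u''(0)-2P(u)=0$), I obtain
\[
\tfrac14 B(u)=\gamma_p(2-p\gamma_p)|u|_p^p+\mu\gamma_q(2-q\gamma_q)|u|_q^q.
\]
Since $p>\tfrac{10}{3}$ gives $p\gamma_p>2$, and $q<\tfrac{10}{3}$ gives $q\gamma_q<2$, the first term on the right is $\le 0$ and, because $\mu\le 0$, the second term is also $\le 0$. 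Since $u\in S_{a,r}$ is nontrivial, $|u|_p^p>0$, so the right-hand side is strictly negative, while $B(u)\ge 0$. This is the desired contradiction and yields $\mathcal{P}_{0,r}=\emptyset$.

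For the manifold statement, I would note that $\Phi$ is of class $C^1$ on $\mathcal{H}_r$ (the nonlocal term is smooth by Lemma \ref{gle4}), so it suffices to show that $d\Phi(u):\mathcal{H}_r\to\mathbb{R}^2$ is surjective at every $u\in\mathcal{P}_r$. To this end, I would test $d\Phi(u)$ on two specific directions. Taking the direction $v_1:=\frac{d}{ds}\bigl|_{s=0}(s\star u)=\tfrac{3}{2}u+x\cdot\nabla u\in\mathcal{H}_r$, the $L^2$-norm is preserved under the scaling $s\star u$, so the first component of $d\Phi(u)[v_1]$ vanishes, while the second component equals $\psi_u''(0)$, which is nonzero because $\mathcal{P}_{0,r}=\emptyset$. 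Taking $v_2:=u$, the first component of $d\Phi(u)[v_2]$ equals $2a^2\neq 0$. Hence the images lie along two linearly independent vectors of $\mathbb{R}^2$, so $d\Phi(u)$ is surjective. By the implicit function theorem, $\mathcal{P}_r$ is a $C^1$ submanifold of codimension $2$ in $\mathcal{H}_r$.

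The main obstacle, if any, is only bookkeeping: one must verify that $v_1\in\mathcal{H}_r$ (which follows since $s\star u$ preserves radial symmetry and maps $\mathcal{H}$ into itself for each $s$, with differentiability in $s$), and that the radiality of $u$ does not introduce extra Lagrange multipliers (it does not, since the symmetric criticality principle identifies tangent vectors to $\mathcal{P}_r$ inside $\mathcal{H}_r$ with those inside $\mathcal{H}$ that are radial). The crucial computational step is the elimination that produces the identity for $\tfrac14 B(u)$; everything else is a sign check exploiting $\mu\le 0$, $p>\tfrac{10}{3}$, $q<\tfrac{10}{3}$.
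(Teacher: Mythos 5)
Your argument that $\mathcal{P}_{0,r}=\emptyset$ is correct and is essentially the paper's own proof: the identity $\tfrac14\int\int\frac{|u(x)|^2|u(y)|^2}{|x-y|}\,dx\,dy=\gamma_p(2-p\gamma_p)|u|_p^p+\mu\gamma_q(2-q\gamma_q)|u|_q^q$ is exactly the paper's elimination of $|\nabla u|_2^2$ from $P(u)=0$ and $\psi_u''(0)=0$, and the sign check ($\mu\le 0$, $q\gamma_q<2<p\gamma_p$, $|u|_p^p>0$) is the same.

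The manifold part, however, has a genuine gap. Your surjectivity argument rests on testing $d\Phi(u)$ on the dilation direction $v_1=\tfrac32 u+x\cdot\nabla u$, but for a generic $u\in\mathcal{P}_r$ one only knows $u\in H^1$; $x\cdot\nabla u\in L^2(\mathbb{R}^3)$ is not guaranteed (points of $\mathcal{P}_r$ are constrained functions, not solutions of any equation, so no regularity or decay is available), so $v_1$ may fail to be an admissible direction in $\mathcal{H}_r$. Your justification — that $s\mapsto s\star u$ maps into $\mathcal{H}$ and is ``differentiable in $s$'' — conflates two different things: $s\mapsto\psi_u(s)$ and $s\mapsto P(s\star u)$ are smooth real functions because the scaling produces explicit powers of $e^s$, but the curve $s\mapsto s\star u$ is in general only continuous, not differentiable, in the $H^1$ topology, so the chain-rule identification $d\Phi(u)[v_1]=(0,\psi_u''(0))$ is not justified. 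The paper (quoting Soave's Lemma 5.2) takes the standard detour that avoids this: if $dG(u)$ and $dP(u)$ were linearly dependent on $\mathcal{H}_r$, then $u$ would weakly solve an elliptic equation of the form $-2\Delta u+(\text{nonlocal term})-p\gamma_p|u|^{p-2}u-\mu q\gamma_q|u|^{q-2}u=2\nu u$; by elliptic regularity such a weak solution satisfies the corresponding Nehari and Poho\v{z}aev identities, which combined with $P(u)=0$ yield $\psi_u''(0)=0$, i.e. $u\in\mathcal{P}_{0,r}$, contradicting the first part. Your second test direction $v_2=u$ is unproblematic; it is only the dilation direction that must be replaced by this contradiction/Poho\v{z}aev argument (or by an a priori regularity argument you have not supplied).
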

 \begin{proof}
  Suppose on the contrary that there exists $u\in  \mathcal{P}_{0,r}$. By \eqref{k8} and \eqref{k4} one has
 \begin{align}
P(u)=\int_{\mathbb{R}^3}|\nabla u|^2dx+\frac{1}{4}\int_{\mathbb{R}^3} \int_{\mathbb{R}^3} \frac{|u(x)|^{2}|u(y)|^2} {|x-y|}dxdy
-\gamma_p\int_{\mathbb{R}^3}|u|^{p}dx
-\mu\gamma_q\int_{\mathbb{R}^3}|u|^{q}dx=0,\label{kk5}\\
\psi_u''(0)=2\int_{\mathbb{R}^3}|\nabla u|^2dx+\frac{1}{4}\int_{\mathbb{R}^3} \int_{\mathbb{R}^3} \frac{|u(x)|^{2}|u(y)|^2} {|x-y|}dxdy
-p\gamma_p^2\int_{\mathbb{R}^3}|u|^{p}dx
-\mu q \gamma_q^2\int_{\mathbb{R}^3}|u|^{q}dx=0.\label{kk6}
 \end{align}
  By eliminating  $|\nabla u|_2^2$ from \eqref{kk5}  and   \eqref{kk6}, we get
  \begin{align*}
\mu  (2-q\gamma_q)\gamma_q\int_{\mathbb{R}^3}|u|^{q}dx
= (p \gamma_p-2)\gamma_p\int_{\mathbb{R}^3}|u|^{p}dx
+\frac{1}{4}\int_{\mathbb{R}^3} \int_{\mathbb{R}^3} \frac{|u(x)|^{2}|u(y)|^2} {|x-y|}dxdy.
 \end{align*}
Since $\mu\leq0$ and $q \gamma_q < 2< p\gamma_p$, we get $u\equiv0$, which is contradict with $u\in S_{a,r}$. Moreover,  for the remaining proofs, similar to \cite[Lemma 5.2]{2020Soave}, we obtain that   $\mathcal{P}_r$ is a smooth manifold of codimension 2 in $\mathcal{H}_r$.
     Thus we complete the proof.
 \end{proof}

\begin{lemma}\label{K-Lem4.2}
  Let  $\mu\leq0$, $  2<q<\frac{8}{3}$ and  $\frac{10}{3}<p<6$. For every $u\in S_{a,r}$, there exists a unique $t_u\in \mathbb{R}$  such that  $t_u\star u\in \mathcal{P}_r$, where $t_u$ is the unique
critical point of the function  $\psi_u$, and is a strict maximum point of the function  $\psi_u$ at positive level. Moreover,
\begin{itemize}
  \item [$(a)$] $  \mathcal{P}_r= \mathcal{P}_{-,r}$;
  \item [$(b)$] $\psi_u$ is strictly decreasing and concave on $(t_u, +\infty)$, and $t_u < 0  \Leftrightarrow P(u)<0$.
  \item [$(c)$] The map $u \in S_{a,r}\mapsto t_u \in \mathbb{R}$ is of class $C^1$.
\end{itemize}
  \end{lemma}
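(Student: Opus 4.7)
The natural strategy is to study the one-variable function $\psi_u(s)=\mathcal{J}(s\star u)$ from \eqref{k8} and its derivative $\psi_u'(s)=P(s\star u)$. Because $\mu\le 0$, the term coming from $|u|_q^q$ flips sign and becomes a \emph{positive} contribution to both $\psi_u$ and $\psi_u'$; combined with $p\gamma_p>2$ and $q\gamma_q<2$, the structure of the four exponentials $e^{q\gamma_q s}$, $e^s$, $e^{2s}$, $e^{p\gamma_p s}$ (strictly increasing in the exponent) will drive the whole analysis.

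\textbf{Existence and uniqueness of the critical point $t_u$.} I would rewrite $\psi_u'(s)=0$ after dividing by $e^{p\gamma_p s}$ as
\[
\gamma_p|u|_p^p=e^{(2-p\gamma_p)s}|\nabla u|_2^2+\tfrac{1}{4}e^{(1-p\gamma_p)s}B(u)+(-\mu)\gamma_q e^{(q\gamma_q-p\gamma_p)s}|u|_q^q,
\]
where $B(u)=\iint|u(x)|^2|u(y)|^2/|x-y|\,dx\,dy$. Since all three exponents $2-p\gamma_p$, $1-p\gamma_p$, $q\gamma_q-p\gamma_p$ are strictly negative and $\mu\le0$, the right-hand side is a strictly decreasing function of $s$ tending to $+\infty$ as $s\to-\infty$ and to $0$ as $s\to+\infty$. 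Since the left-hand side is a positive constant, there is a unique solution $s=t_u$, and $\psi_u'(s)>0$ for $s<t_u$, $\psi_u'(s)<0$ for $s>t_u$. Thus $t_u\star u\in\mathcal{P}_r$, $t_u$ is a strict maximum, and $\psi_u$ is strictly increasing on $(-\infty,t_u)$, strictly decreasing on $(t_u,+\infty)$. To see $\psi_u(t_u)>0$ I note that as $s\to-\infty$ every term in $\psi_u(s)$ tends to $0$ (the leading one being $\frac{|\mu|}{q}e^{q\gamma_q s}|u|_q^q\ge 0$ if $\mu<0$, otherwise $\frac{e^s}{4}B(u)$ or $\frac{e^{2s}}{2}|\nabla u|_2^2$), so $\psi_u(-\infty)=0$ and strict monotonicity gives $\psi_u(t_u)>\psi_u(-\infty)=0$.

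\textbf{Parts (a) and (b).} For (a), Lemma \ref{K-Lem4.1} gives $\mathcal{P}_{0,r}=\emptyset$, so $\mathcal{P}_r=\mathcal{P}_{+,r}\sqcup\mathcal{P}_{-,r}$. If $u\in\mathcal{P}_r$ then $\psi_u'(0)=0$, hence $0=t_u$ by uniqueness; since $t_u$ is a strict maximum and $\psi_u''(0)\neq 0$, we must have $\psi_u''(0)<0$, i.e.\ $u\in\mathcal{P}_{-,r}$. For the concavity statement in (b), I would first compute $\psi_u''(t_u)$ by substituting the identity $\gamma_p e^{p\gamma_p t_u}|u|_p^p=e^{2t_u}|\nabla u|_2^2+\tfrac{1}{4}e^{t_u}B(u)-\mu\gamma_q e^{q\gamma_q t_u}|u|_q^q$ into the expression for $\psi_u''(t_u)$, obtaining
\[
\psi_u''(t_u)=(2-p\gamma_p)e^{2t_u}|\nabla u|_2^2+(1-p\gamma_p)\tfrac{e^{t_u}}{4}B(u)+\mu\gamma_q(p\gamma_p-q\gamma_q)e^{q\gamma_q t_u}|u|_q^q,
\]
which is strictly negative because $p\gamma_p>2>1$, $p\gamma_p>q\gamma_q$ and $\mu\le 0$. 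To extend concavity to the whole half-line $(t_u,+\infty)$, the same dividing-by-$e^{p\gamma_p s}$ trick applied to $\psi_u''(s)=0$ shows that $\psi_u''$ has at most one zero, and that $\psi_u''(s)<0$ for all $s$ larger than this zero; since $\psi_u''(t_u)<0$, this zero lies at most at $t_u$, proving $\psi_u''<0$ on $(t_u,+\infty)$. Strict monotonicity on $(t_u,+\infty)$ is already contained in the first paragraph. Finally, $t_u<0\iff P(u)<0$ because $P(u)=\psi_u'(0)$ and $\psi_u'$ has its unique sign-change from $+$ to $-$ at $t_u$.

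\textbf{Part (c).} I would apply the implicit function theorem to the map $F:S_{a,r}\times\mathbb{R}\to\mathbb{R}$, $F(u,s)=\psi_u'(s)=P(s\star u)$, which is of class $C^1$. By construction $F(u,t_u)=0$ and $\partial_s F(u,t_u)=\psi_u''(t_u)<0$ by the computation above, hence $u\mapsto t_u$ is $C^1$ in a neighborhood of every $u\in S_{a,r}$. The main subtlety of the whole proof is the concavity claim on the open interval $(t_u,+\infty)$: it is not automatic from $\psi_u''(t_u)<0$ alone and requires the monotonicity of the normalized expression for $\psi_u''$, which is the step where the ordering $q\gamma_q<1<2<p\gamma_p$ and the sign $\mu\le 0$ are used decisively.
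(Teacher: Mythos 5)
Your proposal is correct, and it follows the same overall template as the paper (analysis of the fiber map $\psi_u$, limits $\psi_u(-\infty)=0^+$, $\psi_u(+\infty)=-\infty$, reduction of $\psi_u'(s)=0$ to an equation between an exponential expression and a constant level, and the implicit function theorem with $\partial_s P(t_u\star u)=\psi_u''(t_u)<0$ for part (c)). The one genuine difference is the normalization used for uniqueness: the paper divides $\psi_u'(s)=0$ by $e^{q\gamma_q s}$, obtaining a function $f$ that is \emph{not} monotone (it increases to a maximum and then decreases), and must run a second-level analysis of $f'=e^{(1-q\gamma_q)s}f_1$ together with the sign condition $\mu\gamma_q|u|_q^q\le 0$ and the limits $f(-\infty)=0^+$, $f(+\infty)=-\infty$ to see that the level is crossed exactly once; you instead divide by $e^{p\gamma_p s}$, and since $p\gamma_p$ is the largest exponent and $\mu\le 0$ makes every coefficient on the varying side nonnegative, that side is strictly decreasing from $+\infty$ to $0$, so uniqueness and the sign pattern of $\psi_u'$ are immediate. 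Your route buys a shorter monotonicity argument and, in addition, an explicit proof of the concavity of $\psi_u$ on $(t_u,+\infty)$ (via the computation $\psi_u''(t_u)<0$ and a second application of the same normalization to $\psi_u''$), a point the paper only sketches by asserting that $\psi_u$ has a single inflection point; the paper's normalization, on the other hand, is the one that generalizes to the case $\mu>0$ treated elsewhere in the paper, where the $q$-term cannot simply be absorbed with a favorable sign.
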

  \begin{proof}
  Notice that,  for every $u \in S_{a,r}$,  $\psi_u(s)\rightarrow 0^+$  as $s \rightarrow -\infty$, and $\psi_u(s)\rightarrow -\infty$  as $s \rightarrow +\infty$.    Therefore, $\psi_u(s)$ has a global maximum point at positive level. To show that
this is the unique critical point of $\psi_u(s)$, we observe that  $\psi_u'(s)=0$ if and only if
\begin{align}\label{k95}
f(s)=\mu\gamma_q
\int_{\mathbb{R}^3}|u|^{q}dx
\end{align}
has only one solution,
where
$$
f(s)=e^{(2-q\gamma_q)s}\int_{\mathbb{R}^3}|\nabla u|^2dx+\frac{1}{4} e^{(1-q\gamma_q)s}\int_{\mathbb{R}^3} \int_{\mathbb{R}^3} \frac{|u(x)|^{2}|u(y)|^2} {|x-y|}dxdy
-\gamma_p e^{(p\gamma_p-q\gamma_q)s} \int_{\mathbb{R}^3}|u|^{p}dx.
$$
Indeed, since
\begin{align*}
f'(s)=&(2-q\gamma_q)e^{(2-q\gamma_q)s}\int_{\mathbb{R}^3}|\nabla u|^2dx+\frac{1}{4}(1-q\gamma_q) e^{(1-q\gamma_q)s}\int_{\mathbb{R}^3} \int_{\mathbb{R}^3} \frac{|u(x)|^{2}|u(y)|^2} {|x-y|}dxdy \\
&-\gamma_p(p\gamma_p-q\gamma_q) e^{(p\gamma_p-q\gamma_q)s} \int_{\mathbb{R}^3}|u|^{p}dx\\
=&e^{(1-q\gamma_q)s} f_1(s),
\end{align*}
where
$$
f_1(s)=(2-q\gamma_q)e^{s}\int_{\mathbb{R}^3}|\nabla u|^2dx+\frac{1-q\gamma_q}{4}  \int_{\mathbb{R}^3} \int_{\mathbb{R}^3} \frac{|u(x)|^{2}|u(y)|^2} {|x-y|}dxdy
-\gamma_p(p\gamma_p-q\gamma_q) e^{(p\gamma_p-1)s} \int_{\mathbb{R}^3}|u|^{p}dx,
$$
then, by simple analysis,  we obtain that $f_1(s)\rightarrow \frac{1}{4}(1-q\gamma_q) \int_{\mathbb{R}^3} \int_{\mathbb{R}^3} \frac{|u(y)|^2|u(x)|^{2}} {|x-y|}dydx >0$ as $s\rightarrow-\infty$,   $f_1(s)\rightarrow -\infty$ as $s\rightarrow+\infty$, and there exists $s_0\in \mathbb{R}$ such that $f_1(s) $ is increasing on $(-\infty, s_0)$ and decreasing on $( s_0, +\infty)$.
Hence, there is $s_1>s_0$ such that $f_1(s)>0 $ on $(-\infty, s_1)$ and  $f_1(s)<0 $ on $( s_1, +\infty)$, which shows that $f'(s)>0 $ on $(-\infty, s_1)$ and  $f'(s)<0 $ on $( s_1, +\infty)$. That is,  $f(s) $ is increasing on $(-\infty, s_1)$ and  decreasing   on $( s_1, +\infty)$. Moreover, $f(s)\rightarrow 0^+$ as $s\rightarrow -\infty$ and $f(s)\rightarrow -\infty$ as $s\rightarrow +\infty$. Since $\mu\leq0$, by \eqref{k95}, we get that $\psi_u'(s)=0$
 has only one solution, which implies that $\psi_u(s)$ has the unique critical point, denoted as $t_u$.  In the same way, one can also check that  $\psi_u$ has only one inflection point. Since
 $\psi_u'(s)<0$ if and only if $s>t_u$,
we have that $P(u) =   \psi_u(0) < 0 $ if and only if $t_u < 0$. Finally, for point $(c)$ we argue as in \cite[Lemma 5.3]{2020Soave}.
   \end{proof}

 \begin{lemma}\label{K-Lem4.3}
  Let  $\mu\leq0$, $  2<q<\frac{8}{3}$ and  $\frac{10}{3}<p<6$.
 It holds that
$$
\widetilde{m}_{a,r}:=\inf_{u\in \mathcal{P}_r}\mathcal{ J}(u)>0,
$$
  and there exists $k > 0$ sufficiently small such that
\begin{align*}
  0<\sup_{\overline{D}_{k,r}} \mathcal{ J}<\widetilde{m}_{a,r} \quad\quad\text{and} \quad\quad u\in D_{k,r}\Rightarrow \mathcal{ J}(u), \ P(u)>0,
  \end{align*}
  where $D_{k,r}=D_k\cap \mathcal{H}_r$ and $D_k$ is defined in \eqref{k91}.
  \end{lemma}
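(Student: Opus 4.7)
The key simplification for Lemma~\ref{K-Lem4.3} is that $\mu\le 0$, so the $q$-power contributes a nonnegative term to both $-\mathcal{J}$ and $-P$. I shall exploit this and the mass-subcritical bound $q\gamma_q<2<p\gamma_p$ throughout.

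\medskip

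\textbf{Step 1 (positivity of $\widetilde{m}_{a,r}$).} For any $u\in\mathcal{P}_r$, the identity $P(u)=0$ together with $-\mu\gamma_q|u|_q^q\ge 0$ yields
\[
|\nabla u|_2^2\le |\nabla u|_2^2+\tfrac14\iint\tfrac{|u(x)|^2|u(y)|^2}{|x-y|}dxdy=\gamma_p|u|_p^p+\mu\gamma_q|u|_q^q\le \gamma_p C_p^p a^{p(1-\gamma_p)}|\nabla u|_2^{p\gamma_p}.
\]
Since $p\gamma_p>2$, this forces a uniform lower bound $|\nabla u|_2\ge \delta_0>0$ on $\mathcal{P}_r$. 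Next, combine $\mathcal{J}$ with $P$ to eliminate the sign-indefinite $|u|_p^p$ term:
\[
\mathcal{J}(u)=\mathcal{J}(u)-\tfrac{1}{p\gamma_p}P(u)=\Bigl(\tfrac12-\tfrac{1}{p\gamma_p}\Bigr)|\nabla u|_2^2+\tfrac14\Bigl(1-\tfrac{1}{p\gamma_p}\Bigr)\!\iint\tfrac{|u(x)|^2|u(y)|^2}{|x-y|}dxdy+\mu\Bigl(\tfrac{\gamma_q}{p\gamma_p}-\tfrac{1}{q}\Bigr)|u|_q^q.
\]
Because $p\gamma_p>2$, the first two coefficients are positive; because $q\gamma_q<2<p\gamma_p$ we have $\gamma_q/(p\gamma_p)-1/q<0$, so combined with $\mu\le 0$ the third term is also nonnegative. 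Thus $\mathcal{J}(u)\ge\bigl(\tfrac12-\tfrac{1}{p\gamma_p}\bigr)\delta_0^2>0$ on $\mathcal{P}_r$, giving $\widetilde{m}_{a,r}>0$.

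\medskip

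\textbf{Step 2 (positivity on $D_{k,r}$).} For $u\in D_{k,r}$, using $\mu\le 0$ and Gagliardo–Nirenberg,
\[
\mathcal{J}(u)\ge \tfrac12|\nabla u|_2^2-\tfrac{1}{p}C_p^p a^{p(1-\gamma_p)}|\nabla u|_2^{p\gamma_p},\qquad P(u)\ge |\nabla u|_2^2-\gamma_p C_p^p a^{p(1-\gamma_p)}|\nabla u|_2^{p\gamma_p}.
\]
Since $p\gamma_p>2$, both right-hand sides are strictly positive as soon as $|\nabla u|_2\le k$ with $k$ small enough (independent of $u$). Hence $\mathcal{J}(u),P(u)>0$ on $D_{k,r}$.

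\medskip

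\textbf{Step 3 (upper bound below $\widetilde{m}_{a,r}$).} For $u\in\overline{D}_{k,r}$, estimate every term of $\mathcal{J}(u)$ from above. By Lemma~\ref{gle4}(2) and Gagliardo–Nirenberg with $\gamma_{12/5}=1/4$, $\iint\tfrac{|u(x)|^2|u(y)|^2}{|x-y|}dxdy\le C|u|_{12/5}^4\le C\,a^3|\nabla u|_2\le C a^3 k$; the $q$-term is bounded by $\tfrac{|\mu|}{q}C_q^q a^{q(1-\gamma_q)}k^{q\gamma_q}$. Therefore
\[
\sup_{\overline{D}_{k,r}}\mathcal{J}\le \tfrac{1}{2}k^2+\tfrac{C a^3}{4}k+\tfrac{|\mu|}{q}C_q^q a^{q(1-\gamma_q)}k^{q\gamma_q}\longrightarrow 0\quad\text{as }k\to 0^+.
\]
Since $\widetilde{m}_{a,r}>0$ is already fixed by Step~1, shrinking $k$ further if necessary gives $\sup_{\overline{D}_{k,r}}\mathcal{J}<\widetilde{m}_{a,r}$.

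\medskip

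\textbf{Anticipated difficulty.} No single step is delicate; the only care needed is to make sure the three smallness requirements on $k$ (for $\mathcal{J}>0$, for $P>0$, and for the supremum estimate in Step~3) are consistent and all depend only on $a,\mu,p,q$ and the fixed constant $\widetilde{m}_{a,r}$ obtained in Step~1. The sign analysis of the coefficient $\gamma_q/(p\gamma_p)-1/q$ in Step~1 is the one place where the hypothesis $q<8/3<10/3<p$ (via $q\gamma_q<2<p\gamma_p$) is used essentially together with $\mu\le 0$.
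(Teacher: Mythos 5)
Your proof is correct and follows essentially the same route the paper intends: the paper proves this lemma by citing Soave's Lemmas 7.3--7.4 together with the identity $\mathcal{J}(u)-\frac{1}{p\gamma_p}P(u)$ from \eqref{k96}, which is exactly your Step 1 (lower gradient bound on $\mathcal{P}_r$ plus elimination of the $p$-term), followed by the standard Gagliardo--Nirenberg smallness estimates on $D_{k,r}$ as in your Steps 2--3. Your write-up simply supplies the details the paper leaves to the reference, with the sign condition $\mu\le 0$ and $q\gamma_q<2<p\gamma_p$ used in the same way.
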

\begin{proof}
 Similar to \cite[Lemmas 7.3 and  7.4]{2020Soave}, using \eqref{k96} we get the results.
\end{proof}

Now,  we prove that the functional $\mathcal{J}$ has the Mountain Pass geometry. Let
\begin{align*}
\widetilde{\Gamma}:=\{\zeta\in C([0,1], S_{a,r}): \zeta(0)\in D_{k,r}  \quad \text{and} \quad \mathcal{J}(\zeta(1))<0\},
\end{align*}
and
$$
\widetilde{c}_a:=\inf_{\zeta\in \Gamma}\max_{u\in \zeta([0,1])}\mathcal{J}(u).
$$
Taking $v\in S_{a,r}$, then  $|\nabla (s\star u)|\rightarrow 0^+$  as $s\rightarrow -\infty$, and  $ |\mathcal{J} (s\star u)|\rightarrow  -\infty$  as $s\rightarrow+\infty$. Hence there exist
$s_0<< -1$ and $s_1 >> 1$ such that
\begin{align}\label{k97}
\zeta_v(\tau)=[(1-\tau)s_0+\tau s_1]\star v\in \widetilde{\Gamma},
\end{align}
which implies that $\widetilde{\Gamma}\neq \emptyset $. Next, we claim that
$$
\widetilde{c}_a=\widetilde{m}_{a,r}:=\inf_{u\in\mathcal{P}_r}\mathcal{J}(u)>0.
$$
On the one hand, if $u\in \mathcal{P}_r$, then by \eqref{k97}, $\zeta_u(\tau)$ is a path in  $\widetilde{\Gamma}$. Hence, one obtains from Lemma \ref{K-Lem4.2} that
$$
 \mathcal{J}(u)=\max_{\tau\in[0,1]}\mathcal{J}(\zeta_u(\tau))\geq \widetilde{c}_a,
$$
which gives that
$$
\widetilde{m}_{a,r}\geq \widetilde{c}_a.
$$
On the other hand, for any $\zeta\in \widetilde{\Gamma}$, $P( \zeta(0))>0$ by Lemma \ref{K-Lem4.3}. We claim that $P(\zeta(1))<0$. In fact, since $\psi_{\zeta(1) }(s)>0$ for any $s\in(-\infty, t_{\zeta(1)})$, and $\psi_{\zeta(1)}(0)=\mathcal{J}(\zeta(1))<0$, we deduce that $t_{\zeta(1)}<0 $, which implies that $P(\zeta(1))<0$ due to Lemma \ref{K-Lem4.2}. Hence, there exists $\tau\in(0,1)$ such that $P(\zeta(\tau))=0$ by the continuity. That is, $\zeta(\tau)\in \mathcal{P}_r$. Thus,
\begin{align*}
\max_{t\in[0,1]} J(\zeta(t))\geq J(\zeta(\tau))\geq \inf_{\mathcal{P}_r}J\geq \widetilde{m}_{a,r},
\end{align*}
which shows that $\widetilde{c}_a\geq  \widetilde{m}_{a,r}$ in view of the arbitrariness of  $\zeta$.  This and Lemma \ref{K-Lem4.3} imply   $ \widetilde{c}_a= \widetilde{m}_{a,r}>0$.

Similarly as the proof of Lemmas \ref{K-Lem3.1} and \ref{K-Lem3.2}, we obtain
\begin{lemma}\label{K-Lem4.4}
Let $\mu\leq0$, $q\in (2, \frac{8}{3})$ and $p\in (\frac{10}{3}, 6)$. Then there exists a sequence $\{u_n\}\subset S_{a,r} $ and a constant $\alpha>0$ satisfying
$$
P(u_n)=o(1), \quad  \mathcal{J}(u_n)=\widetilde{c}_a +o(1), \quad \|\mathcal{J}'_{|_{S_a}}(u_n)\|_{\mathcal{H}_{r}^{-1}}= o(1), \quad \|u_n\|\leq \alpha.
$$
\end{lemma}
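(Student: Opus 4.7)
The proof will be modeled on the Mountain Pass argument carried out in Section \ref{sec3} (Lemmas \ref{K-Lem3.1} and \ref{K-Lem3.2}), adapted to the defocusing case $\mu\leq 0$ where the structure of $\mathcal{P}_r$ is even simpler: by Lemma \ref{K-Lem4.2}, $\mathcal{P}_r=\mathcal{P}_{-,r}$, the fiber map $\psi_u$ has a unique critical point $t_u$ at its strict positive maximum, and we already know $\widetilde{c}_a=\widetilde{m}_{a,r}>0$. The plan is to produce a Palais--Smale sequence concentrating near $\mathcal{P}_r$, from which $P(u_n)\to 0$ follows automatically, and then obtain boundedness from the Nehari--Poho\v{z}aev constraint together with the sign of $\mu$.

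First I would define, for small $\delta>0$, the sets
\begin{align*}
\widetilde{\Lambda}_\delta &:=\bigl\{u\in S_{a,r}: |\mathcal{J}(u)-\widetilde{c}_a|\leq\delta,\ \mathrm{dist}(u,\mathcal{P}_r)\leq 2\delta\bigr\},\\
\widetilde{\Omega}_\delta &:=\bigl\{u\in\widetilde{\Lambda}_\delta: \|d\mathcal{J}_{|_{S_{a,r}}}(u)\|_{\mathcal{H}_r^{-1}}\leq 2\delta\bigr\},
\end{align*}
and prove $\widetilde{\Omega}_\delta\neq\emptyset$ for every small $\delta$, exactly as in Lemma \ref{K-Lem3.1}. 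Arguing by contradiction, if $\|d\mathcal{J}_{|_{S_{a,r}}}(u)\|_{\mathcal{H}_r^{-1}}>2\overline{\delta}$ on $\widetilde{\Lambda}_{\overline{\delta}}$, the Berestycki--Lions pseudo-gradient flow from \eqref{k41}--\eqref{k45} can be applied to push energy strictly below $\widetilde{c}_a$ on a carefully chosen path.

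The construction of the "good" path is the main obstacle. For $\varepsilon>0$ small pick $u_\varepsilon\in\mathcal{P}_r$ with $\mathcal{J}(u_\varepsilon)\leq\widetilde{c}_a+\varepsilon$, and consider $\zeta_{u_\varepsilon}(\tau)=[(1-\tau)s_0+\tau s_1]\star u_\varepsilon\in\widetilde{\Gamma}$ for $s_0\ll -1\ll s_1$. One needs to show that the portion of $\zeta_{u_\varepsilon}$ at energy $\geq\widetilde{c}_a-\overline{\delta}/2$ lies in $\widetilde{\Lambda}_{\overline{\delta}/2}$. As in Lemma \ref{K-Lem3.1}, this is done by analyzing, along any subsequential limit, the auxiliary function
\begin{align*}
L(t)=\tfrac{1}{2}At^2+\tfrac{1}{4}Bt-\tfrac{1}{p}Dt^{p\gamma_p}-\tfrac{\mu}{q}Et^{q\gamma_q},
\end{align*}
where $A,B,D,E$ are the limits of $|\nabla u_\varepsilon|_2^2$, the Coulomb energy, $|u_\varepsilon|_p^p$ and $|u_\varepsilon|_q^q$; using $u_\varepsilon\in\mathcal{P}_r$ with $\mathcal{J}(u_\varepsilon)\to\widetilde{c}_a$ and Lemma \ref{K-Lem4.2} (uniqueness of the critical point of $\psi_u$), $L$ has a unique global maximum at $t=1$ with $L(1)=\widetilde{c}_a$. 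The sign condition $\mu\leq 0$ enters here by making the term $-\tfrac{\mu}{q}Et^{q\gamma_q}$ non-negative and concave in the relevant regime, so the uniqueness/concavity of $L''$ is verified exactly as in Step 1 of Lemma \ref{K-Lem4.2}. This forces the scaling parameter along $\zeta_{u_\varepsilon}$ to concentrate at $0$ on the relevant energy range, placing the whole super-level set within $\widetilde{\Lambda}_{\overline{\delta}/2}$ and producing the desired contradiction with the flow.

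With $\widetilde{\Omega}_\delta\neq\emptyset$ in hand, choose $u_n\in\widetilde{\Omega}_{1/n}\cap B(0,3M_0)$ (where $M_0$ bounds the set $\{u\in\mathcal{P}_r:\mathcal{J}(u)\leq\widetilde{c}_a+1\}$, which is bounded by the argument below). Then $\mathcal{J}(u_n)\to\widetilde{c}_a$ and $\|d\mathcal{J}_{|_{S_{a,r}}}(u_n)\|_{\mathcal{H}_r^{-1}}\to 0$ are immediate. To get $P(u_n)\to 0$ I repeat Lemma \ref{K-Lem3.2}: since $P\in C^1$ and $\|dP\|_{\mathcal{H}_r^{-1}}$ is uniformly bounded on the bounded set containing $\{u_n\}$, picking $w_m\in\mathcal{P}_r$ with $\|u_n-w_m\|\to\mathrm{dist}(u_n,\mathcal{P}_r)\leq 2/n$ and using $P(w_m)=0$ and the mean value inequality yields $|P(u_n)|\to 0$. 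Finally, for boundedness, since $p\gamma_p>2>q\gamma_q$,
\begin{align*}
\widetilde{c}_a+o(1)=\mathcal{J}(u_n)-\tfrac{1}{p\gamma_p}P(u_n)=\Bigl(\tfrac{1}{2}-\tfrac{1}{p\gamma_p}\Bigr)|\nabla u_n|_2^2+\tfrac{1}{4}\Bigl(1-\tfrac{1}{p\gamma_p}\Bigr)\!\!\int_{\mathbb{R}^3}\!\!\int_{\mathbb{R}^3}\frac{|u_n(x)|^2|u_n(y)|^2}{|x-y|}dxdy+\mu\Bigl(\tfrac{\gamma_q}{p\gamma_p}-\tfrac{1}{q}\Bigr)|u_n|_q^q.
\end{align*}
All three coefficients on the right are positive and, crucially, $\mu\leq 0$ together with $\tfrac{\gamma_q}{p\gamma_p}-\tfrac{1}{q}<0$ makes the last term non-negative, so $|\nabla u_n|_2$ is bounded; combined with $|u_n|_2=a$ this gives $\|u_n\|\leq\alpha$ for some $\alpha>0$, completing the proof.
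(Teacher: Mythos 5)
Your proposal is correct and follows essentially the same route as the paper, which proves this lemma simply by invoking the arguments of Lemmas \ref{K-Lem3.1} and \ref{K-Lem3.2}; you reconstruct exactly that adaptation (pseudo-gradient flow contradiction near $\mathcal{P}_r$, the auxiliary function $L(t)$ with unique maximum at $t=1$, the mean-value estimate for $P(u_n)\to 0$, and boundedness from $\mathcal{J}-\tfrac{1}{p\gamma_p}P$). The simplifications you note for $\mu\leq 0$ (namely $\mathcal{P}_r=\mathcal{P}_{-,r}$ via Lemma \ref{K-Lem4.2} and the good sign of the $\mu$-term, so no Gagliardo--Nirenberg bound is needed in the coercivity estimate) are exactly the adjustments the paper's reference to Section \ref{sec3} implicitly requires.
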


\begin{lemma}\label{K-Lem4.5}
Let $\mu\leq0$, $\lambda\in \mathbb{R}$,   $q\in (2, \frac{8}{3})$ and $p\in (\frac{10}{3}, 6)$. If $v \in \mathcal{H}$ is a weak solution of
\begin{align}\label{k98}
 - \Delta v+\lambda v+  (|x|^{-1}\ast|v|^2)v=|v|^{p-2}v +\mu|v|^{q-2}v\quad \quad \text{in} \ \mathbb{ R}^3,
\end{align}
then $P(v)=0$. Moreover, if $\lambda\leq 0$, then there exists a constant $\widetilde{a}_0>0$, independent on $\lambda \in \mathbb{R}$,
such that the only solution of \eqref{k98} fulfilling  $|v|_2\leq\widetilde{a}_0$
  is the null function.
\end{lemma}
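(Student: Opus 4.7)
The plan for the identity $P(v)=0$ is to repeat the derivation of \eqref{k4} already given in the paper: test the equation against $v$ to obtain the Nehari identity \eqref{kk4}, invoke the Pohozaev identity \eqref{kkk4} from \cite{2014-JFA-le}, and take the standard linear combination that eliminates the $\lambda|v|_2^2$ term. For the Liouville-type second assertion, I would argue by contradiction: suppose $v\not\equiv 0$ solves the equation with $\lambda\le 0$ (and $\mu\le 0$ by hypothesis). The strategy is to derive two incompatible estimates on $|\nabla v|_2$ in terms of $|v|_2$, a lower bound coming from $P(v)=0$ and an upper bound coming from the Nehari--Pohozaev combination.

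For the \emph{lower bound}, starting from $P(v)=0$ and using $\mu\le 0$ together with non-negativity of $D(v):=\iint\frac{|v(x)|^2|v(y)|^2}{|x-y|}dx\,dy$, I would drop the non-negative terms to obtain $|\nabla v|_2^2\le\gamma_p|v|_p^p$. Plugging the Gagliardo--Nirenberg inequality \eqref{k10} into the right-hand side and using $p\gamma_p-2>0$ (equivalent to $p>\tfrac{10}{3}$), for $v\not\equiv 0$ this rearranges to
\begin{equation*}
|\nabla v|_2 \ \ge\ (\gamma_p C_p^p)^{-\tfrac{1}{p\gamma_p-2}}\,|v|_2^{-\tfrac{p(1-\gamma_p)}{p\gamma_p-2}},
\end{equation*}
which blows up as $|v|_2\to 0$ and is independent of $\lambda,\mu$.

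For the \emph{upper bound}, I plan to compute twice the Pohozaev identity \eqref{kkk4} minus the Nehari identity \eqref{kk4}; this eliminates $|\nabla v|_2^2$ and produces
\begin{equation*}
2\lambda|v|_2^2+\tfrac{3}{2}D(v)=\tfrac{6-p}{p}|v|_p^p+\mu\tfrac{6-q}{q}|v|_q^q.
\end{equation*}
Since $2\lambda|v|_2^2\le 0$ and $\mu\tfrac{6-q}{q}|v|_q^q\le 0$ (the latter uses $q<6$), moving these onto the favorable side yields $\tfrac{6-p}{p}|v|_p^p\le\tfrac{3}{2}D(v)+|\mu|\tfrac{6-q}{q}|v|_q^q$. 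Combining this with $|\nabla v|_2^2\le\gamma_p|v|_p^p$ from the previous paragraph, with Lemma~\ref{gle4}(2) (which gives $D(v)\le C_1|v|_{12/5}^4$), and with Gagliardo--Nirenberg at the exponents $\tfrac{12}{5}$ (where $\gamma_{12/5}=\tfrac14$) and $q$, I arrive at
\begin{equation*}
|\nabla v|_2^2 \ \le\ C_9\,|v|_2^3|\nabla v|_2 \ +\ C_{10}|\mu|\,|v|_2^{q(1-\gamma_q)}|\nabla v|_2^{q\gamma_q},
\end{equation*}
with $C_9,C_{10}$ depending only on $p,q$.

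To close the argument I would use that $q\gamma_q<1<2$ (equivalent to $q<\tfrac{8}{3}$) and observe that the last inequality forces at least one of $|\nabla v|_2\le 2C_9|v|_2^3$ or $|\nabla v|_2^{2-q\gamma_q}\le 2C_{10}|\mu|\,|v|_2^{q(1-\gamma_q)}$ to hold. Matching each alternative against the lower bound from the second paragraph pins $|v|_2$ below an explicit strictly positive threshold, depending only on $p,q$ in the first case and additionally on $|\mu|$ in the second, but in both cases \emph{independent of} $\lambda$. Choosing $\widetilde{a}_0$ smaller than both thresholds delivers the contradiction and forces $v\equiv 0$. The only delicate point I anticipate is the sign bookkeeping in the Pohozaev--Nehari combination; once the right linear combination is identified, both $\lambda\le 0$ and $\mu\le 0$ push their respective terms to the favorable side, which is exactly what makes the whole estimate $\lambda$-free.
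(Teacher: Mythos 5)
Your proposal is correct and follows essentially the same strategy as the paper: $P(v)=0$ from the Nehari and Poho\v{z}aev identities, a lower bound $|\nabla v|_2^{p\gamma_p-2}\ge \big(\gamma_p C_p^p|v|_2^{p(1-\gamma_p)}\big)^{-1}$ obtained from $P(v)=0$ with $\mu\le 0$, and an upper bound of the form $|\nabla v|_2^2\lesssim |v|_2^3|\nabla v|_2+|\mu|\,|v|_2^{q(1-\gamma_q)}|\nabla v|_2^{q\gamma_q}$ that contradicts it once $|v|_2$ is small, uniformly in $\lambda\le 0$. The only (harmless) difference is algebraic: the paper eliminates $|v|_p^p$ between the Nehari identity and $P(v)=0$ so that $\lambda\gamma_p|v|_2^2$ appears directly, whereas you eliminate $|\nabla v|_2^2$ via twice Poho\v{z}aev minus Nehari and then reuse $|\nabla v|_2^2\le\gamma_p|v|_p^p$; both routes yield the same final inequality, and your explicit two-case dichotomy merely quantifies the paper's ``when $|v|_2$ is small enough'' conclusion.
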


\begin{proof}
 Since $v$ satisfies
\begin{align}
|\nabla v|_2^2+ \lambda \int_{\mathbb{R}^3} |v|^{2}dx  +\int_{\mathbb{R}^3} \int_{\mathbb{R}^3} \frac{|v(x)|^{2}|v(y)|^2} {|x-y|}dxdy
-\int_{\mathbb{R}^3}|v|^{p}dx-\mu \int_{\mathbb{R}^3}|v|^{q}dx&=0,\label{k100}\\
\frac{1}{2}|\nabla v|_2^2 +\frac{3\lambda }{2} \int_{\mathbb{R}^3}  |v|^2dx
+\frac{5}{4}\int_{\mathbb{R}^3} \int_{\mathbb{R}^3} \frac{|v(x)|^{2}|v(y)|^2} {|x-y|}dxdy-
\frac{3}{p}\int_{\mathbb{R}^3}|v|^{p}dx-\frac{3\mu}{q} \int_{\mathbb{R}^3}|v|^{q}dx&=0,
\label{k101}
\end{align}
then it holds that
\begin{align}\label{k99}
P(v)=\int_{\mathbb{R}^3}|\nabla v|^2dx+\frac{1}{4}\int_{\mathbb{R}^3} \int_{\mathbb{R}^3} \frac{|v(x)|^{2}|v(y)|^2} {|x-y|}dxdy
-\gamma_p\int_{\mathbb{R}^3}|v|^{p}dx
-\mu\gamma_q\int_{\mathbb{R}^3}|v|^{q}dx=0.
 \end{align}
 Suppose that $(\lambda, v)\in \mathbb{R}\times \mathcal{H}$ with $\lambda\leq 0$ and $|v|_2\leq\widetilde{a}_0$ satisfies \eqref{k98}, then
using \eqref{k99}, $\mu\leq0$   and the Gagliardo-Nirenberg inequality \eqref{k10}, one infers
\begin{align*}
 |\nabla v|_2^2 \leq\gamma_p\int_{\mathbb{R}^3}|v|^{p}dx
 \leq  \gamma_p C_p^p|\nabla v|_2^{p\gamma_p} |v|_2^{p(1-\gamma_p)},
\end{align*}
which shows that
\begin{align}\label{kk48}
 |\nabla v|_2^{p\gamma_p-2}\geq \frac{1}{\gamma_pC_p^p |v|_2^{p(1-\gamma_p)} }.
\end{align}
 Then, \eqref{kk48} implies that  when $|v|_2$ is small enough,  $ |\nabla v|_2$ must be large.
Moreover, by eliminating $\int_{\mathbb{R}^3}|v|^{p}dx$ from  \eqref{k100} and \eqref{k101}, since $\mu\leq0$   and the Gagliardo-Nirenberg inequality \eqref{k10}, one has
  \begin{align*}
  \lambda \gamma_p \int_{\mathbb{R}^3} |v|^{2}dx&=(1-\gamma_p)|\nabla v|_2^2+(\frac{1}{4}-\gamma_p) \int_{\mathbb{R}^3} \int_{\mathbb{R}^3} \frac{|v(x)|^{2}|v(y)|^2} {|x-y|}dxdy+\mu ( \gamma_p-\gamma_q) \int_{\mathbb{R}^3}|v|^{q}dx\\
  &\geq(1-\gamma_p)|\nabla v|_2^2 - (\gamma_p-\frac{1}{4})C_{\frac{12}{5}}^{\frac{12}{5}} |\nabla v|_2|v|_2^3-|\mu| ( \gamma_p-\gamma_q) C_q^q |\nabla v|_2^{q\gamma_q}|v|_2^{q(1-\gamma_q)},
\end{align*}
which gives a contradiction  by $\lambda\leq 0$ and \eqref{kk48} when $|v|_2$  is small enough.
Thus we finish the proof.
\end{proof}

 \noindent {\bf{Proof of Theorem \ref{K-TH5}}.}
From Lemma \ref{K-Lem4.4}, we get a sequence  $\{u_n\}\subset S_{a,r}$ satisfying
$$
P(u_n)=o(1), \quad  \mathcal{J}(u_n)=\widetilde{c}_a +o(1), \quad \|\mathcal{J}'_{|_{S_a}}(u_n)\|_{\mathcal{H}_{r}^{-1}}= o(1).
$$
Similar to the proof of Lemma  \ref{K-Lem2.6},      we deduce that $\{u_n\}$ is bounded in $\mathcal{H}_r$, and there exists $0\neq \widetilde{u}\in \mathcal{H}_r$ such that, up to a subsequence,
 \begin{align*}
   & u_n\rightharpoonup \widetilde{u} \quad\quad \text{in}\ \mathcal{H}_r;\\
   & u_n\rightarrow \widetilde{u}\quad\quad \text{in}\ L^t(\mathbb{R}^3)\ \text{with}\ t\in(2, 6);\\
   & u_n\rightarrow \widetilde{u} \quad\quad  a.e \ \text{in}\ \mathbb{R}^3.
\end{align*}
Since $\mathcal{J}'_{|_{S_{a,r}}}(u_n)\rightarrow 0$,  then there exists $\widetilde{\lambda}_n\in \mathbb{R} $ satisfying
\begin{align*} 
-\Delta u_n+\widetilde{\lambda}_n u_n+  (|x|^{-1}\ast |u_n|^2)u_n=\mu |u_n|^{q-2}u_n+|u_n|^{p-2}u_n+o(1).
\end{align*}
 Multiplying the above equation by $u_n$ and integrating in $\mathbb{R}^3$,
\begin{align} \label{kk16}
\widetilde{ \lambda}_n a^2= -|\nabla u_n|_2^2-\int_{\mathbb{R}^3} \int_{\mathbb{R}^3} \frac{|u_n(x)|^{2}|u_n(y)|^2} {|x-y|}dxdy
+\int_{\mathbb{R}^3}|u_n|^{p}dx+\mu \int_{\mathbb{R}^3}|u_n|^{q}dx+o( \|u_n\|).
\end{align}
Since $\{u_n\}\subset \mathcal{H}_r$ is bounded, then it follows from \eqref{kk16} that  $\{\widetilde{\lambda}_n\}$  is bounded in $\mathbb{R}$. Hence, there is $\widetilde{\lambda}\in \mathbb{R}$ satisfying $\widetilde{\lambda}_n\rightarrow \widetilde{\lambda}$ as $n\rightarrow\infty$. Then we have
\begin{align*} 
-\Delta \widetilde{u}+\widetilde{\lambda} \widetilde{u}+ (|x|^{-1}\ast |\widetilde{u}|^2) \widetilde{u}=\mu |\widetilde{u}|^{q-2}\widetilde{u}+|\widetilde{u}|^{p-2}\widetilde{u}.
\end{align*}
Then, by Lemma  \ref{K-Lem4.5}, there exists a $\widetilde{a}_0>0$   such that  $\widetilde{\lambda}>0$ if $a\in (0, \widetilde{a}_0)$.
Similar to  the Step 3 of Lemma   \ref{K-Lem2.6}, we deduce that $\|u_n-\widetilde{u}\|\rightarrow 0$ in $\mathcal{H}_r$ as $n\rightarrow\infty$. Thus, we infer that $(\widetilde{\lambda}, \widetilde{u} )\in \mathbb{R}^+  \times \mathcal{H}_r$ solves Eq.~\eqref{k1},  where $\mathcal{J}(\widetilde{u})=\widetilde{c}_a$. So, we complete the proof.$\hfill\Box$

 \section{The  case   $q\in(2, \frac{8}{3} )$ and $p= \frac{10}{3}$}\label{sec5}
In this section, we consider the case $q\in(2, \frac{8}{3} )$ and $p=\overline{p}=\frac{10}{3}$, and then we give some existence and nonexistence results for Eq.~\eqref{k1}. Defined
$$
E_0(u)=\frac{1}{2}\int_{\mathbb{R}^3}|\nabla u|^2dx-\frac{1}{p}\int_{\mathbb{R}^3}|u|^{p}dx.
$$
By Gagliardo-Nirenberg inequality \eqref{k10}, it holds that, for any $u\in S_a$ and $p=\overline{p}=\frac{10}{3}$,
$$
E_0(u)=\frac{1}{2}\int_{\mathbb{R}^3}|\nabla u|^2dx-\frac{1}{p}\int_{\mathbb{R}^3}|u|^{p}dx
\geq \Big(\frac{1}{2}- \frac{C_p^p}{p}a^{\frac{4}{3}}\Big) \int_{\mathbb{R}^3}|\nabla u|^2dx.
$$
From the above analysis, when $0<a\leq a^*=(\frac{\overline{p}}{2 C_{\overline{p}}^{\overline{p}}})^{\frac{3}{4}}$, we have $E_0(u)\geq 0$.\\

 \noindent{\bf{Proof of Theorem \ref{K-TH7}}.} Let $q\in(2, \frac{8}{3} )$ and $p=\overline{p}=\frac{10}{3}$.
For any $\mu<0$, if $0<a\leq a^*$, assume that $u$ is a solution of
Eq.~\eqref{k1}, then $P(u)=0$, that is
\begin{align*}
P(u)=\int_{\mathbb{R}^3}|\nabla u|^2dx+\frac{1}{4}\int_{\mathbb{R}^3} \int_{\mathbb{R}^3} \frac{|u(x)|^{2}|u(y)|^2} {|x-y|}dxdy
-\gamma_p\int_{\mathbb{R}^3}|u|^{p}dx
-\mu\gamma_q\int_{\mathbb{R}^3}|u|^{q}dx=0,
 \end{align*}
which and   $\inf_{S_a} E_0 \geq 0$  due to $a\leq a^*$   show  that
$$
0>\mu\gamma_q\int_{\mathbb{R}^3}|u|^{q}dx-\frac{1}{4}\int_{\mathbb{R}^3} \int_{\mathbb{R}^3} \frac{|u(x)|^{2}|u(y)|^2} {|x-y|}dxdy=2E_0(u)\geq2 \inf_{S_a}E_0(u)\geq 0.
$$
This is a contradiction. Hence,  Eq.~\eqref{k1} has no solution at all. Clearly,   $\inf_{S_a} \mathcal{J}=0$.

For any $\mu\in \mathbb{R}$, if $a> a^*$, then there exists $w\in S_a$ such that $E_0(w)<0$. Hence, since $q\gamma_q<1$,
\begin{align*}
 \mathcal{J}(s\star w )
  = e^{2s} E_0(w)+\frac{e^{s}}{4} \int_{\mathbb{R}^3} \int_{\mathbb{R}^3} \frac{|w(x)|^{2}|w(y)|^2} {|x-y|}dxdy-\mu\frac{e^{q\gamma_qs}}{q}
\int_{\mathbb{R}^3}|w|^{q}dx
 \rightarrow -\infty
\end{align*}
as $s\rightarrow +\infty$. Thus, $\inf_{S_a}\mathcal{J}=-\infty $.

For any $\mu>0$, if $0<a\leq a^*$, we first claim  that $\mathcal{J}$ is bounded below on $S_a$ and $e(a)<0$. In fact, since in this case  $\inf_{S_a} E_0 \geq 0$, we deduce that, for any $u\in S_a$,
\begin{align*}
 \mathcal{J}(s\star u )
 & = e^{2s} E_0(u)+\frac{e^{s}}{4} \int_{\mathbb{R}^3} \int_{\mathbb{R}^3} \frac{|u(x)|^{2}|u(y)|^2} {|x-y|}dxdy-\mu\frac{e^{q\gamma_qs}}{q}
\int_{\mathbb{R}^3}|u|^{q}dx\\
&\geq \frac{e^{s}}{4} \int_{\mathbb{R}^3} \int_{\mathbb{R}^3} \frac{|u(x)|^{2}|u(y)|^2} {|x-y|}dxdy-\mu\frac{e^{q\gamma_qs}}{q}
\int_{\mathbb{R}^3}|u|^{q}dx.
\end{align*}
In view of $q\gamma_q<1$, we get   $ \mathcal{J}(s\star u )\rightarrow+\infty$ as $s\rightarrow+\infty$. Hence, $\mathcal{J}$ is coercive on $S_a$, and $e(a):= \inf_{S_a} \mathcal{J} >-\infty$. Furthermore,  due to $q\gamma_q<1$, for any $u\in S_a$,  $\mathcal{J}(s\star u )\rightarrow 0^-$ as $s\rightarrow-\infty$. Hence, $e(a)<0$. In particular, we further prove that
\begin{align*}
 \text{when}\  0<a<a^*,  \ e(a) \text{ is attained by a real-valued
function}\ u\in S_a.
 \end{align*}
In other words, we have to verify that, taking a minimizing sequence $\{u_n\}\subset S_a$ of $e(a)$, along a subsequence, there exists $u\in S_a$  such that $u_n \rightarrow u$  in $\mathcal{H}$  up to translations. Indeed, similar to Sec. \ref{sec1}, we    change the definition of $g_u$ to the following statement
\begin{definition}
  Let $u\in \mathcal{H}\backslash \{0\} $. A continuous path $g_u: \theta\in \mathbb{R}^+\mapsto g_u(\theta)\in \mathcal{H}$ such that $g_u(1)=u$  is said to be a scaling path of $u$ if
 \begin{align*}
 \Theta_{g_u}(\theta):=|g_u(\theta)|_2^2|u|_2^{-2}\ \ \text{is differentiable}  \quad\quad \text{and}\quad \quad  \Theta_{g_u}'(1)\neq 0.
 \end{align*}
 We denote with $\mathcal{G}_u$ the set of the scaling paths of $u$.
\end{definition}
 Here,   if the following statements hold: for any $a\in (0, a^*)$ and $\mu>0$,
 \begin{itemize}
   \item [(i)] the map $a \mapsto e(a) $  is continuous;
   \item [(ii)]
 $\lim_{a\rightarrow 0}\frac{e(a) }{a^2}=0$,
 \end{itemize}
 then similar to Sec. \ref{sec1} and  the proof of Theorem \ref{K-TH1}, we can obtain the desire result.  That is,
  there exists $u\in S_a$ such that $ \mathcal{J}(u)=e(a)$.  Since if $\{u_n\}\subset S_a$  is a minimizing sequence of $e(a)$, then $\{|u_n|\}\subset S_a$  is also a minimizing sequence of $e(a)$. Hence,
 there is  a real-valued non-negative function $u\in S_a$ such that $ \mathcal{J}(u)=e(a)$.

  Now, we verify (i).  Let $a \in  (0, a^*)$ be arbitrary and $\{a_n\} \subset(0, \overline{a}_0)$ be such that $a_n \rightarrow a$ as $n\rightarrow\infty$. From the definition of $e(a_n)$
and since $e(a_n) < 0$, for any  $\varepsilon> 0$ sufficiently small, there exists $u_n \in S_{a_n}$ such that $\mathcal{J}(u_n)\leq e(a_n)+\varepsilon$ and $\mathcal{J}(u_n)<0$.
Taking $n$ large enough, we have $a_n \leq a+\varepsilon<a^*$.
Since  $q\gamma_q<1 $ and
\begin{align}\label{k113}
0>\mathcal{J}(u_n)\geq& \frac{1}{2}|\nabla u_n|_2^2-\mu \frac{C_q^q}{q} a_n^{q(1-\gamma_q)}|\nabla u_n|_2^{q\gamma_q}-\frac{C_{\overline{p}}^{\overline{p}}}{{\overline{p}}} (a_n)^{ \frac{4}{3}}|\nabla u_n|_2^{2}\nonumber\\
\geq&\Big( \frac{1}{2}- \frac{C_{\overline{p}}^{\overline{p}}}{{\overline{p}}} (a_n)^{ \frac{4}{3}}\Big) |\nabla u_n|_2^{2} -\mu \frac{C_q^q}{q} a_n^{q(1-\gamma_q)}|\nabla u_n|_2^{q\gamma_q},
\end{align}
we deduce that $\{u_n\} $ is bounded in $D^{1,2}(\mathbb{R}^3)$.
On the one hand,  setting $v_n := \frac{a}{a_n} u_n\in S_a$, by the boundedness of $\{u_n\}$ and $a_n\rightarrow a$ as $n\rightarrow\infty$,
\begin{align}\label{k112}
e(a)\leq \mathcal{J}(v_n)=&\mathcal{J}(u_n)+\frac{1}{2}\Big[\big(\frac{a}{a_n}\big)^2-1\Big]|\nabla u_n|_2^2+\frac{1}{4}\Big[\big(\frac{a}{a_n}\big)^4
-1\Big]\int_{\mathbb{R}^3} \int_{\mathbb{R}^3} \frac{|u_n(x)|^2|u_n(y)|^2} {|x-y|}dxdy\nonumber\\
 &-\frac{1}{p}\Big[\big(\frac{a}{a_n}\big)^p-1\Big]|  u_n|_p^p
 -\mu\frac{1}{q}\Big[\big(\frac{a}{a_n}\big)^q-1\Big]|  u_n|_q^q\nonumber\\
 =&\mathcal{J}(u_n)+o(1)\nonumber\\
 \leq & e(a_n)+o(1).
\end{align}
 On the other hand, let $u \in S_a$ be such that $\mathcal{J}(u)\leq e(a)+\varepsilon$  and  $\mathcal{J}(u)<0.$
Let $w_n=\frac{a_n}{a}u\in S_{a_n}$, then,
 \begin{align}\label{k111}
e(a_n)\leq \mathcal{J}(w_n)=\mathcal{J}(u)+[\mathcal{J}(w_n)-\mathcal{J}(u)]\leq e(a) +o(1).
\end{align}
 Thus, we get from \eqref{k112} and \eqref{k111} that $e(a_n)\rightarrow e(a)$ as $n\rightarrow\infty$ for any $a\in(0, a^*)$.

Next, we show that (ii) holds. Define
 \begin{align*}
\mathcal{I}(u)=\frac{1}{2}\int_{\mathbb{R}^3}|\nabla u|^2dx
-\frac{1}{\overline{p}}\int_{\mathbb{R}^3}|u|^{\overline{p}}dx-\mu\frac{1}{q}\int_{\mathbb{R}^3}|u|^{q}dx,
\end{align*}
and $\overline{e}(a):=\inf_{S_a}  \mathcal{I}$, where $a\in(0, a^*)$.
Then $\mathcal{J} (u)\geq \mathcal{I}(u) $ for any $u\in S_a$. Hence, $0> e(a)\geq \overline{e}(a),$
implies that $ \frac{\overline{e}(a)}{a^2}\leq  \frac{ e(a)}{a^2} <0$. Hence, we only need to show that  $\lim_{a\rightarrow0}\frac{\overline{e}(a)}{a^2}=0$.   According to \cite{2020Soave}, when $a\in(0, a^*)$, $\mu>0$, $q\in(2, \frac{8}{3})$ and $p=\overline{p}=\frac{10}{3}$, we obtain that there exists $\overline{u}_{a}\in S_a$ such that $ \mathcal{I} (\overline{u}_{a})=\overline{e}(a)<0 $.  Then, similar to \eqref{k113}, the sequence $\{\overline{u}_{a}\}_{a>0}$ is bounded in $D^{1,2}(\mathbb{R}^3)$. Since the minimizer $\overline{u}_{a}$ satisfies
\begin{align}\label{kk79}
-\Delta \overline{u}_{a}-|\overline{u}_{a}|^{\overline{p}-2}\overline{u}_{a}
-\mu|\overline{u}_{a}|^{q-2}\overline{u}_{a}=\omega_a \overline{u}_{a},
\end{align}
 where $\omega_a$ is the Lagrange multiplier associated to the minimizer,
then
\begin{align*}
 \frac{\omega_a}{2}=\frac{|\nabla  \overline{u}_{a}|_2^2-| \overline{u}_{a}|_{\overline{p}}^{\overline{p}}-\mu |\overline{u}_{a}|_q^q}{2| \overline{u}_{a}|_2^2}\leq\frac{\frac{1}{2}|\nabla  \overline{u}_{a}|_2^2-\frac{1}{{\overline{p}}}|\overline{u}_{a}|_{\overline{p}}^{\overline{p}}-\mu\frac{1}{q} | \overline{u}_{a}|_q^q}{| \overline{u}_{a}|_2^2}=\frac{\overline{\mathcal{I}}(\overline{u}_{a} )}{a^2}< 0.
\end{align*}
If  $\lim_{a\rightarrow0} \omega_a=0$, we get the conclusion.
 To show that $\lim_{a\rightarrow0} \omega_a=0$,    by contradiction,  assuming that there exists a sequence $a_n \rightarrow 0$ such that $\omega_{a_n} <-\alpha$ for some $\alpha\in(0,1)$. Since the minimizers $\overline{u}_{a_n}\in S_{a_n}$ satisfies Eq.~\eqref{kk79},  one has
\begin{align}
 |\nabla\overline{u}_{a_n} |_2^2-\omega_{a_n} |\overline{u}_{a_n} |_2^2-|\overline{u}_{a_n}|_{\overline{p}}^{\overline{p}}-\mu |\overline{u}_{a_n}|_q^q&=0,\label{kkkk80}\\
 |\nabla\overline{u}_{a_n} |_2^2-\gamma_{\overline{p}}|\overline{u}_{a_n}|_{\overline{p}}^{\overline{p}}-\mu\gamma_q|\overline{u}_{a_n}|_q^q&=0.\label{kkk81}
\end{align}
It follows from  $\eqref{kkkk80}-\frac{1}{2}\eqref{kkk81}$  that
 \begin{align*}
C\|\overline{u}_{a_n}\|^2\leq \frac{1}{2}|\nabla\overline{u}_{a_n} |_2^2+\alpha  |\overline{u}_{a_n} |_2^2
&\leq(1-\frac{1}{2}\gamma_{\overline{p}})| \overline{u}_{a_n} |_{\overline{p}}^{\overline{p}}+\mu(1-\frac{1}{2}\gamma_q)| \overline{u}_{a_n} |_q^q \\
  & \leq C(1-\frac{1}{2}\gamma_{\overline{p}})\| \overline{u}_{a_n} \|^{\overline{p}}+\mu C(1-\frac{1}{2}\gamma_q)\| \overline{u}_{a_n} \|^q,
\end{align*}
 which implies that $\|\overline{u}_{a_n}\|\geq C $ due to ${\overline{p}}, q>2$ and $\gamma_{\overline{p}}, \gamma_q<1$. It gives that $ |\nabla\overline{u}_{a_n} |_2>C$ for $n$ large enough in virtue of $ |\overline{u}_{a_n} |_2=a_n\rightarrow 0$ as $n\rightarrow\infty$.
 Moreover, for any $u\in S_a$,
 \begin{align*}
\mathcal{I}(u)=&\frac{1}{2}\int_{\mathbb{R}^3}|\nabla u|^2dx
-\frac{1}{\overline{p}}\int_{\mathbb{R}^3}|u|^{\overline{p}}dx-\mu\frac{1}{q}\int_{\mathbb{R}^3}|u|^{q}dx\\
\geq& \frac{1}{2}|\nabla u|_2^2-\mu \frac{C_q^q}{q} a^{q(1-\gamma_q)}|\nabla u|_2^{q\gamma_q}-\frac{C_{\overline{p}}^{\overline{p}}}{\overline{p}} a^{\overline{p}(1-\gamma_{\overline{p}})}|\nabla u|_2^{2}.
\end{align*}
Thus, for $n$ large enough,
 \begin{align*}
0\geq  \mathcal{I}(\overline{u}_{a_n})
\geq\Big(\frac{1}{2}-\mu \frac{C_q^q}{q} a_n^{q(1-\gamma_q)}|\nabla\overline{u}_{a_n}|_2^{q\gamma_q-2}-\frac{C_{\overline{p}}^{\overline{p}}}{{\overline{p}}} a_n^{{\overline{p}}(1-\gamma_{\overline{p}})} \Big)|\nabla\overline{u}_{a_n}|_2^2
\geq \frac{1}{4}C>0,
\end{align*}
which is a contradiction. Therefore, $\lim_{a\rightarrow0} \omega_a=0$, which shows that  $\lim_{a\rightarrow0}\frac{\overline{e}(a)}{a^2}=0$.

In order to complete the proof of Theorem \ref{K-TH7}, we finally need to prove that when $\mu>0,  a=a^*, q\in(2, \frac{12}{5} ]$ and $p=\overline{p}=\frac{10}{3}$, $e(a^*)$ is attained by a real-valued
function $ u\in S_a$. In fact,  let us consider a minimizing sequence $\{|v_n|\}\subset S_a$ for $e(a^*)$,   the Ekeland's
variational principle yields in a standard way the existence of a new minimizing sequence $\{u_n\}\subset S_a$
  for $e(a^*)$, with the property that $ \mathcal{J}_{|_{S_a}}'(u_n)\rightarrow 0$ in $\mathcal{H}_{r}^{-1}$ and $u_n$ is a real-value nonnegative function for every $n\in \mathbb{N}$. Since $\mathcal{J}$ is coercive on $S_a$, $\{u_n\}$ is bounded in  $\mathcal{H}_{r}$. Then there exists $u\in \mathcal{H}_r$ such that, up to a subsequence, as $n\rightarrow\infty$,
 \begin{align*}
   & u_n\rightharpoonup u \quad\quad \text{in}\ \mathcal{H}_r;\\
   & u_n\rightarrow u\quad\quad \text{in}\ L^t(\mathbb{R}^3)\ \text{with}\ t\in(2, 6);\\
   & u_n\rightarrow u \quad\quad  a.e \ \text{in}\ \mathbb{R}^3.
\end{align*}
Since $\mathcal{J}'_{|_{S_a}}(u_n)\rightarrow 0$,  then there exists $\lambda_n\in \mathbb{R} $ satisfying
\begin{align*} 
-\Delta u_n+\lambda_n u_n+ (|x|^{-1}\ast |u_n|^2)u_n=\mu |u_n|^{q-2}u_n+|u_n|^{\overline{p}-2}u_n+o(1).
\end{align*}
 Multiplying the above equation by $u_n$ and integrating in $\mathbb{R}^3$,
\begin{align} \label{kkk16}
 \lambda_n a^2= -|\nabla u_n|_2^2-\int_{\mathbb{R}^3} \int_{\mathbb{R}^3} \frac{|u_n(x)|^{2}|u_n(y)|^2} {|x-y|}dxdy
+\int_{\mathbb{R}^3}|u_n|^{\overline{p}}dx+\mu \int_{\mathbb{R}^3}|u_n|^{q}dx+o( \|u_n\|).
\end{align}
Since $\{u_n\}\subset \mathcal{H}_r$ is bounded, then it follows from \eqref{kkk16} that  $\{\lambda_n\}$  is bounded in $\mathbb{R}$. Hence, there is $\lambda\in \mathbb{R}$ satisfying $\lambda_n\rightarrow \lambda$ as $n\rightarrow\infty$. Then we have
\begin{align} \label{kkkk20}
-\Delta u+\lambda u+ (|x|^{-1}\ast |u|^2)u=\mu |u|^{q-2}u+|u|^{\overline{p}-2}u.
\end{align}
Clearly, $u\neq0$. If not, assume that $u=0$, then by Lemma \ref{gle4} and the fact that $\mathcal{H}_r\hookrightarrow L^t(\mathbb{R}^3 )$ with $t\in (2,6)$ is compact, we deduce
\begin{align*}
e(a^*)=\lim_{n\rightarrow\infty}\mathcal{J}(u_n)
 =\lim_{n\rightarrow\infty} \frac{1}{2}  \int_{\mathbb{R}^3} |\nabla u_n|^2dx\geq 0,
\end{align*}
which is   contradict with $e(a^*)<0$. Besides, we show that $\lambda>0$. It follows from \eqref{kkkk20} that
\begin{align}
|\nabla u|_2^2+ \lambda \int_{\mathbb{R}^3} |u|^{2}dx  +\int_{\mathbb{R}^3} \int_{\mathbb{R}^3} \frac{|u(x)|^{2}|u(y)|^2} {|x-y|}dxdy
-\int_{\mathbb{R}^3}|u|^{\overline{p}}dx-\mu \int_{\mathbb{R}^3}|u|^{q}dx&=0,\label{kk17}\\
 |\nabla u|_2^2+ \frac{1}{4}\int_{\mathbb{R}^3} \int_{\mathbb{R}^3} \frac{|u(x)|^{2}|u(y)|^2} {|x-y|}dxdy-\gamma_p \int_{\mathbb{R}^3}|u|^{\overline{p}}dx-\mu \gamma_q \int_{\mathbb{R}^3}|u|^{q}dx&=0.\label{kk18}
\end{align}
  We argue by contradiction   assuming that $\lambda\leq 0$. Then it follows from  \eqref{kk17} and \eqref{kk18} that, by eliminating $\int_{\mathbb{R}^3} \int_{\mathbb{R}^3} \frac{|u(y)|^2|u(x)|^{2}} {|x-y|}dydx$,
\begin{align*}
\frac{3}{4}|\nabla u|_2^2-\frac{\lambda}{4}  \int_{\mathbb{R}^3} |u|^{2}dx
+(\frac{1}{4}-\gamma_{\overline{p}})\int_{\mathbb{R}^3}|u|^{\overline{p}}dx+\mu (\frac{1}{4}-\gamma_q) \int_{\mathbb{R}^3}|u|^{q}dx=0,
\end{align*}
which implies that
\begin{align}\label{k103}
\frac{3}{4}|\nabla u|_2^2
&\leq
 \big(\gamma_{\overline{p}}-\frac{1}{4}\big)\int_{\mathbb{R}^3}|u|^{\overline{p}}dx+\mu \big(\gamma_q-\frac{1}{4}\big) \int_{\mathbb{R}^3}|u|^{q}dx\nonumber\\
& \leq \big(\gamma_{\overline{p}}-\frac{1}{4}\big)\int_{\mathbb{R}^3}|u|^{\overline{p}}dx
 \leq  \big(\gamma_{\overline{p}}-\frac{1}{4}\big) C_{\overline{p}}^{\overline{p}}|\nabla u|_2^{2} (a^*)^{ \frac{4}{3}}
\end{align}
by using the fact that $\mu>0$, $q\in(2, \frac{12}{5}]$, $\overline{p}=\frac{10}{3}$, $ |u|_2\leq a^*$ and  \eqref{k10}.  In view of \eqref{k103}, we infer that $a^*\geq\big(\frac{15}{7 C_{\overline{p}}^{\overline{p}}}\big)^\frac{3}{4}$,
which is contradict with the definition of $a^* $. Hence,   $\lambda>0$.
  The following is similar to    the Step 3 of Lemma    \ref{K-Lem2.6},
 we can obtain that $\|u_n-u\|\rightarrow 0$ in $\mathcal{H}_r$ as $n\rightarrow\infty$. Then, we infer that $( \lambda, u)\in \mathbb{R}^+\times \mathcal{H}_r$ is a solution for Eq.~\eqref{k1}, where $\mathcal{J}(u)=e(a^*)$ and $u\in S_a$ is a nonnegative real-value function. So, we complete the proof.
$\hfill\Box$

\section{The case $\mu>0$, $q=\frac{10}{3}$ and   $p\in (\frac{10}{3}, 6)$}\label{sec7}
In this part, we are going to solve  the case $\mu>0$, $q=\overline{q}=\frac{10}{3}$ and   $p\in (\frac{10}{3}, 6)$.
 \begin{lemma}\label{KJ-Lem2.2}
  Let $\mu>0$, $q=\overline{q}=\frac{10}{3}$ and   $p\in (\frac{10}{3}, 6)$. Then $ \mathcal{P}_0=\emptyset$, and $\mathcal{P}$ is a smooth manifold of codimension 2 in $\mathcal{H}$.
  \end{lemma}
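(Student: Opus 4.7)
The plan is to follow the same strategy as in Lemma \ref{K-Lem2.2}, but to exploit crucially the identity $q\gamma_q=2$ that holds precisely when $q=\overline{q}=\frac{10}{3}$. First, I assume by contradiction that some $u\in\mathcal{P}_0$ exists. Then, exactly as in \eqref{k5}--\eqref{k6}, the pair of conditions $P(u)=0$ and $\psi_u''(0)=0$ gives
\begin{align*}
|\nabla u|_2^2+\tfrac14\!\iint\!\tfrac{|u(x)|^2|u(y)|^2}{|x-y|}dxdy-\gamma_p|u|_p^p-\mu\gamma_q|u|_q^q&=0,\\
2|\nabla u|_2^2+\tfrac14\!\iint\!\tfrac{|u(x)|^2|u(y)|^2}{|x-y|}dxdy-p\gamma_p^2|u|_p^p-\mu q\gamma_q^2|u|_q^q&=0.
\end{align*}

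Eliminating $|\nabla u|_2^2$ by computing $2P(u)-\psi_u''(0)=0$, I obtain
\begin{align*}
\tfrac14\!\iint\!\tfrac{|u(x)|^2|u(y)|^2}{|x-y|}dxdy
=\mu\gamma_q(2-q\gamma_q)|u|_q^q+\gamma_p(p\gamma_p-2)|u|_p^p\cdot(-1)\cdot(-1),
\end{align*}
that is, $\frac14\iint\frac{|u(x)|^2|u(y)|^2}{|x-y|}dxdy+\gamma_p(p\gamma_p-2)|u|_p^p=\mu\gamma_q(2-q\gamma_q)|u|_q^q$. Now the key observation is that $q=\overline{q}=\frac{10}{3}$ yields $q\gamma_q=2$, so the right-hand side vanishes identically, while on the left-hand side $p\gamma_p>2$ (because $p>\frac{10}{3}$) and both remaining quantities are non-negative. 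Hence $|u|_p^p=0$ and $\iint\frac{|u(x)|^2|u(y)|^2}{|x-y|}dxdy=0$, which forces $u\equiv 0$, contradicting $u\in S_a$. Thus $\mathcal{P}_0=\emptyset$.

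For the manifold statement, I view $\mathcal{P}$ as the zero set in $\mathcal{H}$ of the $C^1$ map $\Phi:\mathcal{H}\to\mathbb{R}^2$, $\Phi(u)=(|u|_2^2-a^2,\,P(u))$. It suffices to check that $d\Phi(u)$ is surjective (equivalently, that its two rows are linearly independent) at every $u\in\mathcal{P}$. The standard trick (as in \cite[Lemma 5.2]{2020Soave}) is to test in the direction $s\star u$: if $d\Phi(u)$ had non-trivial kernel in the codirection of $P$, then $\alpha\,(2u)+\beta\,P'(u)=0$ in $\mathcal{H}^{-1}$ for some $(\alpha,\beta)\neq 0$, and evaluating on $\frac{d}{ds}(s\star u)|_{s=0}$ gives $\beta\,\psi_u''(0)=0$, while $\alpha$ is controlled by the mass constraint. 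Since $\mathcal{P}_0=\emptyset$ rules out $\psi_u''(0)=0$, linear independence follows and $\mathcal{P}$ is a $C^1$ submanifold of codimension $2$.

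The only substantive part is the first (non-degeneracy) step; once $\mathcal{P}_0=\emptyset$ is established, the codimension-$2$ manifold claim is routine. The minor subtlety to watch is making sure that the sign assignment of the two terms on the two sides of the eliminated identity is tracked correctly, since both $p\gamma_p-2>0$ and $2-q\gamma_q=0$ enter—this is exactly where the borderline value $q=\overline{q}$ closes the argument without needing any smallness hypothesis on $a$, in contrast with Lemma \ref{K-Lem2.2}.
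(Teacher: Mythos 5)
Your proposal is correct and follows essentially the same route as the paper: eliminating $|\nabla u|_2^2$ from $P(u)=0$ and $\psi_u''(0)=0$ and using that $q\gamma_q=2$ when $q=\overline{q}=\frac{10}{3}$ kills the $|u|_q^q$ term, so the remaining nonnegative terms force $|u|_p=0$, contradicting $u\in S_a$; the codimension-two manifold claim is then handled, as in the paper, by the standard Soave-type linear-independence argument relying on $\mathcal{P}_0=\emptyset$.
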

 \begin{proof}
  Suppose on the contrary that there exists $u\in  \mathcal{P}_0$. By \eqref{k8} and \eqref{k4} one has
 \begin{align}
P(u)=\int_{\mathbb{R}^3}|\nabla u|^2dx+\frac{1}{4}\int_{\mathbb{R}^3} \int_{\mathbb{R}^3} \frac{|u(x)|^{2}|u(y)|^2} {|x-y|}dxdy
-\gamma_p\int_{\mathbb{R}^3}|u|^{p}dx
-\mu\gamma_q\int_{\mathbb{R}^3}|u|^{\overline{q}}dx=0,\label{k205}\\
\psi_u''(0)=2\int_{\mathbb{R}^3}|\nabla u|^2dx+\frac{1}{4}\int_{\mathbb{R}^3} \int_{\mathbb{R}^3} \frac{|u(x)|^{2}|u(y)|^2} {|x-y|}dxdy
-p\gamma_p^2\int_{\mathbb{R}^3}|u|^{p}dx
-\mu q \gamma_q^2\int_{\mathbb{R}^3}|u|^{\overline{q}}dx=0.\label{k206}
 \end{align}
  By eliminating  $|\nabla u|_2^2$ from \eqref{k205}  and   \eqref{k206}, we get
  \begin{align*}
0
= (p \gamma_p-2)\gamma_p\int_{\mathbb{R}^3}|u|^{p}dx
+\frac{1}{4}\int_{\mathbb{R}^3} \int_{\mathbb{R}^3} \frac{|u(x)|^{2}|u(y)|^2} {|x-y|}dxdy,
 \end{align*}
  which implies that $|u|_p=0$. This is not possible
since $u \in  S_a$.  The remain is similar to \cite[Lemma 5.2]{2020Soave}.
     Thus we complete the proof.
  \end{proof}

 \begin{lemma}\label{K-J1}
Let  $\mu>0$, $q=\overline{q} =\frac{10}{3}$ and $p\in (\frac{10}{3}, 6)$. Assume that \eqref{k201} holds.
   For any $u\in S_a$,  there exists a unique $t_u \in \mathbb{R}$ such that $t_u\star u\in \mathcal{P}$. $t_u$ is the unique
critical point of the function $\psi_u$, and is a strict maximum point at positive level. Moreover:
\begin{itemize}
  \item [(i)]  $\mathcal{P}=\mathcal{P}_-$;
  \item [(ii)]  the function $\psi_u$ is strictly decreasing and concave on $(t_u,+\infty)$. In particular,  $t_u < 0\Leftrightarrow P(u) <0$;
    \item [(iii)]   the map $u\in S_a\mapsto t_u\in \mathbb{R}$ is of class $C^1$.
\end{itemize}
  \end{lemma}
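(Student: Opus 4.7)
The plan is to exploit the mass-critical identity $\overline{q}\gamma_{\overline{q}}=2$ (note $\gamma_{\overline{q}}=3/5$) to rewrite
\[
\psi_u(s)=\frac{e^{2s}}{2}\Bigl(|\nabla u|_2^2-\frac{2\mu}{\overline{q}}|u|_{\overline{q}}^{\overline{q}}\Bigr)+\frac{e^s}{4}B(u)-\frac{e^{p\gamma_p s}}{p}|u|_p^p,
\]
where $B(u):=\int\!\!\int |u(x)|^2|u(y)|^2/|x-y|\,dxdy>0$ for every $u\in S_a$. Applying the Gagliardo--Nirenberg inequality $|u|_{\overline{q}}^{\overline{q}}\le C_{\overline{q}}^{\overline{q}}a^{4/3}|\nabla u|_2^2$ together with the condition $\mu a^{4/3}<\overline{q}/(2C_{\overline{q}}^{\overline{q}})$ (the content of the preceding remark extracted from \eqref{k201}), and using $\mu\gamma_{\overline{q}}=2\mu/\overline{q}$, I obtain
\[
\tilde\alpha(u):=|\nabla u|_2^2-\mu\gamma_{\overline{q}}|u|_{\overline{q}}^{\overline{q}}\,\ge\,\Bigl(1-\tfrac{3}{5}C_{\overline{q}}^{\overline{q}}\mu a^{4/3}\Bigr)|\nabla u|_2^2\,>\,0\quad\text{for all }u\in S_a.
\]

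Next I would analyze the equation $\psi_u'(s)=0$, which reads
\[
e^{2s}\tilde\alpha(u)+\tfrac14 e^{s}B(u)=\gamma_p\,e^{p\gamma_p s}|u|_p^p.
\]
Dividing by $e^{p\gamma_p s}$ and setting $t=e^s$, this becomes $h(t)=0$ with
\[
h(t)=\tilde\alpha(u)\,t^{\,2-p\gamma_p}+\tfrac{B(u)}{4}\,t^{\,1-p\gamma_p}-\gamma_p|u|_p^p.
\]
Since $p\gamma_p>2>1$, both exponents are negative, so $h$ is strictly decreasing on $(0,+\infty)$ with $h(0^+)=+\infty$ and $h(+\infty)=-\gamma_p|u|_p^p<0$. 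Therefore $h(t)=0$ has a unique root $t^\ast$, producing the unique critical point $t_u:=\log t^\ast$ of $\psi_u$. The boundary behaviour $\psi_u(s)\sim\tfrac{e^s}{4}B(u)\to 0^+$ as $s\to-\infty$ and $\psi_u(s)\to-\infty$ as $s\to+\infty$, combined with the uniqueness of the critical point, forces $t_u$ to be a strict global maximum at a positive level.

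For assertion (i), since $t_u\star u$ maximizes $\psi_u$ we have $\psi_u''(t_u)\le 0$; invoking Lemma \ref{KJ-Lem2.2} ($\mathcal{P}_0=\emptyset$) rules out equality, so $\psi_u''(t_u)<0$, which reads $t_u\star u\in\mathcal{P}_-$, hence $\mathcal{P}=\mathcal{P}_-$. For (ii), I would observe that
\[
e^{-p\gamma_p s}\psi_u''(s)=2\tilde\alpha(u)\,e^{(2-p\gamma_p)s}+\tfrac14 B(u)\,e^{(1-p\gamma_p)s}-p\gamma_p^2|u|_p^p
\]
is strictly decreasing in $s$ (again both exponents are negative and $\tilde\alpha(u),B(u)>0$). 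Since this expression is negative at $s=t_u$ by the preceding step, it remains negative for every $s>t_u$, giving both strict decrease and strict concavity of $\psi_u$ on $(t_u,+\infty)$; the equivalence $t_u<0\Leftrightarrow P(u)<0$ then follows from $P(u)=\psi_u'(0)$ and the sign pattern of $\psi_u'$. Assertion (iii) is a direct application of the implicit function theorem to $F(u,s)=\psi_u'(s)$ at $(u,t_u)$, since $F$ is $C^1$ on $\mathcal{H}\setminus\{0\}\times\mathbb{R}$ and $\partial_s F(u,t_u)=\psi_u''(t_u)<0$.

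The main obstacle is the very first step: extracting usable positivity from the rather opaque hypothesis \eqref{k201}. Once the implicit condition $\mu a^{4/3}<\overline{q}/(2C_{\overline{q}}^{\overline{q}})$ is read off (as in the remark), the mass-critical identity $\overline{q}\gamma_{\overline{q}}=2$ causes the $\overline{q}$-term to merge with the Dirichlet term, so the whole analysis collapses to the single-parameter monotonicity argument used in Lemma \ref{K-Lem4.2}. After that, the rest of the proof is mechanical.
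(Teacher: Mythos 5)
Your proof is correct and takes essentially the same route as the paper: both exploit $\overline{q}\gamma_{\overline{q}}=2$ to absorb the $\overline{q}$-term into the coefficient of $e^{2s}$, show that coefficient is positive via the Gagliardo--Nirenberg inequality together with the condition $\mu a^{4/3}<\overline{q}/(2C_{\overline{q}}^{\overline{q}})$ implicit in \eqref{k201}, and then reduce to the standard single-maximum fiber analysis. The only difference is presentational: the paper delegates that remaining analysis to Soave's Lemmas 5.3 and 6.1, while you carry it out explicitly (substitution $t=e^{s}$, strict monotonicity of $h$, sign of $\psi_u''$ at $t_u$ via $\mathcal{P}_0=\emptyset$, and the implicit function theorem), all of which is sound.
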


 \begin{proof}
Since
\begin{align*}
 \psi_u(s)
&=\frac{e^{2s}}{2}\int_{\mathbb{R}^3}|\nabla u|^2dx+\frac{e^{s}}{4} \int_{\mathbb{R}^3} \int_{\mathbb{R}^3} \frac{|u(x)|^{2}|u(y)|^2} {|x-y|}dxdy
-\frac{e^{p\gamma_ps}}{p}\int_{\mathbb{R}^3}|u|^{p}dx
-\frac{\mu e^{2s}}{\overline{q}}
\int_{\mathbb{R}^3}|u|^{\overline{q}}dx \\
&= \left(\frac{1}{2}\int_{\mathbb{R}^3}|\nabla u|^2dx-\frac{\mu }{\overline{q}}
\int_{\mathbb{R}^3}|u|^{\overline{q}}dx\right) e^{2s}+ \frac{e^{s}}{4} \int_{\mathbb{R}^3} \int_{\mathbb{R}^3} \frac{|u(x)|^{2}|u(y)|^2} {|x-y|}dxdy
-\frac{e^{p\gamma_ps}}{p}\int_{\mathbb{R}^3}|u|^{p}dx
\end{align*}
then, by the fact that $s\star u\in \mathcal{P}\Leftrightarrow \psi_u'(s)=0$,  to prove existence and uniqueness of $t_u$, together with monotonicity
and convexity of $\psi_u$, we have only to show that the term inside the brackets is positive. This is
clearly satisfied, since
\begin{align*}
\frac{1}{2}\int_{\mathbb{R}^3}|\nabla u|^2dx-\frac{\mu }{\overline{q}}
\int_{\mathbb{R}^3}|u|^{\overline{q}}dx\geq \left( \frac{1}{2}-\frac{\mu}{\overline{q}}C_{\overline{q}}^{\overline{q}}a^{\frac{4}{3}}\right)|\nabla u|_2^2>0
\end{align*}
by the assumption \eqref{k201}.
The remains are similar to the proof of \cite[Lemmas 5.3 and   6.1 ]{2020Soave}. Thus, we complete the proof.
  \end{proof}

\begin{lemma} \label{K-J2}
 Assume that $\mu>0$,
  $q=\overline{q}= \frac{10}{3} $, $p\in (\frac{10}{3}, 6)$ and  \eqref{k201} hold. Then
$ \sigma_{\overline{q}}:=\inf\limits_{\mathcal{P}} \mathcal{J}(u)>0$.
\end{lemma}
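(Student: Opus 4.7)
\medskip

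\noindent\textbf{Proof plan.} The plan is to give a uniform positive lower bound for $\mathcal{J}(u)$ on $\mathcal{P}$ by combining (a) a Pohozaev-type identity that, thanks to \eqref{k201}, turns into a positive lower bound on $|\nabla u|_2^2$, and (b) a coercivity estimate that controls $\mathcal{J}(u)$ from below by a positive multiple of $|\nabla u|_2^2$. The key algebraic fact is the identity $\overline{q}\gamma_{\overline{q}} = 2$, which makes the $L^{\overline{q}}$-term comparable with $|\nabla u|_2^2$ via the Gagliardo--Nirenberg inequality \eqref{k10}.

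\smallskip

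\noindent First I would fix $u\in\mathcal{P}$ and use $P(u)=0$ to rewrite
\begin{align*}
\mathcal{J}(u)&=\mathcal{J}(u)-\tfrac{1}{p\gamma_p}P(u)\\
&=\Big(\tfrac{1}{2}-\tfrac{1}{p\gamma_p}\Big)|\nabla u|_2^2+\tfrac{1}{4}\Big(1-\tfrac{1}{p\gamma_p}\Big)\int_{\mathbb{R}^3}\!\!\int_{\mathbb{R}^3}\tfrac{|u(x)|^2|u(y)|^2}{|x-y|}dxdy
+\mu\Big(\tfrac{\gamma_{\overline{q}}}{p\gamma_p}-\tfrac{1}{\overline{q}}\Big)|u|_{\overline{q}}^{\overline{q}}.
\end{align*}
Since $p\gamma_p>2$ the first two coefficients are positive, while $\overline{q}\gamma_{\overline{q}}=2<p\gamma_p$ makes the last coefficient negative. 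I would drop the (nonnegative) nonlocal term and then apply \eqref{k10} with $\overline{q}\gamma_{\overline{q}}=2$ to estimate $|u|_{\overline{q}}^{\overline{q}}\leq C_{\overline{q}}^{\overline{q}}a^{4/3}|\nabla u|_2^2$. A short computation yields
\begin{align*}
\mathcal{J}(u)\geq \frac{p\gamma_p-2}{p\gamma_p}\left[\frac{1}{2}-\frac{\mu C_{\overline{q}}^{\overline{q}}a^{4/3}}{\overline{q}}\right]|\nabla u|_2^2 .
\end{align*}
By the remark following \eqref{k201} one has $\mu a^{4/3}<\overline{q}/(2C_{\overline{q}}^{\overline{q}})$, so the bracket is a strictly positive constant depending only on $a,\mu,p,q$.

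\smallskip

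\noindent Next I would show $|\nabla u|_2$ is uniformly bounded below on $\mathcal{P}$. From $P(u)=0$, discarding the nonnegative nonlocal term and using \eqref{k10} for both $|u|_p^p$ and $|u|_{\overline{q}}^{\overline{q}}$,
\begin{align*}
|\nabla u|_2^2\leq \gamma_p C_p^p a^{p(1-\gamma_p)}|\nabla u|_2^{p\gamma_p}+\mu\gamma_{\overline{q}}C_{\overline{q}}^{\overline{q}}a^{4/3}|\nabla u|_2^2 .
\end{align*}
The same assumption \eqref{k201} gives $\mu\gamma_{\overline{q}}C_{\overline{q}}^{\overline{q}}a^{4/3}<\gamma_{\overline{q}}\cdot\overline{q}/2=1$, so rearranging yields
\begin{align*}
|\nabla u|_2^{p\gamma_p-2}\geq \frac{1-\mu\gamma_{\overline{q}}C_{\overline{q}}^{\overline{q}}a^{4/3}}{\gamma_p C_p^p a^{p(1-\gamma_p)}}>0 ,
\end{align*}
which, since $p\gamma_p>2$, gives a positive lower bound $|\nabla u|_2\geq \rho_*>0$ independent of $u\in\mathcal{P}$. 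Combining with the previous display produces $\mathcal{J}(u)\geq c(a,\mu,p,q)\rho_*^2>0$, and taking the infimum over $\mathcal{P}$ yields $\sigma_{\overline{q}}>0$.

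\smallskip

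\noindent I do not expect genuine obstacles here; the only delicate point is keeping track of the two places where \eqref{k201} is used, namely to ensure both the coercivity bracket $\tfrac{1}{2}-\mu C_{\overline{q}}^{\overline{q}}a^{4/3}/\overline{q}$ and the coefficient $1-\mu\gamma_{\overline{q}}C_{\overline{q}}^{\overline{q}}a^{4/3}$ are strictly positive. Both reduce to the single inequality $\mu a^{4/3}<\overline{q}/(2C_{\overline{q}}^{\overline{q}})$, which is already recorded in the remark after \eqref{k201}; once this is in hand the remaining estimates are routine applications of the Gagliardo--Nirenberg inequality \eqref{k10}.
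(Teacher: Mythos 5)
Your proposal is correct and follows essentially the same route as the paper: the paper likewise first uses $P(u)=0$ with the Gagliardo--Nirenberg inequality to get $\big(1-\tfrac{3}{5}\mu C_{\overline{q}}^{\overline{q}}a^{4/3}\big)|\nabla u|_2^2\leq \gamma_p C_p^p a^{p(1-\gamma_p)}|\nabla u|_2^{p\gamma_p}$, hence $\inf_{\mathcal{P}}|\nabla u|_2>0$, and then bounds $\mathcal{J}(u)=\mathcal{J}(u)-\tfrac{1}{p\gamma_p}P(u)$ from below by a positive multiple of $|\nabla u|_2^2$ using \eqref{k201}. Your two steps are the same estimates in reverse order (and your constant $\tfrac{p\gamma_p-2}{p\gamma_p}\big[\tfrac12-\tfrac{\mu C_{\overline{q}}^{\overline{q}}a^{4/3}}{\overline{q}}\big]$ coincides with the paper's $\tfrac12\big(1-\tfrac{2}{p\gamma_p}\big)\big(1-\tfrac{3}{5}\mu C_{\overline{q}}^{\overline{q}}a^{4/3}\big)$), so there is nothing to change.
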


\begin{proof}
Since $P(u)=0$, then
\begin{align*}
 |\nabla u|_2^2+ \frac{1}{4}\int_{\mathbb{R}^3} \int_{\mathbb{R}^3} \frac{|u(x)|^{2}|u(y)|^2} {|x-y|}dxdy-\gamma_p \int_{\mathbb{R}^3}|u|^{p}dx-\mu \gamma_{\overline{q}} \int_{\mathbb{R}^3}|u|^{\overline{q}}dx=0,
\end{align*}
which and   the Gagliardo-Nirenberg inequality \eqref{k10}  imply that
\begin{align}\label{k200}
\left(1-\frac{3}{5}\mu C_{\overline{q}}^{\overline{q}}a^{\frac{4}{3}}\right)|\nabla u|_2^2\leq C_{p}^p \gamma_pa^{p(1-\gamma_p)} |\nabla u|_2^{p\gamma_p}.
\end{align}
We deduce from the above and \eqref{k201} that $\inf_{\mathcal{P}} |\nabla u|_2 >0.$
Hence,
\begin{align*}
   \mathcal{J} (u)=\mathcal{J} (u)-\frac{1}{p\gamma_p} P(u)
 \geq \frac{1}{2}\Big(1- \frac{2}{p\gamma_p}\Big)\big(1-\frac{3}{5}  C_{\overline{q}}^{\overline{q}}\mu a^{\frac{4}{3}}\big)|\nabla u|_2^2>0,
\end{align*}
where the assumption \eqref{k201} is applied.
Thus, the thesis follows.
 \end{proof}

 \begin{lemma} \label{K-J3}
Assume that $\mu>0$,
  $q=\overline{q}= \frac{10}{3} $, $p\in (\frac{10}{3}, 6)$ and \eqref{k201} hold. Then
there exists $k > 0$ sufficiently small such that
\begin{align*}
  0<\sup_{ \overline{A}_k\cap S_a} \mathcal{J}< \sigma_{\overline{q}}\quad \text{and} \quad u\in \overline{A}_k\cap S_a\Rightarrow \mathcal{J}(u), P(u)>0,
\end{align*}
where $A_k:=\{u\in \mathcal{H}: |\nabla u|_2\leq k\}$.
\end{lemma}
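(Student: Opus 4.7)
The plan is to verify all three assertions---$P(u)>0$, $\mathcal{J}(u)>0$, and $\sup_{\overline A_k\cap S_a}\mathcal{J}<\sigma_{\overline q}$---by a direct Gagliardo--Nirenberg estimate, exploiting the crucial numerological fact that $\overline q\gamma_{\overline q}=2$ (so the $\mu$-term enters at the same order as $|\nabla u|_2^2$) while $p\gamma_p>2$ (so the $p$-term is genuinely higher order in $|\nabla u|_2$). The sign condition needed to make the quadratic coefficient positive is precisely what is encoded in \eqref{k201}, via the condition $\mu a^{4/3}<\overline q/(2C_{\overline q}^{\overline q})$ pointed out in the remark following Theorem \ref{K-TH15}.

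For the lower bounds on $P(u)$ and $\mathcal{J}(u)$, first I would apply \eqref{k10} with $t=\overline q$ and $t=p$ to get
\begin{align*}
|u|_{\overline q}^{\overline q}\le C_{\overline q}^{\overline q}a^{\overline q(1-\gamma_{\overline q})}|\nabla u|_2^{2},\qquad
|u|_p^p\le C_p^p\, a^{p(1-\gamma_p)}|\nabla u|_2^{p\gamma_p},
\end{align*}
and then drop the nonnegative Coulomb term to obtain
\begin{align*}
P(u)&\ge\bigl(1-\tfrac{3}{5}\mu C_{\overline q}^{\overline q}a^{4/3}\bigr)|\nabla u|_2^{2}-\gamma_p C_p^p\, a^{p(1-\gamma_p)}|\nabla u|_2^{p\gamma_p},\\
\mathcal{J}(u)&\ge\bigl(\tfrac{1}{2}-\tfrac{3}{10}\mu C_{\overline q}^{\overline q}a^{4/3}\bigr)|\nabla u|_2^{2}-\tfrac{1}{p}C_p^p\, a^{p(1-\gamma_p)}|\nabla u|_2^{p\gamma_p}.
\end{align*}
Under \eqref{k201} both leading coefficients are strictly positive (this is exactly the content of the remark). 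Since $p\gamma_p>2$, both right-hand sides are of the form $\alpha|\nabla u|_2^2-\beta|\nabla u|_2^{p\gamma_p}$ with $\alpha>0$, so they are positive on $\{0<|\nabla u|_2\le k\}$ for any $k>0$ small enough. This proves the second claim.

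For the upper bound $\sup_{\overline A_k\cap S_a}\mathcal{J}<\sigma_{\overline q}$, I discard the two negative terms and use Lemma \ref{gle4} together with the Gagliardo--Nirenberg inequality at $t=12/5$ (where $\gamma_{12/5}=1/4$) to estimate
\begin{align*}
\mathcal{J}(u)\le\tfrac{1}{2}|\nabla u|_2^{2}+\tfrac{C}{4}|u|_{12/5}^{4}
\le\tfrac{1}{2}k^{2}+C a^{3}\, k\qquad\text{for all } u\in\overline A_k\cap S_a.
\end{align*}
By Lemma \ref{K-J2} we have $\sigma_{\overline q}>0$, hence the right-hand side is smaller than $\sigma_{\overline q}$ once $k$ is chosen sufficiently small. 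Taking $k$ even smaller if necessary so that the two polynomial lower bounds above are strictly positive, all three conclusions hold simultaneously.

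I do not anticipate a genuine obstacle here: once the numerology $\overline q\gamma_{\overline q}=2<p\gamma_p$ is isolated and \eqref{k201} is invoked for the positivity of the quadratic coefficients, the argument is a mechanical comparison of monomials in $|\nabla u|_2$. The only point requiring a small amount of care is checking that the constant from \eqref{k201} really gives $1-\tfrac{3}{5}\mu C_{\overline q}^{\overline q}a^{4/3}>0$ and the analogous inequality with $\tfrac{1}{2}$; both follow from $\mu a^{4/3}<\overline q/(2C_{\overline q}^{\overline q})=\tfrac{5}{3C_{\overline q}^{\overline q}}$ as already noted in the remark.
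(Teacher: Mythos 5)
Your proposal is correct: the pointwise Gagliardo--Nirenberg comparison (with $\overline q\gamma_{\overline q}=2<p\gamma_p$, the coefficient positivity coming from \eqref{k201} via the remark, and the Coulomb term controlled through Lemma \ref{gle4} and the $L^{12/5}$ Gagliardo--Nirenberg estimate) is exactly the argument the paper intends, which it omits by referring to \cite[Lemma 6.4]{2020Soave}. In effect you have supplied the details the paper leaves out, including the extra step needed here for the nonlocal term, so there is nothing to correct.
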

  \begin{proof}
  The proof is similar to \cite[Lemma 6.4]{2020Soave}, we omit it here.
 \end{proof}

In what follows we address the case $\mu>0$, $q= \overline{q}=\frac{10}{3}  $ and $p\in (\frac{10}{3}, 6)$
  and consider the problem in $\mathcal{H}_r$. We prove the existence of ground state of mountain pass type at level $\sigma_{\overline{q},r}:=  \inf\limits_{\mathcal{P}_r} \mathcal{J}(u)>0$, where $\mathcal{P}_r=\mathcal{H}_r \cap \mathcal{P}$.
Let $S_{a,r}= \mathcal{H}_r\cap S_a  $ and $\mathcal{P}_{\pm,r}= \mathcal{H}_r\cap\mathcal{P}_{\pm} $.

 Firstly, we verify the Mountain Pass geometry of $\mathcal{J} $ on manifold $S_{a,r}$. Let
\begin{align*}
\Gamma_{\overline{q}}:=\big\{\zeta\in C([0,1], S_{a,r}): \zeta(0)\in \overline{A}_k\cap S_{a,r}  \quad \text{and}\quad \mathcal{J}(\zeta(1))<0\big\},
\end{align*}
with associated minimax level
$$
c_{\overline{q},r}:=\inf_{\zeta\in \Gamma_{\overline{q}}}\max_{u\in \zeta([0,1])}\mathcal{J}(u).
$$
Taking $v\in S_{a,r}$, since $|\nabla (s\star v)|_2\rightarrow 0^+$ as $ s\rightarrow -\infty $, and $\mathcal{J}( s\star v)\rightarrow -\infty$ as $s\rightarrow +\infty $. Thus, there exist $s_0<<-1$ and $s_1>> 1$ such that
$$
\zeta_v(\tau)=[(1-\tau)s_0+\tau s_1]\star v\in \Gamma_{\overline{q}},
$$
which implies that $\Gamma_q\neq \emptyset $. Next, we claim that
$$
c_{\overline{q},r}=\sigma_{\overline{q},r}:=\inf_{u\in\mathcal{P}_{r}}\mathcal{J}(u)>0.
$$
On the one hand, assuming that there exists $\widetilde{v}\in \mathcal{P}_{r}$ such that $\mathcal{J}(\widetilde{v})<c_{a,\overline{q}}$.  Then there exist $\widetilde{s}_0<<-1$ and $\widetilde{s}_1>> 1$ such that $\widetilde{s}_0 \star \widetilde{v}\in \overline{A}_k $ and $\mathcal{J}(\widetilde{s}_1\star\widetilde{v})<0 $. Hence,
$ \zeta_{\widetilde{v}}(\tau)=[(1-\tau)\widetilde{s}_0+\tau \widetilde{s}_1]\star \widetilde{v}\in \Gamma_{\overline{q}}$, which  and Lemma \ref{K-J1} show that
 \begin{align*}
 c_{\overline{q},r}\leq \max_{\tau\in[0,1]}\mathcal{J}(\zeta_{\widetilde{v}}(\tau) )=\max_{\tau\in[0,1]}\mathcal{J}([(1-\tau)\widetilde{s}_0+\tau \widetilde{s}_1]\star \widetilde{v})=\mathcal{J}(\widetilde{v} )<c_{\overline{q},r}.
\end{align*}
a contradiction. Thus, for any $u\in  \mathcal{P}_{r}$, one infers  $\mathcal{J}(u)\geq  c_{\overline{q},r} $, which implies that $\sigma_{\overline{q},r}\geq  c_{\overline{q},r}$.
On the other hand, for any $\zeta\in \Gamma_{\overline{q}}$, since $\zeta(0)\in  \overline{A}_k\cap S_{a,r}$, then by Lemma \ref{K-J3}, we have $P( \zeta(0))>0$. Thus, $t_{\zeta(0)}> 0$  where $t_{\zeta(0)}$  is defined in Lemma \ref{K-J1} satisfying $t_{\zeta(0)}\star \zeta(0)\in   \mathcal{P}_{r} $. Moreover, since
$\psi_{\zeta(1)}(s)>0$ for any $s\in(-\infty, t_{ \zeta(1)})$  and $\psi_{\zeta(1)}(0)=\mathcal{J}(\zeta(1))<0 $, where $t_{ \zeta(1)} $ satisfying $t_{\zeta(1)}\star \zeta(1)\in   \mathcal{P}_{r} $,
we know that $t_{\zeta(1)}<0$.
Hence, from Lemma \ref{K-J1}-(iii), there exists $\overline{\tau}\in(0,1)$ such that $t_{\zeta(\overline{\tau})}=0$ by the continuity. Hence, $\zeta(\overline{\tau})\in  \mathcal{P}_{r}$. So,
$$
\max_{u\in \zeta([0,1])}\mathcal{J}(u)\geq \mathcal{J}(\zeta(\overline{\tau}))\geq \inf_{\mathcal{P}_{r} }\mathcal{J}(u)=\sigma_{\overline{q},r},
 $$
which implies that $ c_{\overline{q},r}\geq \sigma_{\overline{q},r}$. Hence, we deduce that $c_{\overline{q},r}=\sigma_{\overline{q},r}$. Furthermore, it follows from Lemma \ref{K-J2}
 that $c_{\overline{q},r}=\sigma_{\overline{q},r}>0$. Then the claim follows.

For any $a>0$ satisfying \eqref{k201}  fixed, it is easy to verify that the set
$$
L:=\{u\in \mathcal{P}_{r}:\mathcal{J}(u)\leq c_{\overline{q},r}+1\}
$$
is bounded. Then let $M_0>0$ be such that $L\subset B(0, M_0)$, where
$$
B(0, M_0):=\{u\in \mathcal{H}_r: \|u\|\leq M_0\}.
$$

  Similar to Lemmas \ref{K-Lem3.1} and  \ref{K-Lem3.2}, we have the following two theses, which are crucial to derive a special Palais-Smail sequence.

\begin{lemma}\label{KJ-Lem3.1}
Assume that $\mu>0$, $q=\overline{q}=\frac{10}{3}$, $p\in (\frac{10}{3}, 6)$ and \eqref{k201} hold.
Let
\begin{align*}
\Omega_{\delta}:=\left\{u\in S_{a,r}:\ |\mathcal{J}(u)-c_{\overline{q},r}  |\leq\delta, \ dist(u, \mathcal{P}_{r})\leq 2\delta, \ \|\mathcal{J}'_{|_{S_{a,r}}}(u)\|_{\mathcal{H}_r^{-1}}\leq 2 \delta\right\},
\end{align*}
then for any $\delta>0$, $\Omega_{\delta}\cap B(0, 3M_0)$ is nonempty.
\end{lemma}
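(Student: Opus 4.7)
The plan is to mimic the argument of Lemma \ref{K-Lem3.1}, adapted to the Mountain Pass class $\Gamma_{\overline{q}}$ and to the simpler structure $\mathcal{P}_r=\mathcal{P}_{-,r}$ provided by Lemma \ref{K-J1}. Suppose by contradiction that there exists $\overline{\delta}\in(0,c_{\overline{q},r}/2)$ such that every $u\in S_{a,r}$ with $|\mathcal{J}(u)-c_{\overline{q},r}|\leq\overline{\delta}$, $\text{dist}(u,\mathcal{P}_r)\leq 2\overline{\delta}$ and $\|u\|\leq 3M_0$ satisfies $\|d\mathcal{J}_{|_{S_{a,r}}}(u)\|>2\overline{\delta}$. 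Using the locally Lipschitz pseudo-gradient vector field $Y$ on $\widetilde{S}_{a,r}$ and a cut-off $g$ as recalled before Lemma \ref{K-Lem3.1}, I build the negative pseudo-gradient flow $\eta(s,\cdot)$ on $S_{a,r}$; since all ingredients (the flow, its basic properties (i)--(iii), and the energy-decrease estimate) are invariant under the $H^1$ radial symmetry, the whole machinery restricts to $S_{a,r}$.

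Next I construct a competitor path. Given $\varepsilon\in(0,\tfrac14\overline{\delta}s')$ small, choose $u_\varepsilon\in\mathcal{P}_r$ with $\mathcal{J}(u_\varepsilon)\leq c_{\overline{q},r}+\varepsilon=\sigma_{\overline{q},r}+\varepsilon$ and set
\[
\zeta_\varepsilon(t):=\bigl[(1-t)s_0+t\,s_1\bigr]\star u_\varepsilon,\qquad t\in[0,1],
\]
with $s_0\ll -1$ and $s_1\gg 1$ chosen large enough so that $\zeta_\varepsilon(0)\in\overline{A}_k\cap S_{a,r}$ (use $|\nabla(s\star u_\varepsilon)|_2\to 0$ as $s\to-\infty$) and $\mathcal{J}(\zeta_\varepsilon(1))<0$ (use $\psi_{u_\varepsilon}(s)\to-\infty$ as $s\to+\infty$). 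Thus $\zeta_\varepsilon\in\Gamma_{\overline{q}}$. Lemma \ref{K-J1} ensures $t=1$ is the unique maximum of $\psi_{u_\varepsilon}$, so
\[
\max_{t\in[0,1]}\mathcal{J}(\zeta_\varepsilon(t))=\psi_{u_\varepsilon}(0)=\mathcal{J}(u_\varepsilon)\leq c_{\overline{q},r}+\varepsilon.
\]
The crucial claim is:
\[
\mathcal{J}(\zeta_\varepsilon(t))\geq c_{\overline{q},r}\ \Longrightarrow\ \zeta_\varepsilon(t)\in\Lambda_{\overline{\delta}/2}\cap B(0,2M_0),
\]
where $\Lambda_{\overline{\delta}}:=\{u\in S_{a,r}:|\mathcal{J}(u)-c_{\overline{q},r}|\leq\overline{\delta},\,\text{dist}(u,\mathcal{P}_r)\leq 2\overline{\delta}\}$. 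Since $\{u_\varepsilon\}_\varepsilon$ is bounded (as $u_\varepsilon\in\mathcal{P}_r$ with $\mathcal{J}(u_\varepsilon)\leq c_{\overline{q},r}+1\in L\subset B(0,M_0)$), I pass (along $\varepsilon\to 0$) to limits of $|\nabla u_\varepsilon|_2^2,\,\int\!\!\int\frac{|u_\varepsilon(x)|^2|u_\varepsilon(y)|^2}{|x-y|}dx\,dy,\,|u_\varepsilon|_p^p,\,|u_\varepsilon|_{\overline{q}}^{\overline{q}}$ and analyze the limit fiber function $L(t)=\tfrac12 At^2+\tfrac14 Bt-\tfrac1p Dt^{p\gamma_p}-\tfrac{\mu}{\overline{q}}Et^2$. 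The assumption \eqref{k201} guarantees the coefficient of $t^2$ is positive, and the analysis of $L',L''$ (strictly decreasing after one turning point, exactly like in Lemma \ref{K-Lem3.1}) shows $L$ has a unique global maximum at $t=1$ with value $c_{\overline{q},r}$. Consequently, $e^{y_\varepsilon}\to 1$ whenever $\mathcal{J}(\zeta_\varepsilon(t))\to c_{\overline{q},r}$, proving the claim for $\varepsilon$ small.

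Finally, I apply the flow. Set $s^\ast:=4\varepsilon/\overline{\delta}<s'$ and consider $t\mapsto\eta(s^\ast,\zeta_\varepsilon(t))$. By property (ii), the endpoints are fixed (both $\mathcal{J}(\zeta_\varepsilon(0))$ and $\mathcal{J}(\zeta_\varepsilon(1))$ lie outside $[c_{\overline{q},r}-2\overline{\delta},c_{\overline{q},r}+2\overline{\delta}]$), so $\eta(s^\ast,\zeta_\varepsilon(\cdot))\in\Gamma_{\overline{q}}$. On the subset where $\mathcal{J}(\zeta_\varepsilon(t))<c_{\overline{q},r}$ energy monotonicity gives $\mathcal{J}(\eta(s^\ast,\zeta_\varepsilon(t)))<c_{\overline{q},r}$. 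On the subset where $\mathcal{J}(\zeta_\varepsilon(t))\geq c_{\overline{q},r}$ the claim above places the trajectory in $\widetilde{N}_{\overline{\delta}}$ for the whole time interval $[0,s^\ast]$ (using that $\|\tfrac{d}{dt}\eta(t,u)\|\leq 1$ and the choice of $s'$), so $g\equiv 1$ there, and by \eqref{k41}, \eqref{k42} we deduce
\[
\mathcal{J}(\eta(s^\ast,\zeta_\varepsilon(t)))\leq\mathcal{J}(\zeta_\varepsilon(t))-\tfrac{\overline{\delta}}{2}s^\ast\leq c_{\overline{q},r}+\varepsilon-2\varepsilon<c_{\overline{q},r}.
\]
Hence $\max_{[0,1]}\mathcal{J}(\eta(s^\ast,\zeta_\varepsilon(\cdot)))<c_{\overline{q},r}$, contradicting the definition $c_{\overline{q},r}=\inf_{\Gamma_{\overline{q}}}\max\mathcal{J}$. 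The main obstacle is the fiber-function analysis guaranteeing the claim; it is here that \eqref{k201} plays an essential role by ensuring the coefficient $\tfrac12 A-\tfrac{\mu}{\overline{q}}E$ in $L$ stays positive, so that $L$ is genuinely mountain-pass shaped with unique top at $t=1$.
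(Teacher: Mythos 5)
Your proposal is correct and is essentially the proof the paper intends: the paper states this lemma without a separate argument, simply invoking "similar to Lemmas \ref{K-Lem3.1} and \ref{K-Lem3.2}", and your adaptation carries out exactly that scheme (pseudo-gradient flow, competitor path $[(1-t)s_0+ts_1]\star u_\varepsilon$ in $\Gamma_{\overline{q}}$, limit fiber function with $q\gamma_q=2$ whose quadratic coefficient stays positive thanks to \eqref{k201}, unique maximum at $t=1$ at level $c_{\overline{q},r}$, then the deformation contradiction). The only point to keep in mind is to take $s_0$ negative enough that $\mathcal{J}(\zeta_\varepsilon(0))<c_{\overline{q},r}-2\overline{\delta}$, so that the left endpoint is genuinely fixed by the flow and the deformed path remains in $\Gamma_{\overline{q}}$; this is implicit in your "$s_0\ll-1$" and consistent with the paper's treatment.
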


\begin{lemma}\label{KJ-Lem3.2}
Let $\mu>0$, $q=\overline{q}=\frac{10}{3}$,  $p\in (\frac{10}{3}, 6)$ and \eqref{k201} be hold.  Then there exists a sequence $\{u_n\}\subset S_{a,r} $ and a constant $\alpha>0$ satisfying
$$
P(u_n)=o(1), \quad  \mathcal{J}(u_n)=c_{\overline{q},r} +o(1)  \quad \text{and} \quad \|\mathcal{J}'_{|_{S_{a,r}}}(u_n)\|_{\mathcal{H}_{r}^{-1}}= o(1).
$$
\end{lemma}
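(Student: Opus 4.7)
The plan is to adapt the two-step argument already carried out in Lemmas \ref{K-Lem3.1} and \ref{K-Lem3.2} for the case $q\in(2,\frac{12}{5}]$ to the present setting $q=\overline{q}=\frac{10}{3}$. The key observation is that Lemma \ref{KJ-Lem3.1} (already stated) plays exactly the role of Lemma \ref{K-Lem3.1} at the new mountain pass level $c_{\overline{q},r}$, so most of the work is already in place; what remains is to translate the conclusion $\Omega_\delta\cap B(0,3M_0)\neq\emptyset$ into a Palais--Smale sequence and to verify that the distance-to-$\mathcal{P}_r$ estimate implies $P(u_n)\to 0$.

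First I would apply Lemma \ref{KJ-Lem3.1} with $\delta=\tfrac{1}{n}$ for each $n\in\mathbb{N}$. This produces $u_n\in S_{a,r}\cap B(0,3M_0)$ satisfying simultaneously
\begin{align*}
|\mathcal{J}(u_n)-c_{\overline{q},r}|\leq\tfrac{1}{n},\quad \mathrm{dist}(u_n,\mathcal{P}_r)\leq\tfrac{2}{n},\quad \|\mathcal{J}'_{|_{S_{a,r}}}(u_n)\|_{\mathcal{H}_r^{-1}}\leq\tfrac{2}{n}.
\end{align*}
Setting $\alpha:=3M_0$ we then have $\|u_n\|\leq\alpha$, and two of the three required conclusions ($\mathcal{J}(u_n)=c_{\overline{q},r}+o(1)$ and $\|\mathcal{J}'_{|_{S_{a,r}}}(u_n)\|_{\mathcal{H}_r^{-1}}=o(1)$) follow immediately.

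It then remains to deduce $P(u_n)=o(1)$ from the distance bound, exactly as in the proof of Lemma \ref{K-Lem3.2}. The functional $P$ is of class $C^1$ on $\mathcal{H}_r$, and by Gagliardo--Nirenberg together with Lemma \ref{gle4}(2) its differential $dP$ is bounded on norm-bounded subsets of $\mathcal{H}_r$; in particular there is a constant $C_\alpha>0$ with
\begin{align*}
\sup_{u\in B(0,3\alpha)}\|dP(u)\|_{\mathcal{H}_r^{-1}}\leq C_\alpha.
\end{align*}
For each $n$, choose a sequence $\{w_m\}\subset\mathcal{P}_r$ with $\|u_n-w_m\|\to\mathrm{dist}(u_n,\mathcal{P}_r)$ as $m\to\infty$; for $m$ large, $w_m\in B(0,3\alpha)$, so $P(w_m)=0$ and the mean value theorem applied on the segment joining $u_n$ and $w_m$ yields
\begin{align*}
|P(u_n)|=|P(u_n)-P(w_m)|\leq C_\alpha\|u_n-w_m\|.
\end{align*}
Letting $m\to\infty$ gives $|P(u_n)|\leq C_\alpha\,\mathrm{dist}(u_n,\mathcal{P}_r)\leq \tfrac{2C_\alpha}{n}\to 0$, which completes the proof.

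The main obstacle is not in this last lemma itself but in Lemma \ref{KJ-Lem3.1}, whose proof (to be given in parallel with Lemma \ref{K-Lem3.1}) requires a careful deformation argument: one must construct, starting from a near-optimal path in $\Gamma_{\overline{q}}$, a perturbed path that stays in $\Gamma_{\overline{q}}$ but drops below $c_{\overline{q},r}$ unless $\Omega_\delta\cap B(0,3M_0)\neq\emptyset$. The crucial inputs here are the strict inequality $\sup_{\overline{A}_k\cap S_{a,r}}\mathcal{J}<c_{\overline{q},r}$ from Lemma \ref{K-J3} and the uniqueness/monotonicity properties of $\psi_u$ from Lemma \ref{K-J1}, both of which hold under the standing hypothesis \eqref{k201}. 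Granted Lemma \ref{KJ-Lem3.1}, the present lemma is a purely quantitative consequence as sketched above.
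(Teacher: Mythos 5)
Your proposal is correct and follows essentially the same route as the paper, which itself obtains this lemma by repeating the argument of Lemmas \ref{K-Lem3.1} and \ref{K-Lem3.2}: take $\delta=\frac1n$ in Lemma \ref{KJ-Lem3.1} to get $\{u_n\}\subset S_{a,r}\cap B(0,3M_0)$ with the energy, gradient and distance estimates, and then convert $\mathrm{dist}(u_n,\mathcal{P}_r)\to 0$ into $P(u_n)\to 0$ via the boundedness of $dP$ on bounded sets and the mean value theorem along segments to nearly-minimizing points $w_m\in\mathcal{P}_r$. Your extra care in taking the supremum of $\|dP\|_{\mathcal{H}_r^{-1}}$ over a slightly larger ball containing the segments is a harmless refinement of the same argument, and relying on Lemma \ref{KJ-Lem3.1} as stated is exactly what the paper does.
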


\begin{lemma}\label{KJ-Lem2.6}
Let $\mu>0$, $q=\overline{q}=\frac{10}{3}$,  $p\in (\frac{10}{3}, 6)$ and \eqref{k201} be hold.
Assume that $\{u_n\}\subset S_{a,r}$ is a Palais-Smail sequence at level $c\neq 0$ and $P(u_n)\rightarrow 0$ as $n\rightarrow\infty$. Then there exists $u\in S_{a,r}$ such that $u_n\rightarrow u$ in $\mathcal{H}_r$ and $(\lambda, u )\in \mathbb{R}^+ \times  \mathcal{H}_{r}$ solves Eq.~\eqref{k1}.
\end{lemma}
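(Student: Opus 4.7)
The plan is to follow the four-step template of Lemma \ref{K-Lem2.6}, but adapted carefully to the $L^2$-critical case $q=\overline{q}=\frac{10}{3}$, where the relation $\overline{q}\gamma_{\overline{q}}=2$ produces a nontrivial interaction with the nonlocal term and forces us to invoke \eqref{k201} at key moments.

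First I would establish boundedness of $\{u_n\}$ in $\mathcal{H}_r$. Since $P(u_n)=o(1)$, I consider the combination $\mathcal{J}(u_n)-\frac{1}{p\gamma_p}P(u_n)$, which eliminates the $L^p$ term. Because $p\gamma_p>2$ but $\overline{q}\gamma_{\overline{q}}=2$, the remaining expression reads
\begin{align*}
c+o(1)=\Big(\frac{1}{2}-\frac{1}{p\gamma_p}\Big)|\nabla u_n|_2^2+\frac{1}{4}\Big(1-\frac{1}{p\gamma_p}\Big)\!\!\int_{\mathbb{R}^3}\!\!\int_{\mathbb{R}^3}\!\frac{|u_n(x)|^2|u_n(y)|^2}{|x-y|}dxdy+\mu\Big(\frac{\gamma_{\overline{q}}}{p\gamma_p}-\frac{1}{\overline{q}}\Big)|u_n|_{\overline{q}}^{\overline{q}}.
\end{align*}
The $\overline{q}$-coefficient is negative; bounding $|u_n|_{\overline{q}}^{\overline{q}}\le C_{\overline{q}}^{\overline{q}} a^{4/3}|\nabla u_n|_2^2$ via \eqref{k10} turns the right side into $\big(\tfrac{1}{2}-\tfrac{1}{p\gamma_p}\big)\big(1-\tfrac{3}{5}\mu C_{\overline{q}}^{\overline{q}}a^{4/3}\big)|\nabla u_n|_2^2$ plus a nonnegative term, and \eqref{k201} guarantees the outer coefficient is strictly positive. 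Then, by the compact embedding $\mathcal{H}_r\hookrightarrow L^t(\mathbb{R}^3)$ for $t\in(2,6)$, up to subsequence $u_n\rightharpoonup u$ in $\mathcal{H}_r$ with $u_n\to u$ in $L^t$. To show $u\not\equiv 0$, I exploit the cancellation at $\overline{q}=\frac{10}{3}$: the combination $\mathcal{J}(u_n)-\frac{1}{2}P(u_n)$ kills the $\overline{q}$-term entirely (since $\frac{\gamma_{\overline{q}}}{2}-\frac{1}{\overline{q}}=0$), giving $c+o(1)=\frac{1}{8}\int\!\!\int\frac{|u_n|^2|u_n|^2}{|x-y|}dxdy+(\frac{\gamma_p}{2}-\frac{1}{p})|u_n|_p^p$. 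If $u\equiv 0$ then the $L^p$-term vanishes and Lemma \ref{gle4}(2) together with $u_n\to 0$ in $L^{12/5}$ forces the nonlocal term to zero, contradicting $c\neq 0$.

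Next I would extract the Lagrange multiplier. From $\|\mathcal{J}'|_{S_{a,r}}(u_n)\|_{\mathcal{H}_r^{-1}}=o(1)$ there exists $\lambda_n\in\mathbb{R}$ with $-\Delta u_n+\lambda_n u_n+(|x|^{-1}\ast|u_n|^2)u_n=|u_n|^{p-2}u_n+\mu|u_n|^{\overline{q}-2}u_n+o(1)$. Testing with $u_n$ and using boundedness yields $\lambda_n\to\lambda$, and the limit $u$ solves Eq.~\eqref{k1}. The crucial positivity $\lambda>0$ is argued by contradiction: if $\lambda\le 0$, combining the Nehari \eqref{kk4} and Poho\v{z}aev \eqref{kkk4} identities for $u$ by eliminating $|u|_p^p$ and by eliminating $\int\!\!\int$ respectively, and applying \eqref{k10} together with Lemma \ref{gle4}(2), yields
\begin{align*}
|\nabla u|_2^{p\gamma_p-2}\ge\frac{1-\frac{3}{5}\mu C_{\overline{q}}^{\overline{q}}a^{4/3}}{\gamma_p C_p^p a^{p(1-\gamma_p)}}\quad\text{and}\quad (1-\gamma_p)|\nabla u|_2\le \big(\gamma_p-\tfrac{1}{4}\big)C_{\frac{12}{5}}^{\frac{12}{5}}a^3+\mu(\gamma_p-\gamma_{\overline{q}})C_{\overline{q}}^{\overline{q}}a^{4/3}|\nabla u|_2.
\end{align*}
Rearranging the second inequality and combining with the first produces a numerical obstruction that is precisely ruled out by \eqref{k201}.

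Finally, with $\lambda>0$ in hand, testing the equations satisfied by $u_n$ and $u$ against $u_n-u$ and using $\lambda_n\to\lambda$, the strong $L^t$ convergence ($t\in(2,6)$), and Lemma \ref{gle4}(3) to handle the Coulomb term, gives $|\nabla(u_n-u)|_2^2+\lambda|u_n-u|_2^2=o(1)$; hence $u_n\to u$ in $\mathcal{H}_r$, $u\in S_{a,r}$, and $(\lambda,u)\in\mathbb{R}^+\times\mathcal{H}_r$ solves Eq.~\eqref{k1} with $\mathcal{J}(u)=c$. I expect the main obstacle to be the verification that $\lambda>0$: the $L^2$-critical $\overline{q}$-term contributes at the same scale as the gradient term, so unlike the subcritical Lemma \ref{K-Lem2.6} one cannot absorb it for free, and the quantitative balance between the Coulomb, Gagliardo--Nirenberg, and $\overline{q}$-terms has to be closed exactly by invoking hypothesis \eqref{k201}.
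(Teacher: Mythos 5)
Your overall route is the same as the paper's: boundedness via $\mathcal{J}(u_n)-\frac{1}{p\gamma_p}P(u_n)$ combined with \eqref{k201} (your computation of the coefficient $(\frac12-\frac{1}{p\gamma_p})(1-\frac35\mu C_{\overline q}^{\overline q}a^{4/3})$ is exactly right, and fills in what the paper only sketches via \eqref{k96}); compact radial embedding and Lagrange multipliers; nonvanishing via $\mathcal{J}-\frac12 P$ (the paper just says ``clearly $u\neq0$'', and your cancellation argument at $\overline q\gamma_{\overline q}=2$ is a correct justification); and strong convergence once $\lambda>0$. The one place where your argument, as written, does not close is precisely the step you flagged as the main obstacle. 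After eliminating $\int_{\mathbb{R}^3}|u|^p\,dx$ from \eqref{k217} and \eqref{k218} one gets, under $\lambda\le 0$,
\begin{align*}
(1-\gamma_p)|\nabla u|_2^2\;\le\;(1-\gamma_p)|\nabla u|_2^2+\mu(\gamma_p-\gamma_{\overline q})\int_{\mathbb{R}^3}|u|^{\overline q}dx\;\le\;\Big(\gamma_p-\tfrac14\Big)\int_{\mathbb{R}^3}\int_{\mathbb{R}^3}\frac{|u(x)|^2|u(y)|^2}{|x-y|}dxdy,
\end{align*}
i.e.\ the $\overline q$-term appears with the favorable sign ($\gamma_{\overline q}=\frac35<\gamma_p$, so $\mu(\gamma_{\overline q}-\gamma_p)\int|u|^{\overline q}\le 0$ and can simply be discarded), which yields the paper's \eqref{kk149}: $(1-\gamma_p)|\nabla u|_2\le(\gamma_p-\frac14)C_{\frac{12}{5}}^{\frac{12}{5}}a^3$. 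You instead placed this term on the right-hand side with a plus sign, obtaining $(1-\gamma_p)|\nabla u|_2\le(\gamma_p-\frac14)C_{\frac{12}{5}}^{\frac{12}{5}}a^3+\mu(\gamma_p-\gamma_{\overline q})C_{\overline q}^{\overline q}a^{4/3}|\nabla u|_2$. That inequality is true but strictly weaker, and with it the claimed ``numerical obstruction precisely ruled out by \eqref{k201}'' fails: after rearranging, the coefficient $(1-\gamma_p)-\mu(\gamma_p-\gamma_{\overline q})C_{\overline q}^{\overline q}a^{4/3}$ is smaller than $1-\gamma_p$ (and for $p$ close to $6$ it can even be negative, since \eqref{k201} only forces $\frac35\mu C_{\overline q}^{\overline q}a^{4/3}<1$), so the resulting upper bound on $|\nabla u|_2$ no longer contradicts the lower bound $|\nabla u|_2^{p\gamma_p-2}\ge(1-\frac35\mu C_{\overline q}^{\overline q}a^{4/3})/(\gamma_p C_p^p a^{p(1-\gamma_p)})$ under \eqref{k201} alone. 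The fix is exactly the sign observation above; strictness (needed because \eqref{k201} is a non-strict inequality) comes from $u\neq0$, which makes the discarded $\overline q$-term strictly negative.

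Two minor remarks. First, your lower bound on $|\nabla u|_2$ is not really obtained by ``eliminating $\int\!\!\int$'' between \eqref{kk4} and \eqref{kkk4}: it follows directly from $P(u)=0$ (equation \eqref{k218}) by dropping the nonnegative Coulomb term and applying \eqref{k10} to both power terms, exactly as in the paper's \eqref{k200}--\eqref{k202}; eliminating the Coulomb term between the Nehari and Poho\v{z}aev identities would produce the coefficient $4\gamma_{\overline q}-1=\frac75$ in front of the $\overline q$-term and a different constant. Second, for the Coulomb bound you should quote the chain $\int\!\!\int\le C|u|_{12/5}^4\le C_{\frac{12}{5}}^{\frac{12}{5}}a^3|\nabla u|_2$ as the paper does (Lemma \ref{gle4}(2) plus \eqref{k10} with $\gamma_{12/5}=\frac14$); this is what makes the constant in \eqref{k201} match. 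With these corrections your proof coincides with the paper's.
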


\begin{proof}
  According to \eqref{k96}, due to the fact that $P(u_n)\rightarrow 0$ as $n\rightarrow\infty$ and  \eqref{k201},  we can easily get that $\{u_n\}$ is bounded in $\mathcal{H}_r$.
Then there exists $u\in \mathcal{H}_r$ such that, up to a subsequence, as $n\rightarrow\infty$,
 \begin{align*}
   & u_n\rightharpoonup u \quad\quad \text{in}\ \mathcal{H}_r;\\
   & u_n\rightarrow u\quad\quad \text{in}\ L^t(\mathbb{R}^3)\ \text{with}\ t\in(2, 6);\\
   & u_n\rightarrow u \quad\quad  a.e \ \text{in}\ \mathbb{R}^3.
\end{align*}
Since $\mathcal{J}'_{|_{S_a}}(u_n)\rightarrow 0$,  then there exists $\lambda_n\in \mathbb{R} $ satisfying
\begin{align*} 
-\Delta u_n+\lambda_n u_n+(|x|^{-1}\ast |u_n|^2)u_n  =\mu |u_n|^{\overline{q}-2}u_n+|u_n|^{p-2}u_n+o(1).
\end{align*}
 Multiplying the above equation by $u_n$ and integrating in $\mathbb{R}^3$,
\begin{align} \label{k215}
 \lambda_n a^2= -|\nabla u_n|_2^2-\int_{\mathbb{R}^3} \int_{\mathbb{R}^3} \frac{|u_n(x)|^{2}|u_n(y)|^2} {|x-y|}dxdy
+\int_{\mathbb{R}^3}|u_n|^{p}dx+\mu \int_{\mathbb{R}^3}|u_n|^{\overline{q}}dx+o( \|u_n\|).
\end{align}
Since $\{u_n\}\subset \mathcal{H}_r$ is bounded, then it follows from \eqref{k215} that  $\{\lambda_n\}$  is bounded in $\mathbb{R}$. Hence, there is $\lambda\in \mathbb{R}$ satisfying $\lambda_n\rightarrow \lambda$ as $n\rightarrow\infty$. Then we have
\begin{align} \label{k220}
-\Delta u+\lambda u+ (|x|^{-1}\ast |u|^2)u =\mu |u|^{\overline{q}-2}u+|u|^{p-2}u.
\end{align}
Clearly, $u\neq0$.  Besides, we show that $\lambda>0$.
 Since $(\lambda, u)\in \mathbb{R}\times \mathcal{H}_r$ satisfies \eqref{k220},
then  the following equalities hold:
\begin{align}
|\nabla u|_2^2+ \lambda \int_{\mathbb{R}^3} |u|^{2}dx  +\int_{\mathbb{R}^3} \int_{\mathbb{R}^3} \frac{|u(x)|^{2}|u(y)|^2} {|x-y|}dxdy
-\int_{\mathbb{R}^3}|u|^{p}dx-\mu \int_{\mathbb{R}^3}|u|^{\overline{q}}dx&=0,\label{k217}\\
 |\nabla u|_2^2+ \frac{1}{4}\int_{\mathbb{R}^3} \int_{\mathbb{R}^3} \frac{|u(x)|^{2}|u(y)|^2} {|x-y|}dxdy-\gamma_p \int_{\mathbb{R}^3}|u|^{p}dx-\mu \gamma_q \int_{\mathbb{R}^3}|u|^{\overline{q}}dx&=0.\label{k218}
\end{align}
 Similar to \eqref{k200},  one gets from \eqref{k218} that
\begin{align}\label{k202}
|\nabla u|_2\geq \Big(\frac{1}{C_p^p \gamma_p a^{p(1-\gamma_p)}}\Big)^{\frac{1}{p \gamma_p-2}} \Big( 1-\frac{3}{5}\mu C_{\overline{q}}^{\overline{q}}a^{\frac{4}{3}}\Big)^{\frac{1}{p \gamma_p-2}}>0.
\end{align}
 We argue by contradiction   assuming that $\lambda\leq 0$.
By eliminating $\int_{\mathbb{R}^3}|u|^{p}dx$
  from  \eqref{k217} and \eqref{k218},
  \begin{align*}
(\gamma_p-1)|\nabla u|_2^2+ \lambda \gamma_p \int_{\mathbb{R}^3} |u|^{2}dx
+\big(\gamma_p-\frac{1}{4}\big) \int_{\mathbb{R}^3} \int_{\mathbb{R}^3} \frac{|u(x)|^{2}|u(y)|^2} {|x-y|}dxdy +\mu ( \gamma_{\overline{q}}-\gamma_p) \int_{\mathbb{R}^3}|u|^{\overline{q}}dx=0.
\end{align*}
  Then,
    \begin{align*}
  0\leq-\lambda \gamma_p \int_{\mathbb{R}^3} |u|^{2}dx =&(\gamma_p-1)|\nabla u|_2^2+(\gamma_p-\frac{1}{4}) \int_{\mathbb{R}^3} \int_{\mathbb{R}^3} \frac{|u(x)|^{2}|u(y)|^2} {|x-y|}dxdy +\mu ( \gamma_{\overline{q}}-\gamma_p) \int_{\mathbb{R}^3}|u|^{\overline{q}}dx\\
  \leq & (\gamma_p-1)|\nabla u|_2^2+\big(\gamma_p-\frac{1}{4}\big) \int_{\mathbb{R}^3} \int_{\mathbb{R}^3} \frac{|u(x)|^{2}|u(y)|^2} {|x-y|}dxdy\\
  \leq &  (\gamma_p-1)|\nabla u|_2^2+ (\gamma_p-\frac{1}{4})  C_{\frac{12}{5}}^{\frac{12}{5}}a^3|\nabla u|_2,
\end{align*}
which gives that
  \begin{align}\label{kk149}
(1-\gamma_p) |\nabla u|_2\leq (\gamma_p-\frac{1}{4})  C_{\frac{12}{5}}^{\frac{12}{5}}a^3.
\end{align}
Combining \eqref{k202} and \eqref{kk149},
   we get a contradiction  by  \eqref{k201}. Thus,   $\lambda>0$. Similarly as the Step 3 of Lemma \ref{K-Lem2.6}, we have $\|u_n-u \|\rightarrow 0$ in $\mathcal{H}_r$  as $n\rightarrow\infty$. Hence, one infers that $( \lambda, u)\in \mathbb{R}^+\times \mathcal{H}_r$ is a solution for Eq.~\eqref{k1}, which satisfies $\mathcal{J}(u)=c_{\overline{q},r}$. We complete the proof.
\end{proof}

 \noindent{\bf{Proof of Theorem \ref{K-TH15}.}} In view of Lemmas \ref{KJ-Lem3.2} and \ref{KJ-Lem2.6}, we obtain that there is $( \lambda_{\overline{q}}, u_{\overline{q}} )\in \mathbb{R}^+ \times \mathcal{H}_r $ such that $( \lambda_{\overline{q}}, u_{\overline{q}} )$ solves Eq.~\eqref{k1} and $\mathcal{J}( u_{\overline{q}})=c_{\overline{q},r}$  with  $|u_{\overline{q}} |_2^2=a^2$. Similar  to the proof of Theorem \ref{K-TH1}, we know that $u_{\overline{q}} >0$.

\section{ Properties of solutions}\label{sec4}

\subsection{Asymptotic behavior of solutions as $\mu\rightarrow 0$ }
   Within a section, since we consider the dependence of  $\mu$, hence for $\mu\geq 0$, we may writing $ J_{\mu}$,   $\mathcal{P}_{\mu, \pm}$ and $c_{a, \mu}$ to denote   the functional $ J$, the Poho\v{z}aev manifold    $\mathcal{P}_{\pm}$ and the mountain pass level $c_{a}$ in Theorem \ref{K-TH2}, respectively. 
  Under the assumption of Theorem \ref{K-TH6}, let $ (\widehat{\lambda}_{\mu}, \widehat{u}_{\mu})\in \mathbb{R}^+ \times \mathcal{H}_r $ be the mountain pass type solutions for
  Eq.~\eqref{k1}, obtained in Theorem \ref{K-TH2}, where
 \begin{align}\label{k108}
  c_{a, \mu}= \inf_{u\in\mathcal{P}_{\mu, -,r}}\mathcal{J}_{\mu}(u) = \mathcal{J}_{\mu}(\widehat{u}_{\mu}).
 \end{align}
Moreover, according to Theorem \ref{K-TH5}, let  $( \widetilde{\lambda}, \widetilde{u}  )\in \mathbb{R}^+ \times \mathcal{H}_r $ be the   solution  for Eq.~\eqref{k1},   where
  \begin{align*}
  c_{a, 0}= \inf_{u\in\mathcal{P}_{0, -,r}}\mathcal{J}_{0}(u) = \mathcal{J}_{0}(\widetilde{u}).
 \end{align*}

\begin{lemma}\label{K-Lem6.1}
 Under the assumption of Theorem \ref{K-TH6}, it holds that
$c_{a, \mu}=\inf_{S_{a, r}}\max_{s\in \mathbb{R}} J_{\mu} (s\star u)$,  and $c_{a, 0}=\inf_{S_{a, r}}\max_{s\in \mathbb{R}} J_{0} (s\star u)$.
\end{lemma}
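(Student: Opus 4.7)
The plan is to exploit the fibering structure already developed: the scaling $s\star u$ reduces the geometry of $\mathcal{J}$ along orbits of $S_{a,r}$ to a one-dimensional problem whose maximum lies on the Pohozaev set $\mathcal{P}_{-,r}$. Combined with the identification $c_{a,\mu}=\inf_{\mathcal{P}_{\mu,-,r}}\mathcal{J}_{\mu}$ (and its analogue for $\mu=0$) already established in Sec.~\ref{sec3} and Sec.~\ref{sec6}, the lemma will follow from a straightforward two-sided inequality.

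I would first treat the case $\mu>0$. For every $u\in S_{a,r}$, Lemma \ref{K-Lem2.3} guarantees the existence of a unique $t_u\in\mathbb{R}$ such that $t_u\star u\in \mathcal{P}_{\mu,-,r}$ and
\begin{align*}
\max_{s\in\mathbb{R}}\mathcal{J}_{\mu}(s\star u)=\mathcal{J}_{\mu}(t_u\star u).
\end{align*}
Consequently,
\begin{align*}
\inf_{u\in S_{a,r}}\max_{s\in\mathbb{R}}\mathcal{J}_{\mu}(s\star u)=\inf_{u\in S_{a,r}}\mathcal{J}_{\mu}(t_u\star u)\ge \inf_{v\in\mathcal{P}_{\mu,-,r}}\mathcal{J}_{\mu}(v)=c_{a,\mu},
\end{align*}
the last equality being the identification $c_{a,\mu}=\sigma$ proved in Sec.~\ref{sec3}. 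For the reverse inequality, given any $v\in\mathcal{P}_{\mu,-,r}$, the uniqueness statement in Lemma \ref{K-Lem2.3} forces $t_v=0$, so $\max_{s}\mathcal{J}_{\mu}(s\star v)=\mathcal{J}_{\mu}(v)$; hence
\begin{align*}
\inf_{u\in S_{a,r}}\max_{s\in\mathbb{R}}\mathcal{J}_{\mu}(s\star u)\le\mathcal{J}_{\mu}(v),
\end{align*}
and taking the infimum over $v\in\mathcal{P}_{\mu,-,r}$ yields the bound by $c_{a,\mu}$. The two inequalities combine into the desired identity.

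For the case $\mu=0$, I would rerun the same argument replacing Lemma \ref{K-Lem2.3} by Lemma \ref{K-Lem4.2}, which provides a unique critical point $t_u$ of $\psi_u$ that is a strict global maximum and satisfies $t_u\star u\in\mathcal{P}_{0,r}=\mathcal{P}_{0,-,r}$. The identification $c_{a,0}=\inf_{\mathcal{P}_{0,-,r}}\mathcal{J}_{0}=\widetilde{c}_a$ established in the proof of Theorem \ref{K-TH5} then closes the argument identically.

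There is no real obstacle: the entire content of the lemma is the observation that the mountain pass level, the Pohozaev infimum, and the inf–sup along scalings coincide, and all three equalities have essentially been proved in the earlier sections. The only small point that deserves care is confirming that the supremum over $s\in\mathbb{R}$ is actually attained (so that the inequalities above are exact rather than strict), but this is precisely what Lemmas \ref{K-Lem2.3}(iii) and \ref{K-Lem4.2} provide, together with $\psi_u(-\infty)=0^{-}$ and $\psi_u(+\infty)=-\infty$.
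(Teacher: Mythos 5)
Your proposal is correct and follows essentially the same route as the paper: both directions rest on Lemma \ref{K-Lem2.3} (resp.\ Lemma \ref{K-Lem4.2} for $\mu=0$) together with the identification $c_{a,\mu}=\inf_{\mathcal{P}_{\mu,-,r}}\mathcal{J}_{\mu}$, the only cosmetic difference being that the paper tests the upper bound with the attained minimizer $\widehat{u}_{\mu}$ while you take the infimum over arbitrary $v\in\mathcal{P}_{\mu,-,r}$ using $t_v=0$.
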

\begin{proof}
 On the one hand, by using \eqref{k108} and Lemma \ref{K-Lem2.3}, we know that
 $$
 c_{a, \mu}=J_{\mu} (\widehat{u}_{\mu})=\max_{s\in \mathbb{R}} J_{\mu} (s\star \widehat{u}_{\mu})\geq\inf_{S_{a, r}}\max_{s\in \mathbb{R}} J_{\mu} (s\star u).
 $$
  On the other hand, recalling that $c_{a, \mu}= \inf_{u\in\mathcal{P}_{\mu, -,r}}\mathcal{J}_{\mu}(u)$, then it follows from Lemma \ref{K-Lem2.3} that, for any $u\in S_{a, r}$, we have $t_u \star u \in \mathcal{P}_{\mu,-,r}$, and hence
  $$
  \max_{s\in \mathbb{R}} J_{\mu} (s\star u)\geq  J_{\mu} (t_u\star u)\geq\inf_{u\in\mathcal{P}_{\mu, -,r}}\mathcal{J}_{\mu}(u)=c_{a, \mu}.
  $$
 Similarly, we can prove  $c_{a, 0}=\inf_{S_{a, r}}\max_{s\in \mathbb{R}} J_{0} (s\star u)$. Thus, we complete the proof.
\end{proof}

\begin{lemma}\label{K-Lem6.2}
 Under the assumption of Theorem \ref{K-TH6}, for any $0< \mu_1<\mu_2$,  we have $c_{a, \mu_2} \leq c_{a, \mu_1}\leq c_{a, 0}$.
\end{lemma}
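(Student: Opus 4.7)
The plan is to reduce the inequality to a pointwise comparison of the energy functionals via the minimax characterization obtained in Lemma~\ref{K-Lem6.1}. Concretely, for every $\mu \geq 0$ we have
\begin{align*}
c_{a,\mu} = \inf_{u \in S_{a,r}} \max_{s \in \mathbb{R}} J_{\mu}(s \star u),
\end{align*}
so it suffices to establish monotonicity of $\mu \mapsto J_{\mu}(s \star u)$ for fixed $u \in S_{a,r}$ and $s \in \mathbb{R}$.

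Fix $u \in S_{a,r}$ and $s \in \mathbb{R}$. From the definition of $J_{\mu}$, the dependence on $\mu$ enters only through the term $-\frac{\mu}{q}\int_{\mathbb{R}^3}|s\star u|^q\,dx$, and by the scaling identity $\int_{\mathbb{R}^3}|s\star u|^q\,dx = e^{q\gamma_q s}\int_{\mathbb{R}^3}|u|^q\,dx \geq 0$. Hence for any $0 < \mu_1 < \mu_2$,
\begin{align*}
J_{\mu_2}(s\star u) - J_{\mu_1}(s\star u) = -\frac{\mu_2-\mu_1}{q}\,e^{q\gamma_q s}\int_{\mathbb{R}^3}|u|^q\,dx \leq 0,
\end{align*}
and analogously $J_{\mu_1}(s\star u) \leq J_{0}(s\star u)$ since $\mu_1 > 0$. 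Taking the supremum over $s \in \mathbb{R}$ on both sides (which preserves the inequality since it holds pointwise in $s$) and then taking the infimum over $u \in S_{a,r}$, I obtain
\begin{align*}
c_{a,\mu_2} \leq c_{a,\mu_1} \leq c_{a,0},
\end{align*}
as desired.

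The only mild subtlety is to make sure the minimax characterization of Lemma~\ref{K-Lem6.1} indeed applies at $\mu = 0$; this is consistent with the setup used for Theorem~\ref{K-TH5}, where $\mathcal{P}_{0,-,r} = \mathcal{P}_{0,r}$ and every $u \in S_{a,r}$ admits a unique $t_u \in \mathbb{R}$ with $t_u\star u \in \mathcal{P}_{0,r}$ maximizing $s \mapsto J_0(s\star u)$, so the argument in Lemma~\ref{K-Lem6.1} transfers verbatim. There is no genuine obstacle in this lemma: it is a pure monotonicity-in-parameter statement, and the minimax formula does all the work.
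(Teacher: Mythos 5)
Your proof is correct and follows essentially the same route as the paper: both rest on the minimax characterization $c_{a,\mu}=\inf_{S_{a,r}}\max_{s}J_{\mu}(s\star u)$ from Lemma \ref{K-Lem6.1} together with the pointwise monotonicity in $\mu$ of the term $-\frac{\mu}{q}e^{q\gamma_q s}|u|_q^q$. The only cosmetic difference is that the paper tests with the particular solution $\widehat{u}_{\mu_1}$ and uses $\max_s\mathcal{J}_{\mu_1}(s\star\widehat{u}_{\mu_1})=\mathcal{J}_{\mu_1}(\widehat{u}_{\mu_1})=c_{a,\mu_1}$, whereas you pass the inequality through the inf-max directly; both are equally valid.
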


\begin{proof}
From Lemma \ref{K-Lem6.1},  for any $0< \mu_1<\mu_2$, we  deduce that
\begin{align*}
c_{a, \mu_2}&\leq \max_{s\in \mathbb{R}} \mathcal{J}_{\mu_2}(s\star \widehat{u}_{ \mu_1})\\
&= \max_{s\in \mathbb{R}}\Big[\mathcal{J}_{\mu_1}(s\star \widehat{u}_{ \mu_1})+\frac{e^{q \gamma_q s}}{q}(\mu_1-\mu_2)|\widehat{u}_{ \mu_1}|_q^q  \Big]\\
&\leq \max_{s\in \mathbb{R}}\mathcal{J}_{\mu_1}(s\star \widehat{u}_{ \mu_1})\\
&= \mathcal{J}_{\mu_1}( \widehat{u}_{ \mu_1})\\
&=c_{a, \mu_1}.
\end{align*}
 Similarly,  $c_{a, \mu_1}\leq c_{a, 0}$ can be obtained. Hence, we complete the proof.
\end{proof}

  \noindent{\bf{Proof of Theorem \ref{K-TH6}}.} Assume that $\mu>0$, $q\in(2, \frac{12}{5} ]$, $p\in (\frac{10}{3}, 6)$, $a\in(0, \min \{\overline{a}_0, \widetilde{a}_0\})$ and \eqref{k3} hold.  let $ \{(\widehat{\lambda}_{\mu}, \widehat{u}_{\mu})\}\subseteq \mathbb{R}^+ \times \mathcal{H}_r $ be the sequence of solutions for Eq.~\eqref{k1}, where $\mathcal{J}_{\mu}(\widehat{u}_{\mu})=c_{a, \mu}$.
Now, we prove that $\{\widehat{u}_{\mu}\}$ is bounded in $\mathcal{H}_r$. Since $P_{\mu}(\widehat{u}_{\mu} )=0$ and Lemma \ref{K-Lem6.2}, then   we have
\begin{align*}
    c_{a, 0}\geq c_{a, \mu}
 =&\mathcal{J}_{\mu} (\widehat{u}_{\mu})-\frac{1}{p\gamma_p} P_{\mu}(\widehat{u}_{\mu} )\nonumber\\
 =&\left(\frac{1}{2}-\frac{1}{p\gamma_p}\right)\int_{\mathbb{R}^3}|\nabla \widehat{u}_{\mu} |^2dx+\frac{1}{4}\left(1-\frac{1}{p\gamma_p}\right)\int_{\mathbb{R}^3} \int_{\mathbb{R}^3} \frac{|\widehat{u}_{\mu}(x) |^{2}|\widehat{u}_{\mu} (y)|^2} {|x-y|}dxdy\\
&+\mu\left(\frac{\gamma_q}{p\gamma_p}-\frac{1}{q}\right)\int_{\mathbb{R}^3}|\widehat{u}_{\mu} |^{q}dx\nonumber\\
 \geq &\left(\frac{1}{2}-\frac{1}{p\gamma_p}\right)\int_{\mathbb{R}^3}|\nabla \widehat{u}_{\mu} |^2dx-\mu  \frac{p\gamma_p-q\gamma_q}{pq\gamma_p} C_q^qa^{(1-\gamma_q)q}|\nabla u_n|_2^{q \gamma_q},
\end{align*}
which shows that $\{\widehat{u}_{\mu} \}$ is bounded in $\mathcal{H}_r$ because of $q\gamma_q<2$. Therefore,  there exists $\widehat{u}_0\in \mathcal{H}_r$ such that, up to a subsequence, as $\mu\rightarrow 0$,
 \begin{align*}
   & \widehat{u}_{\mu}\rightharpoonup \widehat{u}_0 \quad\quad \text{in}\ \mathcal{H}_r;\\
   & \widehat{u}_{\mu}\rightarrow \widehat{u}_0\quad\quad \text{in}\ L^t(\mathbb{R}^3)\ \text{with}\ t\in(2, 6);\\
   & \widehat{u}_{\mu}\rightarrow \widehat{u}_0\quad\quad  a.e \ \text{in}\ \mathbb{R}^3.
\end{align*}
Since  $(\widehat{\lambda}_{\mu}, \widehat{u}_{\mu})\in \mathbb{R}^+ \times \mathcal{H}_{r}$   satisfies
\begin{align} \label{kkk191}
-\Delta \widehat{u}_{\mu}+\widehat{\lambda}_{\mu}\widehat{u}_{\mu}+  (|x|^{-1}\ast|\widehat{u}_{\mu}|^2 )\widehat{u}_{\mu}  =\mu |\widehat{u}_{\mu}|^{q-2}\widehat{u}_{\mu}+|\widehat{u}_{\mu}|^{p-2}\widehat{u}_{\mu},
\end{align}
 then multiplying the above equation by $\widehat{u}_{\mu}$ and integrating in $\mathbb{R}^3$,
\begin{align*}
\widehat{\lambda}_{\mu} a^2= -|\nabla \widehat{u}_{\mu}|_2^2-\int_{\mathbb{R}^3} \int_{\mathbb{R}^3} \frac{|\widehat{u}_{\mu}(x)|^{2}|\widehat{u}_{\mu}(y)|^2} {|x-y|}dxdy
+\int_{\mathbb{R}^3}|\widehat{u}_{\mu}|^{p}dx+\mu \int_{\mathbb{R}^3}|\widehat{u}_{\mu}|^{q}dx,
\end{align*}
which implies that  $\{\widehat{\lambda}_{\mu}\}$  is bounded in $\mathbb{R}$. Hence, there is $\lambda_0 \geq 0$ satisfying $\lambda_{\mu}\rightarrow \widehat{\lambda}_0$ as $n\rightarrow\infty$. Moreover, when $\mu\rightarrow 0$, from \eqref{kkk191} we have
\begin{align} \label{kkk20}
-\Delta \widehat{u}_0+\widehat{\lambda}_0 \widehat{u}_0+(|x|^{-1}\ast |\widehat{u}_0|^2)\widehat{u}_0
= |\widehat{u}_0|^{p-2}\widehat{u}_0.
\end{align}
Clearly, $\widehat{u}_0\neq 0$. Moreover, using the assumption \eqref{k3}, similar to the proof of \eqref{k40}, we have     the    Lagrange multiplier $\widehat{\lambda}_0>0$.
Then, using a similar methods of the Step 3 in Lemma \ref{K-Lem2.6},
  we conclude  that $\widehat{u}_{\mu}\rightarrow \widehat{u}_{0}$ in  $\mathcal{H}_{r}$ and $\widehat{\lambda}_{\mu}\rightarrow \widehat{\lambda}_{0}$ in $\mathbb{R} $  as $\mu\rightarrow0$  up to a subsequence, where $(\widehat{\lambda}_{0}, \widehat{u}_{0})\in \mathbb{R}^+ \times \mathcal{H}_r  $  is the solution for the following equation
\begin{align*}
  \begin{cases}
\displaystyle - \Delta u+\lambda u+  (|x|^{-1}\ast|u|^2)u=|u|^{p-2}u  \quad \quad \text{in} \ \mathbb{ R}^3,\\
\displaystyle \int_{\mathbb{R}^3}|u|^2dx=a^2.\\
  \end{cases}
 \end{align*}
Thus, we complete the proof. $\hfill\Box$

\subsection{Limit behavior of solutions as $q\rightarrow  \frac{10}{3}$ }
In this subsection,    in order to study  the limit behavior of  normalized solutions as $q\rightarrow  \frac{10}{3}$, we use  $\mathcal{P}_{q,r}$, $\mathcal{J}_q$ and $P_q$   instead of  $\mathcal{P}_{r}$, $\mathcal{J}$ and $P$  to indicate the dependence on $q$ for convenience.
 Similar to Lemma \ref{K-J1}, we have
 \begin{lemma}\label{KK-J1}
Let  $\mu>0$, $\frac{10}{3}=\overline{q} <q< p<6$.
   For any $u\in S_a$,  there exists a unique $t_u \in \mathbb{R}$ such that $t_u\star u\in \mathcal{P}_{q, r}$. $t_u$ is the unique
critical point of the function $\psi_u$, and is a strict maximum point at positive level. Moreover:
\begin{itemize}
  \item [(i)]  $\mathcal{P}_{q,r}=\mathcal{P}_{q,r,-}$;
  \item [(ii)]  the function $\psi_u$ is strictly decreasing and concave on $(t_u,+\infty)$. Particularly,  $t_u < 0\Leftrightarrow P_q(u) <0$;
    \item [(iii)]   the map $u\in S_a\mapsto t_u\in \mathbb{R}$ is of class $C^1$.
\end{itemize}
  \end{lemma}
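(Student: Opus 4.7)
The plan is to adapt the fiber‐map strategy of Lemma \ref{K-J1} to the case where both exponents $q$ and $p$ lie strictly above the $L^{2}$-critical exponent $\overline{q}=\frac{10}{3}$, so that $q\gamma_{q}>2$ and $p\gamma_{p}>2$. Under this strict double super‐criticality the analysis of
\[
\psi_u(s)=\tfrac{e^{2s}}{2}|\nabla u|_2^{2}+\tfrac{e^{s}}{4}\iint\tfrac{|u(x)|^{2}|u(y)|^{2}}{|x-y|}dxdy-\tfrac{e^{p\gamma_p s}}{p}|u|_p^{p}-\tfrac{\mu e^{q\gamma_q s}}{q}|u|_q^{q}
\]
becomes simpler than in the mixed Lemma \ref{K-J1} setting, because the quadratic term no longer interacts with the $L^{q}$ term in a delicate way. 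My first step would be to record the boundary behavior: since $e^{s}$ dominates both $e^{2s}$, $e^{p\gamma_p s}$, $e^{q\gamma_q s}$ as $s\to -\infty$ (each of these exponents exceeding $1$), one has $\psi_u(s)\to 0^{+}$ from above; since $p\gamma_p, q\gamma_q>2$, the two negative terms dominate as $s\to +\infty$, giving $\psi_u(s)\to -\infty$.

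The heart of the argument is to show that $\psi_u'(s)=P_q(s\star u)$ has exactly one zero. I would multiply $\psi_u'(s)$ by $e^{-2s}$ and set
\[
\phi(s):=e^{-2s}\psi_u'(s)=|\nabla u|_2^{2}+\tfrac{e^{-s}}{4}\iint\tfrac{|u|^{2}|u|^{2}}{|x-y|}dxdy-\gamma_p e^{(p\gamma_p-2)s}|u|_p^{p}-\mu\gamma_q e^{(q\gamma_q-2)s}|u|_q^{q}.
\]
A direct differentiation shows $\phi'(s)<0$ for every $s\in\mathbb{R}$, because the coefficient of each exponential in $\phi'(s)$ is strictly negative (using $p\gamma_p-2>0$ and $q\gamma_q-2>0$). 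Combined with $\phi(-\infty)=+\infty$ (from the $e^{-s}$ term) and $\phi(+\infty)=-\infty$, strict monotonicity yields a unique $t_u\in\mathbb{R}$ with $\phi(t_u)=0$, equivalently $t_u\star u\in\mathcal{P}_{q,r}$. Since $\psi_u'(s)$ has the same sign as $\phi(s)$, we also get that $\psi_u$ is strictly increasing on $(-\infty,t_u)$ and strictly decreasing on $(t_u,+\infty)$, whence $t_u$ is the unique critical point of $\psi_u$ and a strict global maximum point; moreover $\psi_u(t_u)>\lim_{s\to-\infty}\psi_u(s)=0$, so the maximum lies at positive level.

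For the identification $\mathcal{P}_{q,r}=\mathcal{P}_{q,r,-}$, I would compute $\psi_u''(t_u)$ from $\phi'(t_u)<0$: the relation $\psi_u''(s)=e^{2s}(2\phi(s)+\phi'(s))$ at $s=t_u$ gives $\psi_u''(t_u)=e^{2t_u}\phi'(t_u)<0$, so every point of $\mathcal{P}_{q,r}$ is of type $\mathcal{P}_{q,r,-}$. Strict concavity of $\psi_u$ on $(t_u,+\infty)$ follows from $\psi_u''=e^{2s}(2\phi+\phi')$ with both $\phi$ and $\phi'$ negative on $(t_u,+\infty)$. The equivalence $t_u<0\iff P_q(u)<0$ is immediate from $P_q(u)=\psi_u'(0)$ and the sign structure of $\psi_u'$ about $t_u$. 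Finally, the $C^{1}$ dependence $u\mapsto t_u$ is obtained via the implicit function theorem applied to the $C^{1}$ map $(s,u)\mapsto\psi_u'(s)$ on $\mathbb{R}\times S_{a,r}$: the derivative in $s$ at $(t_u,u)$ equals $\psi_u''(t_u)\neq 0$.

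I do not anticipate a genuine obstacle here, since in the doubly $L^{2}$-supercritical regime the geometry of $\psi_u$ is a clean single‐bump profile; the only place where one must be careful is in writing down $\phi$ and verifying that \emph{every} exponential coefficient in $\phi'(s)$ has the correct sign (this is where the strict inequality $q>\overline{q}$ is used, and it is precisely this point that fails at $q=\overline{q}$ and forces the extra smallness assumption \eqref{k201} in Lemma \ref{K-J1}). Once this sign check is in place, the remaining items are formal, and the argument parallels \cite[Lemma~5.3]{2020Soave} and Lemma \ref{K-J1} of the present paper.
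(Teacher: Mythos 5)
Your proposal is correct and follows essentially the same route the paper intends (the standard Nehari--Poho\v{z}aev fiber-map analysis invoked via Lemma \ref{K-J1} and \cite[Lemmas 5.3, 6.1]{2020Soave}): the normalization $\phi(s)=e^{-2s}\psi_u'(s)$, its strict monotonicity from $q\gamma_q>2$, $p\gamma_p>2$, and the identity $\psi_u''=e^{2s}(2\phi+\phi')$ give uniqueness of $t_u$, $\mathcal{P}_{q,r}=\mathcal{P}_{q,r,-}$, the concavity/decay on $(t_u,+\infty)$, and the $C^1$ dependence by the implicit function theorem exactly as needed. You also correctly observe that no smallness condition such as \eqref{k201} is required here, since the strict inequality $q>\overline{q}$ is what makes every exponential coefficient in $\phi'$ negative.
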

It is easy to know that $
c_{q,r}=\sigma_{q,r}:=\inf_{u\in\mathcal{P}_{q,r}}\mathcal{J}_q(u)>0,
$
where $c_{q,r}$ is the mountain pass level defined in Theorem \ref{K-T1}.

\begin{lemma}\label{KJ-Lem1}
Assume that $\mu>0$,  $\frac{10}{3}=\overline{q}<q<p<6$ and $a\in (0, \widetilde{\kappa})$ satisfying \eqref{k201}.  It holds that
\begin{align}\label{k223}
\liminf_{q\rightarrow \overline{q}} c_{q, r}>0,
\end{align}
and
\begin{align}\label{k224}
\limsup_{q\rightarrow \overline{q}}c_{q, r}\leq c_{\overline{q}, r}.
\end{align}
\end{lemma}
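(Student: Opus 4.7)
The plan is to establish the two inequalities separately. For \eqref{k223}, I would argue by contradiction. Suppose on the contrary there exist $q_n\to\overline{q}^+$ and $u_n\in\mathcal{P}_{q_n,r}$ with $\mathcal{J}_{q_n}(u_n)\to 0$. The key algebraic identity is
\begin{align*}
\mathcal{J}_q(u)-\tfrac12 P_q(u) = \tfrac18\int_{\mathbb{R}^3}\int_{\mathbb{R}^3}\tfrac{|u(x)|^2|u(y)|^2}{|x-y|}dxdy+\tfrac{p\gamma_p-2}{2p}|u|_p^p+\mu\tfrac{q\gamma_q-2}{2q}|u|_q^q,
\end{align*}
so on $\mathcal{P}_{q,r}$ the functional $\mathcal{J}_q(u)$ equals the right-hand side, in which all three terms are nonnegative for $q>\overline{q}$ and the first two have coefficients independent of $q$. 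Hence $|u_n|_p^p\to 0$ and the nonlocal integral vanishes. The interpolation $|u|_q^q \le |u|_2^{q(1-\beta)}|u|_p^{q\beta}$ with $\beta=p(q-2)/(q(p-2))$, whose limit exponent $q_n\beta_n\to\tfrac{4p}{3(p-2)}>0$, yields $|u_n|_{q_n}^{q_n}\to 0$. The Poho\v{z}aev identity $P_{q_n}(u_n)=0$ therefore forces $|\nabla u_n|_2^2 \le \gamma_p|u_n|_p^p+\mu\gamma_{q_n}|u_n|_{q_n}^{q_n}\to 0$.

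Combining $P_{q_n}(u_n)=0$ with Gagliardo--Nirenberg \eqref{k10} and dividing by $|\nabla u_n|_2^2>0$ gives
\begin{align*}
1 \le \gamma_p C_p^p a^{p(1-\gamma_p)}|\nabla u_n|_2^{p\gamma_p-2}+\mu\gamma_{q_n}C_{q_n}^{q_n}a^{q_n(1-\gamma_{q_n})}|\nabla u_n|_2^{q_n\gamma_{q_n}-2}.
\end{align*}
The first summand tends to $0$ since $p\gamma_p>2$ and $|\nabla u_n|_2\to 0$. For the second, $|\nabla u_n|_2<1$ eventually and $q_n\gamma_{q_n}-2>0$ give $|\nabla u_n|_2^{q_n\gamma_{q_n}-2}\le 1$, while the coefficient converges to $\tfrac{3}{5}\mu C_{\overline q}^{\overline q} a^{4/3}$, which by the Remark following Theorem \ref{K-TH15} is strictly less than $1$ under \eqref{k201}. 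Taking the liminf then yields $1\le\tfrac35\mu C_{\overline q}^{\overline q}a^{4/3}<1$, a contradiction, proving \eqref{k223}.

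For \eqref{k224} I would fix the mountain-pass solution $u_{\overline q}$ of Theorem \ref{K-TH15}, which lies in $S_{a,r}\cap\mathcal{P}_{\overline q,r}$ with $\mathcal{J}_{\overline q}(u_{\overline q})=c_{\overline q,r}$. For each $q>\overline q$ close to $\overline q$, Lemma \ref{KK-J1} yields a unique $s_q\in\mathbb{R}$ with $s_q\star u_{\overline q}\in\mathcal{P}_{q,r}$ at which $s\mapsto\mathcal{J}_q(s\star u_{\overline q})$ attains its maximum. Since $c_{q,r}=\inf_{\mathcal{P}_{q,r}}\mathcal{J}_q$, this gives $c_{q,r}\le\mathcal{J}_q(s_q\star u_{\overline q})$. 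I would next show $\{s_q\}$ stays in a compact interval by examining $P_q(s_q\star u_{\overline q})=0$: $s_q\to+\infty$ would let the $e^{p\gamma_p s_q}$ term dominate, and $s_q\to-\infty$ would let the leading $e^{s_q}$ term on the left dominate every right-hand term (since $p\gamma_p, q\gamma_q>1$), each contradicting the balance. The only $q$-dependent summand of $\mathcal{J}_q(s\star u_{\overline q})$ is $-\tfrac{\mu e^{q\gamma_q s}}{q}|u_{\overline q}|_q^q$, so both $\mathcal{J}_q$ and $P_q$ converge, uniformly on compact $s$-intervals, to $\mathcal{J}_{\overline q}$ and $P_{\overline q}$. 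Any subsequential limit $s^*$ of $\{s_q\}$ therefore satisfies $P_{\overline q}(s^*\star u_{\overline q})=0$, and the uniqueness assertion of Lemma \ref{K-J1} pins $s^*=0$. Consequently $\mathcal{J}_q(s_q\star u_{\overline q})\to\mathcal{J}_{\overline q}(u_{\overline q})=c_{\overline q,r}$, giving \eqref{k224}.

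I expect the main obstacle to be the delicate limit in \eqref{k223}: the exponent $q_n\gamma_{q_n}-2$ degenerates to $0$, producing an indeterminate form $0^0$ in $|\nabla u_n|_2^{q_n\gamma_{q_n}-2}$. The quantitative hypothesis \eqref{k201}, equivalent to $\tfrac35\mu C_{\overline q}^{\overline q}a^{4/3}<1$, is precisely what leaves a strict gap in the final inequality and closes the argument independently of how that degenerate power behaves along the sequence.
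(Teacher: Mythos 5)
Your argument for \eqref{k224} is correct but takes a slightly different route from the paper. The paper does not use the attained mountain-pass solution at $\overline{q}$: it picks an $\varepsilon$-almost minimizer $u\in\mathcal{P}_{\overline{q},r}$ with $\mathcal{J}_{\overline{q}}(u)\le c_{\overline{q},r}+\varepsilon$, fixes $T$ with $\mathcal{J}_{\overline{q}}(T\star u)\le -1$, uses continuity of $(s,q)\mapsto \frac{e^{q\gamma_q s}}{q}|u|_q^q$ on $(-\infty,T]\times[\overline{q},6)$ to get $|\mathcal{J}_q(s\star u)-\mathcal{J}_{\overline{q}}(s\star u)|<\varepsilon$ for $q$ close to $\overline{q}$, produces a critical point $s_0\in(-\infty,T)$ of $s\mapsto\mathcal{J}_q(s\star u)$, and then concludes via $c_{q,r}\le\mathcal{J}_q(s_0\star u)\le\mathcal{J}_{\overline{q}}(s_0\star u)+\varepsilon\le\mathcal{J}_{\overline{q}}(u)+\varepsilon\le c_{\overline{q},r}+2\varepsilon$, exploiting only that $s=0$ maximizes the fiber map of $u\in\mathcal{P}_{\overline{q},r}$. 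This avoids both the attainment of $c_{\overline{q},r}$ and your compactness step showing $s_q\to0$; your version works too (boundedness of $s_q$, uniform convergence of $\mathcal{J}_q,P_q$ on compact $s$-intervals, and uniqueness of the fiber critical point from Lemma \ref{K-J1} are all available), it just needs the extra input of Theorem \ref{K-TH15}. For \eqref{k223} the paper simply defers to \cite[Lemma 3.1]{2023-JDE-Qi}, so your written-out contradiction argument is a genuine addition, and its structure (the identity $\mathcal{J}_q-\frac12 P_q$, vanishing of $|u_n|_p$, the nonlocal term and $|\nabla u_n|_2$, then division of $P_{q_n}(u_n)=0$ by $|\nabla u_n|_2^2$) is sound.

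The one step you assert without justification is the convergence $\mu\gamma_{q_n}C_{q_n}^{q_n}a^{q_n(1-\gamma_{q_n})}\to\frac35\mu C_{\overline{q}}^{\overline{q}}a^{4/3}$, which requires (upper semi)continuity of the sharp Gagliardo--Nirenberg constant $q\mapsto C_q^q$ at $\overline{q}$, i.e.\ continuity of $q\mapsto|Q_q|_2$. This is true but nontrivial and is genuinely needed in your argument, since \eqref{k201} only gives the strict inequality $\frac35\mu C_{\overline{q}}^{\overline{q}}a^{4/3}<1$ at the limit exponent, with unspecified slack. You can avoid it entirely by the interpolation the paper itself uses in Lemma \ref{KJ-Lem2} (cf.\ \eqref{k226}): write $|u_n|_{q_n}^{q_n}\le\bigl(|u_n|_{\overline{q}}^{\overline{q}}\bigr)^{\theta_n}\bigl(|u_n|_p^p\bigr)^{1-\theta_n}$ with $\theta_n=\frac{p-q_n}{p-\overline{q}}\to1$, then apply \eqref{k10} only with the fixed exponents $\overline{q}$ and $p$. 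Dividing $P_{q_n}(u_n)=0$ by $|\nabla u_n|_2^2$ then yields
\begin{align*}
1\le \gamma_pC_p^pa^{p(1-\gamma_p)}|\nabla u_n|_2^{p\gamma_p-2}
+\mu\gamma_{q_n}\bigl(C_{\overline{q}}^{\overline{q}}a^{\frac43}\bigr)^{\theta_n}
\bigl(C_p^pa^{p(1-\gamma_p)}\bigr)^{1-\theta_n}|\nabla u_n|_2^{(1-\theta_n)(p\gamma_p-2)},
\end{align*}
where the last power of $|\nabla u_n|_2$ is at most $1$ eventually and the coefficient converges to $\frac35\mu C_{\overline{q}}^{\overline{q}}a^{4/3}<1$, giving the same contradiction with only the fixed constants $C_{\overline{q}}$ and $C_p$.
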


\begin{proof}
The proof of \eqref{k223} is similar to \cite[Lemma 3.1]{2023-JDE-Qi}, so we omit it here.  Now, we prove \eqref{k224}. For any $\varepsilon\in (0, \frac{1}{2})$, there exists $u\in \mathcal{P}_{\overline{q},r}$ such that $ \mathcal{J}_{\overline{q}}(u)\leq c_{\overline{q},r}+\varepsilon$. Then, due to $\overline{q}<p<6 $, there is $T\in \mathbb{R}$ large enough satisfying
\begin{align*}
 \mathcal{J}_{\overline{q}}(T\star u )
=&\frac{e^{2T}}{2}\int_{\mathbb{R}^3}|\nabla u|^2dx+\frac{e^{T}}{4} \int_{\mathbb{R}^3} \int_{\mathbb{R}^3} \frac{|u(x)|^{2}|u(y)|^2} {|x-y|}dxdy \nonumber\\
&-\frac{e^{p\gamma_pT}}{p}\int_{\mathbb{R}^3}|u|^{p}dx
-\frac{\mu e^{2T}}{\overline{q}}
\int_{\mathbb{R}^3}|u|^{\overline{q}}dx\nonumber\\
\leq &  -1.
\end{align*}
  By the continuity of $ \frac{e^{q\gamma_qs}}{q}\int_{\mathbb{R}^3}|u|^{q}dx$ on $(s,q)\in(-\infty, T]\times [\overline{q}, 6)$  there exists $\eta > 0$ such that
\begin{align}\label{k225}
| \mathcal{J}_{q}(s\star u)- \mathcal{J}_{\overline{q}}(s\star u)|=\bigg|\frac{e^{q\gamma_qs}}{q}\int_{\mathbb{R}^3}|u|^{q}dx
-\frac{e^{2s}}{\overline{q}}\int_{\mathbb{R}^3}|u|^{\overline{q}}dx \bigg|<\varepsilon
\end{align}
for all $\overline{q}<q<\overline{q}+\eta$ and $s\in(-\infty, T]$, which shows that $\mathcal{J}_{q}(s\star u)\leq -\frac{1}{2}$ for all $\overline{q}<q<\overline{q}+\eta$. Besides, when $s\in \mathbb{R}$ small enough, one gets
$$
 \mathcal{J}_{q}(s\star u )>0.
$$
Hence, there is $s_0\in (-\infty, T)$ such that $\frac{d}{ds} \mathcal{J}_{q}(s\star u)|_{s=s_0}=0$. This gives us that $s_0\star u\in \mathcal{P}_{q, r}$. Thus, by \eqref{k225}, Lemmas \ref{K-J1} and \ref{KK-J1} we have
$$
c_{q, r}\leq \mathcal{J}_{q}(s_0\star u)\leq  \mathcal{J}_{\overline{q}}(s_0\star u)+\varepsilon\leq \mathcal{J}_{\overline{q}}(  u)+\varepsilon\leq c_{\overline{q},r}+2\varepsilon
$$
for all $\overline{q}<q<\overline{q}+\eta$. Therefore, we complete the proof.
\end{proof}

\begin{lemma}\label{KJ-Lem2}
Assume that $\mu>0$, $\frac{10}{3}=\overline{q}<q<p<6$ and \eqref{k201} hold. Let $(\lambda_q, u_q )$ be the radial mountain pass type normalized solution of Eq.~\eqref{k1} at the level $c_{q,r}$,  then
there is a constant $ C > 0$ independent of $q$ such that
\begin{align*}
\lim_{q\rightarrow \overline{q}}\|\nabla u_q\|_2^2\leq C.
\end{align*}
\end{lemma}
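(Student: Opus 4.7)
The plan is to exploit the two identities $\mathcal{J}_q(u_q)=c_{q,r}$ and $P_q(u_q)=0$ by forming the scaling multiplier that annihilates the $L^q$ contribution. Concretely, I would compute
\begin{align*}
\mathcal{J}_q(u_q)-\frac{1}{q\gamma_q}P_q(u_q)
&=\Big(\tfrac{1}{2}-\tfrac{1}{q\gamma_q}\Big)\|\nabla u_q\|_2^2
+\tfrac{1}{4}\Big(1-\tfrac{1}{q\gamma_q}\Big)D(u_q)\\
&\quad+\tfrac{p\gamma_p-q\gamma_q}{p\,q\gamma_q}\int_{\mathbb{R}^3}|u_q|^p\,dx \;=\; c_{q,r},
\end{align*}
where $D(u_q):=\iint\frac{|u_q(x)|^2|u_q(y)|^2}{|x-y|}\,dx\,dy\ge 0$. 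The specific choice of multiplier $1/(q\gamma_q)$, rather than the more customary $1/(p\gamma_p)$, is what makes the argument go through: it eliminates the $\int|u_q|^q\,dx$ term entirely and thereby avoids the mass-critical degeneracy coming from $q\gamma_q\to 2$ as $q\to\overline{q}=10/3$.

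Since $q>\overline{q}$ one has $q\gamma_q>2$, so all three coefficients on the right are strictly positive. As $q\to\overline{q}$ the coefficient of $\|\nabla u_q\|_2^2$ degenerates to $0$, but the coefficient of $\int|u_q|^p\,dx$ has the strictly positive limit $\tfrac{p\gamma_p-2}{2p}$, while $\tfrac{1}{4}(1-\tfrac{1}{q\gamma_q})\to\tfrac18$. Invoking Lemma~\ref{KJ-Lem1} to obtain $c_{q,r}\le c_{\overline{q},r}+o(1)$, and discarding the two nonnegative terms involving $\|\nabla u_q\|_2^2$ and $D(u_q)$ on the left, one extracts a uniform bound $\int_{\mathbb{R}^3}|u_q|^p\,dx\le C$ for $q$ sufficiently close to $\overline{q}$.

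Since $2<\overline{q}<q<p$, standard interpolation yields
$$
|u_q|_q \le |u_q|_2^{\alpha_q}\,|u_q|_p^{1-\alpha_q},\qquad \alpha_q:=\tfrac{2(p-q)}{q(p-2)}\in(0,1),
$$
which together with $|u_q|_2=a$ and the $L^p$-bound just obtained produces a uniform control on $\int_{\mathbb{R}^3}|u_q|^q\,dx$ (the constants depend continuously on $q$ near $\overline{q}$). Substituting both bounds into $P_q(u_q)=0$ and dropping the nonnegative nonlocal term gives
$$
\|\nabla u_q\|_2^2\;\le\;\gamma_p\int_{\mathbb{R}^3}|u_q|^p\,dx+\mu\gamma_q\int_{\mathbb{R}^3}|u_q|^q\,dx\;\le\;C,
$$
which is the required conclusion.

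The main technical obstacle, and the rationale for the above multiplier, is the mass-critical degeneracy at $q=\overline{q}$. If one instead uses $\mathcal{J}_q-\tfrac{1}{p\gamma_p}P_q$, the residual $\int|u_q|^q\,dx$ term must be controlled by Gagliardo--Nirenberg, producing a power $\|\nabla u_q\|_2^{q\gamma_q}$ with $q\gamma_q\to 2^+$; this competes with the gradient energy on the left and fails to give a uniform bound without re-invoking a smallness condition of the form \eqref{k201}. Choosing the multiplier $1/(q\gamma_q)$ sidesteps this competition by controlling $\int|u_q|^p\,dx$ first, where no such scaling resonance occurs, and then recovering the $L^q$-bound from $L^2$--$L^p$ interpolation.
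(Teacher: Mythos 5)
Your argument is correct, and its first half coincides with the paper's: forming $\mathcal{J}_q(u_q)-\frac{1}{q\gamma_q}P_q(u_q)=c_{q,r}$ to annihilate the $L^q$ term, discarding the nonnegative gradient and Coulomb terms, and invoking \eqref{k224} of Lemma \ref{KJ-Lem1} to get the uniform bound on $\int_{\mathbb{R}^3}|u_q|^p\,dx$ is exactly how the paper derives \eqref{k228}. After that the routes genuinely diverge. The paper argues by contradiction (assuming $|\nabla u_q|_2\to\infty$), interpolates $\int|u_q|^q\,dx$ between $L^{\overline q}$ and $L^p$ and then applies Gagliardo--Nirenberg to the $L^{\overline q}$ factor (see \eqref{k226}), which reintroduces a gradient power $|\nabla u_q|_2^{\frac{2(p-q)}{p-\overline q}}\to|\nabla u_q|_2^2$; it then evaluates the second combination $\mathcal{J}_q-\frac{1}{p\gamma_p}P_q$ and needs the smallness $\mu C_{\overline q}^{\overline q}a^{4/3}<\frac{5}{3}$ implied by \eqref{k201} to keep the limiting coefficient of $|\nabla u_q|_2^2$ positive and reach a contradiction with $c_{\overline q,r}<\infty$ --- i.e.\ the paper takes precisely the ``$1/(p\gamma_p)$'' route you criticize, but makes it work by pairing it with the finer $L^{\overline q}$--$L^p$ interpolation and the smallness condition. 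You instead interpolate $L^q$ between $L^2$ and $L^p$, where the fixed mass $|u_q|_2=a$ and the already-established $L^p$ bound give $\int|u_q|^q\,dx\le C$ with no gradient appearing, and then read the gradient bound directly off $P_q(u_q)=0$ after dropping the nonnegative nonlocal term. This is more elementary: it avoids the contradiction argument, avoids any competition between the gradient terms, and does not use the quantitative smallness in \eqref{k201} at this stage (that hypothesis enters only through Lemma \ref{KJ-Lem1}, which you use as a black box); it even yields a uniform bound for all $q$ close to $\overline q$, which is slightly stronger than the stated conclusion.
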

\begin{proof}
Assume to the contrary that there is a subsequence, denoted still by $\{u_q\}$, such that
\begin{align}\label{k229}
\|\nabla u_q\|_2^2\rightarrow\infty\quad\quad \text{as}\ q\rightarrow \overline{q}.
\end{align}
Since $u_q$ is a solution of Eq.~\eqref{k1},  then
\begin{align}\label{k227}
 P_q(u_q)=\int_{\mathbb{R}^3}|\nabla u_q|^2dx+\frac{1}{4}\int_{\mathbb{R}^3} \int_{\mathbb{R}^3} \frac{|u_q(x)|^{2}|u_q(y)|^2} {|x-y|}dxdy
-\gamma_p\int_{\mathbb{R}^3}|u_q|^{p}dx
-\mu\gamma_q\int_{\mathbb{R}^3}|u_q|^{q}dx=0.
\end{align}
Then, we deduce
\begin{align*}
c_{q,r}=&\mathcal{J}_q(u_q)-\frac{1}{q \gamma_q}  P_q(u_q)\\
=& \left(\frac{1}{2}-\frac{1}{q\gamma_q}\right)\int_{\mathbb{R}^3}|\nabla u_q|^2dx+\frac{1}{4}\left(1-\frac{1}{q\gamma_q}\right)\int_{\mathbb{R}^3} \int_{\mathbb{R}^3} \frac{|u_q(x)|^{2}|u_q(y)|^2} {|x-y|}dxdy\\
&+ \left(\frac{\gamma_p}{q\gamma_q}-\frac{1}{p}\right)\int_{\mathbb{R}^3}|u_q|^{p}dx\nonumber\\
\geq&\left(\frac{\gamma_p}{q\gamma_q}-\frac{1}{p}\right)\int_{\mathbb{R}^3}|u_q|^{p}dx,
\end{align*}
which and \eqref{k224} imply that
\begin{align}\label{k228}
\lim_{q\rightarrow \overline{q}}\int_{\mathbb{R}^3}|u_q|^{p}dx\leq \frac{pq \gamma_q}{p\gamma_p-q\gamma_q} c_{\overline{q},r}.
\end{align}
Moreover, due to $\overline{q}<q<p$, by   the interpolation inequality and \eqref{k10}, we have
\begin{align}\label{k226}
\int_{\mathbb{R}^3}|u|^{q}dx&\leq\Big(\int_{\mathbb{R}^3}|u|^{\overline{q}}dx\Big)^{\frac{p-q}{p-\overline{q}}}
\Big(\int_{\mathbb{R}^3}|u|^{p}dx\Big)^{\frac{q-\overline{q}}{p-\overline{q}}}\nonumber\\
&\leq C_{\overline{q}}^{\frac{\overline{q}(p-q)}{p-\overline{q}}} |u|_2^{\frac{4(p-q)}{3(p-\overline{q})}}|\nabla u|_2^{\frac{2(p-q)}{p-\overline{q}}} \Big(\int_{\mathbb{R}^3}|u|^{p}dx\Big)^{\frac{q-\overline{q}}{p-\overline{q}}}.
\end{align}
Then, it follows from \eqref{k227} and \eqref{k226} that
\begin{align*}
c_{q,r}
=&\mathcal{J}_q(u_q)-\frac{1}{p \gamma_p}  P_q(u_q)\\
=& \left(\frac{1}{2}-\frac{1}{p\gamma_p}\right)\int_{\mathbb{R}^3}|\nabla u_q|^2dx+\frac{1}{4}\left(1-\frac{1}{p\gamma_p}\right)\int_{\mathbb{R}^3} \int_{\mathbb{R}^3} \frac{|u_q(x)|^{2}|u_q(y)|^2} {|x-y|}dxdy\\
&+\mu\left(\frac{\gamma_q}{p\gamma_p}-\frac{1}{q}\right)\int_{\mathbb{R}^3}|u_q|^{q}dx\nonumber\\
 \geq & \left(\frac{1}{2}-\frac{1}{p\gamma_p}\right)\int_{\mathbb{R}^3}|\nabla u_q|^2dx-\mu  \frac{p\gamma_p-q\gamma_q}{pq\gamma_p} C_{\overline{q}}^{\frac{\overline{q}(p-q)}{p-\overline{q}}} |u_q|_2^{\frac{4(p-q)}{3(p-\overline{q})}}|\nabla u_q|_2^{\frac{2(p-q)}{p-\overline{q}}} \Big(\int_{\mathbb{R}^3}|u_q|^{p}dx\Big)^{\frac{q-\overline{q}}{p-\overline{q}}},
\end{align*}
which and \eqref{k228} show that
$$
c_{\overline{q},r}\geq \lim_{q\rightarrow \overline{q}}c_{q,r}\geq\frac{[3(p-2)-4](5-3\mu C_{\overline{q}}^{\overline{q}} a^{\frac{4}{3}} )}{30(p-2)}\lim_{q\rightarrow \overline{q}} |\nabla u_q|_2^2.
$$
This gives a contradiction due to \eqref{k229} and \eqref{k201}. Hence, we complete the proof.
\end{proof}

\begin{lemma}\label{KJ-Lem3}
Assume that $\mu>0$,  $\frac{10}{3}=\overline{q}<q<p<6$ and $a\in (0, \widetilde{\kappa})$ satisfying \eqref{k201}. Let $(\lambda_q, u_q )$ be the radial mountain pass type normalized solution of Eq.~\eqref{k1} at the level $c_{q,r}$, then there exists a constant
$  \Lambda> 0$ independent of $q$ such that  $0 < \lambda_q <\Lambda $
 for any $q$ tending to $\overline{q}$.
\end{lemma}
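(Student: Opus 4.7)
The plan splits into establishing the upper bound $\lambda_q<\Lambda$ and the lower bound $\lambda_q>0$ separately, both in a way that uses only constants depending on $p$, $a$, $\mu$ and $\overline{q}$ (not on the running $q$).

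First, combining the Nehari identity \eqref{kk4} (with $u=u_q$, $\lambda=\lambda_q$) and the Pohozaev identity $P_q(u_q)=0$ to eliminate both $|\nabla u_q|_2^2$ and the Coulomb term yields
\[
\lambda_q a^2=(1-\gamma_p)|u_q|_p^p+\mu(1-\gamma_q)|u_q|_q^q-\frac{3}{4}\int_{\mathbb{R}^3}\int_{\mathbb{R}^3}\frac{|u_q(x)|^2|u_q(y)|^2}{|x-y|}dxdy.
\]
Since $\gamma_p,\gamma_q<1$ and the Coulomb term is non-negative, the right-hand side is $\leq (1-\gamma_p)|u_q|_p^p+\mu(1-\gamma_q)|u_q|_q^q$. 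Lemma \ref{KJ-Lem2} supplies a uniform bound $|\nabla u_q|_2\leq C_0$ as $q\to\overline{q}$, and then the Gagliardo-Nirenberg inequality \eqref{k10} makes $|u_q|_p^p$ and $|u_q|_q^q$ uniformly bounded. This gives $\lambda_q\leq \Lambda$ for some $\Lambda>0$ independent of $q$.

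For the lower bound I argue by contradiction, assuming a subsequence with $\lambda_q\leq 0$ and $q\to\overline{q}$. Eliminating $|u_q|_p^p$ from \eqref{kk4} and $P_q(u_q)=0$ and using $p>q\Rightarrow\gamma_p>\gamma_q$ gives
\[
0\geq \gamma_p\lambda_q a^2=(1-\gamma_p)|\nabla u_q|_2^2-\Bigl(\gamma_p-\frac{1}{4}\Bigr)\!\!\int_{\mathbb{R}^3}\!\!\int_{\mathbb{R}^3}\frac{|u_q(x)|^2|u_q(y)|^2}{|x-y|}dxdy+\mu(\gamma_p-\gamma_q)|u_q|_q^q.
\]
Dropping the non-negative last term and estimating the Coulomb integral by $C_{\frac{12}{5}}^{\frac{12}{5}}a^3|\nabla u_q|_2$ (exactly as in Lemma \ref{KJ-Lem2.6}) yields
\[
(1-\gamma_p)|\nabla u_q|_2\leq \Bigl(\gamma_p-\frac{1}{4}\Bigr)C_{\frac{12}{5}}^{\frac{12}{5}}a^3. \qquad(\star)
\]
On the other hand, $P_q(u_q)=0$ combined with Gagliardo-Nirenberg gives
\[
\Bigl(1-\mu\gamma_q C_q^q a^{q(1-\gamma_q)}|\nabla u_q|_2^{q\gamma_q-2}\Bigr)|\nabla u_q|_2^{2}\leq \gamma_p C_p^p a^{p(1-\gamma_p)}|\nabla u_q|_2^{p\gamma_p}.
\]
The uniform upper bound from Lemma \ref{KJ-Lem2} keeps $|\nabla u_q|_2^{q\gamma_q-2}$ bounded as $q\to\overline{q}$, and the continuity of $q\mapsto(C_q^q,\gamma_q,a^{q(1-\gamma_q)})$ at $\overline{q}$, together with the assumption \eqref{k201} (equivalently $\mu C_{\overline{q}}^{\overline{q}}a^{4/3}<\overline{q}/2$, so $\frac{3}{5}\mu C_{\overline{q}}^{\overline{q}}a^{4/3}<1$), forces the bracketed coefficient to stay bounded away from $0$ for $q$ close enough to $\overline{q}$. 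Hence
\[
|\nabla u_q|_2^{p\gamma_p-2}\geq \frac{1-\frac{3}{5}\mu C_{\overline{q}}^{\overline{q}}a^{4/3}+o(1)}{\gamma_p C_p^p a^{p(1-\gamma_p)}} \quad \text{as } q\to\overline{q}.
\]
Combining this lower bound on $|\nabla u_q|_2$ with $(\star)$ reproduces, up to an $o(1)$ perturbation, the incompatibility exploited in Lemma \ref{KJ-Lem2.6}; the perturbation is harmless for $q$ sufficiently close to $\overline{q}$, giving the desired contradiction.

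The main obstacle is making this passage \emph{uniform} in $q$: the exponent $q\gamma_q$ crosses the critical value $2$ exactly at $q=\overline{q}$, so without the uniform upper bound on $|\nabla u_q|_2$ provided by Lemma \ref{KJ-Lem2} the factor $|\nabla u_q|_2^{q\gamma_q-2}$ could not be controlled as $q\to\overline{q}$. Once that uniformity is secured, the hypothesis \eqref{k201} plays exactly the same role for $q$ near $\overline{q}$ as it did at $q=\overline{q}$ in Lemma \ref{KJ-Lem2.6}.
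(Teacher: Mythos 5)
Your upper bound is fine: the identity $\lambda_q a^2=(1-\gamma_p)|u_q|_p^p+\mu(1-\gamma_q)|u_q|_q^q-\tfrac34\int\!\!\int\frac{|u_q(x)|^2|u_q(y)|^2}{|x-y|}dxdy$ is correct, and together with Lemma \ref{KJ-Lem2} and Gagliardo--Nirenberg it gives $\lambda_q\le\Lambda$; this is only a cosmetic variant of the paper's elimination of the $L^q$-term. The lower bound, however, has a genuine gap at the final step. The incompatibility you invoke from Lemma \ref{KJ-Lem2.6} is \emph{marginal}: assumption \eqref{k201} is a non-strict inequality $L\ge U$, where $L$ is the right-hand side of \eqref{k202} and $U:=\frac{4\gamma_p-1}{4(1-\gamma_p)}C_{\frac{12}{5}}^{\frac{12}{5}}a^3$ is the bound in \eqref{kk149}, and the contradiction there rests solely on the strictly positive but \emph{unquantified} term $\mu(\gamma_p-\gamma_{\overline q})\int|u|^{\overline q}dx$ that is discarded. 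In your perturbed version the two sides become $|\nabla u_q|_2\le U$ (after discarding $\mu(\gamma_p-\gamma_q)|u_q|_q^q$) and $|\nabla u_q|_2\ge L(1+o(1))$; if \eqref{k201} happens to hold with equality, this only says $U(1+o(1))\le|\nabla u_q|_2\le U$, which is no contradiction at all. So "the perturbation is harmless" is exactly the unproved point: to close your route you would have to show that the discarded term stays bounded away from zero along the sequence (for instance, from $P_q(u_q)=0$ the lower bound on $|\nabla u_q|_2$ forces $|u_q|_p^p$ bounded below, and interpolation between $L^q$ and $L^6$ then bounds $|u_q|_q$ below), and none of this appears in your proposal. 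A related, smaller issue: you argue from "$\lambda_q\le 0$", but each $\lambda_q$ is already positive by Theorem \ref{K-T1}; what is actually needed downstream (uniform exponential decay in Lemma \ref{KJ-Lem5}, and $\lambda>0$ in Theorem \ref{K-TH14}) is that $\lambda_q$ stays bounded away from $0$ as $q\to\overline q$, which your hypothesis does not address.

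The paper closes the argument by a different, robust mechanism, and this explains the hypothesis $a\in(0,\widetilde\kappa)$ which your proof never uses. Assuming (for contradiction) $\lambda_q\to 0$, it first shows $\lim_{q\to\overline q}|\nabla u_q|_2^2>0$ (using $\liminf_{q\to\overline q}c_{q,r}>0$ from Lemma \ref{KJ-Lem1}) and obtains the lower bound \eqref{k234}; then, eliminating $|u_q|_p^p$ from the Nehari and Poho\v{z}aev identities, estimating the Coulomb term by $C a^3|\nabla u_q|_2$ and applying Young's inequality, it arrives at an inequality of the form $0\le -c_p|\nabla u_q|_2^2+Ca^6$. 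Since the lower bound \eqref{k234} on $|\nabla u_q|_2$ blows up as $a\to0$ while $Ca^6\to0$, the smallness of $a$ produces a strict, quantitative contradiction that is insensitive to all the $o(1)$ errors coming from $q\to\overline q$ and $\lambda_q\to0$. In short: your scheme can probably be repaired by quantifying the discarded $L^q$-term, but as written the decisive step fails precisely in the equality case of \eqref{k201}, whereas the paper's proof avoids this by trading the \eqref{k201}-incompatibility for the small-$a$ argument.
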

\begin{proof}
Firstly, since $u_q$ is a solution of Eq.~\eqref{k1}, then $u_q$ satisfies
\begin{align}
|\nabla u_q|_2^2+ \lambda \int_{\mathbb{R}^3} |u_q|^{2}dx  +\int_{\mathbb{R}^3} \int_{\mathbb{R}^3} \frac{|u_q(x)|^{2}|u_q(y)|^2} {|x-y|}dxdy
-\int_{\mathbb{R}^3}|u_q|^{p}dx-\mu \int_{\mathbb{R}^3}|u_q|^{q}dx&=0,\label{k230}\\
 P_q(u_q)=|\nabla u_q|_2^2+ \frac{1}{4}\int_{\mathbb{R}^3} \int_{\mathbb{R}^3} \frac{|u_q(x)|^{2}|u_q(y)|^2} {|x-y|}dxdy-\gamma_p \int_{\mathbb{R}^3}|u_q|^{p}dx-\mu \gamma_q \int_{\mathbb{R}^3}|u_q|^{q}dx&=0.\label{k231}
\end{align}
 By eliminating $\int_{\mathbb{R}^3}|u|^{q}dx$
  from  \eqref{k230} and \eqref{k231}, one infers that
  \begin{align*}
(1-\gamma_q)|\nabla u_q|_2^2= \lambda \gamma_q \int_{\mathbb{R}^3} |u_q|^{2}dx
+\big(\gamma_q-\frac{1}{4}\big) \int_{\mathbb{R}^3} \int_{\mathbb{R}^3} \frac{|u_q(x)|^{2}|u_q(y)|^2} {|x-y|}dxdy +\mu ( \gamma_p-\gamma_q) \int_{\mathbb{R}^3}|u|^{p}dx,
\end{align*}
which and Lemma \ref{KJ-Lem2} show that
  \begin{align*}
\lim_{q\rightarrow \overline{q}}\lambda_q \leq \frac{2}{3 a^2}\lim_{q\rightarrow \overline{q}}|\nabla u_q|_2^2\leq \frac{2}{3 a^2}C:=\Lambda.
\end{align*}
Next, we prove that $\lim_{q\rightarrow \overline{q}}\lambda_q >0$. Assume to the contrary that, up to a subsequence,
\begin{align}\label{k232}
\lim_{q\rightarrow \overline{q}}\lambda_q =0.
\end{align}
From \eqref{k231} and the Gagliardo-Nirenberg inequality, we know
$$
 |\nabla u_q|_2^2 -\gamma_pC_p^pa^{p(1-\gamma_p)} |\nabla u_q|_2^{p\gamma_p}  -\mu \gamma_q C_q^q a^{q(1-\gamma_q)} |\nabla u_q|_2^{q\gamma_q}\leq 0,
 $$
 which implies that
 \begin{align}\label{k233}
 (1-\frac{3}{5}\mu C_{\overline{q}}^{\overline{q}}a^{\frac{4}{3}})\lim_{q\rightarrow \overline{q}}|\nabla u_q|_2^2\leq \gamma_pC_p^p a^{p(1-\gamma_p)} \lim_{q\rightarrow \overline{q}}|\nabla u_q|_2^{p\gamma_p}.
 \end{align}
 We claim that $\lim_{q\rightarrow \overline{q}}|\nabla u_q|_2^2>0$. In fact, if we assume that $\lim_{q\rightarrow \overline{q}}|\nabla u_q|_2^2=0$, then by eliminating $\int_{\mathbb{R}^3} \int_{\mathbb{R}^3} \frac{|u(x)|^{2}|u(y)|^2} {|x-y|}dxdy$ from \eqref{k230} and  \eqref{k231}, in view  of \eqref{k232}, one has
\begin{align*}
0=\frac{3}{4} \lim_{q\rightarrow \overline{q}}|\nabla u_q|_2^2
=\big(\gamma_p-\frac{1}{4}\big)\lim_{q\rightarrow \overline{q}}\int_{\mathbb{R}^3}|u_q|^{p}dx+\mu  \frac{7}{20} \lim_{q\rightarrow \overline{q}}\int_{\mathbb{R}^3}|u_q|^{q}dx,
\end{align*}
 which implies that
 $$
 \lim_{q\rightarrow \overline{q}}\int_{\mathbb{R}^3}|u_q|^{p}dx=0 \quad\quad \text{and} \quad\quad \lim_{q\rightarrow \overline{q}}\int_{\mathbb{R}^3}|u_q|^{q}dx=0.
 $$
 And then,  we deduce from \eqref{k231} that
 $ \lim_{q\rightarrow\overline{q}}\int_{\mathbb{R}^3} \int_{\mathbb{R}^3} \frac{|u_q(x)|^{2}|u_q(y)|^2} {|x-y|}dxdy=0$. Hence, $ \lim_{q\rightarrow\overline{q}}c_{q, r}=\lim_{q\rightarrow\overline{q}} J_q(u_q)=0$, which contradicts \eqref{k223}.  Then the claim follows.
Hence, it follows from \eqref{k201} and \eqref{k233} that
  \begin{align}\label{k234}
 \lim_{q\rightarrow\overline{q}} |\nabla u_q|_2^{p\gamma_p-2}\geq \frac{1-\mu \gamma_{\overline{q}}C_{\overline{q}}^{\overline{q}} a^{\frac{4}{3}}}{\gamma_p C_p^p a^{p(1-\gamma_p)}}>0.
\end{align}
By eliminating $\int_{\mathbb{R}^3}|u|^{p}dx$
  from  \eqref{k230} and \eqref{k231}, due to  Lemma \ref{gle4},   \eqref{k10},  \eqref{k234} and the Young's inequality, one infers that
  \begin{align*}
 0\leq\mu ( \gamma_p-\gamma_q) \int_{\mathbb{R}^3}|u_q|^{q}dx=&(\gamma_p-1)|\nabla u_q|_2^2
+(\gamma_p-\frac{1}{4}) \int_{\mathbb{R}^3} \int_{\mathbb{R}^3} \frac{|u_q(x)|^{2}|u_q(y)|^2} {|x-y|}dxdy\\
 \leq&(\gamma_p-1)|\nabla u_q|_2^2
+C|\nabla u_q|_2 |u_q|_2^3\\
 \leq& -\frac{3(6-p)}{2(p-2)} |\nabla u_q|_2^2 +C a^6\\
  \leq& -\frac{3(6-p)}{2(p-2)} \Big(\frac{1-\mu \gamma_{\overline{q}}C_{\overline{q}}^{\overline{q}} a^{\frac{4}{3}}}{\gamma_p C_p^p a^{p(1-\gamma_p)}} \Big)^{\frac{p\gamma_p-2}{2}}+C a^6.
\end{align*}
 When $a\in(0, \widetilde{\kappa})$ small enough, the right-hand side of the above is negative, which is a contradiction. Then, we get $\lim_{q\rightarrow \overline{q}}\lambda_q >0$. Hence, we complete the proof.
\end{proof}

\begin{lemma}\label{KJ-Lem4}
Assume that $\mu>0$ and $\frac{10}{3}=\overline{q}<q<p<6$. Let $(\lambda_q, u_q )$ be the radial mountain pass type normalized solution of Eq.~\eqref{k1} at the level $c_{q,r}$,
 there holds
\begin{align}\label{k221}
\limsup_{q\rightarrow\overline{q}}\|u_q\|_{\infty}\leq C.
\end{align}
Moreover,
\begin{align}\label{k222}
\liminf_{q\rightarrow\overline{q}}\|u_q\|_{\infty}>0.
\end{align}
\end{lemma}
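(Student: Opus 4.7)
The plan is to prove the two estimates \eqref{k221} and \eqref{k222} independently, exploiting the uniform controls already established in Lemmas \ref{KJ-Lem2} and \ref{KJ-Lem3}. For the upper bound \eqref{k221}, the idea is to show that $u_q$ solves a Schr\"odinger-type equation with uniformly bounded coefficients and a uniformly controllable nonlinearity, so that a standard Br\'ezis--Kato/Moser iteration yields a $q$-independent $L^\infty$ bound. For the lower bound \eqref{k222}, I would argue by contradiction, converting vanishing in $L^\infty$ into vanishing of $\mathcal{J}_q(u_q)$, which contradicts Lemma \ref{KJ-Lem1}.

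\textbf{Proof of \eqref{k221}.} First, the mass constraint $|u_q|_2 = a$ together with Lemma \ref{KJ-Lem2} gives a uniform $H^1(\mathbb{R}^3)$ bound, hence by Sobolev a uniform bound on $|u_q|_s$ for all $s \in [2,6]$. Set $\phi_q(x) := \int_{\mathbb{R}^3} |u_q(y)|^2 |x-y|^{-1}\, dy$. Splitting $|x|^{-1} = |x|^{-1}\chi_{\{|x|\le 1\}} + |x|^{-1}\chi_{\{|x|>1\}}$, the first summand lies in every $L^r$ with $r<3$ and the second in every $L^r$ with $r>3$; since $|u_q|^2$ is uniformly bounded in every $L^t$ with $t\in[1,3]$, Young's inequality provides $\|\phi_q\|_\infty \le C$ uniformly in $q$. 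By Lemma \ref{KJ-Lem3} one has $0<\lambda_q<\Lambda$. Rewriting Eq.~\eqref{k1} as
\[
-\Delta u_q + (\lambda_q + \phi_q)\, u_q = |u_q|^{p-2}u_q + \mu |u_q|^{q-2}u_q,
\]
the potential $\lambda_q+\phi_q$ is uniformly bounded in $L^\infty$ and the right-hand side is uniformly bounded in $L^{6/(p-1)}$. A Br\'ezis--Kato iteration (testing the equation with $u_q \min\{|u_q|^{2\beta}, M^{2\beta}\}$ and letting $\beta$ grow) promotes $u_q$ to every $L^r$ with a constant independent of $q$, after which $W^{2,r}\hookrightarrow L^\infty$ for $r>3/2$ gives \eqref{k221}.

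\textbf{Proof of \eqref{k222} and main obstacle.} Suppose towards a contradiction that $\|u_{q_n}\|_\infty \to 0$ along some $q_n \to \overline{q}$. Using the trivial interpolation $|u|_r^r \le \|u\|_\infty^{r-2}|u|_2^2 = a^2\|u\|_\infty^{r-2}$ for $r>2$, we get both $|u_{q_n}|_p^p \to 0$ and $|u_{q_n}|_{q_n}^{q_n} \to 0$. The Pohozaev identity $P_{q_n}(u_{q_n})=0$ then forces
\[
|\nabla u_{q_n}|_2^2 + \tfrac{1}{4}\int_{\mathbb{R}^3}\!\!\int_{\mathbb{R}^3}\frac{|u_{q_n}(x)|^2|u_{q_n}(y)|^2}{|x-y|}\,dx\,dy = \gamma_p |u_{q_n}|_p^p + \mu\gamma_{q_n}|u_{q_n}|_{q_n}^{q_n}\longrightarrow 0,
\]
so both $|\nabla u_{q_n}|_2\to 0$ and the Coulomb integral tend to $0$. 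Consequently
\[
c_{q_n,r} = \mathcal{J}_{q_n}(u_{q_n}) = \tfrac{1}{2}|\nabla u_{q_n}|_2^2 + \tfrac{1}{4}\!\int_{\mathbb{R}^3}\!\!\int_{\mathbb{R}^3}\frac{|u_{q_n}(x)|^2|u_{q_n}(y)|^2}{|x-y|}dx\,dy - \tfrac{1}{p}|u_{q_n}|_p^p - \tfrac{\mu}{q_n}|u_{q_n}|_{q_n}^{q_n}\longrightarrow 0,
\]
contradicting $\liminf_{q\to\overline{q}} c_{q,r}>0$ from Lemma \ref{KJ-Lem1}. The only delicate point is the Br\'ezis--Kato step for the upper bound: since $p<6$ stays fixed and $q_n$ stays in a compact subinterval of $(\overline{q},p)$, the Moser constants can be chosen uniformly in $q$, and finitely many iterations suffice to overcome the critical-growth threshold—this is where the uniformity in $q$ is genuinely exploited.
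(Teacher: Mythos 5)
Your proof of the lower bound \eqref{k222} is essentially the same as the paper's: interpolate against the $L^\infty$ norm to kill $|u_q|_p^p$, $|u_q|_q^q$ and the Coulomb term, use $P_q(u_q)=0$ to force $|\nabla u_q|_2\to 0$, and contradict $\liminf_{q\to\overline q}c_{q,r}>0$ from Lemma \ref{KJ-Lem1}. For the upper bound \eqref{k221}, however, you take a genuinely different route. The paper argues by contradiction with a blow-up rescaling: assuming $\|u_q\|_\infty\to\infty$ it sets $v_q=\|u_q\|_\infty^{-1}u_q(\|u_q\|_\infty^{(2-p)/2}x)$, uses the uniform bound $0<\lambda_q<\Lambda$ from Lemma \ref{KJ-Lem3} to see that the rescaled lower-order coefficients stay bounded, passes to a $C^2_{loc}$ limit solving $-\Delta v=|v|^{p-2}v$ in $\mathbb{R}^3$, and invokes the Liouville theorem of \cite{1982-PRSEA-E-L} to contradict $v(0)=1$. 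You instead prove a direct a priori bound: the uniform $H^1$ control from Lemma \ref{KJ-Lem2} and the kernel splitting $|x|^{-1}=|x|^{-1}\chi_{\{|x|\le 1\}}+|x|^{-1}\chi_{\{|x|>1\}}$ give $\|\,|x|^{-1}\ast|u_q|^2\|_\infty\le C$ uniformly, so $u_q$ solves a Schr\"odinger equation with uniformly bounded nonnegative potential and a nonlinearity whose "potential part" $|u_q|^{p-2}+\mu|u_q|^{q-2}$ is uniformly bounded in some $L^{s_0}$ with $s_0>3/2$ (here it matters that $p<6$ is fixed and $q$ stays near $\overline q$, away from the critical exponent), and a Br\'ezis--Kato/Moser iteration plus local $W^{2,r}$ estimates and translation invariance yields a $q$-independent $L^\infty$ bound. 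Both arguments are correct and consume the same inputs (Lemmas \ref{KJ-Lem2} and \ref{KJ-Lem3}); the paper's blow-up argument is shorter on the page but indirect and leans on an external Liouville theorem, while your iteration is more quantitative, producing an explicit bound in terms of the uniform $H^1$ bound and $\Lambda$, at the cost of having to verify (as you correctly note) that the Moser constants can be chosen uniformly because the exponents stay in a compact set strictly below $6$.
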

\begin{proof}
Let $u_q$ be the radial mountain pass type normalized solution of Eq.~\eqref{k1} at the level $c_{q,r}$. Without loss of generality, we assume $u_q(0)=\max_{x\in \mathbb{R}^3}u_q(x)$. Since the multiplier $\lambda_q >0$,  by the regularity theory of elliptic partial differential equations, we have  $u_q\in C_{loc}^{1, \alpha}(\mathbb{R}^3)$  for some $\alpha\in (0, 1)$. Assume to the
contrary that there is a subsequence, denoted still by $\{u_q\}$, such that
\begin{align*}
\limsup_{q\rightarrow\overline{q}}\|u_q\|_{\infty}=\infty.
\end{align*}
Define
\begin{align*}
v_q=\frac{1}{\|u_q\|_{\infty}}u_q(\|u_q\|_{\infty}^{\frac{2-p}{2}}x) \quad \quad \text{for any}\ x\in \mathbb{R}^3,
\end{align*}
then $v_q(0)=1$ and $\|v_q\|_{\infty}\leq 1$. By a direct calculation, $v_q$ satisfies
\begin{align}\label{k235}
-\Delta v_q+\left(\lambda_q |u_q|_{\infty}^{2-p}+|u_q|_{\infty}^{6-2p}(|x|^{-1}\ast |v_q|^2)-\mu|u_q|_{\infty}^{q-p}|v_q|^{q-2}\right)v_q=|v_q|^{p-2}v_q.
\end{align}
In view of Lemma \ref{KJ-Lem3}, we have
\begin{align*}
\max_{x\in \mathbb{R}^3}\Big| \lambda_q |u_q|_{\infty}^{2-p}+|u_q|_{\infty}^{6-2p}(|x|^{-1}\ast |v_q|^2)-\mu|u_q|_{\infty}^{q-p}|v_q|^{q-2} \Big|<C
\end{align*}
where $C$ is a constant independence of $q$. Then we deduce from  $|v_q |_{\infty} \leq 1$ and the regularity theory of elliptic partial differential equations that $v_q\in C_{loc}^{2, \beta}(\mathbb{R}^3)$
 for some $\beta\in (0, 1)$. Moreover, there is a
constant $C > 0$ independent of $q$ such that
$$
|v_q|_{C_{loc}^{2, \beta}(\mathbb{R}^3)}\leq C.
$$
Hence, there is $v\in C_{loc}^{2, \beta}(\mathbb{R}^3)$  such that, up to a subsequence,
$$
v_q\rightarrow v \quad\quad \text{in}\ C_{loc}^{2}(\mathbb{R}^3)
$$
as $q\rightarrow \overline{q}$. Then it follows from \eqref{k235} that $v$ satisfies
$$
-\Delta v=|v|^{p-2}v\quad\quad \text{in} \ \mathbb{R}^3.
$$
Hence, the  Liouville theorem \cite{1982-PRSEA-E-L} implies that $v(x) = 0$ for any $x \in \mathbb{R}^3 $, which
contradicts the fact $v(0)= \lim_{q\rightarrow \overline{q}}v_q(0)=1$. Hence, \eqref{k221} follows. Now, we prove \eqref{k222}. Assume to the contrary that there is a subsequence, denoted still by
$\{u_q \}$, such that
$$
\liminf_{q\rightarrow\overline{q}}\|u_q\|_{\infty}=0.
$$
Thus, for any $t>2$, one has
$$
\int_{\mathbb{R}^3}|u_q|^tdx\leq |u_q|_{\infty}^{t-2}a^2\rightarrow 0 \quad\quad \text{as}\ q\rightarrow \overline{q}.
$$
and then
$$
\int_{\mathbb{R}^3}\int_{\mathbb{R}^3}\frac{|u_q(x)|^2|u_q(y)|^2}{|x-y|}dxdy\leq C|u_q|_{\frac{12}{5}}^4\rightarrow 0 \quad\quad \text{as}\ q\rightarrow \overline{q}.
$$
By \eqref{k231}, we have $|\nabla u_q|_2^2\rightarrow 0$ as $q\rightarrow \overline{q}$. Hence
$$
\lim_{q\rightarrow \overline{q}}c_{q,r}=\lim_{q\rightarrow \overline{q}} J_{q}(u_q)=0,
$$
which contradicts Lemma \ref{KJ-Lem1}. Thus, \eqref{k222} holds. We complete the proof.
\end{proof}
\begin{lemma}\label{KJ-Lem5}
Assume that $\mu>0$ and $\frac{10}{3}=\overline{q}<q<p<6$. Let $(\lambda_q, u_q )$  be the radial mountain pass type normalized solution of Eq.~\eqref{k1} at the level $c_{q,r}$. Then,
\begin{align}\label{k236}
|u_q(x)|\leq A e^{-c|x|},
\end{align}
 where constants $A$ and $c$ are independent of $q$. Moreover,
up to a subsequence, as $q\rightarrow \overline{q}$,
$$
u_q\rightarrow u \quad\quad \text{in} \ L^2(\mathbb{R}^3).
$$
\end{lemma}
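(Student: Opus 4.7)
\medskip

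\noindent\textbf{Proof plan for Lemma \ref{KJ-Lem5}.} The plan is to treat the two assertions separately, using only facts already established: the uniform $H^1_r$ bound (Lemma \ref{KJ-Lem2}), the uniform $L^\infty$ bound (Lemma \ref{KJ-Lem4}), the uniform positivity $\lambda_q\ge\lambda_*>0$ for some $\lambda_*$ independent of $q$ (Lemma \ref{KJ-Lem3}), and the non-negativity of the Coulomb potential $(|x|^{-1}\ast|u_q|^2)$ from Lemma \ref{gle4}(1). Since $u_q$ is a radial $H^1$ solution, by Strauss' radial lemma together with the uniform $H^1_r$ bound we have $|u_q(x)|\le C|x|^{-1}\|u_q\|$ for $|x|\ge 1$ with $C$ independent of $q$; in particular there exists $R_0>0$, independent of $q$ along the subsequence, such that
\begin{equation*}
\mu|u_q(x)|^{q-2}+|u_q(x)|^{p-2}\le \frac{\lambda_*}{2}\le \frac{\lambda_q}{2}\qquad\text{for all } |x|\ge R_0.
\end{equation*}

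For the decay estimate, using the equation satisfied by $u_q$ and dropping the non-negative term $(|x|^{-1}\ast|u_q|^2)u_q$, the inequality above gives the distributional differential inequality
\begin{equation*}
-\Delta u_q+\frac{\lambda_q}{2}\,u_q\le 0\qquad\text{in } \{|x|>R_0\}.
\end{equation*}
I then introduce the explicit radial barrier $\Phi(x):=M\,e^{-c(|x|-R_0)}$ with $c:=\sqrt{\lambda_*/4}>0$ and $M:=\sup_q\|u_q\|_\infty$ (finite by Lemma \ref{KJ-Lem4}). A direct computation shows $-\Delta\Phi+\tfrac{\lambda_q}{2}\Phi\ge 0$ in $|x|>R_0$ since $c^2<\lambda_q/2$, and $\Phi\ge u_q$ on $\partial B_{R_0}$. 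A standard comparison argument based on testing with $(u_q-\Phi)_+\in H^1_0(\{|x|>R_0\})$ then yields $u_q\le \Phi$ on $\{|x|\ge R_0\}$, and adjusting $A,c$ to absorb the estimate on $B_{R_0}$ gives the uniform bound $|u_q(x)|\le A\,e^{-c|x|}$ with constants independent of $q$.

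For the $L^2$ convergence, by Lemma \ref{KJ-Lem2}, $\{u_q\}$ is bounded in $\mathcal H_r$, so up to a subsequence $u_q\rightharpoonup u$ in $\mathcal H_r$ and $u_q\to u$ a.e.\ in $\mathbb{R}^3$. Fatou's lemma with the pointwise bound \eqref{k236} gives $|u(x)|\le A\,e^{-c|x|}$ as well. Splitting
\begin{equation*}
\int_{\mathbb{R}^3}|u_q-u|^2dx=\int_{B_R}|u_q-u|^2dx+\int_{\mathbb{R}^3\setminus B_R}|u_q-u|^2dx,
\end{equation*}
the first integral vanishes as $q\to\overline q$ by the Rellich--Kondrachov compactness on $B_R$, while the second is controlled uniformly in $q$ by
\begin{equation*}
\int_{\mathbb{R}^3\setminus B_R}|u_q-u|^2dx\le 2\int_{\mathbb{R}^3\setminus B_R}(|u_q|^2+|u|^2)\,dx\le 4A^2\int_{\mathbb{R}^3\setminus B_R}e^{-2c|x|}dx,
\end{equation*}
which tends to $0$ as $R\to\infty$ thanks to the uniform exponential decay. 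Letting first $q\to\overline q$ and then $R\to\infty$ yields $u_q\to u$ in $L^2(\mathbb{R}^3)$.

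The only nontrivial step is the exponential decay: once one knows the uniform smallness of $u_q$ at infinity (from the radial Strauss bound) and the uniform positivity of $\lambda_q$, the argument is textbook comparison. The main technical care is to ensure that the barrier $\Phi$ can be chosen with constants $A,c$ independent of $q$, which is precisely why the uniform lower bound $\lambda_q\ge\lambda_*>0$ from Lemma \ref{KJ-Lem3} and the uniform $L^\infty$ bound from Lemma \ref{KJ-Lem4} are invoked.
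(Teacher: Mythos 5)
Your proof is correct and follows essentially the same route as the paper, which omits the argument and refers to \cite[Lemma 3.6]{2023-JDE-Qi}: the standard scheme of a uniform radial (Strauss) smallness estimate at infinity from the uniform $\mathcal{H}_r$ bound of Lemma \ref{KJ-Lem2}, the uniform bounds on $\lambda_q$ and $\|u_q\|_\infty$ from Lemmas \ref{KJ-Lem3} and \ref{KJ-Lem4}, an exponential comparison barrier outside a large ball, and a tail-splitting argument combining local compactness with the uniform decay to get $L^2(\mathbb{R}^3)$ convergence. The only points worth stating explicitly are that discarding the Coulomb term uses $u_q>0$, and that the bounds $\lambda_q\ge\lambda_*>0$ and $\|u_q\|_\infty\le M$ are available uniformly only for $q$ sufficiently close to $\overline{q}$ along the chosen subsequence, which is all the statement requires.
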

\begin{proof}
The proof is similar to \cite[Lemma 3.6]{2023-JDE-Qi}, so we omit it here.
\end{proof}

 \noindent{\bf{Proof of Theorem \ref{K-TH14}.}} Let $(\lambda_q, u_q )$ be the radial mountain pass type normalized solution of
 Eq.~\eqref{k1} at the level $c_{q,r}$. In view of Lemmas \ref{KJ-Lem2}, \ref{KJ-Lem3}  and \ref{KJ-Lem5}, there exist $u\in  S_a$, $\lambda> 0$  and a
subsequence, denoted still by $\{(\lambda_q, u_q )\}$, such that, as $q\rightarrow \overline{q}$,
$$
u_q\rightharpoonup u \quad\quad \text{in}\  \mathcal{H}, \quad\quad
u_q \rightarrow u \quad\quad \text{in}\ L^2(\mathbb{R}^3) \cap C_{loc} (\mathbb{R}^3)
$$
and
\begin{align}\label{k239}
\lambda_q\rightarrow \lambda.
\end{align}
Thus, for any $t > 2$, one has
\begin{align}\label{k238}
\int_{\mathbb{R}^3}|u_q-u|^tdx\leq |u_q-u|_{\infty}^{s-2}\int_{\mathbb{R}^3}|u_q-u|^2dx\rightarrow 0\quad\quad \text{as}\  q\rightarrow \overline{q}.
\end{align}
It follows from \eqref{k236} that for any $\varphi\in \mathcal{H} $ and $\varepsilon> 0$, there is an $R > 0$ such that
$$
\Re\int_{\mathbb{R}^3\backslash B_R}|u_q|^{q-2}u_q \overline{\varphi } dx\leq\frac{\varepsilon}{4}, \quad \quad \Re\int_{\mathbb{R}^3\backslash B_R}|u|^{\overline{q}-2}u \overline{\varphi} dx\leq\frac{\varepsilon}{4}, \quad \forall \ q\rightarrow \overline{q}.
$$
The  Vitali convergence theorem  and Lemma \ref{KJ-Lem4} imply that
$$
 \int_{B_R}|(|u_q|^{q-2}u_q - |u|^{\overline{q}-2}u)\overline{\varphi}|dx\leq \frac{\varepsilon}{2}, \quad \forall \ q\rightarrow \overline{q}.
$$
Hence,
\begin{align}\label{k237}
\lim_{q\rightarrow \overline{q}} \Re\int_{\mathbb{R}^3} |u_q|^{q-2}u_q\overline{\varphi} dx=\Re\int_{\mathbb{R}^3} |u|^{\overline{q}-2}u\overline{\varphi} dx, \quad \forall \ \varphi \in \mathcal{H}.
\end{align}
As a consequence, by  Lemma \ref{gle4}, \eqref{k239}, \eqref{k238} and \eqref{k237}, $u$ satisfies
$$
-\Delta u+\lambda u+(|x|^{-1}\ast |u|^2)u=\mu|u|^{\overline{q}-2}u+|u|^{p-2}u\quad\quad \text{in} \ \mathbb{R}^3,
$$
where $|u|_2^2=a^2$. Now, we claim that
\begin{align}\label{k241}
\lim_{q\rightarrow \overline{q}} \int_{\mathbb{R}^3}|u_q|^qdx=\int_{\mathbb{R}^3}|u|^{\overline{q}}dx.
\end{align}
Indeed, on the one hand, by the Fatou lemma, we have
$$
\int_{\mathbb{R}^3}|u|^{\overline{q}}dx \leq \liminf_{q\rightarrow \overline{q}} \int_{\mathbb{R}^3}|u_q|^qdx.
$$
On the other hand, the Young inequality implies
$$
\int_{\mathbb{R}^3}|u_q|^qdx\leq \frac{q-\overline{q}}{p-\overline{q}}\int_{\mathbb{R}^3}|u_q|^pdx
+\frac{p-q}{p-\overline{q}}\int_{\mathbb{R}^3}|u_q|^{\overline{q}}dx.
$$
From \eqref{k238}, one has
$$
\limsup_{q\rightarrow \overline{q}} \int_{\mathbb{R}^3}|u_q|^qdx
\leq \int_{\mathbb{R}^3}|u|^{\overline{q}}dx.
$$
Therefore, the claim holds.
By \eqref{k238}, \eqref{k241} and Lemma \ref{KJ-Lem1}, one obtains
\begin{align*}
c_{\overline{q},r}&\geq \lim_{q\rightarrow \overline{q}}c_{q,r}\\
&=\lim_{q\rightarrow \overline{q}}\mathcal{J}_q(u_q)\\
&=\lim_{q\rightarrow \overline{q}}\left(\frac{1}{2}\int_{\mathbb{R}^3}|\nabla u_q|^2dx+\frac{1}{4}\int_{\mathbb{R}^3}\int_{\mathbb{R}^3} \frac{|u_q(x)|^{2}|u_q(y)|^2} {|x-y|}dxdy
-\frac{1}{p}\int_{\mathbb{R}^3}|u_q|^{p}dx
-\frac{\mu}{q}\int_{\mathbb{R}^3}|u_q|^{q}dx \right)\\
& \geq\frac{1}{2}\int_{\mathbb{R}^3}|\nabla u|^2dx+\frac{1}{4}\int_{\mathbb{R}^3}\int_{\mathbb{R}^3} \frac{|u(x)|^{2}|u(y)|^2} {|x-y|}dxdy
-\frac{1}{p}\int_{\mathbb{R}^3}|u|^{p}dx
-\frac{\mu}{\overline{q}}\int_{\mathbb{R}^3}|u|^{\overline{q}}dx\\
 &=\mathcal{J}_q(u)\\
&\geq c_{\overline{q}, r},
\end{align*}
which implies that $u_q\rightarrow u$ in $\mathcal{H}$ as $q\rightarrow \overline{q}$ and $\mathcal{J}_q(u)= c_{\overline{q}, r}$. Hence, we complete the proof.

\subsection{The orbital stability  of ground state set   $\mathcal{M}_a$}
In this subsection, we focus on the properties of ground states in the case $\mu>0$, $q\in(2, \frac{12}{5}]$ and $p\in(\frac{10}{3}, 6)$. In the first step, we provide the structure of the ground state set $\mathcal{M}_a$. And then,  the orbital stability of  $\mathcal{M}_a$ is analyzed.
The proof of Theorem \ref{K-TH3} is inspired by the classical Cazenave-Lions' stability argument \cite{1982-CMP-CL}, further developed by \cite{2004-ANS-Ha}. \\

 \noindent{\bf{Proof of Theorem \ref{K-TH3}.}}
 We   first describe the characteristic of $\mathcal{M}_a$  as
\begin{align}\label{k94}
\mathcal{M}_a=\big\{ e^{i \theta}|u|:  \ \theta\in \mathbb{R}, \ |u|\in D_{\rho_0},\ \mathcal{J}(|u|)=m(a)\ \text{and}\   |u| >0 \ \text{in}\ \mathbb{R}^3\big\},
\end{align}
where $\mathcal{M}_a=\{u\in  \mathcal{H}: u\in D_{\rho_0}, \ \mathcal{J}(u)=m(a)\}$.
For any $z\in \mathcal{H}$,   let $z(x)=(v(x),w(x))= v(x)+iw(x)$, where $  v, w\in \mathcal{H}$ are real-valued functions   and
$$
\|z\|^2=|\nabla z|_2^2+| z|_2^2, \quad\quad | z|_2^2=|v|_2^2+| w|_2^2\quad \quad \text{and}\quad \quad  | \nabla z|_2^2=|\nabla v|_2^2+|\nabla w|_2^2.
$$
On the one hand, taking $z=(v,w)\in \mathcal{M}_a=\{u\in  \mathcal{H}: u\in D_{\rho_0}, \ \mathcal{J}(u)=m(a)\}$,   we see from \cite[Theorem 3.1]{2004-ANS-Ha} that
$|z|\in D_{\rho_0},\ \mathcal{J}(|z|)=m(a)$ and $ | \nabla |z||_2^2=| \nabla z|_2^2$.
Then, by the fact that $ | \nabla |z||_2^2-| \nabla z|_2^2=0$ one obtains
\begin{align}\label{h92}
\int_{\mathbb{R}^3}\sum_{i=1}^3 \frac{(v \partial_i w-w\partial_i v)^2}{v^2+w^2}dx =0.
\end{align}
Hence, from \cite[Theorem 4.1]{2004-ANS-Ha}, we know that
 \begin{itemize}
   \item [$(i)$] either $v\equiv 0$ or $v(x) \neq   0$ for all $x \in \mathbb{R}^3$;
   \item [$(ii)$] either $w\equiv 0$ or $w(x) \neq   0$ for all $x \in \mathbb{R}^3$.
 \end{itemize}
  let   $u:= |z|$, then we have $J(u)=m(a)$ and $u\in D_{\rho_0}$.   If $w\equiv 0$, then   we deduce that $u:= |z|=|v|>0$ on $\mathbb{R}^3$  and $z=   e^{i \theta} |u| $ where  $\theta =0$ if $v>0$ and   $\theta =\pi$ if $v<0$ on $\mathbb{R}^3$.  Otherwise, it follows from $(ii)$  that $w(x)\neq 0$ for all $x\in\mathbb{R}^3$. Since
\begin{align*}
  \frac{(v \partial_i w-w\partial_i v)^2}{v^2+w^2}=\Big[\partial_i\big (\frac{v}{w}\big)  \Big ]^2 \frac{w^2}{v^2+w^2}\quad\quad \text{where}\ i=1,2, 3,
\end{align*}
for all $x\in \mathbb{R}^3$,    we get from \eqref{h92} that $\nabla (\frac{v}{w})=0$ on $\mathbb{R}^3$.
Therefore, there exists $ C\in \mathbb{R}$ such that $v=C w$ on  $\mathbb{R}^3$. Then we have
\begin{align}\label{h93}
z=(v,w)=v+iw=(C+i)w \quad\quad\text{and}\quad\quad |z|= |C+i||w|.
\end{align}
Let $\theta_1\in \mathbb{R}$ be such that $C+i= |C+i| e^{i \theta_1}$ and let $w=|w| e^{i \theta_2} $ with
\begin{align*} \theta_2=
\begin{cases}
0, \quad\quad &\text{if}\ w>0;\\
\pi, \quad\quad &\text{if}\ w<0.
\end{cases}
\end{align*}
 Then we can see from \eqref{h93} that $ z=(C+i)w=|C+i| |w|e^{i (\theta_1+ \theta_2)} = |z|e^{i (\theta_1+ \theta_2)}$. Setting $\theta=\theta_1+\theta_2$ and $u:=|z|$, then $|u|>0 $ and $z= e^{i \theta }|u|$.
Thus,
$$
\mathcal{M}_a\subseteq \big\{ e^{i \theta}|u|:  \ \theta\in \mathbb{R}, \ |u|\in D_{\rho_0},\ \mathcal{J}(|u|)=m(a)\ \text{and}\   |u| >0 \ \text{in}\ \mathbb{R}^3\big\}.
$$
On the other hand, let $z= e^{i \theta}|u|$ with  $\theta\in \mathbb{R}$, $|u|\in D_{\rho_0}$,  $\mathcal{J}(|u|)=m(a)$    and $ |u|>0$ in $\mathbb{R}^3$, then we have $|z|_2^2=a^2$, $|\nabla z|_2^2=|\nabla |u||_2^2 \leq \rho_0^2$  and $\mathcal{J}(z)=\mathcal{J}(u)= m(a)$,
 which implies that
 $$
  \big\{ e^{i \theta}|u|:  \ \theta\in \mathbb{R}, \ |u|\in D_{\rho_0},\ \mathcal{J}(|u|)=m(a)\ \text{and}\   |u| >0 \ \text{in}\ \mathbb{R}^3\big\}\subseteq  \mathcal{M}_a.
 $$
Hence, \eqref{k94} follows.

 Next,  we prove that $  \mathcal{M}_a$  is orbitally stable. Under the assumptions of Theorem \ref{K-TH3}, it is known that in this case, the Cauchy problem associated with \eqref{kk1} is locally  well posed in $\mathcal{H}$, see \cite{2003-C}.
That is, let $ \varphi(t,x)$  be  the unique solution of  the  initial-value problem \eqref{kk1}
with initial datum $u_0$ on $(-T_{min}, T_{max})$,  it holds that
\begin{align}\label{kkk80}
|u_0|_2=| \varphi|_2\quad\quad \text{and} \quad \quad \mathcal{J}(u_0)=\mathcal{J}(\varphi),
\end{align}
and   either $T_{max} =+ \infty$  or if $T_{max} <+ \infty$,  $| \nabla  \varphi|_2 \rightarrow +\infty$ as $t \rightarrow T^-_{max}$.
 We assume by contradiction  that,  let $v \in \mathcal{M}_a$, there exists $\delta_n\in \mathbb{R}^+$ a decreasing sequence converging to $0$ and  $\{\varphi_n\}\in \mathcal{H}$ satisfying
$$
\|\varphi_n-v \|\leq \delta_n,
$$
but
$$
\sup_{t\in[0, T_{max})}dist( u_{\varphi_n}(t),\mathcal{M}_a)>\varepsilon_0>0,
$$
where $u_{\varphi_n}(t)$ denotes the solution to \eqref{kk1} with initial datum $ \varphi_n$.
We observe that $|\varphi_n|_2\rightarrow |v|_2=a$ as $n\rightarrow\infty$ and   $\mathcal{J}(v)$,  $\mathcal{J}(\varphi_n)\rightarrow m(a)$ as $n\rightarrow\infty$ by continuity.     From the  conservation
laws \eqref{kkk80}, for $n \in \mathbb{N}$ large enough, $u_{\varphi_n }$ will remains inside of $D_{\rho_0}$ for all $t\in [0, T_{max}]$. Indeed, if for some time
$t > 0$, $|\nabla u_{\varphi_n}(t)|_2=\rho_0$,
  then, in view of Lemma \ref{K-Lem2.5} we have that $\mathcal{J}( u_{\varphi_n})\geq0$, which is  contradict  with $ m(a)<0$. Hence, $T_{max}=+\infty$.  This shows that  solutions starting
in  $D_{\rho_0}$ are globally defined in time. That is,
  $u_{\varphi_n}(t)$ is globally defined.
   Now let $t_n > 0$ be the first time such that $dist( u_{\varphi_n}(t_n),\mathcal{M}_a)=\varepsilon_0$ and set $u_n := u_{\varphi_n}(t_n)$. By
conservation laws \eqref{kkk80}, $\{u_n\}  \subset A_{\rho_0}$ satisfies  $|u_n|_2\rightarrow a$ as $n\rightarrow\infty$
 and $\mathcal{J}(u_n)= \mathcal{J}(\varphi_n)\rightarrow m(a)$ and thus, in view of Lemma \ref{K-Lem2.12}, it converges, up to translation, to an element of  $\mathcal{M}_a$. Since  $\mathcal{M}_a$ is invariant under translation this contradicts
the equality $dist(u_n, \mathcal{M}_a)= \varepsilon_0 > 0$. Thus, we complete the proof. $\hfill\Box$

\subsection{Strong instability of the standing wave $e^{i \lambda  t} u $}
In this part, we prove that the standing wave $e^{i \lambda  t} u $  of
 Eq.~\eqref{k1} with $\lambda>0$ and $u\in \mathcal{H}_r$,    obtained in  Theorem \ref{K-TH2} (or Theorem \ref{K-TH5} or Theorem \ref{K-TH15}), is strongly unstable. In what follows, we  present the   result of finite time blow-up.
The   proof  of the finite time blow-up relies  on the classical  convexity method of Glassey \cite{1977-Glassey}, which was further   refined in  Berestycki and  Cazenave \cite{1981CRASSM-BC}.

\begin{lemma}\label{K-Lem5.1}
 Under the assumptions of Theorem \ref{K-TH2} (or Theorem \ref{K-TH5} or Theorem \ref{K-TH15}), let $u_0 \in S_{a,r}$ be such that $\mathcal{J}(u_0)<\inf_{u\in \mathcal{P}_{-,r}}$, and if $|x|u_0\in L^2(\mathbb{R}^3)$
  and $t_{u_0} < 0$, where $t_{u_0}$
is defined in Lemma \ref{K-Lem2.3} (or Lemma \ref{K-Lem4.2} or Lemma \ref{K-J1}), then the solution $\varphi$ of problem \eqref{kk1} with initial datum $u_0$ blows-up in finite time.
\end{lemma}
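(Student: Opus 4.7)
The plan is to apply Glassey's convexity (variance) argument, as refined by Berestycki and Cazenave~\cite{1981CRASSM-BC}, to the solution $\varphi$ of \eqref{kk1} with initial datum $u_0$. By the local well-posedness of \eqref{kk1} in $\mathcal{H}$, I obtain a maximal solution $\varphi\in C([0,T_{\max}),\mathcal{H})$ along which mass and energy are conserved: $|\varphi(t)|_2=a$ and $\mathcal{J}(\varphi(t))=\mathcal{J}(u_0)$. The hypothesis $|x|u_0\in L^2(\mathbb{R}^3)$ propagates in time by a standard moment estimate, so
\[
V(t):=\int_{\mathbb{R}^3}|x|^2|\varphi(t,x)|^2\,dx
\]
is of class $C^2$ on $[0,T_{\max})$, and a direct computation paralleling \eqref{kkk4} yields the virial identity $V''(t)=8\,P(\varphi(t))$, with $P$ as in \eqref{k4}. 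The task thus reduces to showing $P(\varphi(t))\le -\delta<0$ uniformly on $[0,T_{\max})$; integrating twice then forces $V$ to become negative in finite time, contradicting $V\ge 0$ and giving $T_{\max}<\infty$.

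The first step is to prove that the set
\[
\mathcal{A}:=\{u\in S_{a,r}:\mathcal{J}(u)<\sigma,\ t_u<0\},\qquad \sigma:=\inf_{\mathcal{P}_{-,r}}\mathcal{J},
\]
is invariant, where $t_u$ is the scaling parameter mapping $u$ onto $\mathcal{P}_{-,r}$ (Lemma~\ref{K-Lem2.3} for Theorem~\ref{K-TH2}, respectively Lemma~\ref{K-Lem4.2} and Lemma~\ref{KK-J1} for Theorems~\ref{K-TH5} and \ref{K-TH15}). Energy conservation keeps $\mathcal{J}(\varphi(t))<\sigma$, while the $C^1$-dependence $u\mapsto t_u$ makes $t\mapsto t_{\varphi(t)}$ continuous. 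If $t_{\varphi(t^\star)}=0$ at some first exit time $t^\star$, then $\varphi(t^\star)\in\mathcal{P}_{-,r}$ and $\mathcal{J}(\varphi(t^\star))\ge\sigma$, a contradiction; in the mountain-pass regime of Theorem~\ref{K-TH2} the separation $\sup_{\mathcal{P}_{+,r}}\mathcal{J}\le 0<\sigma$ furnished by Corollary~\ref{K-Lem2.4} additionally rules out a crossing into $\mathcal{P}_{+,r}$. Consequently $t_{\varphi(t)}<0$, and in particular $P(\varphi(t))<0$, throughout $[0,T_{\max})$.

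The delicate remaining step is to promote this strict sign to a uniform bound. Applying the concavity of the fibre map $\psi_u$ on $[t_u,+\infty)$ at $s=t_u$ yields
\[
\sigma\le\mathcal{J}(t_u\star u)\le \mathcal{J}(u)+t_u\,P(u),\qquad u=\varphi(t),
\]
so $|t_u|\,|P(u)|\ge \sigma-\mathcal{J}(u_0)=:2\delta_0>0$. To extract a lower bound on $|P(u)|$ I need an a priori upper bound on $|t_u|$. Every $v\in\mathcal{P}_{-,r}$ satisfies $|\nabla v|_2\ge c_0>0$, obtained from $P(v)=0$ together with Lemma~\ref{gle4} and the Gagliardo--Nirenberg inequality~\eqref{k10} (exactly as in the proof of Theorem~\ref{K-TH2}), so $e^{t_u}|\nabla u|_2\ge c_0$ and $|t_u|\le\log(|\nabla u|_2/c_0)$. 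A uniform bound $|\nabla\varphi(t)|_2\le M$ then follows by combining $P(\varphi(t))<0$, energy conservation $\mathcal{J}(\varphi(t))=\mathcal{J}(u_0)<\sigma$ and \eqref{k10} (this is where the strict inequality $\mathcal{J}(u_0)<\sigma$, via the sharp thresholds already exploited in Lemma~\ref{K-Lem2.1}, is crucial). Combining these gives $P(\varphi(t))\le -\delta$ for some $\delta>0$ independent of $t$, and Glassey's convexity forces $T_{\max}<\infty$. The principal obstacle I foresee is precisely this a priori $H^1$-bound in the mountain-pass regime of Theorem~\ref{K-TH2}, where the repulsive Coulomb term and the $L^2$-supercritical power $p$ must be balanced against the $L^2$-subcritical power $q$ using only $P<0$ and conservation; the argument simplifies in Theorems~\ref{K-TH5} and \ref{K-TH15}, where $\mathcal{P}=\mathcal{P}_-$ and the dichotomy between $\mathcal{P}_+$ and $\mathcal{P}_-$ is absent.
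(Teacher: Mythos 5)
Your overall strategy (virial identity $V''=8P(\varphi)$, invariance of the region $\{\mathcal{J}<\sigma,\ t_u<0\}$ by conservation laws and continuity of $u\mapsto t_u$, then a uniform bound $P(\varphi(t))\le-\delta$) is the same Berestycki--Cazenave/Glassey scheme as the paper, and the first two steps match the paper's proof. The gap is in your "delicate remaining step". From concavity in the additive variable you get $\sigma\le\mathcal{J}(u)+t_u P(u)$, hence $|t_u|\,|P(u)|\ge\sigma-\mathcal{J}(u_0)$, and you then need an upper bound on $|t_{\varphi(t)}|$, which you reduce to a uniform-in-time bound $|\nabla\varphi(t)|_2\le M$ on $[0,T_{\max})$. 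You offer no proof of this bound, and it cannot hold: the solutions in question blow up, so $|\nabla\varphi(t)|_2$ is unbounded on the maximal interval; and even arguing by contradiction under global existence, nothing in "$P<0$ plus $\mathcal{J}(\varphi(t))=\mathcal{J}(u_0)$ plus Gagliardo--Nirenberg" prevents $|\nabla\varphi(t)|_2\to\infty$ as $t\to\infty$ (the identity $\mathcal{J}-\tfrac{1}{p\gamma_p}P$ used in \eqref{k96} only bounds the gradient from above when $-P$ is controlled, which is exactly what you are trying to establish). So the step you yourself flag as the principal obstacle is indeed where the argument breaks.

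The paper sidesteps this entirely: writing $\widetilde\psi_u(s)=\psi_u(\log s)$ and using decrease/concavity beyond the maximum point $\widehat t_u=e^{t_u}\in(0,1)$, one gets
\begin{align*}
\mathcal{J}(u)=\widetilde\psi_u(1)\ \ge\ \mathcal{J}(t_u\star u)-|P(u)|\,(1-\widehat t_u)\ \ge\ \inf_{\mathcal{P}_{-,r}}\mathcal{J}+P(u),
\end{align*}
because the factor $1-\widehat t_u$ is automatically $\le 1$; hence $P(\varphi(t))\le\mathcal{J}(u_0)-\sigma=-\eta$ with no control on $t_u$ or on $|\nabla\varphi(t)|_2$ whatsoever. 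This is precisely the boundedness your additive-variable inequality loses ($|t_u|$ is unbounded, $1-e^{t_u}$ is not). Your argument could in principle be repaired without the false $H^1$ bound by a dichotomy (if $|\nabla\varphi(t)|_2$ is large, then combining $\mathcal{J}=\mathcal{J}(u_0)$ with $\mathcal{J}-\tfrac12 P$ shows $-P$ is itself large, since $p>\tfrac{10}{3}$ and $q\gamma_q<2$; if it is small, your $|t_u|$ bound applies), but as written the proposal does not close this step. A minor point: for Theorem \ref{K-TH15} the relevant fiber-map lemma is Lemma \ref{K-J1}, not Lemma \ref{KK-J1}.
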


\begin{proof}
  Under the assumptions of Theorem \ref{K-TH2} (or Theorem \ref{K-TH5} or Theorem \ref{K-TH15}),   let $u_0 \in S_a$ be such that $\mathcal{J}(u_0)<\inf_{u\in \mathcal{P}_{-,r}}$.
Since  the  initial-value problem \eqref{kk1} with initial datum $u_0$  is local well-posed on $(-T_{min}, T_{max})$ with $T_{min}, T_{max}>0 $,
that is, let $ \varphi(t,x)$  be  the solution of  the  initial-value problem \eqref{kk1}
with initial datum $u_0$ on $(-T_{min}, T_{max})$,  it holds that
\begin{align}\label{kk80}
|u_0|_2=| \varphi|_2\quad\quad \text{and} \quad \quad \mathcal{J}(u_0)=\mathcal{J}(\varphi),
\end{align}
and    if $T_{max} <+ \infty$,  $| \nabla  \varphi|_2 \rightarrow +\infty$ as $t \rightarrow T^-_{max}$.
By  $|x|u_0 \in L^2(\mathbb{R}^3 )$  and \cite[Proposition 6.5.1]{2003-C}, we get
\begin{align} \label{kk81}
H(t):=\int_{\mathbb{R}^3}|x|^2|\varphi(t,x)|^2dx<+\infty  \ \ \ \ \ \text{for all}\ t\in(-T_{min}, T_{max} ).
  \end{align}
Moreover, the function $H\in C^2(-T_{min}, T_{max} )$ and the following  Virial identity holds: $H'(t)= 4\Im  \int_{\mathbb{R}^3}\overline{\varphi} (x\cdot \nabla\varphi )dx $ and
\begin{align}\label{k82}
\ H''(t)&= 8 P(\varphi)\nonumber\\
&=8\int_{\mathbb{R}^3}|\nabla \varphi|^2dx+2 \int_{\mathbb{R}^3} \int_{\mathbb{R}^3}  \frac{|\varphi(t,x)|^2|\varphi(t,y)|^2 }{|x-y|} dxdy -8 \gamma_p\int_{\mathbb{R}^3}|\varphi|^{p}dx-8\mu \gamma_q \int_{\mathbb{R}^3}|\varphi|^{q}dx.
  \end{align}
From Lemma \ref{K-Lem2.3} (or   Lemma \ref{K-Lem4.2} or Lemma \ref{K-J1}), for any $u\in S_{a,r}$, the function $\widetilde{\psi}_u(s):=\psi_u(\log s)$ with $s>0$ has a   unique global maximum point $\widehat{t}_u=e^{t_u}$ and $\widetilde{\psi}_u(s)$ is strictly decreasing and concave on $(\widehat{t}_u, +\infty)$. According to the assumption $t_{u_0}<0$,   one obtains $ \widehat{t}_{u_0}\in(0,1)$. We claim that
\begin{align}\label{k85}
\text{if}\ u \in S_{a,r} \ \text{and}\ \widehat{t}_u\in(0,1), \ \text{then}\ P(u)\leq \mathcal{J}(u)-\inf_{\mathcal{P}_{-, r}}\mathcal{J}.
\end{align}
In fact, since $ \widehat{t}_u\in(0,1)$ and $\widetilde{\psi}_u(s)$ is strictly decreasing and concave on $(\widehat{t}_u, +\infty)$, we infer that $t_u<0$, $P(u)<0$ and
\begin{align*}
\mathcal{J}(u)=\psi_u(0) = \widetilde{\psi}_u(1)\geq  \widetilde{\psi}_u(\widehat{t}_u )-\widetilde{\psi}_u'(1)(\widehat{t}_u-1 )
=\mathcal{J}(t_u \star u) -|P(u)|(1-\widehat{t}_u )
\geq \inf_{\mathcal{P}_{-,r}}\mathcal{J} +P(u),
\end{align*}
which completes the claim.
Now, let us consider the solution $\varphi$  for the  initial-value problem \eqref{kk1} with initial datum $u_0$. Since by assumption $t_{u_0} < 0$, and the map $u \mapsto t_u$ is continuous, we deduce that  $t_{ \varphi(t)} < 0$   for every $|t|<\overline{t}$ with $\overline{t}>0$ small enough. Then $\widehat{t}_{ \varphi(t)}\in (0,1)$ for $|t|<\overline{t}$. By \eqref{kk80}, \eqref{k85}  and recalling the assumption     $\mathcal{J}(u_0)< \inf_{\mathcal{P}_{-,r}}\mathcal{J} $, we deduce   that
\begin{align*}
P(\varphi(t))\leq \mathcal{J}(\varphi(t) )-\inf_{\mathcal{P}_{-,r}}\mathcal{J}=\mathcal{J}(u_0 )-\inf_{\mathcal{P}_{-,r}}\mathcal{J}:=-\eta<0  \quad \ \ \text{for all}\ |t|< \overline{t}.
\end{align*}
 Next, we show that
\begin{align}\label{k86}
  P(\varphi(t) )\leq-\eta \quad\quad \text{ for any }\ t\in (-T_{min}, T_{max}).
\end{align}
 Indeed, assume that there is $t_0\in (-T_{min}, T_{max})$ satisfying $ P(\varphi(t_0) )=0$. It holds from \eqref{kk80} that $\varphi(t_0)\in S_{a,r}$  and
$$
 \mathcal{J}(\varphi(t_0))\geq c_{a}=\inf_{\mathcal{P}_{-,r}}\mathcal{J}> \mathcal{J}(u_0)=  \mathcal{J}(\varphi(t_0)),
$$
which is a contradiction.  This shows that    $ P(\varphi(t_0) )\neq0$ for any  $t_0\in (-T_{min}, T_{max})$. Then, since $P(\varphi(t))<0$ for every   $|t|< \overline{t}$, we obtain $ P(\varphi(t) )<0$ for any  $t\in (-T_{min}, T_{max})$ (if at some $t\in  (-T_{min}, T_{max})$, $P(\varphi(t))>0$. By continuity, we have $P(\varphi(\widetilde{t}))=0$ for some $\widetilde{t}\in  (-T_{min}, T_{max})$, a contradiction). Since $ P(\varphi(t) )<0$ for any  $t\in (-T_{min}, T_{max})$, it follows from   Lemma \ref{K-Lem2.3}  (or   Lemma \ref{K-Lem4.2} or Lemma \ref{K-J1}) that $t_{\varphi  }<0$. That is, $\widehat{ t}_{\varphi  }\in (0,1)$. Thus, \eqref{k85} holds and   the above arguments yield
\begin{align*}
  P(\varphi(t) )\leq-\eta \quad\quad \text{ for any }\ t\in (-T_{min}, T_{max}).
\end{align*}
Thus, by  \eqref{k82} we get that $ H$ is a concave function. Then,    from \eqref{kk81}, \eqref{k82} and \eqref{k86}, we deduce
\begin{align*}
0\leq H(t)\leq H(0)+H'(0) t+\frac{1}{2}H''(0) t^2\leq H(0)+H'(0) t-4 \eta t^2 \quad\quad \text{ for any }\ t\in (-T_{min}, T_{max}),
\end{align*}
which implies that $T_{max}<+\infty$.  Thus, the    solution $\varphi$  of the initial-value problem  \eqref{kk1}
with initial datum $u_0$   blows-up in finite time.
\end{proof}

 \noindent{\bf{Proof of Theorem \ref{K-TH4}}.} Under the assumption of Theorem \ref{K-TH2} (or Theorem \ref{K-TH5} or Theorem \ref{K-TH15}), $e^{i\widehat{\lambda} t} \widehat{u}$ is a standing wave of problem \eqref{kk1}, obtained in  Theorem \ref{K-TH2}  (or Theorem \ref{K-TH5} or Theorem \ref{K-TH15}), where $ \widehat{\lambda}>0$ and $ \widehat{u}\in \mathcal{H}_{r}$.
For any $\varrho> 0$, let $u_{\varrho} := \varrho\star \widehat{u}$,  and let  $\varphi_{\varrho}$ be the solution to \eqref{kk1} with initial datum $u_{\varrho} $. One has $u_{\varrho} \rightarrow  \widehat{u }$ as $\varrho\rightarrow 0^+$. Hence it is sufficient to prove that $\varphi_{\varrho}$ blows-up in finite time.
Clearly $t_{u_{\varrho}} = -\varrho < 0$, where $t_{u_{\varrho}}$ is defined in Lemma \ref{K-Lem2.3} (or Lemma \ref{K-Lem4.2} or Lemma \ref{K-J1}).
By definition
$$
\mathcal{J}(u_{\varrho})=\mathcal{J}(\varrho\star \widehat{u})< \mathcal{J}( \widehat{u})=\inf_{ \mathcal{P}_{-,r}}\mathcal{J}( u).
$$
Moreover, since $ \widehat{\lambda}>0$
  and $ \widehat{u}\in \mathcal{H}_{r}$, we have that $ \widehat{u}$ decays exponentially at infinity (see \cite{1983-BL}),
and hence $|x|u_{\varrho} \in L^2(\mathbb{R}^3)$. Therefore, by Lemma \ref{K-Lem5.1} the solution  $\varphi_{\varrho}$ blows-up in finite time.    Hence, $e^{i\widehat{\lambda} t}\widehat{u}$ is strongly unstable from  Definition \ref{de1.2}. Thus, we complete the proof. $\hfill\Box$


\end{document}